\definecolor{my-linkcolor}{rgb}{0.75,0,0}
\definecolor{my-citecolor}{rgb}{0.1,0.57,0}
\definecolor{my-urlcolor}{rgb}{0,0,0.75}
\title{Twisted modules and $G$-equivariantization in logarithmic conformal field theory}
\author{Robert McRae\\
\small \it 
Yau Mathematical Sciences Center\\
\small \it Tsinghua University, Beijing 100084, China\\
\small \textit{E--mail address:} \texttt{rhmcrae@tsinghua.edu.cn}} \date{}
\numberwithin{equation}{section}
\theoremstyle{definition}\newtheorem{rema}{Remark}[section]
\theoremstyle{plain}\newtheorem{propo}[rema]{Proposition}
\newtheorem{theo}[rema]{Theorem}
\newtheorem{mthm}{Main Theorem}
\theoremstyle{definition}\newtheorem{defi}[rema]{Definition}
\theoremstyle{plain}\newtheorem{lemma}[rema]{Lemma}
\newtheorem{corol}[rema]{Corollary}
\theoremstyle{definition}
\theoremstyle{definition}
\theoremstyle{definition}
\theoremstyle{definition}\newtheorem{assum}[rema]{Assumption}
\newcommand{\cY}{\mathcal{Y}}
\newcommand{\cA}{\mathcal{A}}
\newcommand{\cR}{\mathcal{R}}
\newcommand{\cM}{\mathcal{M}}
\newcommand{\cF}{\mathcal{F}}
\newcommand{\cC}{\mathcal{C}}
\newcommand{\sC}{{\mathcal{SC}}}
\newcommand{\CC}{\mathbb{C}}
\newcommand{\ZZ}{\mathbb{Z}}
\newcommand{\NN}{\mathbb{N}}
\newcommand{\RR}{\mathbb{R}}
\newcommand{\fus}[1]{\mathbin{\boxtimes_{#1}}}
\newcommand{\tens}{\boxtimes}
\newcommand{\vac}{\mathbf{1}}
\DeclareMathOperator{\coker}{Coker}
\DeclareMathOperator{\rep}{Rep}
\let\ker\relax
\let\hom\relax
\DeclareMathOperator{\ker}{Ker}
\DeclareMathOperator{\hom}{Hom}
\DeclareMathOperator{\Endo}{End}
\newcommand{\even}{{\bar{0}}}
\newcommand{\odd}{{\bar{1}}}
\newcommand{\repgV}{\rep^g V}
\newcommand{\repGV}{\rep^G V}
\begin{document}

\bibliographystyle{alpha}

\maketitle

\begin{abstract}
 A two-dimensional chiral conformal field theory can be viewed mathematically as the representation theory of its chiral algebra, a vertex operator algebra. Vertex operator algebras are especially well suited for studying logarithmic conformal field theory (in which correlation functions have logarithmic singularities arising from non-semisimple modules for the chiral algebra) because of the logarithmic tensor category theory of Huang, Lepowsky, and Zhang. In this paper, we study not-necessarily-semisimple or rigid braided tensor categories $\mathcal{C}$ of modules for the fixed-point vertex operator subalgebra $V^G$ of a vertex operator (super)algebra $V$ with finite automorphism group $G$. The main results are that every $V^G$-module in $\mathcal{C}$ with a unital and associative $V$-action is a direct sum of $g$-twisted $V$-modules for possibly several $g\in G$, that the category of all such twisted $V$-modules is a braided $G$-crossed (super)category, and that the $G$-equivariantization of this braided $G$-crossed (super)category is braided tensor equivalent to the original category $\cC$ of $V^G$-modules. This generalizes results of Kirillov and M\"{u}ger proved using rigidity and semisimplicity. We also apply the main results to the orbifold rationality problem: whether $V^G$ is strongly rational if $V$ is strongly rational. We show that $V^G$ is indeed strongly rational if $V$ is strongly rational, $G$ is any finite automorphism group, and $V^G$ is $C_2$-cofinite.
\end{abstract}

\tableofcontents

\section{Introduction}

Orbifolding is a way to produce new conformal field theories from old ones. Mathematically, a two-dimensional (chiral) conformal field theory can be treated as the representation theory of its chiral algebra, a vertex operator algebra, and from this point of view, an orbifold conformal field theory amounts to the representation theory of the fixed-point subalgebra $V^G$ of an automorphism group $G$ of the original vertex operator algebra $V$. In the physics literature, systematic study of orbifolds for rational conformal field theories began in \cite{DVVV}. A key feature is the adddition of twisted sectors to the Hilbert space of the original conformal field theory, which correspond in mathematics to categories of twisted $V$-modules associated to automorphisms in $G$. In the mathematics literature, twisted modules for lattice vertex operator algebras had already been introduced in \cite{FLM} for the construction of the moonshine module, a vertex operator algebra on which the Monster finite simple group acts by automorphisms.

Perhaps the first question in orbifold conformal field theory is, how are representations of the orbifold $V^G$ related to the representations of $V$ and $G$? In particular, can the tensor category structures on the three categories of $V$-, $G$-, and $V^G$-modules be related? In this paper, for finite $G$, we answer these questions under minimal assumptions, namely that $V^G$ actually has a module category $\cC$ that both includes $V$ and admits vertex and braided tensor category structures as in \cite{HLZ1}-\cite{HLZ8} (see also the review article \cite{HL}). These conditions hold for many vertex operator algebras in logarithmic conformal field theory, where correlation functions have logarithmic singularities arising from (necessarily non-semisimple) modules on which the Virasoro operator $L(0)$ acts non-semisimply. For example, by \cite{H-cofin}, our results apply when $V^G$ is positive energy (also called CFT-type) and $C_2$-cofinite. Moreover, $V^G$ indeed satisfies these conditions if $V$ itself is simple, positive energy, $C_2$-cofinite and $G$ is finite solvable \cite{Mi1}. (An effort to extend this result to general finite groups has recently appeared in \cite{Mi}, but unfortunately there appears to be a gap in the argument at the moment.) 

In the narrower setting of rational conformal field theory, when $V^G$ is a so-called ``strongly rational'' vertex operator algebra (with a semisimple modular tensor category of
representations \cite{H-rigidity}), it follows from the work of Kirillov \cite{KirillovOrbifoldI}-\cite{KirillovGequivariant} and M\"{u}ger \cite{Mu1}-\cite{Mu2} how to describe the category of $V^G$-modules in terms of $V$- and $G$-modules. Take the category $\rep\,V$ of all $V^G$-modules which admit suitably associative and unital actions of $V$. This category is semisimple and every simple object is a $g$-twisted $V$-module for some $g\in G$, and from there it follows that $\rep\,V$ is a braided $G$-crossed tensor category in the sense of Turaev \cite{TuraevGcrossed}. Any braided $G$-crossed tensor category has an associated braided tensor category called the $G$-equivariantization. For $\rep\,V$, objects of the equivariantization $(\rep\,V)^G$ are modules in $\rep\,V$ equipped with a compatible $G$-module structure; morphisms commute with both $V$- and $G$-actions. Finally, $(\rep\,V)^G$ is braided tensor equivalent to the original category of $V^G$-modules: taking $G$-fixed points gives the functor from $(\rep\,V)^G$ to $V^G$-modules, while there is an induction functor in the other direction.

The main result of this paper is that these results remain true for vertex operator algebras, and indeed vertex operator superalgebras, in logarithmic conformal field theory. Summarizing Theorems \ref{thm:mainVOAthm1} and \ref{thm:mainVOAthm2}:
\begin{mthm}\label{mthm1}
 Let $V$ be a simple vertex operator superalgebra, $G$ a finite automorphism group of $V$ that includes the parity involution, and $\cC$ an abelian category of strongly-graded generalized $V^G$-modules that includes $V$ and admits vertex and braided tensor category structures as in \cite{HLZ1}-\cite{HLZ8}. Then:
 \begin{enumerate}
  \item Every indecomposable object of $\rep\,V$ is a $g$-twisted $V$-module for some $g\in G$.
  \item The category $\rep\,V$ admits the structure of a braided $G$-crossed supercategory.
  \item The induction functor $\cF: \cC\rightarrow(\rep\,V)^G$ is an equivalence of braided tensor categories.
 \end{enumerate}
\end{mthm}

In particular, we do not assume that any category of $V^G$-modules is semisimple. More importantly, we do not assume that any category of $V^G$-modules is rigid, that is, that its objects have duals. It is usually difficult to show that a tensor category of modules for a vertex operator algebra is rigid, and the only general rigidity theorem, due to Huang \cite{H-rigidity}, applies only to rational vertex operator algebras. This is why the work of Kirillov and M\"{u}ger, which uses rigidity, does not apply in the generality of the main theorem. For example, \cite{KirillovOrbifoldI, KirillovOrbifoldII}  show that a simple object $W$ in $\rep\,V$ is $g$-twisted by constructing $g$, which must be an endomorphism of $V$ having something to do with $W$. But the only way to construct such an endomorphism is to create a copy of $W$ (and its dual) using rigidity and have them interact with $V$ in some way.

In Section \ref{sec:MainCatThm}, we prove that indecomposable objects of $\rep\,V$ are $g$-twisted by constructing the complete set of projections from any object $W$ in $\rep\,V$ onto its $g$-twisted summands for $g\in G$. The formula for the projections generalizes a construction from \cite{KO} and requires rigidity only for $V$ itself: the projection $\pi_g$ onto the $g$-twisted summand must be an endomorphism of $W$ having something to do with $g$, so the formula for $\pi_g$ uses rigidity to create a copy of $V$ on which $g$ can act. But we do not even need to assume the rigidity of $V$ as a $V^G$-module, because this follows from results in \cite{McR}. For $G$ finite abelian, a variant of Main Theorem \ref{mthm1}(1), also without assuming rigidity, has appeared previously in \cite[Corollary 4.3]{CarM}.

Main Theorem \ref{mthm1} can be used when rigidity is expected but not known \textit{a priori}. In Section \ref{sec:OrbRat}, we consider the orbifold rationality problem: if $V$ is strongly rational and $G$ is finite, is $V^G$ also strongly rational? Carnahan and Miyamoto have shown \cite{CarM} that the answer is yes if $G$ is cyclic (and by extension if $G$ is solvable), but the question has remained open for general finite $G$. One needs to show that $V^G$ is both $C_2$-cofinite and rational, and in the presence of $C_2$-cofiniteness, rationality means that the category of (strongly-graded) $V^G$-modules is semisimple. In Theorem \ref{thm:VG_ss} and Corollary \ref{cor:OrbRat} below, we reduce the orbifold rationality problem to the question of $C_2$-cofiniteness for $V^G$:
\begin{mthm}\label{mthm2}
 Let $V$ be a strongly rational vertex operator algebra and $G$ any finite group of automorphisms of $V$. If $\cC$ is an abelian category of strongly-graded generalized $V^G$-modules that includes $V$ and admits vertex tensor category structure as in \cite{HLZ1}-\cite{HLZ8}, then $\cC$ is semisimple. In particular, if $V^G$ is $C_2$-cofinite, then $V^G$ is strongly rational.
\end{mthm}

The idea is to show that $\rep\,V$ is semisimple under the conditions of Main Theorem \ref{mthm2}, because it is easy to show that this implies $\cC$ is semisimple. But because every module in $\rep\,V$ is a direct sum of $g$-twisted modules for certain $g\in G$, it is enough to show that the category of $g$-twisted modules is semisimple for any fixed $g\in G$. This then follows using the rationality of each $V^{\langle g\rangle}$ proved in \cite{CarM}.

The equivalence in Main Theorem \ref{mthm1} between $\cC$ and the $G$-equivariantization of $\rep\,V$ has interesting implications even in what is perhaps the simplest non-trivial case: $V$ is a superalgebra and $G\cong\ZZ/2\ZZ$ is generated by the parity involution, so that $V^G$ is the even vertex operator subalgebra $V^\even$. In this case, the objects of $(\rep\,V)^{\ZZ/2\ZZ}$ are simply the ordinary and parity-twisted $V$-modules (Neveu-Schwarz and Ramond sectors in physics terminology) equipped with parity decompositions. In particular, morphisms in $(\rep\,V)^{\ZZ/2\ZZ}$ must be even. Applying the induction functor then shows that every indecomposable module in the base category $\cC$ of $V^\even$-modules is the even part of either a Neveu-Schwarz or Ramond $V$-module (equivalently, the odd part since reversing the parity decomposition yields another module in $(\rep\,V)^{\ZZ/2\ZZ}$).

Examples of vertex operator superalgebras with non-semisimple representation theory that can be studied using Main Theorem \ref{mthm1} include the symplectic fermion superalgebras $SF(d)$, $d\in\ZZ_+$, of $d$ pairs of symplectic fermions \cite{Ka,Ab, Ru}. As the even subalgebras $SF(d)^\even$ are $C_2$-cofinite \cite{Ab}, the full categories of grading-restricted, generalized $SF(d)^\even$-modules have braided tensor category structure \cite{H-cofin}. In fact, one of the goals that stimulated the work in this paper was to understand Runkel's construction \cite{Ru} of a braided tensor category conjecturally equivalent to the Huang-Lepowsky-Zhang braided tensor category (as in \cite{HLZ1}-\cite{HLZ8}) of strongly-graded generalized $SF(d)^\even$-modules. The braided tensor category in \cite{Ru} was constructed using Neveu-Schwarz and Ramond $SF(d)$-modules and seems to be the $\ZZ/2\ZZ$-equivariantization of $\rep\,SF(d)$. In future work, we plan to verify the identification of $(\rep\,SF(d))^{\ZZ/2\ZZ}$ with Runkel's braided tensor category, thus proving (in light of Main Theorem \ref{mthm1}(3)) the conjectured equivalence with the Huang-Lepowsky-Zhang braided tensor category of $SF(d)^\even$-modules. As Runkel's category is braided tensor equivalent to a non-semisimple modular tensor category of finite-dimensional modules for a quasi-Hopf algebra \cite{GR, FGR}, this would provide a family of examples of non-rational $C_2$-cofinite vertex operator algebras whose module categories admit non-semisimple modular tensor category structure.


The methods of this paper are primarily categorical, using the theory of commutative associative (super)algebras in braided tensor categories developed in, for instance, \cite{KO} and expanded upon in \cite{CKM}. To apply results on algebras in tensor categories to vertex operator algebras, we use the identification from \cite{HKL} of vertex operator algebra extensions with algebras in a braided tensor category of modules for a vertex operator subalgebra. This result was extended to superalgebras in \cite{CKL}, and the relationship between the module categories of vertex operator (super)algebra extensions and (super)algebras in a braided tensor category was established in \cite{CKM}. Tensor-categorical techniques have proven highly useful in the representation theory of vertex operator algebras in recent years: in this paper, we use braid diagrams to concisely express proofs that would otherwise require complex manipulations of compositions of up to four vertex algebraic intertwining operators ($6$-point correlation functions in conformal-field-theoretic terminology). That tensor-categorical techniques can be used at all to study vertex operator algebras is a consequence of the work of Huang-Lepowsky-(Zhang), culminating in \cite{HLZ1}-\cite{HLZ8}, developing the (logarithmic) vertex tensor category theory for module categories of a vertex operator algebra. The reader may notice that while the present paper is designed to handle vertex operator algebras in logarithmic conformal field theory, very few logarithms appear explicitly. This is because most of the complex analysis needed for this paper has been worked out already in \cite{HLZ1}-\cite{HLZ8} as well as subsequent works such as \cite{CKM}.

The remaining contents of this paper are structured as follows. In Section \ref{subsec:Superalgebras}, we present definitions and basic results on superalgebras $V$ in braided tensor categories $\cC$, including an overview of the monoidal supercategory structure on the representation category $\rep\,V$ and the induction tensor functor from $\cC$ to $\rep\,V$ (see \cite{CKM} for a fuller discussion). In Section \ref{sec:TwistMods}, we define the notion of $g$-twisted $V$-module in $\rep\,V$, associated to an automorphism $g$ of the superalgebra $V$, as well as the notion of braided $G$-crossed supercategory, a suitable supercategory version of the notion of braided $G$-crossed category from \cite{TuraevGcrossed}. We also discuss how categories of twisted $V$-modules provide examples of braided $G$-crossed supercategories. This result seems to be well known, since it is stated as a theorem in \cite{KirillovGequivariant}, but a detailed proof seems to be missing from the literature. We provide one in Appendix \ref{app:BGC_proof}. In Section \ref{sec:Gequiv} we give the definition of $G$-equivariantization of a braided $G$-crossed supercategory and show that in the case of twisted modules for a superalgebra, induction is a braided tensor functor from $\cC$ to the $G$-equivariantization $(\rep\,V)^G$, provided that every object in $\rep\,V$ is a direct sum of twisted modules. This result is also known, appearing in \cite{KirillovOrbifoldI} and \cite{Mu2}, but we include a full proof to emphasize that it does not require $\cC$ to be semisimple or rigid.

In Section \ref{sec:MainCatThm}, we prove the main categorical result of this paper (Theorem \ref{thm:repV=repGV}): under some conditions which do not include rigidity or semisimplicity of $\cC$, every object in $\rep\,V$ is a direct sum of $g$-twisted $V$-modules for possibly several $g\in G$. In particular, $\rep\,V$ is a braided $G$-crossed supercategory and induction is a braided tensor functor from $\cC$ to $(\rep\,V)^G$. In the proof of Theorem \ref{thm:repV=repGV}, we use braid diagrams for brevity and clarity, but the reader interested in fuller details may consult Appendix \ref{app:mthm_details}.

In Section \ref{sec:VOAs}, we interpret the categorical results of the previous two sections as theorems for vertex operator (super)algebras using the connection between vertex operator superalgebra extensions and superalgebras in braided tensor categories established in \cite{HKL, CKL, CKM}. After reviewing the definitions of vertex operator superalgebras and their $g$-twisted modules in Section \ref{subsec:VOAdefs}, we prove the main general theorems in Section \ref{subsec:VOAthms}. First we show that the categorical and vertex algebraic notions of $g$-twisted $V$-module agree when $V$ is a vertex operator superalgebra, and then we prove the first two parts of Main Theorem \ref{mthm1} by verifying the conditions of Theorem \ref{thm:repV=repGV} using \cite{DLM} and \cite{McR}. Then after arguing that the braided $G$-crossed supercategory structure on $\rep\,V$ is natural from a vertex algebraic point of view, we prove the third part of Main Theorem \ref{mthm1} in Theorem \ref{thm:mainVOAthm2}. In Section \ref{subsec:VOSAs}, we describe the braided tensor category structure on the $\ZZ/2\ZZ$-equivariantization of $\rep\,V$ when $V$ is a superalgebra and $\ZZ/2\ZZ$ is generated by the parity automorphism; by Main Theorem \ref{mthm1}(3), this provides a description of the braided tensor category of modules for the even subalgebra $V^\even$. Finally, in Section \ref{sec:OrbRat}, we prove Main Theorem \ref{mthm2}.

\paragraph{Acknowledgements}
This work was partially supported by the United States National Science Foundation grant DMS-1362138. I would also like to thank Thomas Creutzig and Shashank Kanade for discussions.

\section{Braided \texorpdfstring{$G$}{G}-crossed supercategories}\label{sec:BGXSC}

In this section, we collect definitions and results on braided $G$-crossed supercategories of twisted modules for a superalgebra $V$ in a braided tensor category associated to a group $G$ of automorphisms of $V$. For more details refer, for example, to \cite{TuraevGcrossed}, \cite{KO}, \cite{KirillovOrbifoldI}-\cite{KirillovGequivariant}, and \cite{CKM}.

\subsection{Superalgebra objects in supercategories}\label{subsec:Superalgebras}

See \cite{BE} for a helpful analysis of the relations between various notions of supercategory in the mathematical literature. Here we fix a field $\mathbb{F}$ of characteristic not equal to $2$ and work with $\mathbb{F}$-linear \textit{supercategories} (in the terminology of \cite{BE}) whose morphism sets are $\mathbb{F}$-superspaces. We will denote the parity of a morphism $f$ in a supercategory by $\vert f\vert\in\ZZ/2\ZZ$. Composition of morphisms in an $\mathbb{F}$-linear supercategory is an even linear map between superspaces, in the sense that
\begin{equation*}
 \vert f g\vert=\vert f\vert+\vert g\vert
\end{equation*}
when $f$ and $g$ have parity. Superfunctors between supercategories induce even linear maps on morphisms. If $\sC$ is a supercategory, then so is $\sC\times\sC$ with composition of morphisms defined by
\begin{equation}\label{compinSCtimesSC}
 (f_1, f_2)(g_1,g_2)=(-1)^{\vert f_2\vert\vert g_1\vert}(f_1 g_1, f_2 g_2)
\end{equation}
when $f_2$ and $g_1$ have parity. Also, $\sC\times\sC$ has a supersymmetry superfunctor $\sigma$ given on objects by $\sigma(W_1,W_2)=(W_2,W_1)$ and on parity-homogeneous morphisms by $\sigma(f_1,f_2)=(-1)^{\vert f_1\vert\vert f_2\vert} (f_2,f_1)$.

A monoidal supercategory $\sC$ is a supercategory with monoidal category structure: a tensor product superfunctor $\tens : \sC\times\sC\rightarrow \sC$, a unit object $\vac$, even natural left and right unit isomorphisms $l$ and $r$, and even natural associativity isomorphisms $\cA$, which satisfy the triangle and pentagon axioms. Since $\boxtimes$ is a superfunctor, it induces an even linear map on morphisms: $\vert f_1\tens f_2\vert=\vert f_1\vert+\vert f_2\vert$ when $f_1$ and $f_2$ are parity-homogeneous, and by \eqref{compinSCtimesSC},
\begin{equation}\label{superinterchange}
 (f_1\boxtimes f_2)(g_1\boxtimes g_2)=(-1)^{\vert f_2\vert\vert g_1\vert}(f_1 g_1)\boxtimes(f_2 g_2)
\end{equation}
for appropriately composable morphisms with $f_2$ and $g_1$ parity-homogeneous (this is called the super interchange law in \cite{BE}). 

The monoidal supercategory $\sC$ is \textit{braided} if it has an even natural braiding isomorphism $\cR: \tens\rightarrow\tens\circ\sigma$. From the definition of $\sigma$, for parity-homogeneous morphisms $f_1: W_1\rightarrow\widetilde{W}_1$ and $f_2: W_2\rightarrow\widetilde{W}_2$ in $\sC$,
\begin{equation}\label{Rnaturality}
 \cR_{\widetilde{W}_1, \widetilde{W}_2}(f_1\tens f_2)=(-1)^{\vert f_1\vert\vert f_2\vert}(f_2\boxtimes f_1)\cR_{W_1,W_2}.
\end{equation}
We will say that a (braided) monoidal supercategory is \textit{rigid} if every object $W$ has a (left) dual $W^*$ for which the evaluation morphism $e_W: W^*\tens W\rightarrow\vac$ and coevaluation morphism $i_W: \vac\rightarrow W\tens W^*$ are both even.

All $\mathbb{F}$-linear supercategories $\sC$ in this paper will be $\mathbb{F}$-additive in the sense that $\sC$ has a zero object and the biproduct $\bigoplus W_i$ of any finite set of objects $\lbrace W_i\rbrace$ in $\sC$ exists. We will not generally require our supercategories to be abelian, as kernels and cokernels of non-parity-homogeneous morphisms may not exist in our examples. By an $\mathbb{F}$-additive monoidal supercategory, we will mean an $\mathbb{F}$-additive supercategory with a monoidal supercategory structure such that the tensor product of morphisms is bilinear.

Now we recall the main definitions and results from \cite[Section 2.2]{CKM}. Fix an $\mathbb{F}$-linear (abelian) braided tensor category $\cC$, with unit object $\vac$, left and right unit isomorphisms $l$ and $r$, associativity isomorphisms $\cA$, and braiding isomorphisms $\cR$. The only additional requirement is that for any object $W$ in $\cC$, the functors $W\tens\bullet$ and $\bullet\tens W$ are right exact. We define an auxiliary supercategory $\sC$ whose objects are ordered pairs $W=(W^\even, W^\odd)$, with $W^\even$ and $W^\odd$ objects of $\cC$, and whose morphisms are given by
\begin{equation*}
 \hom_{\sC}(W_1, W_2)=\hom_{\cC}(W_1^\even\oplus W_1^\odd, W_2^\even\oplus W_2^\odd).
\end{equation*}
Every object $W$ of $\sC$ has a parity involution $P_W\in\mathrm{End}_{\sC}(W)$ given by
\begin{equation*}
 P_W = 1_{W^\even}\oplus(-1_{W^\odd}).
\end{equation*}
The parity involutions determine the superspace structure of the morphism spaces in $\sC$: $f\in\hom_{\sC}(W_1, W_2)$ has parity $\vert f\vert\in\ZZ/2\ZZ$ if
\begin{equation*}
 f P_{W_1}=(-1)^{\vert f\vert} P_{W_2} f.
\end{equation*}
The supercategory $\sC$ is also $\mathbb{F}$-additive with zero object $0=(0,0)$ and biproducts defined by $W_1\oplus W_2 =(W_1^\even\oplus W_2^\even, W_1^\odd\oplus W_2^\odd)$. Moreover, $\sC$ is abelian, with every morphism having a kernel and cokernel, because $\cC$ is. However, if $f: W_1\rightarrow W_2$ is a morphism in $\sC$, we cannot assume the kernel morphism $k: \ker f\rightarrow W_1$ and cokernel morphism $c: W_2\rightarrow\coker f$ are even. They can be taken to be even if $f$ is parity homogeneous. Moreover every parity-homogeneous monomorphism in $\sC$ is the kernel of an even morphism, and every parity-homogeneous epimorphism in $\sC$ is the cokernel of an even morphism (see \cite[Proposition 2.15]{CKM}).

Next, $\sC$ has braided tensor supercategory structure as follows: for objects $W_1$, $W_2$ in $\sC$,
\begin{equation*}
 W_1\tens W_2 = \left( (W_1^\even\tens W_2^\even)\oplus(W_1^\odd\tens W_2^\odd), (W_1^\even\tens W_2^\odd)\oplus(W_1^\odd\tens W_2^\even)\right).
\end{equation*}
For morphisms $f_1: W_1\rightarrow\widetilde{W}_1$ and $f_2: W_2\rightarrow\widetilde{W}_2$ in $\sC$, the tensor product $f_1\tens f_2$ must be a $\cC$-morphism
\begin{equation*}
 \bigoplus_{i_1,i_2\in\ZZ/2\ZZ} W_1^{i_1}\tens W_2^{i_2} \rightarrow\bigoplus_{j_1,j_2\in\ZZ/2\ZZ} \widetilde{W}_1^{j_1}\tens\widetilde{W}_2^{j_2}.
\end{equation*}
Since the tensor product in $\cC$ distributes over biproducts, $f_1\tens f_2$ in $\sC$ can be identified with a $\cC$-morphism
\begin{equation*}
 (W_1^\even\oplus W_1^\odd)\tens(W_2^\even\oplus W_2^\odd)\rightarrow (\widetilde{W}_1^\even\oplus\widetilde{W}_1^\odd)\tens(\widetilde{W}_2^\even\oplus\widetilde{W}_2^\odd),
\end{equation*}
and this $\cC$-morphism is $\big(f_1 P_{W_1}^{\vert f_2\vert}\big)\tens f_2$ when $f_2$ is parity-homogeneous. The factor $P_{W_1}^{\vert f_2\vert}$ allows the super interchange law \eqref{superinterchange} to hold. 

The unit object in $\sC$ is $\vac=(\vac, 0)$, so for any object $W=(W^\even, W^\odd)$ in $\sC$, we can identify
\begin{equation*}
 \vac\tens W=(\vac\tens W^\even, \vac\tens W^\odd),\hspace{3em} W\tens\vac =(W^\even\tens\vac, W^\odd\tens\vac)
\end{equation*}
by first identifying $0\tens W^i=0=W^i\tens 0$ for $i\in\ZZ/2\ZZ$ and then identifying $(\vac\tens W^i)\oplus 0=\vac\tens W^i$. Under these identifications, the unit isomorphisms for $W$ in $\sC$ are given by $l_W=l_{W^\even}\oplus l_{W^\odd}$ and $r_W=r_{W^\even}\oplus r_{W^\odd}$. The associativity isomorphism for objects $W_1$, $W_2$, and $W_3$ in $\sC$ is
\begin{equation*}
 \cA_{W_1,W_2,W_3}=\bigg(\bigoplus_{i_1+i_2+i_3=\even} \cA_{W_1^{i_1}, W_2^{i_2}, W_3^{i_3}}\bigg)\oplus\bigg(\bigoplus_{i_1+i_2+i_3=\odd} \cA_{W_1^{i_1}, W_2^{i_2}, W_3^{i_3}}\bigg).
\end{equation*}
For objects $W_1$ and $W_2$ in $\sC$, the braiding isomorphism is
\begin{equation*}
 \cR_{W_1,W_2}=\bigg(\bigoplus_{i_1+i_2=\even} (-1)^{i_1 i_2} \cR_{W_1^{i_1}, W_2^{i_2}}\bigg)\oplus\bigg(\bigoplus_{i_1+i_2=\odd} (-1)^{i_1 i_2} \cR_{W_1^{i_1}, W_2^{i_2}}\bigg).
\end{equation*}
The sign factors in the braiding isomorphisms guarantee that \eqref{Rnaturality} holds. As for $\cC$, the functors $W\tens\bullet$ and $\bullet\tens W$ on $\sC$ are right exact for any object $W$.

Now we define a superalgebra in the braided tensor category $\cC$ to be a commutative associative algebra, with even structure morphisms, in $\sC$. Specifically:
\begin{defi}
 A \textit{superalgebra} in $\cC$ is an object $V=(V^\even, V^\odd)$ in $\sC$ equipped with even morphisms $\mu_V: V\tens V\rightarrow V$ and $\iota_V: \vac\rightarrow V$ that satisfy the following conditions:
 \begin{enumerate}
 \item Left unit: $\mu_V(\iota_V\tens 1_V) l_V^{-1} = 1_V$.
 
  \item Associativity:  $\mu_V(1_V\tens\mu_V)= \mu_V(\mu_V\tens 1_V)\cA_{V,V,V}$.
  
  \item Supercommutativity: $\mu_V= \mu_V\cR_{V,V}$.
 \end{enumerate}
\end{defi}

\begin{rema}
 No sign factor is needed in the supercommutativity axiom because this is built into the braiding isomorphisms in $\sC$. Also, the left unit and supercommutativity axioms together imply the right unit property: $\mu_V(1_V\tens\iota_V) r_V^{-1} = 1_V$.
\end{rema}

Given a superalgebra $V$ in $\cC$, we define the supercategory $\rep V$ of (left) $V$-modules with objects $(W,\mu_W)$ where $W$ is an object of $\sC$ and $\mu_W: V\tens W\rightarrow W$ is an even morphism in $\sC$ satisfying
\begin{enumerate}
 \item Unit: $\mu_W(\iota_V\tens 1_W) l_W^{-1}=1_W$, and
 \item Associativity: $\mu_W(1_V\tens\mu_W)= \mu_W(\mu_V\tens 1_W)\cA_{V,V,W}$.
\end{enumerate}
A morphism $f: (W_1,\mu_{W_1})\rightarrow(W_2,\mu_{W_2})$ in $\rep V$ is an $\sC$-morphism $f: W_1\rightarrow W_2$ such that
\begin{equation*}
 f\mu_{W_1}=\mu_{W_2}(1_V\tens f).
\end{equation*}
The parity of a morphism in $\rep V$ agrees with its parity as a morphism in $\sC$. The supercategory $\rep V$ is an $\mathbb{F}$-additive supercategory (see for instance \cite[Proposition 2.32]{CKM}) but is not necessarily abelian because the natural actions of $V$ on the kernel and cokernel of a morphism in $\rep V$ might not be even unless the morphism is parity-homogeneous. However, the underlying category $\underline{\rep V}$, which has the same objects as $\rep V$ but only the even morphisms, is an $\mathbb{F}$-linear abelian category.

The supercategory $\rep V$ also has a monoidal supercategory structure as follows. Given two objects $W_1$, $W_2$ of $\rep V$, $V$ can act on either factor of $W_1\tens W_2$: define $\mu_{W_1,W_2}^{(1)}$ to be the composition
\begin{equation*}
 V\tens(W_1\tens W_2)\xrightarrow{\cA_{V,W_1,W_2}} (V\tens W_1)\tens W_2\xrightarrow{\mu_{W_1}\tens 1_{W_2}} W_1\tens W_2
\end{equation*}
and define $\mu_{W_1,W_2}^{(2)}$ to be the composition
\begin{align*}
V\tens(W_1\tens W_2)\xrightarrow{\cA_{V,W_1,W_2}} (V\tens W_1)\tens W_2 & \xrightarrow{\cR_{V,W_1}\tens 1_{W_2}} (W_1\tens V)\tens W_2\nonumber\\
&\xrightarrow{\cA_{W_1,V,W_2}^{-1}} W_1\tens(V\tens W_2)\xrightarrow{1_{W_1}\tens\mu_{W_2}} W_1\tens W_2.
\end{align*}
Then the tensor product of $W_1$ and $W_2$ in $\rep V$, $W_1\tens_V W_2$, is the cokernel of $\mu_{W_1,W_2}^{(1)}-\mu_{W_1,W_2}^{(2)}$, which exists because $\sC$ is abelian. Let $I_{W_1,W_2}: W_1\tens W_2\rightarrow W_1\tens_V W_2$ denote the cokernel morphism, which we take even, as we may since $\mu_{W_1,W_2}^{(1)}-\mu_{W_1,W_2}^{(2)}$ is an even morphism in $\sC$. The multiplication action $\mu_{W_1\tens_V W_2}$ is characterized by the commutative diagram
\begin{equation*}
 \xymatrixcolsep{6pc}
 \xymatrix{
 V\tens(W_1\tens W_2) \ar[r]^(.55){\mu_{W_1,W_2}^{(1)}\,or\,\mu_{W_1,W_2}^{(2)}} \ar[d]^{1_V\tens I_{W_1,W_2}} & W_1\tens W_2 \ar[d]^{I_{W_1,W_2}}\\
 V\tens(W_1\tens_V W_2) \ar[r]^(.55){\mu_{W_1\tens_V W_2}} & W_1\tens_V W_2\\
 }
\end{equation*}
The tensor product of morphisms $f_1: W_1\rightarrow\widetilde{W}_1$ and $f_2: W_2\rightarrow\widetilde{W}_2$ in $\rep V$ is characterized by the commuting diagram
\begin{equation*}
 \xymatrixcolsep{4pc}
 \xymatrix{
 W_1\tens W_2 \ar[r]^{f_1\tens f_2} \ar[d]^{I_{W_1,W_2}} & \widetilde{W}_1\tens\widetilde{W}_2 \ar[d]^{I_{\widetilde{W}_1,\widetilde{W}_2}} \\
 W_1\tens_V W_2 \ar[r]^{f_1\tens_V f_2} & \widetilde{W}_1\tens_V\widetilde{W}_2\\
 }
\end{equation*}

In \cite[Proposition 2.47]{CKM}, we showed that the universal property of the cokernel tensor product in $\rep V$ can be expressed in terms of what we called categorical $\rep V$-intertwining operators. For objects $W_1$, $W_2$, and $W_3$, of $\rep V$, a categorical $\rep V$-intertwining operator of type $\binom{W_3}{W_1\,W_2}$ is an $\sC$-morphism $I: W_1\tens W_2\rightarrow W_3$ such that
\begin{equation*}
 I\mu_{W_1,W_2}^{(1)}=I\mu_{W_1,W_2}^{(2)}=\mu_{W_3}(1_V\tens I): V\tens(W_1\tens W_2)\rightarrow W_3.
\end{equation*}
That is, an intertwining operator intertwines the left and right actions of $V$ on $W_1\tens W_2$ with the action of $V$ on $W_3$. Examples of intertwining operators include $\mu_W$ of type $\binom{W}{V\,W}$ for an object $W$ of $\rep V$, and $I_{W_1,W_2}$ of type $\binom{W_1\tens_V W_2}{W_1\,W_2}$ for objects $W_1$ and $W_2$.  Any categorical intertwining operator of type $\binom{W_3}{W_1\,W_2}$ induces a unique $\rep V$-morphism $f_I: W_1\tens_V W_2\rightarrow W_3$ such that $I=f_I I_{W_1,W_2}$. For example the tensor product of morphisms $f_1: W_1\rightarrow\widetilde{W}_1$ and $f_2: W_2\rightarrow\widetilde{W}_2$ is induced by the intertwining operator $I_{\widetilde{W}_1,\widetilde{W}_2}(f_1\tens f_2)$ of type $\binom{\widetilde{W}_1\tens_V\widetilde{W}_2}{W_1\,W_2}$.

The unit object of $\rep\,V$ is $(V, \mu_V)$ and the unit isomorphisms $l_W^V$ and $r_W^V$ for an object $W$ of $\rep V$ are characterized by the commuting diagrams
\begin{equation*}
 \xymatrixcolsep{4pc}
\xymatrix{
V\tens W \ar[rd]^{\mu_W} \ar[d]^{I_{V,W}} & \\
 V\tens_V W \ar[r]^{l^V_W} & W
 }
\end{equation*}
and
\begin{equation*}
 \xymatrixcolsep{4pc}
 \xymatrix{
 W\tens V \ar[r]^{\cR_{V,W}^{-1}} \ar[d]^{I_{W,V}} & V\tens W \ar[d]^{\mu_W} \\
 W\tens_V V \ar[r]^{r_W^V} & W
 }
\end{equation*}
The associativity isomorphism for objects $W_1$, $W_2$, and $W_3$ in $\rep V$ is characterized by the commutative diagram
\begin{equation*}
 \xymatrixcolsep{6pc}
 \xymatrix{
 W_1\tens(W_2\tens W_3) \ar[r]^{\cA_{W_1,W_2,W_3}} \ar[d]^{1_{W_1}\tens I_{W_2,W_3}} & (W_1\tens W_2)\tens W_3 \ar[d]^{I_{W_1, W_2}\tens 1_{W_3}}\\
 W_1\tens(W_2\tens_V W_3) \ar[d]^{I_{W_1,W_2\tens_V W_3}} & (W_1\tens_V W_2)\tens W_3 \ar[d]^{I_{W_1\tens_V W_2, W_3}} \\
 W_1\tens_V (W_2\tens_V W_3) \ar[r]^{\cA^V_{W_1,W_2,W_3}} & (W_1\tens_V W_2)\tens_V W_3 \\
 }
\end{equation*}

We have an induction superfunctor $\cF: \sC\rightarrow\rep V$ defined on objects by
\begin{equation*}
 \cF(W)=(V\tens W,\mu_{\cF(W)})
\end{equation*}
 where $\mu_{\cF(W)}$ is the composition
\begin{equation*}
 V\tens(V\tens W)\xrightarrow{\cA_{V,V,W}} (V\tens V)\tens W\xrightarrow{\mu_V\tens 1_W} V\tens W.
\end{equation*}
On morphisms, we define $\cF(f)=1_V\tens f$. Induction is a tensor superfunctor: there is an even isomorphism $\varphi: \cF(\vac)\rightarrow V$ (given by $r_V$) and an even natural isomorphism $f: \cF\circ\tens\rightarrow\tens_V\circ(\cF\times\cF)$, where $f_{W_1,W_2}$ is defined as the composition
\begin{align*}
 V\tens(W_1\tens W_2)\xrightarrow{\cA_{V,W_1,W_2}} & (V\tens W_1)\tens W_2\xrightarrow{1_{V\tens W_1}\tens l_{W_2}^{-1}} (V\tens W_1)\tens(\vac\tens W_2)\nonumber\\
 & \xrightarrow{1_{V\tens W_1}\tens(\iota_V\tens 1_{W_2})} (V\tens W_1)\tens(V\tens W_2)\xrightarrow{I_{V\tens W_1,V\tens W_2}} (V\tens W_1)\tens_V(V\tens W_2).
\end{align*}
These isomorphisms are compatible with the unit and associativity isomorphisms of $\sC$ and $\rep V$ in the required sense. Induction is left adjoint to the obvious forgetful functor from $\rep V$ to $\sC$ since if $W$ is an object of $\sC$, $X$ is an object of $\rep V$ and $f: W\rightarrow X$ is a morphism in $\sC$, there is a unique morphism $\Psi(f): \cF(W)\rightarrow X$ such that the diagram
\begin{equation*}
 \xymatrixcolsep{4pc}
 \xymatrix{
 \cF(W)=V\tens W \ar[rd]^{\Psi(f)} & \\
 W \ar[u]^{(\iota_V\tens 1_W) l_W^{-1}} \ar[r]^{f} & X\\
 }
\end{equation*}
commutes. In fact, $\Psi(f)=\mu_X(1_V\tens f)$.

\subsection{Braided \texorpdfstring{$G$}{G}-crossed supercategories of twisted modules}\label{sec:TwistMods}

 Fix a superalgebra $V$ in a braided tensor category $\cC$ with right exact tensor functors $W\tens\bullet$ and $\bullet\tens W$ for any object $W$ in $\cC$. We say that a subgroup $G\subseteq\mathrm{Aut}_{\sC}(V)^\even$ is an automorphism group if
\begin{equation*}
 g\mu_V=\mu_V(g\tens g)
\end{equation*}
and
\begin{equation*}
 g\iota_V=\iota_V
\end{equation*}
for every $g\in G$. Fix an automorphism group $G$ of $V$.
\begin{defi}\label{def:CatTwistMods}
For $g\in G$, an object $(W,\mu_W)$ in $\rep V$ is a \textit{$g$-twisted $V$-module} if
\begin{equation*}
 \mu_W(g\tens 1_W)\cM_{V,W}=\mu_W,
\end{equation*}
where $\cM_{V,W}=\cR_{W,V}\cR_{V,W}$ is the natural \textit{monodromy} isomorphism in $\sC$.
\end{defi}
For $g\in G$, let $\repgV$ denote the full subcategory of $g$-twisted $V$-modules in $\rep V$. Then define $\repGV$ to be the full subcategory of $\rep V$ whose objects are isomorphic to finite biproducts of $g$-twisted $V$-modules for possibly several different $g\in G$. The category $\repGV$ is an $\mathbb{F}$-additive monoidal supercategory. Indeed, the unit object $(V,\mu_V)$ of $\rep V$ is in $\rep^1 V$ by the supercommutativity of $\mu_V$ (we say that objects in $\rep^1 V$ are \textit{untwisted}), and to show that $\repGV$ is closed under tensor products, we use the following result which is essentially part of \cite[Theorem 4.7 (4)]{KirillovOrbifoldI} where, however, the proof used strong assumptions on $\cC$ that we do not need here:
\begin{propo}\label{prop:TensProdandGgrading}
 If $W_1$ is a $g_1$-twisted $V$-module, $W_2$ is a $g_2$-twisted $V$-module, and $I$ is a surjective intertwining operator of type $\binom{W_3}{W_1\,W_2}$, then $W_3$ is a $g_1 g_2$-twisted $V$-module.
\end{propo}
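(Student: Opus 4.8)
The plan is to reduce, using surjectivity of $I$, to an identity taking place entirely in $\sC$, and then to verify that identity by standard braiding manipulations. Since $I$ is surjective and the functor $V\tens\bullet$ on $\sC$ is right exact (hence preserves epimorphisms), the morphism $1_V\tens I\colon V\tens(W_1\tens W_2)\to V\tens W_3$ is an epimorphism, so it is enough to prove
\[
 \mu_{W_3}(g_1 g_2\tens 1_{W_3})\cM_{V,W_3}(1_V\tens I)=\mu_{W_3}(1_V\tens I).
\]
On the left I would use naturality of the monodromy to pull $1_V\tens I$ out past $\cM_{V,W_3}$, the super interchange law \eqref{superinterchange} to commute $g_1 g_2\tens 1_{W_3}$ past $1_V\tens I$ (this introduces no sign, since $g_1 g_2$ is even, so the whole argument is insensitive to the parity of $I$), and then the defining property $\mu_{W_3}(1_V\tens I)=I\mu_{W_1,W_2}^{(1)}=I\mu_{W_1,W_2}^{(2)}$ of a categorical intertwining operator on both sides. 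This turns the claim into the purely $\sC$-theoretic identity
\[
 I\,\mu_{W_1,W_2}^{(1)}\,(g_1 g_2\tens 1_{W_1\tens W_2})\,\cM_{V,W_1\tens W_2}=I\,\mu_{W_1,W_2}^{(1)}.
\]

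To prove this I would first rewrite $\cM_{V,W_1\tens W_2}$ by the hexagon axioms (suppressing associativity isomorphisms) as $(\cR_{W_1,V}\tens 1_{W_2})(1_{W_1}\tens\cM_{V,W_2})(\cR_{V,W_1}\tens 1_{W_2})$. Because $\mu_{W_1,W_2}^{(1)}$ acts through $\mu_{W_1}$ on the first tensorand, the $g_1$-twisting relation $\mu_{W_1}(g_1\tens 1_{W_1})\cM_{V,W_1}=\mu_{W_1}$ lets one absorb the braiding $\cR_{W_1,V}$: writing $g_1g_2=g_1\circ g_2$ and moving $g_2$ across by naturality of $\cR$, one gets $\mu_{W_1}(g_1 g_2\tens 1_{W_1})\cR_{W_1,V}=\mu_{W_1}\cR_{V,W_1}^{-1}(1_{W_1}\tens g_2)$. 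Sliding the surviving copy of $g_2$ to the far right, using naturality of $\cM_{V,W_2}$ and $\cR_{V,W_1}$ in the $V$-variable, leaves $I$ applied to
\[
 \big[(\mu_{W_1}\cR_{V,W_1}^{-1})\tens 1_{W_2}\big]\,(1_{W_1}\tens\cM_{V,W_2})\,(\cR_{V,W_1}\tens 1_{W_2})\,(g_2\tens 1_{W_1\tens W_2}).
\]

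The crucial step — and the only place the hypothesis on $I$ is used beyond surjectivity — is now to convert the \emph{right} action of $V$ on $W_1$ appearing here into the \emph{left} action of $V$ on $W_2$. Precomposing $\mu_{W_1,W_2}^{(1)}$ and $\mu_{W_1,W_2}^{(2)}$ with $\cR_{V,W_1}^{-1}\tens 1_{W_2}$ yields $(\mu_{W_1}\cR_{V,W_1}^{-1})\tens 1_{W_2}$ and $1_{W_1}\tens\mu_{W_2}$ respectively (modulo associators), so the intertwining relation $I\mu_{W_1,W_2}^{(1)}=I\mu_{W_1,W_2}^{(2)}$ gives $I\big[(\mu_{W_1}\cR_{V,W_1}^{-1})\tens 1_{W_2}\big]=I\,(1_{W_1}\tens\mu_{W_2})$. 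Substituting this places $\mu_{W_2}$ next to $\cM_{V,W_2}$, and the $g_2$-twisting relation in the form $\mu_{W_2}\cM_{V,W_2}=\mu_{W_2}(g_2^{-1}\tens 1_{W_2})$ then applies; the factor $g_2^{-1}$ slides back (again by naturality of $\cR_{V,W_1}$) to cancel the $g_2$ produced earlier, and what remains is $I\mu_{W_1,W_2}^{(2)}=I\mu_{W_1,W_2}^{(1)}$, which is exactly the required identity.

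I expect the only tedious part to be the bookkeeping of associativity isomorphisms and the repeated naturality moves; presenting the computation of the middle two paragraphs as a braid diagram makes both the associators and the absence of signs transparent. I would also note that no commutativity between $g_1$ and $g_2$ is needed, and that taking $I=I_{W_1,W_2}$ recovers the statement that $W_1\tensv W_2$ is itself a $g_1 g_2$-twisted $V$-module, from which the general case also follows since twistedness passes to quotients.
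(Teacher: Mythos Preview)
Your proposal is correct and follows essentially the same route as the paper's proof: reduce via surjectivity of $1_V\tens I$, decompose $\cM_{V,W_1\tens W_2}$ by the hexagon axiom, absorb one braiding using the $g_1$-twist, switch from the $W_1$-action to the $W_2$-action via the intertwining relation $I\mu^{(1)}=I\mu^{(2)}$, and finish with the $g_2$-twist. The only cosmetic difference is the order in which you shuttle $g_2$ around (you push it to the far right before invoking the intertwining relation and then cancel it against the $g_2^{-1}$ produced by the $g_2$-twist, whereas the paper moves $g_2$ after the intertwining step so that the $g_2$-twist applies directly), and your closing remark about deducing the general case from $I=I_{W_1,W_2}$ via quotients is a valid alternative packaging but not needed given what you have already shown.
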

\begin{proof}
 We need to show that $\mu_{W_3}(g_1 g_2\tens 1_{W_3})\cM_{V, W_3}=\mu_{W_3}$. Since $I$ is surjective and $V\tens\bullet$ is right exact, $1_V\tens I$ is surjective as well, and it is sufficient to prove that
 \begin{equation*}
  \mu_{W_3}(g_1g_2\tens 1_{W_3})\cM_{V,W_3}(1_V\tens I)=\mu_{W_3}(1_V\tens I).
 \end{equation*}
Using naturality of the monodromy isomorphisms, the left side of this equation equals the composition
\begin{align*}
 V\tens(W_1\tens W_2)\xrightarrow{\cM_{V,W_1\tens W_2}} V\tens(W_1\tens W_2)\xrightarrow{g_1g_2\tens 1_{W_1\tens W_2}} V\tens(W_1\tens W_2)\xrightarrow{1_V\tens I} V\tens W_3\xrightarrow{\mu_{W_3}} W_3.
\end{align*}
Using the hexagon axiom and the fact that $I$ is an intertwining operator, this composition becomes
\begin{align*}
 V\tens(W_1\tens & W_2)  \xrightarrow{\cA_{V,W_1,W_2}} (V\tens W_1)\tens W_2\xrightarrow{\cR_{V,W_1}\tens 1_{W_2}} (W_1\tens V)\tens W_2\xrightarrow{\cA_{W_1,V,W_2}^{-1}} W_1\tens(V\tens W_2)\nonumber\\
 &\xrightarrow{1_{W_1}\tens\cM_{V,W_2}} W_1\tens(V\tens W_2)\xrightarrow{\cA_{W_1, V,W_2}} (W_1\tens V)\tens W_2\xrightarrow{\cR_{W_1,V}\tens 1_{W_2}} (V\tens W_1)\tens W_2\nonumber\\
 & \xrightarrow{\cA_{V,W_1,W_2}^{-1}} V\tens(W_1\tens W_2)\xrightarrow{g_1g_2\tens 1_{W_1\tens W_2}} V\tens(W_1\tens W_2)\xrightarrow{\cA_{V,W_1,W_2}} (V\tens W_1)\tens W_2\nonumber\\
 & \xrightarrow{\mu_{W_1}\tens 1_{W_2}} W_1\tens W_2\xrightarrow{I} W_3.
\end{align*}
We apply the naturality of the associativity isomorphisms to $g_1g_2\tens 1_{W_1\tens W_2}$ to cancel the associativity isomorphism and its inverse in the third line. Then since $W_1$ is $g_1$-twisted, we replace $\mu_{W_1}(g_1\tens 1_{W_1})$ with $\mu_{W_1}\cM_{V,W_1}^{-1}$ to get
\begin{align}\label{calc1}
 V\tens(W_1\tens & W_2)  \xrightarrow{\cA_{V,W_1,W_2}} (V\tens W_1)\tens W_2\xrightarrow{\cR_{V,W_1}\tens 1_{W_2}} (W_1\tens V)\tens W_2\xrightarrow{\cA_{W_1,V,W_2}^{-1}} W_1\tens(V\tens W_2)\nonumber\\
 &\xrightarrow{1_{W_1}\tens\cM_{V,W_2}} W_1\tens(V\tens W_2)\xrightarrow{\cA_{W_1, V,W_2}} (W_1\tens V)\tens W_2\xrightarrow{\cR_{W_1,V}\tens 1_{W_2}} (V\tens W_1)\tens W_2\nonumber\\
 &\xrightarrow{g_2\tens 1_{W_1\tens W_2}} (V\tens W_1)\tens W_2\xrightarrow{\cM_{V,W_1}^{-1}\tens 1_{W_2}} (V\tens W_1)\tens W_2\xrightarrow{\mu_{W_1}\tens 1_{W_2}} W_1\tens W_2\xrightarrow{I} W_3.
\end{align}
In the presence of the intertwining operator $I$, $\mu_{W_1}\tens 1_{W_2}$ can be replaced with
\begin{equation*}
 (V\tens W_1)\tens W_2\xrightarrow{\cR_{V,W_1}\tens 1_{W_2}} (W_1\tens V)\tens W_2\xrightarrow{\cA_{W_1,V,W_2}^{-1}} W_1\tens(V\tens W_2)\xrightarrow{1_{W_1}\tens\mu_{W_2}} W_1\tens W_2.
\end{equation*}
Insert this into \eqref{calc1}, apply naturality of the associativity and braiding isomorphisms to $g_2\tens 1_{W_1\tens W_2}$, and cancel to obtain
\begin{align*}
 V\tens(W_1\tens W_2) & \xrightarrow{\cA_{V,W_1,W_2}} (V\tens W_1)\tens W_2\xrightarrow{\cR_{V,W_1}\tens 1_{W_2}} (W_1\tens V)\tens W_2\xrightarrow{\cA_{W_1,V,W_2}^{-1}} W_1\tens(V\tens W_2)\nonumber\\
 &\xrightarrow{1_{W_1}\tens\cM_{V,W_2}} W_1\tens(V\tens W_2)\xrightarrow{1_{W_1}\tens(g_2\tens 1_{W_2})} W_1\tens(V\tens W_2)\xrightarrow{1_{W_1}\tens\mu_{W_2}} W_1\tens W_2\xrightarrow{I} W_3.
\end{align*}
Since $W_2$ is $g_2$-twisted this reduces to $I\mu_{W_1,W_2}^{(2)}$, which equals $\mu_{W_3}(1_V\tens I)$ because $I$ is an intertwining operator.
\end{proof}

Now for objects $W_1$ and $W_2$ in $\rep V$, the intertwining operator $I_{W_1,W_2}$ of type $\binom{W_1\tens_V W_2}{W_1\,W_2}$ is surjective because it is a cokernel morphism. Thus the preceding proposition immediately implies:
\begin{corol}\label{g1g2corol}
 If $W_1$ is a $g_1$-twisted $V$-module and $W_2$ is a $g_2$-twisted $V$-module, then $W_1\tens_V W_2$ is a $g_1 g_2$-twisted $V$-module. In particular, $\rep^G\,V$ is closed under tensor products.
\end{corol}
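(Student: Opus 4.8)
The plan is to deduce the corollary directly from Proposition~\ref{prop:TensProdandGgrading}, handling the biproduct case by a routine additivity argument.

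First I would recall that for objects $W_1$ and $W_2$ in $\rep V$, the morphism $I_{W_1,W_2}\colon W_1\tens W_2\rightarrow W_1\tens_V W_2$ is by construction the cokernel morphism of $\mu_{W_1,W_2}^{(1)}-\mu_{W_1,W_2}^{(2)}$, hence an epimorphism in $\sC$, i.e.\ surjective, and that (as observed just above the corollary) $I_{W_1,W_2}$ is a categorical $\rep V$-intertwining operator of type $\binom{W_1\tens_V W_2}{W_1\,W_2}$. Applying Proposition~\ref{prop:TensProdandGgrading} with $W_3=W_1\tens_V W_2$ and $I=I_{W_1,W_2}$ then immediately gives that $W_1\tens_V W_2$ is a $g_1g_2$-twisted $V$-module whenever $W_1$ is $g_1$-twisted and $W_2$ is $g_2$-twisted.

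For the final assertion, let $W$ and $W'$ be objects of $\repGV$, so that $W\cong\bigoplus_i W_i$ and $W'\cong\bigoplus_j W_j'$ for finitely many twisted $V$-modules $W_i$, $W_j'$, each twisted by some element of $G$. Since the tensor product of morphisms in $\rep V$ is bilinear and finite biproducts in the $\mathbb{F}$-additive supercategory $\rep V$ are simultaneously products and coproducts, the functor $\tens_V$ preserves finite biproducts in each variable, so $W\tens_V W'\cong\bigoplus_{i,j}(W_i\tens_V W_j')$. By the first part each summand $W_i\tens_V W_j'$ is a twisted $V$-module, so $W\tens_V W'$ is again a finite biproduct of twisted $V$-modules and hence lies in $\repGV$.

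I do not expect any genuine obstacle here: the entire content is packaged in Proposition~\ref{prop:TensProdandGgrading}, and the only mild point is the distributivity of $\tens_V$ over finite biproducts, which follows formally from bilinearity of the tensor product of morphisms together with the biproduct structure of $\rep V$.
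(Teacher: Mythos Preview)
Your proposal is correct and follows exactly the paper's approach: the paper simply notes that $I_{W_1,W_2}$ is surjective because it is a cokernel morphism, so Proposition~\ref{prop:TensProdandGgrading} applies directly with $W_3=W_1\tens_V W_2$. Your additional biproduct argument for the ``in particular'' clause is a routine unpacking that the paper leaves implicit.
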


Note that the subcategory $\rep^1\,V$ is a monoidal supercategory, and it is braided \cite{Pa, KO, CKM} with braiding isomorphisms characterized by the commutative diagram
\begin{equation*}
 \xymatrixcolsep{4pc}
 \xymatrix{
 W_1\tens W_2 \ar[r]^{\cR_{W_1,W_2}} \ar[d]^{I_{W_1,W_2}} & W_2\tens W_1 \ar[d]^{I_{W_2,W_1}} \\
 W_1\tens_V W_2 \ar[r]^{\cR^V_{W_1,W_2}} & W_2\tens_V W_1 \\
 } .
\end{equation*}
The braiding isomorphisms in $\sC$ do not induce well-defined braiding isomorphisms on the entire category $\rep^G\,V$, but $\repGV$ does admit the structure of a braided
$G$-crossed supercategory, with braiding isomorphisms twisted by an action of $G$ on $\repGV$. We discuss this structure after presenting the definition of braided $G$-crossed supercategory.

An $\mathbb{F}$-additive supercategory $\sC$ decomposes as a direct sum of (not necessarily finitely many) full subcategories $\lbrace\sC_i\rbrace_{i\in I}$, denoted $\sC=\bigoplus_{i\in I} \sC_i$, if
\begin{enumerate}
 \item Every object in $\sC$ is isomorphic to a biproduct $\bigoplus_{i\in I} W_i$ with $W_i$ an object of $\sC_i$ and finitely many $W_i$ non-zero.
 \item If $W_i$ is an object of $\sC_i$ and $W_j$ is an object of $W_j$ for $i\neq j$, then $\mathrm{Hom}_{\sC}(W_i, W_j)=0$.
\end{enumerate}
The second condition implies that the only object in both $\sC_i$ and $\sC_j$ for $i\neq j$ is the zero object. Also, if an object $W$ is isomorphic to both $\bigoplus_{i\in I} W_i$ and $\bigoplus_{i\in I} \widetilde{W}_i$ with $W_i, \widetilde{W}_i$ objects in $\sC_i$, then $W_i\cong\widetilde{W}_i$ for each $i$.

For a (braided) monoidal supercategory $\sC$, let $\mathrm{Aut}^{(\mathrm{br})}_\tens(\sC)$ denote the group of equivalence classes of even (braided) tensor autoequivalences of $\sC$. Such an autoequivalence consists of a triple $(T, \tau, \varphi)$ where $T: \sC\rightarrow\sC$ is an equivalence of categories inducing even linear maps on morphisms, $\tau: T\circ\boxtimes\rightarrow\boxtimes\circ(T\times T)$ is an even natural isomorphism, and $\varphi: T(\vac)\rightarrow\vac$ is an even isomorphism. These isomorphisms must be suitably compatible with the unit, associativity, and braiding isomorphisms (if any) of $\sC$. The composition of two autoequivalences $(T_1,\tau_1,\varphi_1)$ and $(T_2,\tau_2,\varphi_2)$ is the functor $T_1\circ T_2$ together with the isomorphism
\begin{equation*}
 T_1(T_2(\vac))\xrightarrow{T_1(\varphi_2)} T_1(\vac)\xrightarrow{\varphi_1} \vac
\end{equation*}
and natural isomorphims
\begin{equation*}
 T_1(T_2(W_1\tens W_2))\xrightarrow{T_1((\tau_2)_{W_1,W_2})} T_1(T_2(W_1)\tens T_2(W_2))\xrightarrow{(\tau_1)_{T_2(W_1),T_2(W_2)}} T_1(T_2(W_1))\tens T_1(T_2(W_2))
\end{equation*}
for objects $W_1$ and $W_2$ of $\sC$.

Now for $G$ a (not necessarily finite) group, the following is a natural generalization of the notion of $G$-crossed category from \cite{TuraevGcrossed} (see also \cite{KirillovGequivariant, EGNO}) to the supercategory setting: 
\begin{defi}
 A \textit{braided $G$-crossed supercategory} over $\mathbb{F}$ is an $\mathbb{F}$-additive monoidal supercategory $\sC$ with the following structures:
 \begin{enumerate}
  \item $G$-grading: As a category $\sC$ decomposes as a direct sum
  \begin{equation*}
   \sC=\bigoplus_{g\in G} \sC_g
  \end{equation*}
where each $\sC_g$ is a full subcategory, called the \textit{$g$-twisted sector}. The $G$-grading is compatible with the monoidal structure in the sense that:
\begin{enumerate}
 \item The unit object $\vac$ is an object of $\sC_1$.
 \item For objects $W_1$ in $\sC_{g_1}$ and $W_2$ in $\sC_{g_2}$, $W_1\tens W_2$ is an object of $\sC_{g_1 g_2}$.
\end{enumerate}

\item $G$-action: There is a group homomorphism $\varphi: G\rightarrow\mathrm{Aut}_\tens(\sC)$, denoted $g\mapsto(T_g, \tau_g, \varphi_g)$, such that $g, h\in G$ and an object $W$ in $\sC_g$, $T_h(W)$ is an object of $\sC_{h g h^{-1}}$.

\item Braiding isomorphisms: For every $g\in G$, there is an even natural isomorphism $\cR$ from the functor $\tens$ on $\sC_g\times\sC$ to the functor $\tens\circ(T_g\times 1_{\sC_g})\circ\sigma$ satisfying the following properties:
\begin{enumerate}
 \item Compatibility with the $G$-action: For $g\in G$ and $W_1$ in $\sC_g$, the diagram
\begin{equation*}
 \xymatrixcolsep{6pc}
 \xymatrix{
 T_h(W_1\boxtimes W_2) \ar[r]^{T_h(\cR_{W_1,W_2})} \ar[d]^{\tau_{h; W_1, W_2}} & T_h(T_g(W_2)\boxtimes W_1) \ar[d]^{\tau_{h; T_g(W_2),W_1}} \\
 T_h(W_1)\boxtimes T_h(W_2) \ar[r]^{\cR_{T_h(W_1),T_h(W_2)}} & T_{hg}(W_2)\boxtimes T_h(W_1) \\
 }
\end{equation*}
commutes for all $h\in G$ and all objects $W_2$ in $\sC$.

\item The hexagon/heptagon axioms: First, for $g_1, g_2\in G$, $W_1$ in $\sC_{g_1}$, and $W_2$ in $\sC_{g_2}$, the diagram
\begin{equation*}
 \xymatrixcolsep{6pc}
 \xymatrix{
 W_1\tens(W_2\tens W_3) \ar[r]^{\cA_{W_1,W_2,W_3}} \ar[d]^{1_{W_1}\tens\cR_{W_2,W_3}} & (W_1\tens W_2)\tens W_3 \ar[d]^{\cR_{W_1\tens W_2, W_3}} \\
 W_1\tens(T_{g_2}(W_3)\tens W_2) \ar[d]^{\cA_{W_1,T_{g_2}(W_3),W_2}} & T_{g_1 g_2}(W_3)\tens(W_1\tens W_2) \ar[d]^{\cA_{T_{g_1,g_2}(W_3),W_1,W_2}} \\
 (W_1\tens T_{g_2}(W_3))\tens W_2 \ar[r]^{\cR_{W_1,T_{g_2}(W_3)}\tens 1_{W_3}} & (T_{g_1 g_2}(W_3)\tens W_1)\tens W_2 \\
 }
\end{equation*}
commutes for any object $W_3$ in $\sC$; and second, for $g\in G$ and $W_1$ in $\sC_g$, the diagram
\begin{equation*}
 \xymatrixcolsep{3.8pc}
 \xymatrix{
 (W_1\tens W_2)\tens W_3 \ar[r]^(.47){\cR_{W_1,W_2}\tens 1_{W_3}} \ar[d]^{\cA^{-1}_{W_1,W_2,W_3}} & (T_g(W_2)\tens W_1)\tens W_3 \ar[d]^{\cA^{-1}_{T_g(W_2),W_1,W_3}} & \\
 W_1\tens(W_2\tens W_3) \ar[d]^{\cR_{W_1,W_2\tens W_3}} & T_g(W_2)\tens(W_1\tens W_3) \ar[rd]^(.55){1_{T_g(W_2)}\tens\cR_{W_1,W_3}} & \\
 T_g(W_2\tens W_3)\tens W_1 \ar[r]_(.47){\tau_{g; W_2, W_3}\tens 1_{W_1}} & (T_g(W_2)\tens T_g(W_3))\tens W_1 \ar[r]_(0.53){\cA^{-1}_{T_g(W_2),T_g(W_3),W_1}} & T_g(W_2)\tens(T_g(W_3)\tens W_1) \\
 }
\end{equation*}
commutes for all objects $W_2$, $W_3$ in $\sC$.
\end{enumerate}
 \end{enumerate}
\end{defi}

\begin{rema}
In the axioms for the braiding isomorphisms, we have implicitly assumed the homomorphism $\varphi$ is strict in the sense that $\varphi(g_1 g_2) =\varphi(g_1)\varphi(g_2)$. More generally, one could require that $\varphi(g_1)\varphi(g_2)$ and $\varphi(g_1 g_2)$ be naturally isomorphic via an isomorphism with suitable coherence properties, as in \cite{KirillovGequivariant}. One could also impose stronger strictness conditions: in \cite{TuraevGcrossed}, for example, it is assumed that $G$ acts by strict tensor functors, that is, $T_g(W_1\tens W_2)=T_g(W_1)\tens T_g(W_2)$, $T_g(\vac)=\vac$, and $\tau_g$, $\varphi_g$ are identity isomorphisms for all $g\in G$. Here, we have chosen the level of strictness that actually occurs in the examples that we will consider.
\end{rema}

\begin{rema}
 The only modification in the notion of braided $G$-crossed category needed for the supercategory setting is the evenness requirement for $\cR$ and each $T_g$. The naturality of the braiding isomorphism $\cR$ means that for parity homogeneous morphisms $f_1: W_1\rightarrow\widetilde{W}_1$ and $f_2: W_2\rightarrow\widetilde{W}_2$, where $W_1$, $\widetilde{W}_1$ are objects of $\sC_g$,
 \begin{equation*}
  \cR_{\widetilde{W}_1,\widetilde{W}_2}(f_1\tens f_2)=(-1)^{\vert f_1\vert\vert f_2\vert} (T_g(f_2)\tens f_1)\cR_{W_1,W_2}.
 \end{equation*}
As $T_g$ induces even linear maps on morphisms, there is no question of whether $\vert f_2\vert$ or $\vert T_g(f_2)\vert$ should appear in the sign factor here.
\end{rema}

\begin{rema}
 If a braided $G$-crossed supercategory $\sC$ is rigid and $W$ is an object of $\sC_g$, then its dual $W^*$ is an object of $\sC_{g^{-1}}$. Indeed, if $W^*=\bigoplus_{h\in G} W^*_h$, then the restriction of the evaluation $e_W: W^*\tens W\rightarrow\vac$ to $W^*_h\tens W$ is zero unless $h=g^{-1}$. Similarly, the image of the coevaluation $i_W: \vac\rightarrow W\tens W^*$ is contained in $W\tens W^*_{g^{-1}}$, and we find that $(W^*_{g^{-1}}, e_W\vert_{W^*_{g^{-1}}\tens W}, i_W)$ is already a (left) dual of $W$.
\end{rema}


Under mild conditions, the category $\rep^G\,V$ of twisted modules for a superalgebra $V$ in a braided tensor category $\cC$ is a braided $G$-crossed supercategory. This result was stated in \cite[Theorem 4.2 (2)]{KirillovGequivariant}, although a detailed proof was not given.
As a full proof seems to be missing from the literature, we will give one in Appendix \ref{app:BGC_proof}, here only discussing the definitions of the $G$-action and the $G$-crossed braiding.
\begin{theo}\label{Gcrossedfromtwist}
 Let $\cC$ be a braided tensor category with right exact tensoring functors, $V$ a superalgebra in $\cC$, and $G$ an automorphism group of $V$. If $\hom_{\rep V}(W_1,W_2)=0$ whenever $W_1$ is $g_1$-twisted, $W_2$ is $g_2$-twisted, and $g_1\neq g_2$, then $\repGV$ is a braided $G$-crossed supercategory.
\end{theo}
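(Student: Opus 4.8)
The plan is to verify the three structures in the definition of braided $G$-crossed supercategory in turn, using the results already established: the $G$-grading follows from Corollary \ref{g1g2corol} together with the Hom-vanishing hypothesis, while the $G$-action and the $G$-crossed braiding are built directly from the ambient braided tensor structure of $\cC$ (equivalently $\sC$) and the automorphism action of $G$ on $V$. For the grading: by hypothesis $\hom_{\rep V}(W_1,W_2)=0$ when $W_1$ is $g_1$-twisted and $W_2$ is $g_2$-twisted with $g_1\neq g_2$, and since $\repGV$ was defined as the full subcategory of objects isomorphic to finite biproducts of twisted modules, every object of $\repGV$ decomposes as $\bigoplus_{g\in G} W_g$ with $W_g$ in $\repgV$; uniqueness of this decomposition also follows from the Hom-vanishing. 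The compatibility conditions are immediate: $(V,\mu_V)$ is untwisted (i.e.\ in $\rep^1 V$) by supercommutativity of $\mu_V$, and $W_1\tens_V W_2$ is $g_1g_2$-twisted when $W_i$ is $g_i$-twisted, which is exactly Corollary \ref{g1g2corol}.

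Next I would define the $G$-action. For $h\in G$, let $T_h:\repGV\rightarrow\repGV$ be the functor sending $(W,\mu_W)$ to $(W,\mu_W(h^{-1}\tens 1_W))$ and acting as the identity on morphisms; the twisting isomorphisms $\tau_h$ and $\varphi_h$ are all identities, so the required coherences with the unit and associativity isomorphisms of $\sC$ are trivial, and the super interchange law and evenness of $T_h$ are automatic. One checks $T_h$ lands in $\rep V$ (the unit and associativity axioms for $\mu_W(h^{-1}\tens 1_W)$ follow from $h\iota_V=\iota_V$ and $h\mu_V=\mu_V(h\tens h)$ together with naturality of $\cA$), that $T_{h_1}T_{h_2}=T_{h_1 h_2}$, and — using naturality of the monodromy $\cM_{V,W}$ and that $h$ commutes appropriately with the braiding — that $T_h$ carries a $g$-twisted module to an $hgh^{-1}$-twisted module. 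That $T_h$ is a tensor functor reduces to checking it is compatible with the cokernel tensor product, which follows because $\mu_{W_1,W_2}^{(1)}$ and $\mu_{W_1,W_2}^{(2)}$ are each twisted consistently by $h^{-1}$.

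For the $G$-crossed braiding: given $g\in G$, $W_1$ in $\repgV$, and $W_2$ in $\rep V$, I would define $\cR^V_{W_1,W_2}: W_1\tens_V W_2\rightarrow T_g(W_2)\tens_V W_1$ as the $\rep V$-morphism induced, via the universal property of $I_{W_1,W_2}$, by the categorical intertwining operator $I_{T_g(W_2),W_1}\,\cR_{W_1,W_2}$ of type $\binom{T_g(W_2)\tens_V W_1}{W_1\,W_2}$; here the key point is that $\cR_{W_1,W_2}$ composed into $W_2\tens_V W_1$ is \emph{not} $\rep V$-balanced on the nose, but becomes balanced after retwisting the $V$-action on $W_2$ by $g$, precisely because $W_1$ is $g$-twisted (this is the computation $\mu_{W_1}(g\tens 1_{W_1})\cM_{V,W_1}=\mu_{W_1}$ feeding through the definitions of $\mu^{(1)}$ and $\mu^{(2)}$). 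I would then verify invertibility (the inverse is induced analogously from $\cR_{W_2,W_1}^{-1}$ up to the $T_g$-twist, using $T_{g^{-1}}T_g=1$), naturality and evenness (inherited from $\cR$ in $\sC$ and the fact that $T_g$ is even), compatibility with the $G$-action (the square in the definition, which unwinds to naturality of $\cR$ plus $T_h T_g = T_{hgh^{-1}} T_h$), and finally the two hexagon/heptagon axioms, which follow from the hexagon axioms for $\cR$ in $\sC$ after transporting along the cokernel maps $I_{-,-}$ and keeping careful track of where the automorphisms $g_1,g_2$ act. The main obstacle is this last bookkeeping step: each heptagon reduces to a braid-diagram identity in $\sC$, but one must confirm that the automorphism insertions land in exactly the right tensor factors so that the $\sC$-hexagons apply verbatim; this is the content that is deferred to Appendix \ref{app:BGC_proof}, and here I would only indicate the reduction, noting that all the genuinely new input beyond \cite{CKM} is the interplay between the $g$-twisting condition of Definition \ref{def:CatTwistMods} and the retwisting functor $T_g$.
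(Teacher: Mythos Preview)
Your overall strategy matches the paper's: the $G$-grading comes from Corollary \ref{g1g2corol} plus the Hom-vanishing hypothesis, the functor $T_h$ is defined by twisting the action to $\mu_W(h^{-1}\tens 1_W)$ and acting as identity on morphisms, and the braiding $\cR^V_{W_1,W_2}$ is induced from $I_{T_g(W_2),W_1}\,\cR_{W_1,W_2}$ via the universal property of the cokernel. That is exactly the paper's construction.

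There is, however, a genuine error in your treatment of the tensor functor structure on $T_h$: the isomorphisms $\varphi_h$ and $\tau_h$ are \emph{not} identities. For $\varphi_h: T_h(V)\to V$, note that $T_h(V)=(V,\mu_V(h^{-1}\tens 1_V))$ and $V=(V,\mu_V)$ are different objects of $\rep\,V$; the identity $1_V$ is not a morphism between them since $\mu_V(h^{-1}\tens 1_V)\neq \mu_V$ for $h\neq 1$. The correct choice is $\varphi_h=h$, which does intertwine the two actions because $h\mu_V=\mu_V(h\tens h)$. Similarly, $T_h(W_1\tens_V W_2)$ and $T_h(W_1)\tens_V T_h(W_2)$ are cokernels of \emph{different} morphisms (namely $\mu^{(1)}_{W_1,W_2}-\mu^{(2)}_{W_1,W_2}$ versus its precomposition with $h^{-1}\tens 1_{W_1\tens W_2}$), so $\tau_h$ is the canonical isomorphism between these cokernels induced by $I_{T_h(W_1),T_h(W_2)}$, not the identity on the nose.

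Because $\varphi_h$ and $\tau_h$ are nontrivial, the coherence conditions are not vacuous: one must check that $\varphi_h$ is compatible with the unit isomorphisms $l^V$, $r^V$, that $\tau_h$ is compatible with $\cA^V$, that $\tau_{gh}=\tau_{g;T_h(-),T_h(-)}\circ T_g(\tau_h)$, and that the heptagon axiom (which explicitly involves $\tau_{g;W_2,W_3}$) holds. These are precisely the computations carried out in Appendix \ref{app:BGC_proof}. Your reduction of the hexagon/heptagon axioms to the $\sC$-hexagons is the right idea, but without the correct $\tau_g$ inserted, the heptagon diagram you would write down is not the one required by the definition.
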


The condition on $\hom_{\rep V}(W_1, W_2)$ guarantees that $\repGV$ decomposes as a direct sum
\begin{equation*}
 \repGV=\bigoplus_{g\in G} \repgV.
\end{equation*}
For $g\in G$, the superfunctor $T_g: \rep V\rightarrow\rep V$ is defined as follows:
 \begin{itemize}
  \item For an object $(W,\mu_W)$ in $\rep V$, $T_g(W,\mu_W)=(W, \mu_W(g^{-1}\tens 1_W))$.
  \item For a morphism $f: W_1\rightarrow W_2$ in $\rep V$, $T_g(f)=f$.
 \end{itemize}
After showing that $T_g$ sends $\repgV$ to $\rep^{ghg^{-1}}\,V$, we see that $T_g$ restricts to a superfunctor on $\repGV$. The isomorphism
\begin{equation*}
 \varphi_g: T_g(V)\rightarrow V
\end{equation*}
is $g$ itself. Then for objects $W_1$, $W_2$ in $\rep\,V$, the even natural isomorphism 
\begin{equation*}
 \tau_{g; W_1,W_2}: T_g(W_1\tens_V W_2)\rightarrow T_g(W_1)\tens_V T_g(W_2),
\end{equation*}
which as a morphism in $\sC$ is an isomorphism from $W_1\tens_V W_2$ to $T_g(W_1)\tens_V T_g(W_2)$, is characterized by the commutative diagram
\begin{equation*}
 \xymatrixcolsep{4pc}
 \xymatrix{
 W_1\tens W_2 \ar[d]^{I_{W_1,W_2}} \ar[rd]^{I_{T_g(W_1),T_g(W_2)}} & \\
 W_1\tens_V W_2 \ar[r]^(.4){\tau_{g; W_1, W_2}} & T_g(W_1)\tens_V T_g(W_2) \\
 } .
\end{equation*}
Note that $I_{T_g(W_1),T_g(W_2)}$ does not equal $I_{W_1,W_2}$ since it is the cokernel of a different $\sC$-morphism into $W_1\tens W_2$, defined using different actions of $V$ on $W_1\tens W_2$. Finally for $W_1$ a $g$-twisted module and $W_2$ any object of $\repGV$, the braiding isomorphism
\begin{equation*}
 \cR_{W_1,W_2}^V: W_1\tens_V W_2\rightarrow T_g(W_2)\tens_V W_1
\end{equation*}
is characterized by the commutative diagram
\begin{equation*}
 \xymatrixcolsep{4pc}
 \xymatrix{
 W_1\tens W_2 \ar[r]^{\cR_{W_1,W_2}} \ar[d]^{I_{W_1,W_2}} & W_2\tens W_1 \ar[d]^{I_{T_g(W_2),W_1}} \\
 W_1\tens_V W_2 \ar[r]^{\cR^V_{W_1,W_2}} & T_g(W_2)\tens_V W_1 \\
 } .
\end{equation*}

\subsection{\texorpdfstring{$G$}{G}-equivariantization}\label{sec:Gequiv}

Given a braided $G$-crossed supercategory $\sC$ with $G$-action $g\mapsto(T_g,\tau_g,\varphi_g)$ and braiding $\cR$, there is a braided monoidal supercategory $\sC^G$ called the $G$-equivariantization of $\sC$ with objects arising from $G$-invariant objects of $\sC$ (see for example Sections 2.7, 4.15 and 8.24 of \cite{EGNO}). Formally,
\begin{itemize}
 \item The $\mathbb{F}$-additive supercategory $\sC^G$ has objects $(W,\lbrace\varphi_W(g)\rbrace_{g\in G})$ where $W$ is an object of $\sC$ and the $\varphi_W(g): T_g(W)\rightarrow W$ are even isomorphisms in $\sC$ such that the diagram
 \begin{equation*}
 \xymatrixcolsep{4pc}
 \xymatrix{
  T_{gh}(W)=T_g(T_h(W)) \ar[r]^(.65){T_g(\varphi_W(h))} \ar[rd]_{\varphi_W(gh)} & T_g(W) \ar[d]^{\varphi_W(g)} \\
   & W
  }
 \end{equation*}
commutes for $g,h\in G$.

\item Morphisms $f: (W_1,\varphi_{W_1})\rightarrow (W_2,\varphi_{W_2})$ in $\sC^G$ are morphisms $f: W_1\rightarrow W_2$ in $\sC$ such that the diagram
\begin{equation*}
 \xymatrixcolsep{4pc}
 \xymatrix{
 T_g(W_1) \ar[r]^{T_g(f)} \ar[d]^{\varphi_{W_1}(g)} & T_g(W_2) \ar[d]^{\varphi_{W_2}(g)} \\
 W_1 \ar[r]^{f} & W_2
 }
\end{equation*}
commutes for all $g\in G$.
\end{itemize}
For objects $(W_1,\varphi_{W_1})$ and $(W_2,\varphi_{W_2})$ in $\sC^G$, their tensor product is $(W_1\tens W_2,\varphi_{W_1\tens W_2})$ where
\begin{equation*}
 \varphi_{W_1\tens W_2}(g)=(\varphi_{W_1}(g)\tens\varphi_{W_2}(g))\tau_{g;W_1,W_2}
\end{equation*}
for $g\in G$. Then the tensor product (in $\sC$) of two morphisms in $\sC^G$ is also a morphism in $\sC^G$ due to the naturality of $\tau_g$. The unit object of $\sC^G$ is $(\vac,\lbrace\varphi_g\rbrace_{g\in G})$, and the unit and associativity isomorphisms of $\sC$ are morphisms in $\sC^G$ due to the compatibility of the $\varphi_g$ and $\tau_g$ with the unit and associativity isomorphisms.

We can also define a braiding on $\sC^G$ as follows. For an object $W$ in $\sC$, let $\pi_g$ denote projection onto the $g$-graded homogeneous summand $W^g$ and let $q_g$ denote the inclusion of $W^g$ into $W$. Then for objects $(W_1,\varphi_{W_1})$ and $(W_2,\varphi_{W_2})$ of $\sC^G$, we define $\widetilde{\cR}_{W_1,W_2}$ to be the sum over $g\in G$ of the compositions
\begin{equation*}
 W_1\tens W_2\xrightarrow{\pi_g\tens 1_{W_2}} W_1^g\tens W_2\xrightarrow{\cR_{W_1^g,W_2}} T_g(W_2)\tens W_1^g\xrightarrow{\varphi_{W_2}(g)\tens q_g} W_2\tens W_1.
\end{equation*}
Showing that $\widetilde{\cR}_{W_1,W_2}$ is a morphism in $\sC^G$ requires the compatibility of $\cR$ with the $G$-action on $\sC$, and the hexagon axioms for $\widetilde{\cR}$ follow using the hexagon/heptagon axioms for $\cR$.

When our braided $G$-crossed supercategory is the category of twisted modules for a superalgebra $V$ in a braided tensor category $\cC$, an object of the $G$-equivariantization is an object $(W,\mu_W)$ of $\repGV$ equipped with a representation $\varphi_W: G\rightarrow\mathrm{Aut}_{\sC}(W)$ such that
\begin{equation*}
 \varphi_W(g)\mu_W=\mu_W(g\tens\varphi_W(g))
\end{equation*}
for all $g\in G$. Morphisms $f: W_1\rightarrow W_2$ in the $G$-equivariantization are morphisms in $\repGV$ that commute with the representations of $G$ on $W_1$ and $W_2$.

For the rest of this section, we will assume that $\repGV$ equals the full category $\rep\,V$; for conditions guaranteeing this occurs, see Assumption \ref{mainassum} in the next section. In this case, the induction functor
 $\cF: \sC\rightarrow\rep\,V$ is actually a functor into the $G$-equivariantization, which we will denote by $\mathcal{S}(\rep\,V)^G$ because we will soon use the notation $(\rep\,V)^G$ for a certain subcategory. Indeed, for an object $W$ in $\sC$, $G$ acts on $\cF(W)=V\tens W$ by $\varphi_{\cF(W)}(g)=g\tens 1_W$. This representation satisfies
\begin{equation*}
 \varphi_{\cF(W)}(g)\mu_{\cF(W)}=\mu_{\cF(W)}(g\tens\varphi_{\cF(W)}(g))
\end{equation*}
because $g$ is an automorphism of $V$. Moreover, if $f: W_1\rightarrow W_2$ is a morphism in $\sC$, then $\cF(f)=1_V\tens f$ is a morphism in $\mathcal{S}(\rep\,V)^G$ because
\begin{equation*}
 (g\tens 1_{W_2})(1_V\tens f)=(1_V\tens f)(g\tens 1_{W_1})
\end{equation*}
(since $g$ is even, there is no sign factor). 

The following theorem can be found in \cite{KirillovOrbifoldII, Mu2}, but we include the proof to emphasize that it does not require rigidity or semisimplicity:
\begin{theo}\label{thm:F_braided}
 If $\repGV=\rep\,V$, then induction $\cF: \sC\rightarrow\mathcal{S}(\rep\,V)^G$ is a braided monoidal superfunctor.
\end{theo}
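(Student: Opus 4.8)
The plan is to leverage two facts already in place: from Section~\ref{subsec:Superalgebras}, $\cF\colon\sC\to\rep\,V$ is a tensor superfunctor with structure isomorphisms $r_V\colon\cF(\vac)\to V$ and $f_{W_1,W_2}\colon\cF(W_1\tens W_2)\to\cF(W_1)\tens_V\cF(W_2)$; and, as observed just above the theorem, $\cF$ already lands in $\mathcal{S}(\rep\,V)^G$ on objects and morphisms via the $G$-action $\varphi_{\cF(W)}(g)=g\tens 1_W$. So it remains to show: (i) that $r_V$ and $f_{W_1,W_2}$ are morphisms in $\mathcal{S}(\rep\,V)^G$, i.e.\ intertwine the relevant $G$-actions, making $\cF$ a \emph{monoidal} superfunctor into the equivariantization; and (ii) that $\cF$ carries the braiding $\cR$ of $\sC$ to the braiding $\widetilde{\cR}$ of $\mathcal{S}(\rep\,V)^G$.

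For (i): $G$-equivariance of $r_V$ is just naturality of the right unit isomorphism applied to the even morphism $g$, which is precisely the structure map making $(V,\mu_V)$ the unit of $\mathcal{S}(\rep\,V)^G$. For $f_{W_1,W_2}$ one must verify
\[
\bigl((g\tens 1_{W_1})\tens_V(g\tens 1_{W_2})\bigr)\,\tau_{g;\,\cF(W_1),\cF(W_2)}\circ f_{W_1,W_2}=f_{W_1,W_2}\circ(g\tens 1_{W_1\tens W_2}).
\]
I would substitute the explicit composition defining $f_{W_1,W_2}$, use the characterization of $\tau_g$ from Section~\ref{sec:TwistMods} together with the universal property of the cokernel morphisms $I$, and reduce the claim to an identity of $\sC$-morphisms out of $V\tens(W_1\tens W_2)$; this then follows from naturality of $\cA$ and $l$ and from the automorphism identities $g\mu_V=\mu_V(g\tens g)$ and $g\iota_V=\iota_V$.

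Part (ii) is the heart of the matter: one needs
\[
\widetilde{\cR}_{\cF(W_1),\cF(W_2)}\circ f_{W_1,W_2}=f_{W_2,W_1}\circ\cF(\cR_{W_1,W_2}).
\]
Here the hypothesis $\repGV=\rep\,V$ is essential, since $\widetilde{\cR}$ is defined only via the $G$-graded decomposition of objects of $\rep\,V$ into twisted summands, and $\cF(W_1)=V\tens W_1$ is in general genuinely twisted, so the sum over $G$ in the definition of $\widetilde{\cR}$ is nontrivial. I would decompose $\cF(W_1)$ into its $g$-twisted summands, substitute into $\widetilde{\cR}_{\cF(W_1),\cF(W_2)}=\sum_{g\in G}(\varphi_{\cF(W_2)}(g)\tens_V q_g)\,\cR^V_{\cF(W_1)^g,\cF(W_2)}\,(\pi_g\tens_V 1_{\cF(W_2)})$, and use the characterization of the $G$-crossed braiding $\cR^V$ (via the $I$'s and the underlying braiding $\cR$ of $\sC$) to rewrite $\widetilde{\cR}_{\cF(W_1),\cF(W_2)}$ as the $\rep\,V$-morphism induced by a categorical $\rep\,V$-intertwining operator assembled from $\cR_{V\tens W_1,\,V\tens W_2}$ and the maps $g\tens 1$; the sum over $g$ then collapses because $\sum_g q_g\pi_g=1$ and $g\iota_V=\iota_V$. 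Feeding this and the formulas for $f_{W_1,W_2},f_{W_2,W_1}$ into the square above, and applying the universal property of the $I$'s once more, reduces everything to an identity of $\sC$-morphisms out of $V\tens(W_1\tens W_2)$, provable from naturality of $\cA$, $\cR$, $l$, the hexagon axioms in $\cC$, and $g\mu_V=\mu_V(g\tens g)$, $g\iota_V=\iota_V$.

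The main obstacle I anticipate is bookkeeping rather than conceptual: correctly unwinding $\widetilde{\cR}$ on the \emph{induced} objects $\cF(W_i)$ — tracking the twisted decomposition, the projections $\pi_g,q_g$, and the distinction between $I_{W_1,W_2}$ and $I_{T_g(W_1),T_g(W_2)}$ — and then repeatedly invoking the universal property of the cokernel tensor product to push all identities down to $\sC$. As the statement emphasizes, rigidity and semisimplicity of $\cC$ are never used: only right exactness of the tensoring functors (so that $1_V\tens I$ stays surjective) and the hypothesis $\repGV=\rep\,V$ (so that $\widetilde{\cR}$ exists) enter.
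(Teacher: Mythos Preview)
Your proposal is correct and follows essentially the same approach as the paper's proof: verify $G$-equivariance of $r_V$ by naturality and of $f_{W_1,W_2}$ via the defining property of $\tau_g$ and $g\iota_V=\iota_V$, then check braiding compatibility by expanding $\widetilde{\cR}_{\cF(W_1),\cF(W_2)}$ as a sum over $g$, using naturality of $\cR$ applied to $g$ together with $g\iota_V=\iota_V$ to eliminate the $g$'s, collapsing the sum via $\sum_g q_g\pi_g=1$, and finishing with the hexagon axiom and unit/triangle identities in $\sC$. The paper carries out precisely this computation in full detail; your identification of where the hypothesis $\repGV=\rep\,V$ enters and why rigidity/semisimplicity are irrelevant is also accurate.
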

\begin{proof}
 We need to check that $r_V:\cF(\vac)\rightarrow V$ and $f_{W_1,W_2}: \cF(W_1\tens W_2)\rightarrow\cF(W_1)\tens_V\cF(W_2)$ are morphisms in $\mathcal{S}(\rep\,V)^G$. These isomorphisms will be compatible with the unit and associativity isomorphisms in $\sC$ and $\mathcal{S}(\rep\,V)^G$ because the unit and associativity isomorphisms in $\mathcal{S}(\rep\,V)^G$ are the same as those in $\rep\,V$.
 
 The naturality of the right unit isomorphisms implies $r_V$ is an $\mathcal{S}(\rep\,V)^G$-morphism. For $f_{W_1,W_2}$, we need to show
 \begin{equation*}
  \varphi_{\cF(W_1)\tens_V\cF(W_2)}(g) f_{W_1,W_2}=f_{W_1,W_2}\varphi_{\cF(W_1\tens W_2)}(g)
 \end{equation*}
for $g\in G$. The left side is the composition
\begin{align}\label{Fmonoidal_calc}
 V\tens(W_1\tens & W_2)\xrightarrow{\cA_{V,W_1,W_2}} (V\tens W_1)\tens W_2\xrightarrow{1_{V\tens W_1}\tens(\iota_V\tens 1_{W_2})l_{W_2}^{-1}} (V\tens W_1)\tens(V\tens W_2) \nonumber\\
 & \xrightarrow{I_{\cF(W_1),\cF(W_2)}} (V\tens W_1)\tens_V (V\tens W_2)\xrightarrow{\tau_{g;\cF(W_1),\cF(W_2)}} T_g(V\tens W_1)\tens_V T_g(V\tens W_2)\nonumber\\
 &\xrightarrow{\varphi_{\cF(W_1)}(g)\tens_V\varphi_{\cF(W_2)}(g)} (V\tens W_1)\tens_V(V\tens W_2).
\end{align}
Using the definitions,
\begin{align*}
 \big(\varphi_{\cF(W_1)}(g)\tens_V\varphi_{\cF(W_2)}(g)\big) & \tau_{g;\cF(W_1),\cF(W_2)} I_{\cF(W_1),\cF(W_2)}\nonumber\\
 &= \big(\varphi_{\cF(W_1)}(g)\tens_V\varphi_{\cF(W_2)}(g)\big) I_{T_g(\cF(W_1)),T_g(\cF(W_2))}\nonumber\\
& = I_{\cF(W_1),\cF(W_2)}\big(\varphi_{\cF(W_1)}(g)\tens\varphi_{\cF(W_2)}(g)\big)\nonumber\\
& = I_{\cF(W_1),\cF(W_2)}\big((g\tens 1_{W_1})\tens(g\tens 1_{W_2})\big).
\end{align*}
Inserting this back into \eqref{Fmonoidal_calc}, using $g\iota_V=\iota_V$, and applying naturality of associativity,
we get
\begin{align*}
 V\tens(W_1\tens & W_2)\xrightarrow{g\tens 1_{W_1\tens W_2}} V\tens(W_1\tens W_2)\xrightarrow{\cA_{V,W_1,W_2}} (V\tens W_1)\tens W_2\nonumber\\
 & \xrightarrow{1_{V\tens W_1}\tens(\iota_V\otimes 1_{W_2})l_{W_2}^{-1}} (V\tens W_1)\tens(V\tens W_2)\xrightarrow{I_{\cF(W_1),\cF(W_2)}} (V\tens W_1)\tens_V(V\tens W_2),
\end{align*}
which is $f_{W_1,W_2}\varphi_{\cF(W_1\tens W_2)}(g)$.

We also need to verify that the natural isomorphism $f$ is compatible with the braiding isomorphisms $\cR$ in $\sC$ and $\widetilde{R}^V$ in $\mathcal{S}(\rep\,V)^G$ in the sense that the diagram
\begin{equation*}
 \xymatrixcolsep{4pc}
 \xymatrix{
 \cF(W_1\tens W_2) \ar[r]^{\cF(\cR_{W_1,W_2})} \ar[d]^{f_{W_1,W_2}} & \cF(W_2\tens W_1) \ar[d]^{f_{W_2,W_1}} \\
 \cF(W_1)\tens_V\cF(W_2) \ar[r]^{\widetilde{R}^V_{\cF(W_1),\cF(W_2)}} & \cF(W_2)\tens_V\cF(W_1) \\
 }
\end{equation*}
commutes. The lower left composition here is the sum over $g\in G$ of the compositions
\begin{align*}
 V\tens(W_1 & \tens W_2)\xrightarrow{\cA_{V,W_1,W_2}} (V\tens W_1)\tens W_2 \xrightarrow{1_{V\tens W_1}\tens(\iota_V\tens 1_{W_2})l_{W_2}^{-1}} (V\tens W_1)\tens (V\tens W_2)\nonumber\\
 &\xrightarrow{I_{\cF(W_1),\cF(W_2)}} (V\tens W_1)\tens_V(V\tens W_2)\xrightarrow{\pi_g\tens_V 1_{V\tens W_2}} (V\tens W_1)^g\tens_V(V\tens W_2)\nonumber\\
 & \xrightarrow{\cR^V_{(V\tens W_1)^g,V\tens W_2}} T_g(V\tens W_2)\tens_V (V\tens W_1)^g\xrightarrow{\varphi_{\cF(W_2)}(g)\tens_V q_g} (V\tens W_2)\tens_V(V\tens W_1).
\end{align*}
By the definitions of the tensor product of morphisms in $\rep\,V$, the braiding $\cR^V$, and $\varphi_{\cF(W_2)}(g)$, this is the sum over $g\in G$ of
\begin{align*}
V\tens(W_1 & \tens W_2)\xrightarrow{\cA_{V,W_1,W_2}} (V\tens W_1)\tens W_2 \xrightarrow{1_{V\tens W_1}\tens(\iota_V\tens 1_{W_2})l_{W_2}^{-1}} (V\tens W_1)\tens (V\tens W_2)\nonumber\\
&\xrightarrow{\pi_g\tens 1_{V\tens W_2}} (V\tens W_1)^g\tens(V\tens W_2)\xrightarrow{\cR_{(V\tens W_1)^g,V\tens W_2}} (V\tens W_2)\tens(V\tens W_1)^g\nonumber\\
& \xrightarrow{(g\tens 1_{W_2})\tens q_g} (V\tens W_2)\tens(V\tens W_1)\xrightarrow{I_{\cF(W_2),\cF(W_1)}} (V\tens W_2)\tens_V(V\tens W_1).
\end{align*}
We apply naturality of the braiding to $g$ and use $g\iota_V=\iota_V$ to eliminate $g$. We then apply naturality of the braiding to $q_g$ and get $\sum_{g\in G} q_g \pi_g = 1_{V\tens W_1}$. Thus everything simplifies to
\begin{align*}
 V\tens(W_1 \tens W_2)\xrightarrow{\cA_{V,W_1,W_2}} & (V\tens W_1)\tens W_2 \xrightarrow{1_{V\tens W_1}\tens(\iota_V\tens 1_{W_2})l_{W_2}^{-1}} (V\tens W_1)\tens (V\tens W_2)\nonumber\\
&\xrightarrow{\cR_{V\tens W_1,V\tens W_2}} (V\tens W_2)\tens(V\tens W_1)\xrightarrow{I_{\cF(W_2),\cF(W_1)}} (V\tens W_2)\tens_V(V\tens W_1).
\end{align*}
Now use the hexagon axiom and the unit property of $V$ to rewrite as
\begin{align}\label{Fbraided_calc}
 V & \tens(W_1 \tens W_2)\xrightarrow{\cA_{V,W_1,W_2}} (V\tens W_1)\tens W_2 \xrightarrow{1_{V\tens W_1}\tens(\iota_V\tens 1_{W_2})l_{W_2}^{-1}} (V\tens W_1)\tens (V\tens W_2)\nonumber\\
 & \xrightarrow{\cA_{V,W_1,V\tens W_2}^{-1}} V\tens(W_1\tens(V\tens W_2)) \xrightarrow{1_V\tens\cR_{W_1,V\tens W_2}} V\tens((V\tens W_2)\tens W_1) \xrightarrow{\cA_{V,V\tens W_2,W_1}} (V\tens(V\tens W_2))\tens W_1\nonumber\\
 & \xrightarrow{\cR_{V,V\tens W_2}\tens 1_{W_1}} ((V\tens W_2)\tens V)\tens W_1 \xrightarrow{\cA_{V\tens W_2,V,W_1}^{-1}} (V\tens W_2)\tens(V\tens W_1)\nonumber\\
& \xrightarrow{1_{V\tens W_2}\tens(r_V^{-1}\tens 1_{W_1})} (V\tens W_2)\tens((V\tens\vac)\tens W_1) \xrightarrow{1_{V\tens W_2}\tens((1_V\tens\iota_V)\tens 1_{W_1})} (V\tens W_2)\tens((V\tens V)\tens W_1)\nonumber\\
&\xrightarrow{1_{V\tens W_2}\tens(\mu_V\tens 1_{W_1})} (V\tens W_2)\tens(V\tens W_1) \xrightarrow{I_{\cF(W_2),\cF(W_1)}} (V\tens W_2)\tens_V(V\tens W_1).
\end{align}
Next use the triangle axiom and naturality of associativity to calculate
\begin{align*}
 (\mu_V\tens 1_{W_1})&\big((1_V\tens\iota_V)\tens 1_{W_1}\big)(r_V^{-1}\tens 1_{W_1})\nonumber\\
 & = (\mu_V\tens 1_{W_1})\big((1_V\tens\iota_V)\tens 1_{W_1}\big)\cA_{V,\vac,W_1}(1_V\tens l_{W_1}^{-1})\nonumber\\
 & =(\mu_V\tens 1_{W_1})\cA_{V,V,W_1}\big(1_V\tens(\iota_V\tens 1_{W_1})\big)(1_V\tens l_{W_1}^{-1})\nonumber\\
 & = \mu_{\cF(W_1)}\big(1_V\tens(\iota_V\tens 1_{W_1}) l_{W_1}^{-1}\big).
\end{align*}
We insert this back into \eqref{Fbraided_calc} and use properties of natural isomorphisms to get
\begin{align*}
 V & \tens(W_1\tens W_2)\xrightarrow{1_V\tens(1_{W_1}\tens(\iota_V\tens 1_{W_2})l_{W_2}^{-1})} V\tens(W_1\tens(V\tens W_2))\xrightarrow{1_V\tens\cR_{W_1,V\tens W_2}} V\tens((V\tens W_2)\tens W_1)\nonumber\\
 & \xrightarrow{1_V\tens(1_{V\tens W_2}\tens(\iota_V\tens 1_{W_1})l_{W_1}^{-1})} V\tens((V\tens W_2)\tens(V\tens W_1)) \xrightarrow{\cA_{V,\cF(W_2),\cF(W_1)}} (V\tens(V\tens W_2))\tens(V\tens W_1)\nonumber\\
 & \xrightarrow{\cR_{V,\cF(W_2)}\tens 1_{\cF(W_1)}} ((V\tens W_2)\tens V)\tens(V\tens W_1)\xrightarrow{\cA_{\cF(W_2),V,\cF(W_1)}^{-1}} (V\tens W_2)\tens(V\tens(V\tens W_1))\nonumber\\
 &\xrightarrow{1_{\cF(W_2)}\tens\mu_{\cF(W_1)}} (V\tens W_2)\tens(V\tens W_1)\xrightarrow{I_{\cF(W_2),\cF(W_1)}} (V\tens W_2)\tens_V(V\tens W_1).
\end{align*}
Since $I_{\cF(W_2),\cF(W_1)}$ is an intertwining, we can replace the fifth through seventh arrows above with $\mu_{\cF(W_2)}\tens 1_{\cF(W_1)}$. Then applying naturality of braiding and associativity to $(\iota_V\tens 1_{W_2})l_{W_2}^{-1}$, we get
\begin{align*}
 V & \tens(W_1\tens W_2)\xrightarrow{1_V\tens\cR_{W_1,W_2}} V\tens(W_2\tens W_1) \xrightarrow{1_V\tens(1_{W_2}\tens(\iota_V\tens 1_{W_1})l_{W_1}^{-1})} V\tens(W_2\tens(V\tens W_1))\nonumber\\
 & \xrightarrow{\cA_{V,W_2,V\tens W_1}} (V\tens W_2)\tens(V\tens W_1) \xrightarrow{(1_V\tens(\iota_V\tens 1_{W_2})l_{W_2}^{-1})\tens 1_{V\tens W_1}} (V\tens(V\tens W_2))\tens(V\tens W_1)\nonumber\\
 & \xrightarrow{\cA_{V,V,W_2}\tens 1_{V\tens W_1}} ((V\tens V)\tens W_2)\tens(V\tens W_1) \xrightarrow{(\mu_V\tens 1_{W_2})\tens 1_{V\tens W_1}} (V\tens W_2)\tens(V\tens W_1)\nonumber\\
 & \xrightarrow{I_{\cF(W_2),\cF(W_1)}} (V\tens W_2)\tens_V(V\tens W_1).
\end{align*}
Finally, we use naturality of associativity, the triangle axiom, and the right unit property of $V$ to conclude
\begin{align*}
 (\mu_V\tens 1_{W_2}) & \cA_{V,V,W_2} \big(1_V\tens(\iota_V\tens 1_{W_2})\big)(1_V\tens l_{W_2}^{-1})\nonumber\\
 &= (\mu_V\tens 1_{W_2})\big((1_V\tens\iota_V)\tens 1_{W_2}\big)\cA_{V,\vac,W_2}(1_V\tens l_{W_2}^{-1})\nonumber\\
&= (\mu_V\tens 1_{W_2})\big((1_V\tens\iota_V)\tens 1_{W_2}\big)(r_V^{-1}\tens 1_{W_2}) = 1_{V\tens W_2}.
\end{align*}
This together with naturality of associativity yields the composition
\begin{align*}
 V & \tens(W_1\tens W_2)\xrightarrow{1_V\tens\cR_{W_1,W_2}} V\tens(W_2\tens W_1)\xrightarrow{\cA_{V,W_2,W_1}} (V\tens W_2)\tens W_1\nonumber\\
 &\xrightarrow{1_{V\tens W_2}\tens(\iota_V\tens 1_{W_1})l_{W_1}^{-1}} (V\tens W_2)\tens(V\tens W_1) \xrightarrow{I_{\cF(W_2),\cF(W_1)}} (V\tens W_2)\tens_V(V\tens W_1),
\end{align*}
which is $f_{W_2,W_1}\cF(\cR_{W_1,W_2})$ as required.
\end{proof}

Since we mainly want to understand the original braided tensor category $\cC$ rather than the auxiliary supercategory $\sC$, we would like induction to be a functor from $\cC$ (embedded into $\sC$ via $W\mapsto(W,0)$) into a suitable braided tensor subcategory of $\mathcal{S}(\rep\,V)^G$. For this, we need $G$ to include the parity automorphism $P_V=1_{V^\even}\oplus(-1_{V^\odd})$ of $V$. In this case, define $(\rep\,V)^{G}$ to be the full subcategory of $\mathcal{S}(\rep\,V)^G$ whose objects $(W,\mu_W,\varphi_W)$ satisfy
\begin{equation*}
 \varphi_W(P_V)=P_W.
\end{equation*}
The category $(\rep\,V)^{G}$ is not a supercategory in any meaningful sense because its morphisms $f: W_1\rightarrow W_2$ satisfy
\begin{equation*}
 P_{W_2} f= f P_{W_1}
\end{equation*}
and hence are all even. Also, induction sends $\cC$ to $(\rep\,V)^{G}$ because if $W$ is an object of $\cC$, then
$ \cF(W)=(V^\even\tens W, V^\odd\tens W)$
as an object of $\sC$, and hence
\begin{equation*}
 P_{\cF(W)} = P_V\tens 1_W =\varphi_{\cF(W)}(P_V).
\end{equation*}
Now we have:
\begin{theo}\label{thm:F_even_braided}
Assume $G$ contains $P_V$ and $\repGV=\rep\,V$. Then $(\rep\,V)^{G}$ is a braided tensor category and induction $\cF: \cC\rightarrow(\rep\,V)^{G}$ is a braided tensor functor.
\end{theo}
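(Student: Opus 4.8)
The plan is to deduce this from Theorem \ref{thm:F_braided} together with some bookkeeping about the parity involution $P_V$. Since $(\rep\,V)^{G}$ is by definition the full subcategory of $\mathcal{S}(\rep\,V)^G$ cut out by the condition $\varphi_W(P_V)=P_W$, and since $\mathcal{S}(\rep\,V)^G$ is already a braided monoidal supercategory with $\cF:\sC\to\mathcal{S}(\rep\,V)^G$ a braided monoidal superfunctor, the only real content is to show that $(\rep\,V)^{G}$ is closed under the monoidal structure; everything else is formal. The one elementary fact I would record first is the identity $P_{W_1}\tens P_{W_2}=P_{W_1\tens W_2}$ for objects $W_1,W_2$ of $\sC$: this is immediate from the explicit description of $\tens$ on $\sC$, since both sides act as $+1$ on $(W_1^\even\tens W_2^\even)\oplus(W_1^\odd\tens W_2^\odd)$ and as $-1$ on $(W_1^\even\tens W_2^\odd)\oplus(W_1^\odd\tens W_2^\even)$ (there is no sign in $P_{W_1}\tens P_{W_2}$ because $P_{W_2}$ is even). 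Combined with $\varphi_V(P_V)=P_V$ for the unit object $(V,\mu_V)$ of $\rep\,V$ and $\varphi_{W_1\oplus W_2}(P_V)=\varphi_{W_1}(P_V)\oplus\varphi_{W_2}(P_V)$, this handles the unit object and biproducts, and (via $\cF(W)=(V^\even\tens W, V^\odd\tens W)$) it recovers the already-noted fact that $\cF$ maps $\cC$ into $(\rep\,V)^{G}$.

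For closure under $\tens_V$, given $W_1,W_2$ in $(\rep\,V)^{G}$ I would compute $\varphi_{W_1\tens_V W_2}(P_V)$ by precomposing with the epimorphism $I_{W_1,W_2}$. Using the formula $\varphi_{W_1\tens_V W_2}(g)=(\varphi_{W_1}(g)\tens_V\varphi_{W_2}(g))\tau_{g;W_1,W_2}$, the characterizing property $\tau_{g;W_1,W_2}I_{W_1,W_2}=I_{T_g(W_1),T_g(W_2)}$, and the definition of $\tens_V$ on morphisms, one obtains $\varphi_{W_1\tens_V W_2}(g)\,I_{W_1,W_2}=I_{W_1,W_2}\,(\varphi_{W_1}(g)\tens\varphi_{W_2}(g))$ for every $g\in G$. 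Taking $g=P_V$, using $\varphi_{W_i}(P_V)=P_{W_i}$ together with the identity above, and using that the even morphism $I_{W_1,W_2}$ intertwines parity involutions, the right-hand side becomes $P_{W_1\tens_V W_2}\,I_{W_1,W_2}$; since $I_{W_1,W_2}$ is epi this gives $\varphi_{W_1\tens_V W_2}(P_V)=P_{W_1\tens_V W_2}$. Hence $(\rep\,V)^{G}$ contains the unit, is closed under biproducts and under $\tens_V$, and, being a full subcategory of $\mathcal{S}(\rep\,V)^G$, inherits the associativity, unit, and braiding isomorphisms; it is therefore a braided monoidal category. That it is a genuine category (all morphisms even) is exactly the observation $P_{W_2}f=fP_{W_1}$ already recorded in the text, and one checks that kernels and cokernels — which exist in $\underline{\rep\,V}$ since all morphisms are even — carry $G$-actions satisfying $\varphi(P_V)=P$, so $(\rep\,V)^{G}$ is in fact an $\mathbb{F}$-linear abelian braided tensor category.

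For the last assertion, note that $\cC$ embeds into $\sC$ as a full braided tensor subcategory via $W\mapsto(W,0)$, and $(\rep\,V)^{G}$ embeds into $\mathcal{S}(\rep\,V)^G$ as a full braided monoidal subcategory. Since $\cF(\cC)\subseteq(\rep\,V)^{G}$ and, by closure of $(\rep\,V)^{G}$ under $\tens_V$, the unit and tensor structure isomorphisms $r_V:\cF(\vac)\to V$ and $f_{W_1,W_2}:\cF(W_1\tens W_2)\to\cF(W_1)\tens_V\cF(W_2)$ of Theorem \ref{thm:F_braided} have both source and target in $(\rep\,V)^{G}$, they are morphisms there by fullness; likewise the compatibility of $\cF$ with the braidings (the last diagram in the proof of Theorem \ref{thm:F_braided}) restricts to $(\rep\,V)^{G}$. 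Thus $\cF:\cC\to(\rep\,V)^{G}$ with these structure isomorphisms is a braided monoidal functor, and it is a functor between ordinary braided tensor categories rather than supercategories since all the objects, morphisms, and structure isomorphisms in sight are even. The only ingredient beyond Theorem \ref{thm:F_braided} is the closure of $(\rep\,V)^{G}$ under $\tens_V$, so that short verification is where I expect the (small amount of) care to be needed.
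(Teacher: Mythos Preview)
Your proposal is correct and follows essentially the same approach as the paper: the key computation showing closure of $(\rep\,V)^{G}$ under $\tens_V$ via $\varphi_{W_1\tens_V W_2}(P_V)\,I_{W_1,W_2}=P_{W_1\tens_V W_2}\,I_{W_1,W_2}$ is exactly what the paper does, and the braided tensor functor assertion is deduced from Theorem~\ref{thm:F_braided} in the same way. The only place the paper supplies noticeably more detail is the abelian category verification: beyond checking that kernels and cokernels inherit the $G$-action with $\varphi(P_V)=P$, the paper explicitly argues that an epimorphism $f$ in $(\rep\,V)^{G}$ is already an epimorphism in $\rep\,V$ (by equipping its $\rep\,V$-cokernel with a $(\rep\,V)^{G}$-structure and then using that $f$ is epi in the smaller category), which is what is needed to conclude that $f$ is the cokernel of its kernel. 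Your phrase ``one checks'' covers this, but that step is the one piece of the abelian axioms that is not entirely automatic.
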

\begin{proof}
 To show that $(\rep\,V)^{G}$ is a braided monoidal subcategory of $\mathcal{S}(\rep\,V)^G$, we just need to show that it is closed under tensor products. Thus we show that if $\varphi_{W_1}(P_V)=P_{W_1}$ and $\varphi_{W_2}(P_V)=P_{W_2}$ for objects $W_1$, $W_2$ in $\mathcal{S}(\rep\,V)^G$, then $\varphi_{W_1\tens_V W_2}(P_V)=P_{W_1\tens_V W_2}$ as well. Using definitions,
 \begin{align*}
  \varphi_{W_1\tens_V W_2}(P_V) I_{W_1,W_2} & = (\varphi_{W_1}(P_V)\tens_V\varphi_{W_2}(P_V)) \tau_{P_V; W_1,W_2} I_{W_1,W_2}\nonumber\\
  & = (P_{W_1}\fus{V} P_{W_2}) I_{T_{P_V}(W_1), T_{P_V}(W_2)}\nonumber\\
  & = I_{W_1,W_2}(P_{W_1}\tens P_{W_2})\nonumber\\
  & = I_{W_1,W_2} P_{W_1\tens W_2}\nonumber\\
  & = P_{W_1\fus{V} W_2} I_{W_1,W_2},
 \end{align*}
where the last step uses the evenness of $I_{W_1,W_2}$. Since $I_{W_1,W_2}$ is surjective, $\varphi_{W_1\tens_V W_2}(P_V)=P_{W_1\tens_V W_2}$.

The proof that $(\rep\,V)^{G}$ is abelian, and thus a braided tensor category, is similar to the proof of \cite[Theorem 2.9]{CKM}, so we just indicate how to show $(\rep\,V)^{G}$ is closed under cokernels and why epimorphisms in $(\rep\,V)^G$ are cokernels of their kernels. A morphism $f: W_1\rightarrow W_2$ in $(\rep\,V)^{G}$ is in particular an even morphism in $\rep\,V$, so \cite[Proposition 2.32]{CKM} shows $f$ has a cokernel $(C,\mu_C)$ in $\rep\,V$ with even cokernel morphism $c: W_2\rightarrow C$. Then for $g\in G$, define $\varphi_C(g): C\rightarrow C$ to be the unique $\sC$-morphism such that the diagram
\begin{equation*}
 \xymatrixcolsep{4pc}
 \xymatrix{
 W_1 \ar[r]^{f} & W_2 \ar[r]^{\varphi_{W_2}(g)} \ar[d]^{c} & W_2 \ar[d]^{c} \\
  & C \ar[r]^{\varphi_{C}(g)} & C \\
 }
\end{equation*}
commutes; $\varphi_C(g)$ exists because $f$ is a morphism in $(\rep\,V)^{G}$:
\begin{equation*}
 c\varphi_{W_2}(g) f =c f\varphi_{W_1}(g)= 0.
\end{equation*}
To show that $\varphi_C(g)$ is compatible with $\mu_C$ and that $\varphi_C$ is a representation of $G$, one uses the corresponding properties of $\varphi_{W_2}$ and the surjectivity of $c$ and $1_V\tens c$. Showing that $(C,\mu_C,\varphi_C)$ is a cokernel of $f$ in $(\rep\,V)^{G}$ uses the cokernel property $(C,\mu_C)$ in $\rep\,V$, the definition of $\varphi_C$, and the surjectivity of $c$.


Now suppose $f: W_1\twoheadrightarrow W_2$ is an epimorphism in $(\rep\,V)^G$. We claim that $f$ is also an epimorphism in $\rep\,V$. Indeed, for $h: W_2\rightarrow X$ a morphism in $\rep\,V$ such that $hf=0$ and $(C,c)$ a cokernel of $f$ in $\rep\,V$ with $c: W_2\rightarrow C$ even, there is a unique $\widetilde{h}: C\rightarrow X$ such that $h=\widetilde{h}c$. But we have seen that $C$ has a unique structure of $(\rep\,V)^{G}$-object for which $c$ is a morphism in $(\rep\,V)^{G}$. So $cf=0$ implies $c=0$ as $f$
is an epimorphism in $(\rep\,V)^{G}$. Then $h=\widetilde{h}c =0$ as well, showing $f$ is an epimorphism in $\rep\,V$.

Now that $f$ is an even epimorphism in $\rep\,V$, \cite[Proposition 2.32]{CKM} shows that $f$ is the cokernel of its kernel morphism $k: (K,\mu_K)\rightarrow(W_1,\mu_{W_1})$ in $\rep\,V$. But $k$ is also a morphism in $(\rep\,V)^{G}$, and then one shows that $(W_2,f)$ satisfies the universal property of the cokernel of $k$ in $(\rep\,V)^G$ by applying the cokernel property of $(W_2,f)$ in $\rep\,V$, the fact that $f$ is a morphism in $(\rep\,V)^G$, and the surjectivity of $f$ in $\rep\,V$.
%
%
%

The assertion that induction is a braided tensor functor from $\cC$ to $(\rep\,V)^{G}$ is immediate from the discussion preceding the theorem and Theorem \ref{thm:F_braided}.
\end{proof}

\begin{rema}
 From now on we will slightly abuse terminology and refer to $(\rep\,V)^G$ as the $G$-equivariantization of $\rep\,V$.
\end{rema}

\section{The main categorical theorem}\label{sec:MainCatThm}

We continue to fix an (abelian) $\mathbb{F}$-linear braided tensor category $\cC$, a superalgebra $V$ in $\cC$, and an automorphism group $G$ of $V$. In the preceding section, we saw that if $\rep\,V=\rep^G\,V$, that is, all objects of $\rep\,V$ are direct sums of $g$-twisted $V$-modules for $g\in G$, then $\rep\,V$ is a braided $G$-crossed supercategory and induction is a braided tensor functor from $\cC$ to the $G$-equivariantization $(\rep\,V)^G$. In \cite{KirillovOrbifoldI, KirillovOrbifoldII, Mu2}, it was shown that $\rep\,V=\repGV$ under the assumptions that $\cC$ is rigid and semisimple and that the $G$-invariants of $V$ equal $\vac$. Now, we prove the same result without semisimplicity and using rigidity only for $V$. Thus, our result will apply to non-semisimple module categories for vertex operator algebras arising in logarithmic conformal field theory, many of which are not known to be rigid. The following conditions will be in force for the rest of the section:
\begin{assum}\label{mainassum}
 The superalgebra $(V,\mu_V,\iota_V)$ and automorphism group $G$ satisfy:
 \begin{itemize}
 \item $G$ is finite and includes the parity involution $P_V = 1_{V^\even}\oplus (-1_{V^\odd})$, so that $\vert G\vert\in 2\ZZ$.
 
 \item The order of $G$ is invertible in $\mathbb{F}$, so that in particular the characteristic of $\mathbb{F}$ is not $2$.
 
 \item $V$ is \textit{haploid} in the sense that $\hom_\sC(\vac, V)^\even=\mathbb{F}\iota_V$. 
  
 \item There is an even morphism $\varepsilon_V: V\rightarrow\vac$ in $\sC$ such that $\varepsilon_V \iota_V=1_\vac$ and $\iota_V \varepsilon_V=\frac{1}{\vert G\vert}\sum_{g\in G} g$.
  \item There is an even morphism $\widetilde{i}_V:\vac\rightarrow V\tens V$ in $\sC$ such that $(V, \varepsilon_V \mu_V,\widetilde{i}_V)$ is a (left) dual of $V$ in $\cC$, that is,
  \begin{equation*}
   V\xrightarrow{l_V^{-1}}\vac\tens V\xrightarrow{\widetilde{i}_V\tens 1_V} (V\tens V)\tens V\xrightarrow{\cA_{V,V,V}^{-1}} V\tens(V\tens V)\xrightarrow{1_V\tens (\varepsilon_V \mu_V)} V\tens\vac\xrightarrow{r_V} V
  \end{equation*}
and
\begin{equation*}
 V\xrightarrow{r_V^{-1}}V\tens\vac\xrightarrow{1_V\tens\widetilde{i}_V} V\tens (V\tens V)\xrightarrow{\cA_{V,V,V}} (V\tens V)\tens V\xrightarrow{(\varepsilon_V \mu_V)\tens 1_V} \vac\tens V\xrightarrow{l_V} V
\end{equation*}
both equal the identity on $V$.

\item The morphism $\vac\xrightarrow{\widetilde{i}_V} V\tens V\xrightarrow{\mu_V} V$ in $\hom_\cC(\vac,V)$ equals $\vert G\vert\iota_V$.
 \end{itemize}
\end{assum}

\begin{rema}
 The fourth and sixth assumptions above imply that the \textit{dimension} of $V$, defined by
 \begin{equation*}
  \dim V=\varepsilon_V \mu_V \widetilde{i}_V\in\Endo_\cC(\vac)=\mathbb{F},
 \end{equation*}
is equal to $\vert G\vert$. Conversely, since $V$ is haploid, the final condition above follows from $\dim V=\vert G\vert$. 
\end{rema}

We now state the theorem which is the main technical result of this paper:
\begin{theo}\label{thm:repV=repGV}
 Under Assumption \ref{mainassum}, every object $W$ in $\rep V$ is a direct sum $W=\bigoplus_{g\in G} W_g$ where $W_g$ is a (possibly zero) $g$-twisted $V$-module.
\end{theo}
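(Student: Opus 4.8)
Following the introduction, the plan is to construct for each $g\in G$ an even idempotent $\pi_g\in\Endo_{\rep V}(W)$ so that $\{\pi_g\}_{g\in G}$ is a complete family of orthogonal idempotents, $\sum_{g\in G}\pi_g=1_W$ and $\pi_g\pi_h=\delta_{g,h}\pi_g$, with each image $W_g:=\im\pi_g$ a (possibly zero) $g$-twisted $V$-module. Since every $\pi_g$ is even and $\underline{\rep V}$ is abelian, these idempotents split there, so we obtain a biproduct decomposition $W=\bigoplus_{g\in G}W_g$ in $\rep V$, which is exactly the assertion of the theorem. The formula for $\pi_g$ is the counterpart, for the group $G$ and its averaging, of the projector onto the local part of a module in \cite{KO}: using the self-duality data $\widetilde{i}_V$ and $\varepsilon_V\mu_V$ provided by Assumption \ref{mainassum}, one creates an auxiliary copy of $V$, transports it around $W$ by means of the monodromy $\cM_{V,W}$ and lets it act via $\mu_W$, inserts the automorphism $g$ on the auxiliary strand, and averages over $G$ with weight $\tfrac{1}{\vert G\vert}$ (legitimate because $\vert G\vert$ is invertible in $\mathbb{F}$). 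I would record $\pi_g$ as a braid diagram and verify all of the identities below diagrammatically, as advertised in the introduction.

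\textbf{$V$-linearity and completeness.} The first point is that $\pi_g\mu_W=\mu_W(1_V\tens\pi_g)$, so that $\pi_g$ is a morphism in $\rep V$. In the diagram for $\pi_g\mu_W$, the additional acting $V$-strand is pulled past the auxiliary $V$-strand using the supercommutativity $\mu_V=\mu_V\cR_{V,V}$ of $V$, together with naturality of the associativity and braiding isomorphisms and the relation $g\mu_V=\mu_V(g\tens g)$ — the same sort of manipulation as in the proof of Proposition \ref{prop:TensProdandGgrading}. The identity $\sum_{g\in G}\pi_g=1_W$ comes from the $G$-averaging: the sum over the inserted automorphisms is $\sum_{g\in G}g=\vert G\vert\,\iota_V\varepsilon_V$ by the fourth bullet of Assumption \ref{mainassum}, so once the weight $\tfrac{1}{\vert G\vert}$ is absorbed the auxiliary strand carries $\iota_V\varepsilon_V$; using the duality axioms together with $\mu_V\widetilde{i}_V=\vert G\vert\,\iota_V$ (equivalently $\dim V=\vert G\vert$, the last bullet of Assumption \ref{mainassum}) and the unit axiom $\mu_W(\iota_V\tens 1_W)l_W^{-1}=1_W$ for $W$, the whole composite collapses to $1_W$. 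It is precisely this collapse that singles out $\dim V=\vert G\vert$ as the correct hypothesis.

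\textbf{Orthogonal idempotency — the main obstacle.} The crux of the argument is to compute $\pi_g\pi_h$. Stacking the two diagrams produces two nested auxiliary $V$-loops encircling $W$; by supercommutativity of $V$ these can be merged into a single loop carrying $\mu_V$, now decorated with both insertions $g,h$ and both $G$-averages. The relation $g\mu_V=\mu_V(g\tens g)$ moves one automorphism onto the other strand, and applying $\iota_V\varepsilon_V=\tfrac{1}{\vert G\vert}\sum_{k\in G}k$ along the merged strand collapses the double average by orthogonality of the group sum, forcing $g=h$ and returning a single copy of the $\pi_g$-diagram; hence $\pi_g\pi_h=\delta_{g,h}\pi_g$. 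I expect this double-monodromy bookkeeping — following the two loops, the two automorphisms, and the two averages at once, and correctly invoking the hexagon axioms — to be the hardest and most error-prone step, and it is exactly here that braid diagrams are indispensable and that rigidity of $V$ (to create, slide, and recombine the auxiliary loops) is used.

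\textbf{Twistedness and conclusion.} Finally one checks that the ``twisting defect'' $\mu_W(g\tens 1_W)\cM_{V,W}-\mu_W$ annihilates $1_V\tens\pi_g$: threading the auxiliary $V$-loop of $\pi_g$ through this defect and applying the hexagon axioms, the monodromy of the acting strand around the loop cancels the inserted $g$, so the defect vanishes on $\im\pi_g$. Since $\pi_g$ is an even idempotent in the abelian category $\underline{\rep V}$, its image $W_g=\im\pi_g$ is a $V$-submodule, and the previous sentence gives $\mu_{W_g}(g\tens 1_{W_g})\cM_{V,W_g}=\mu_{W_g}$, i.e.\ $W_g$ is $g$-twisted in the sense of Definition \ref{def:CatTwistMods}. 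Combined with $\sum_{g\in G}\pi_g=1_W$ and the orthogonality $\pi_g\pi_h=\delta_{g,h}\pi_g$ from the previous steps, this yields the decomposition $W=\bigoplus_{g\in G}W_g$ into twisted submodules, as desired.
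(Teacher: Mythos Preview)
Your overall plan—construct projectors $\pi_g$ from the coevaluation $\widetilde{i}_V$, the monodromy $\cM_{V,W}$, an inserted $g$, and two applications of $\mu_W$, then verify $V$-linearity, completeness, orthogonality, and $g$-twistedness of the images—matches the paper's, and your treatment of $V$-linearity and of $\sum_g\pi_g=1_W$ is essentially what the paper does.

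The gap is in your orthogonality step, and it comes from running the argument in the wrong order. You attempt $\pi_g\pi_h$ directly by ``merging two nested loops'' and then invoking ``orthogonality of the group sum'' via $\iota_V\varepsilon_V=\tfrac{1}{|G|}\sum_{k\in G}k$. But for fixed $g,h$ there is no group sum in $\pi_g\pi_h$, so there is no character-type orthogonality to appeal to; the identity $\iota_V\varepsilon_V=\tfrac{1}{|G|}\sum_k k$ is precisely what powers the \emph{completeness} step, not this one. The paper instead proves twistedness of $\im\pi_h$ \emph{before} orthogonality. Once the image of $\pi_h$ is known to be $h$-twisted, the monodromy appearing in the formula for $\Pi_g$ can be replaced by the action of $h^{-1}$ on that image, and the whole diagram collapses to a single closed $V$-loop with the automorphism $h^{-1}g$ inserted, fed into $W$ via the unit. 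The value of that closed loop is a categorical trace, and the paper isolates as a separate lemma (Lemma~\ref{trace_of_g_lemma}) that $\mu_V(1_V\tens k)\widetilde{i}_V=|G|\,\delta_{k,1}\,\iota_V$ for $k\in G$, proved from haploidity, the automorphism identity $k\mu_V=\mu_V(k\tens k)$, and a Frobenius-type relation equating the two natural maps $V\to V\tens V$ built from $\widetilde{i}_V$ and $\mu_V$ (Lemma~\ref{rigidlike_lemma}). That trace lemma, not any group-sum orthogonality, is what forces $g=h$. The same Frobenius relation is also the key manoeuvre in the twistedness verification, where the auxiliary loop must be slid from one side of the acting strand to the other; your sketch of twistedness (``threading the auxiliary loop through the defect and applying hexagon'') does not isolate this ingredient either.
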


 The idea of the proof is to find the projections from $W$ to all of its $g$-twisted summands. That is, we need to construct morphisms $\lbrace\pi_g: W\rightarrow W\rbrace_{g\in G}$ which satisfy:
 \begin{enumerate}
  \item Each $\pi_g$ is a morphism in $\rep V$.
  \item For each $g\in G$, the image $\pi_g(W)$ is a $g$-twisted $V$-module.
  \item For all $g,h\in G$, $\pi_g\pi_h=\delta_{g,h}\pi_g$, and $\sum_{g\in G} \pi_g=1_W$.
 \end{enumerate}
 We shall verify these properties for the morphisms $\pi_g=\vert G\vert^{-1}\Pi_g$, where $\Pi_g: W\rightarrow W$ is the composition
 \begin{align*}
  W\xrightarrow{l_W^{-1}}\vac\tens W\xrightarrow{\widetilde{i}_V\tens 1_W} (V\tens V)\tens W\xrightarrow{\cA_{V,V,W}^{-1}} V\tens(V\tens W) & \xrightarrow{1_V\tens\cM_{V,W}} V\tens(V\tens W)\nonumber\\
  &\xrightarrow{1_V\tens(g\tens 1_W)} V\tens(V\tens W)\xrightarrow{1_V\tens\mu_W} V\tens W\xrightarrow{\mu_W} W.
 \end{align*}
 We represent $\pi_g$ pictorially using braid diagrams as follows:
 \begin{align*}
\pi_g = \frac{1}{\vert G\vert}
 \begin{matrix}
  \begin{tikzpicture}[scale = 1, baseline = {(current bounding box.center)}, line width=0.75pt]
   \node (w) at (2,-0.3) {$W$};
   \node (g) at (1, 3.5) [draw,minimum width=10pt,minimum height=10pt,thick, fill=white] {$g$};
   \draw(1,2.25) .. controls (1,1.75) and (2,2) .. (2,1.5) -- (2,0);
   \draw[white, double=black, line width = 3pt ] (2,2.25) .. controls (2,1.75) and (1,2) .. (1,1.5) .. controls (1,0.8) and (0,0.8) .. (0,1.5) -- (0, 4.5) .. controls (0,5) .. (0.55,5.28);
   \draw[dashed] (2,0.4) .. controls (1,.4) .. (0.5,1);
   \draw (1,3.28) -- (1,3) .. controls (1,2.5) and (2,2.75) .. (2,2.25);
   \draw[white, double=black, line width = 3pt ] (1.7, 4.3) .. controls (2, 4) .. (2,3) .. controls (2,2.5) and (1,2.75) .. (1,2.25);
   \draw(1, 3.74) .. controls (1, 4) .. (1.3, 4.3);
   \node (mu1) at (1.5, 4.5) [draw,minimum width=20pt,minimum height=10pt,thick, fill=white] {$\mu_W$};
   \node (mu2) at (.75, 5.5) [draw,minimum width=20pt,minimum height=10pt,thick, fill=white] {$\mu_W$};
   \draw (1.5, 4.72) .. controls (1.5,5) .. (.95, 5.28);
   \draw (.75,5.72) -- (.75,6.25);
   \node at (.75, 6.5) {$W$};
   \node at (-.1,.9) {$V$};
   \node at (1.1,.9) {$V$};
  \end{tikzpicture}
 \end{matrix}
\end{align*}
 Before proving the properties of $\pi_g$ listed above, we note two corollaries:
 \begin{corol}\label{cor:RepV_G-crossed}
  Under Assumption \ref{mainassum}, $\rep\,V$ is a braided $G$-crossed supercategory.
 \end{corol}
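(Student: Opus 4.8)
The plan is to derive the corollary from Theorem \ref{thm:repV=repGV} together with Theorem \ref{Gcrossedfromtwist}. Theorem \ref{thm:repV=repGV} already gives $\rep V=\repGV$, and $\cC$ has right exact tensoring functors by the standing hypotheses, so by Theorem \ref{Gcrossedfromtwist} the only thing left to verify is that $\hom_{\rep V}(W_1,W_2)=0$ whenever $W_1$ is $g_1$-twisted, $W_2$ is $g_2$-twisted, and $g_1\neq g_2$.

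To establish this vanishing I would use the projections $\pi_g$. First I would observe that $\pi_g$ is natural with respect to morphisms in $\rep V$: every arrow in the defining composition of $\Pi_g$ — the unit isomorphism $l_W$, the fixed morphism $\widetilde{i}_V$, the associativity $\cA_{V,V,W}$, the monodromy $\cM_{V,W}$, the automorphism $g$ acting on the first tensor factor, and the two copies of $\mu_W$ — is natural in $W$ (for $\mu_W$ this is precisely the condition $f\mu_{W_1}=\mu_{W_2}(1_V\tens f)$ defining a morphism $f$ in $\rep V$), so $f\pi_g^{W_1}=\pi_g^{W_2}f$ for every $\rep V$-morphism $f\colon W_1\to W_2$. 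Next I would compute $\pi_{g_0}^W=1_W$ when $W$ is $g_0$-twisted: the $g_0$-twisted condition $\mu_W(g_0\tens 1_W)\cM_{V,W}=\mu_W$ collapses the inner part of the composition so that $\Pi_{g_0}=\mu_W(1_V\tens\mu_W)\cA_{V,V,W}^{-1}(\widetilde{i}_V\tens 1_W)l_W^{-1}$; module associativity turns this into $\mu_W((\mu_V\widetilde{i}_V)\tens 1_W)l_W^{-1}$, and then $\mu_V\widetilde{i}_V=\vert G\vert\iota_V$ from Assumption \ref{mainassum} together with the unit axiom give $\Pi_{g_0}=\vert G\vert\cdot 1_W$, hence $\pi_{g_0}^W=1_W$. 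Consequently, for $h\neq g_0$ we get $\pi_h^W=\pi_h^W\cdot 1_W=\pi_h^W\pi_{g_0}^W=0$ using the orthogonality $\pi_g^W\pi_h^W=\delta_{g,h}\pi_g^W$ (one of the listed properties of $\pi_g$, established below).

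Combining these observations: given $f\colon W_1\to W_2$ in $\rep V$ with $W_1$ $g_1$-twisted, $W_2$ $g_2$-twisted, and $g_1\neq g_2$, we have $f=f\pi_{g_1}^{W_1}=\pi_{g_1}^{W_2}f=0$, since $\pi_{g_1}^{W_1}=1_{W_1}$ and $\pi_{g_1}^{W_2}=0$. This is exactly the hypothesis of Theorem \ref{Gcrossedfromtwist}, and the corollary follows. The argument is essentially bookkeeping once Theorem \ref{thm:repV=repGV} and the three properties of $\pi_g$ are in hand; the only point worth isolating — and the only place any care is needed — is the naturality of $\pi_g$, which is what links the projections on different twisted modules and thereby forces the Hom-spaces between distinct twisted sectors to vanish, with no appeal to semisimplicity or to rigidity beyond that already assumed for $V$.
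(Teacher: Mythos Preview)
Your proof is correct and follows essentially the same route as the paper: reduce to Theorem \ref{Gcrossedfromtwist} via Theorem \ref{thm:repV=repGV}, then establish the Hom-vanishing between distinct twisted sectors using naturality of the $\pi_g$ together with the computation $\pi_{g_0}=1_W$ on a $g_0$-twisted $W$ and the orthogonality relations. The only cosmetic difference is that the paper writes the final chain as $f=\pi_{g_2}f\pi_{g_1}=f\pi_{g_2}\pi_{g_1}=\delta_{g_1,g_2}f$, while you first deduce $\pi_{g_1}^{W_2}=0$ and then apply naturality; these are equivalent rearrangements of the same three ingredients.
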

 \begin{proof}
  This follows immediately from Theorems \ref{Gcrossedfromtwist} and \ref{thm:repV=repGV} once we verify that $\hom_{\rep\,V}(W_1,W_2)=0$ when $W_1$ is $g_1$-twisted, $W_2$ is $g_2$-twisted, and $g_1\neq g_2$. Observe first that for a $g$-twisted module $W$, $\pi_g$ is the identity on $W$ by the definition of $g$-twisted module, associativity of $\mu_W$, the final condition in Assumption \ref{mainassum}, and the unit property of $W$. Second, the projections $\pi_g$ commute with morphisms $f: W_1\rightarrow W_2$ in $\rep\,V$ due to properties of natural isomorphisms in $\sC$ and $f\mu_{W_1}=\mu_{W_2}(1_V\tens f)$. So if $W_1$ is $g_1$-twisted and $W_2$ is $g_2$-twisted,
  \begin{equation*}
   f = \pi_{g_2} f\pi_{g_1} =f\pi_{g_2}\pi_{g_1} =\delta_{g_1,g_2} f\pi_{g_1}=\delta_{g_1,g_2} f,
  \end{equation*}
and $f=0$ if $g_1\neq g_2$.
 \end{proof}

\begin{corol}
 Under Assumption \ref{mainassum},  induction $\cF: \sC\rightarrow\mathcal{S}(\rep\,V)^G$ is a braided monoidal superfunctor and restricts to a braided tensor functor $\cF: \cC\rightarrow(\rep\,V)^{G}$.
\end{corol}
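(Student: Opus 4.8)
The plan is to combine the main technical theorem just proved with the results of Section \ref{sec:Gequiv}. First I would apply Theorem \ref{thm:repV=repGV}: under Assumption \ref{mainassum}, every object of $\rep\,V$ is a direct sum of $g$-twisted $V$-modules for $g\in G$, which is exactly the statement $\rep^G\,V=\rep\,V$. This identity is precisely the hypothesis of Theorem \ref{thm:F_braided}, so invoking that theorem immediately yields that induction $\cF:\sC\to\mathcal{S}(\rep\,V)^G$ is a braided monoidal superfunctor.

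For the second assertion I would observe that Assumption \ref{mainassum} also stipulates that $G$ contains the parity involution $P_V=1_{V^\even}\oplus(-1_{V^\odd})$. Together with $\rep^G\,V=\rep\,V$ from Theorem \ref{thm:repV=repGV}, these are exactly the two hypotheses of Theorem \ref{thm:F_even_braided}. Applying that theorem gives that $(\rep\,V)^G$ is a braided tensor category and that the restriction of $\cF$ to $\cC$ (embedded in $\sC$ via $W\mapsto(W,0)$, as in the discussion preceding Theorem \ref{thm:F_even_braided}) is a braided tensor functor $\cF:\cC\to(\rep\,V)^G$.

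There is essentially no obstacle: all the substantive work is contained in Theorem \ref{thm:repV=repGV} and in the constructions and verifications of Section \ref{sec:Gequiv}. The only point to check is the bookkeeping that every hypothesis invoked — finiteness of $G$, membership of $P_V$ in $G$, and the equality $\rep^G\,V=\rep\,V$ — is either an explicit clause of Assumption \ref{mainassum} or the conclusion of Theorem \ref{thm:repV=repGV}, which it is. Hence the corollary follows at once.
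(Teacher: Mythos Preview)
Your proposal is correct and matches the paper's own proof, which simply states that the corollary follows directly from Theorems \ref{thm:F_braided}, \ref{thm:F_even_braided}, and \ref{thm:repV=repGV}. You have spelled out the hypothesis-checking in more detail, but the logical structure is identical.
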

\begin{proof}
 This follows directly from Theorems \ref{thm:F_braided}, \ref{thm:F_even_braided}, and \ref{thm:repV=repGV}.
\end{proof}

 The proof of Theorem \ref{thm:repV=repGV} starts with some preliminary lemmas. In this section, we give proofs by braid diagram for brevity and clarity; see Appendix \ref{app:mthm_details} for full calculations, incorporating for example associativity isomorphisms.
 \begin{lemma}\label{iota_lemma}
  The composition $\vac\xrightarrow{\widetilde{i}_V} V\tens V\xrightarrow{1_V\tens\varepsilon_V} V\tens\vac\xrightarrow{r_V} V$ equals $\iota_V$.
 \end{lemma}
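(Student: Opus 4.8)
The plan is to compute the composition $r_V(1_V\tens\varepsilon_V)\widetilde{i}_V$ directly, using the two defining properties of $\widetilde{i}_V$ from Assumption \ref{mainassum} together with the fact that $\varepsilon_V\iota_V=1_\vac$. The key observation is that $\varepsilon_V = \varepsilon_V\mu_V(1_V\tens\iota_V)r_V^{-1}$ by the left unit axiom for $V$ (more precisely, by the right unit property $\mu_V(1_V\tens\iota_V)r_V^{-1}=1_V$, which follows from the left unit and supercommutativity axioms as noted in the remark after the definition of superalgebra). Substituting this identity for $\varepsilon_V$ turns $r_V(1_V\tens\varepsilon_V)\widetilde{i}_V$ into an expression involving $\varepsilon_V\mu_V$ — exactly the evaluation morphism that makes $(V,\varepsilon_V\mu_V,\widetilde{i}_V)$ a dual of $V$.

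Concretely, I would write out the composition
\begin{equation*}
 \vac\xrightarrow{\widetilde{i}_V} V\tens V\xrightarrow{1_V\tens r_V^{-1}} V\tens(V\tens\vac)\xrightarrow{1_V\tens(1_V\tens\iota_V)} V\tens(V\tens V)\xrightarrow{1_V\tens\mu_V} V\tens V\xrightarrow{1_V\tens\varepsilon_V} V\tens\vac\xrightarrow{r_V} V,
\end{equation*}
which equals $r_V(1_V\tens\varepsilon_V)\widetilde{i}_V$ after collapsing the inserted identity $\mu_V(1_V\tens\iota_V)r_V^{-1}=1_V$ on the second tensor factor. Now I rearrange using naturality of $r$ and an associativity isomorphism so that the $1_V\tens\iota_V$ and $\widetilde{i}_V$ sit on the outside and $(\varepsilon_V\mu_V)$ acts on the last two factors in the pattern appearing in the second dual-axiom identity. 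Specifically, one rewrites the chain as
\begin{equation*}
 \vac\xrightarrow{\iota_V}V\xrightarrow{r_V^{-1}}V\tens\vac\xrightarrow{1_V\tens\widetilde{i}_V}V\tens(V\tens V)\xrightarrow{\cA_{V,V,V}}(V\tens V)\tens V\xrightarrow{(\varepsilon_V\mu_V)\tens 1_V}\vac\tens V\xrightarrow{l_V}V,
\end{equation*}
using naturality of the structure isomorphisms to slide $\iota_V$ past everything (it is even, so no signs intervene) and the interchange law \eqref{superinterchange}; the bracketed tail is precisely the second zig-zag identity in Assumption \ref{mainassum}, which equals $1_V$. Hence the whole composition collapses to $\iota_V$.

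The main obstacle will be bookkeeping of the associativity and unit isomorphisms: the dual axioms are stated with explicit $\cA$'s and $l,r$'s, while the naive composition above suppresses them, so care is needed to insert them correctly and to check that naturality is being applied to the right morphisms (and that the interchange law does not introduce a sign, which it does not here since $\iota_V$, $\varepsilon_V$, $\mu_V$, $\widetilde{i}_V$ are all even). A cleaner route, which I would adopt if the isomorphism-chasing becomes unwieldy, is to argue pictorially: draw $\widetilde{i}_V$ as a cup, $\varepsilon_V\mu_V$ as a cap, and $\iota_V$, $\varepsilon_V$ as the unit/counit; the second dual axiom is the statement that the $V$-strand can be straightened, and capping off the $\varepsilon_V$-leg against a $\mu_V(1_V\tens\iota_V)$ unit simply reinserts that straightening, leaving the bare strand $\iota_V$. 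Either way the computation is short, and the result feeds directly into the upcoming analysis of the projections $\pi_g$.
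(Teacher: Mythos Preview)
Your overall strategy—insert the right unit identity $\mu_V(1_V\tens\iota_V)r_V^{-1}=1_V$ to convert $\varepsilon_V$ into the evaluation $\varepsilon_V\mu_V$, then apply a zig-zag—is exactly what the paper does, just unpacked. The paper phrases it abstractly: the rigidity map $\Phi:\hom(V,\vac)\to\hom(\vac,V)$, $f\mapsto r_V(1_V\tens f)\widetilde{i}_V$, has inverse $g\mapsto \varepsilon_V\mu_V(1_V\tens g)r_V^{-1}$, and the right unit property gives $\Phi^{-1}(\iota_V)=\varepsilon_V$. That inverse formula is your insertion, and $\Phi\Phi^{-1}=\mathrm{id}$ is the zig-zag.

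However, there is a genuine error in your execution. After your insertion you have
\[
\vac\xrightarrow{\widetilde{i}_V} V\tens V\xrightarrow{1_V\tens r_V^{-1}} V\tens(V\tens\vac)\xrightarrow{1_V\tens(1_V\tens\iota_V)} V\tens(V\tens V)\xrightarrow{1_V\tens(\varepsilon_V\mu_V)} V\tens\vac\xrightarrow{r_V} V,
\]
in which $\widetilde{i}_V$ creates strands $1,2$, $\iota_V$ creates strand $3$, and $\varepsilon_V\mu_V$ pairs strands $2$ and $3$. Your claimed rewrite
\[
\vac\xrightarrow{\iota_V}V\xrightarrow{r_V^{-1}}V\tens\vac\xrightarrow{1_V\tens\widetilde{i}_V}V\tens(V\tens V)\xrightarrow{\cA_{V,V,V}}(V\tens V)\tens V\xrightarrow{(\varepsilon_V\mu_V)\tens 1_V}\vac\tens V\xrightarrow{l_V}V
\]
has $\iota_V$ creating strand $1$, $\widetilde{i}_V$ creating strands $2,3$, and $\varepsilon_V\mu_V$ pairing strands $1$ and $2$. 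Naturality and interchange let you reorder the \emph{creation} of $\iota_V$ and $\widetilde{i}_V$, but they do not let you change \emph{which} pair of strands the evaluation acts on. These two diagrams are not equal by any ``sliding'' argument; they happen to agree only because both equal $\iota_V$, which is what you are trying to prove.

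The fix is simply to use the \emph{first} zig-zag rather than the second. Your expanded composition is, after the standard coherence manipulations (naturality of $l,r$ applied to $\iota_V$ and $\widetilde{i}_V$, the identity $r_{V\tens V}\cA_{V,V,\vac}=1_V\tens r_V$, and naturality of $\cA$), exactly
\[
\vac\xrightarrow{\iota_V}V\xrightarrow{l_V^{-1}}\vac\tens V\xrightarrow{\widetilde{i}_V\tens 1_V}(V\tens V)\tens V\xrightarrow{\cA_{V,V,V}^{-1}}V\tens(V\tens V)\xrightarrow{1_V\tens(\varepsilon_V\mu_V)}V\tens\vac\xrightarrow{r_V}V,
\]
which is $\iota_V$ followed by the first dual-axiom identity and hence equals $\iota_V$.
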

 \begin{proof}
 Consider the linear map $\Phi: \hom_\cC(V,\vac)\rightarrow\hom_\cC(\vac,V)$
which sends $f: V\rightarrow\vac$ to the composition $$\vac\xrightarrow{\widetilde{i}_V}V\tens V\xrightarrow{1_V\tens f} V\tens\vac\xrightarrow{r_V} V.$$
In particular, the morphism indicated in the statement of the lemma is $\Phi(\varepsilon_V)$. Because $(V,\widetilde{i}_V,\varepsilon_V\mu_V)$ is a dual of $V$ in $\cC$, $\Phi$ is an isomorphism with inverse sending $g:\vac\rightarrow V$ to the composition
\begin{equation*}
 V\xrightarrow{r_V^{-1}} V\tens\vac\xrightarrow{1_V\tens g} V\tens V\xrightarrow{\varepsilon_V\mu_V} \vac.
\end{equation*}
In particular $\Phi^{-1}(\iota_V)=\varepsilon_V$ by the right unit property of $V$, so $\Phi(\varepsilon_V)=\Phi(\Phi^{-1}(\iota_V)=\iota_V$.
\end{proof}
 
 \begin{lemma}\label{rigidlike_lemma}
  The two morphisms $V\rightarrow V\tens V$ in $\cC$ given by the compositions
  \begin{equation*}
   V\xrightarrow{l_V^{-1}} \vac\tens V\xrightarrow{\widetilde{i}_V\tens 1_V} (V\tens V)\tens V\xrightarrow{\cA_{V,V,V}^{-1}} V\tens(V\tens V)\xrightarrow{1_V\tens\mu_V} V\tens V
  \end{equation*}
and
\begin{equation*}
 V\xrightarrow{r_V^{-1}} V\tens\vac\xrightarrow{1_V\tens\widetilde{i}_V} V\tens(V\tens V)\xrightarrow{\cA_{V,V,V}} (V\tens V)\tens V\xrightarrow{\mu_V\tens 1_V} V\tens V
\end{equation*}
are equal. Diagrammatically,
\begin{align*}
 \begin{matrix}
  \begin{tikzpicture}[scale = 1, baseline = {(current bounding box.center)}, line width=0.75pt]
   \draw (2,0) -- (2,1.5) .. controls (2,1.7) .. (1.7,1.8);
   \draw (1.5,2.2) -- (1.5,2.6);
   \draw (1.3,1.8) .. controls (1,1.7) .. (1,1.5) .. controls (1,.8) and (0,.8) .. (0,1.5) -- (0,2.6);
   \draw[dashed] (2,.4) .. controls (.5,.5) .. (.5,1);
   \node at (1.5,2) [draw,minimum width=20pt,minimum height=10pt,thick, fill=white] {$\mu_V$};
   \node at (2,-.25) {$V$};
   \node at (1.5,2.85) {$V$};
   \node at (0,2.85) {$V$};
   \node at (0,.9) {$V$};
   \node at (1,.9) {$V$};
  \end{tikzpicture}
 \end{matrix} =
 \begin{matrix}
  \begin{tikzpicture}[scale = 1, baseline = {(current bounding box.center)}, line width=0.75pt]
   \draw (0,0) -- (0,1.5) .. controls (0,1.7) .. (.3,1.8);
   \draw (.5,2.2) -- (.5,2.6);
   \draw (.7,1.8) .. controls (1,1.7) .. (1,1.5) .. controls (1,.8) and (2,.8) .. (2,1.5) -- (2,2.6);
   \draw[dashed] (0,.4) .. controls (1.5,.5) .. (1.5,1);
   \node at (.5,2) [draw,minimum width=20pt,minimum height=10pt,thick, fill=white] {$\mu_V$};
   \node at (0,-.25) {$V$};
   \node at (.5,2.85) {$V$};
   \node at (2,2.85) {$V$};
   \node at (1,.9) {$V$};
   \node at (2,.9) {$V$};
  \end{tikzpicture}
 \end{matrix} .
\end{align*}
 \end{lemma}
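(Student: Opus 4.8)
The plan is to exploit self-duality of $V$ in $\cC$. Write $e:=\varepsilon_V\mu_V\colon V\tens V\to\vac$ for the evaluation and $\widetilde{i}_V\colon\vac\to V\tens V$ for the coevaluation of Assumption \ref{mainassum}. The tensor--hom adjunction coming from this dual pair is a linear bijection
\[
\Theta\colon\hom_\cC(V,V\tens V)\xrightarrow{\ \sim\ }\hom_\cC(V\tens V,V),\qquad \Theta(\phi)=r_V\,(1_V\tens e)\,\cA_{V,V,V}^{-1}\,(\phi\tens 1_V),
\]
with inverse built from $\widetilde{i}_V$ via the two zigzag identities recorded in Assumption \ref{mainassum}; in particular $\Theta$ is injective. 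So it is enough to show that $\Theta$ sends both of the morphisms in the statement to $\mu_V\colon V\tens V\to V$, and the lemma follows.

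For the right-hand morphism $(\mu_V\tens 1_V)\cA_{V,V,V}(1_V\tens\widetilde{i}_V)r_V^{-1}$: pictorially it creates the coevaluation ``cup'' to the right of the incoming $V$-strand and multiplies that strand into the left leg of the cup, while $\Theta$ tensors on an extra $V$-strand and caps the right leg of the cup against it via $e$. This cup-then-cap is exactly one of the zigzag identities of Assumption \ref{mainassum}, so it collapses and what remains is $\mu_V$. For the left-hand morphism $(1_V\tens\mu_V)\cA_{V,V,V}^{-1}(\widetilde{i}_V\tens 1_V)l_V^{-1}$ the cup is created on the left and its right leg is multiplied into the incoming strand; one extra move is needed here, namely the reassociation $e(\mu_V\tens 1_V)=e(1_V\tens\mu_V)\cA_{V,V,V}^{-1}$, which is just associativity of $\mu_V$ postcomposed with $\varepsilon_V$. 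This amalgamates the two multiplications into one, after which the picture is again a bare cup capped against a single strand, and the same zigzag identity reduces $\Theta$ of the left-hand morphism to $\mu_V$ as well.

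I would carry out these two computations in braid-diagram form, in keeping with the rest of the section (the cup being $\widetilde{i}_V$, the cap $e=\varepsilon_V\mu_V$, and the trivalent vertex $\mu_V$), relegating the explicit associativity and unit isomorphisms to Appendix \ref{app:mthm_details}. The one point demanding care---and the main, mild obstacle---is the bookkeeping: checking that each use of a zigzag identity is the instance with the correctly placed (left versus right) cup and correct associators, and that the reassociation invoked on the left-hand side is $e(\mu_V\tens 1_V)=e(1_V\tens\mu_V)\cA_{V,V,V}^{-1}$ rather than its mirror. Only self-duality of $V$ is used; no rigidity or semisimplicity of $\cC$ is needed.
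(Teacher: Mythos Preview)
Your proof is correct and uses the same overall strategy as the paper: exploit self-duality of $V$ to embed $\hom_\cC(V,V\tens V)$ injectively into another hom-space, then show $F_L$ and $F_R$ have the same image. The specific adjunction you choose, however, is different and slightly leaner. The paper uses rigidity of $V\tens V$ to map into $\hom_\cC((V\tens V)\tens V,\vac)$ via $F\mapsto e_{V\tens V}(1_{V\tens V}\tens F)$, where $e_{V\tens V}$ is built from two nested copies of $e=\varepsilon_V\mu_V$; it then shows both sides land on $\varepsilon_V\mu_V(\mu_V\tens 1_V)$. You instead use the single-evaluation adjunction $\Theta\colon\hom_\cC(V,V\tens V)\to\hom_\cC(V\tens V,V)$ and show both sides land on $\mu_V$. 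Your route avoids constructing $e_{V\tens V}$ and needs only one zigzag per side (plus the associativity move $e(\mu_V\tens 1_V)=e(1_V\tens\mu_V)\cA^{-1}$ for $F_L$, which you identify correctly), so it is marginally more economical; the paper's version makes the symmetry between the two cases more visually parallel in the braid diagrams. Either way the content is the same: self-duality plus associativity of $\mu_V$.
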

 \begin{proof}
 Since $V$ is rigid with dual $V$ and evaluation $\varepsilon_V\mu_V$, $V\tens V$ is also rigid with dual $V\tens V$ and evaluation
 \begin{equation*}
  e_{V\tens V}: (V\tens V)\tens(V\tens V)\rightarrow\vac
 \end{equation*}
given by the composition
\begin{align*}
 (V\tens V)\tens(V\tens V) \xrightarrow{\cA_{V,V,V\tens V}^{-1}} V\tens (V\tens (V\tens V)) & \xrightarrow{1_V\tens\cA_{V,V,V}} (V\tens(V\tens V))\tens V\nonumber\\
 & \xrightarrow{1_V\tens(\varepsilon_V\mu_V\tens 1_V)} V\tens(\vac\tens V)\xrightarrow{1_V\tens l_V} V\tens V\xrightarrow{\varepsilon_V\mu_V} \vac.
\end{align*}
As $V\tens V$ is rigid, the map $\mathrm{Hom}_\mathcal{C}(V,V\tens V)\rightarrow\mathrm{Hom}_{\mathcal{C}}((V\tens V)\tens V,\vac)$ given by $F\mapsto e_{V\tens V}(1_{V\tens V}\tens F)$ is an isomorphism, so letting $F_L$ and $F_R$ denote the morphisms in the statement of the lemma, it is enough to show
\begin{equation*}
 e_{V\tens V}(1_{V\tens V}\tens F_L) = e_{V\tens V}(1_{V\tens V}\tens F_R).
\end{equation*}
In fact, we will show that these two morphisms equal $\varepsilon_V\mu_V(\mu_V\tens 1_V)$, or equivalently $\varepsilon_V\mu_V(1_V\tens\mu_V)\cA_{V,V,V}^{-1}$.

We analyze $e_{V\tens V}(1_{V\tens V}\tens F_L)$ as follows:
\begin{align}\label{eqn:rigidlike_lemma_1}
   \begin{matrix}
    \begin{tikzpicture}[scale = 1, baseline = {(current bounding box.center)}, line width=0.75pt]
    \draw (0,0) -- (0,2.75) .. controls(0,4.2) .. (1.75,4.5);
    \draw (3.5,2) -- (3.5,2.75) .. controls (3.5,4.2) .. (1.75,4.5);
    \draw[dashed] (1.5,3) .. controls (1.5,3.8) .. (3.4,3.8);
    \draw (1,0) -- (1,2.5) .. controls (1,3.3) and (2,3.3) .. (1.5,3);
    \draw (1.5, 3) .. controls (1,3.3) and (2,3.3) .. (2,2.5) -- (2,1.5) .. controls (2,.8) and (3,.8) .. (3,1.5) .. controls (3,1.7) .. (3.3,1.8);
    \draw (4,0) -- (4,1.5) .. controls (4,1.7) .. (3.7,1.8);
    \draw[dashed] (4,.4) .. controls (2.5,.5) .. (2.5,1);
    \draw[dashed] (1.75,4.5) -- (1.75,5.25);
     \node at (3.5,2) [draw,minimum width=20pt,minimum height=10pt,thick, fill=white] {$\mu_V$};
     \node at (1.5,3) [draw,minimum width=20pt,minimum height=10pt,thick, fill=white] {$\varepsilon_V\mu_V$};
     \node at (1.75,4.5) [draw,minimum width=20pt,minimum height=10pt,thick, fill=white] {$\varepsilon_V\mu_V$};
     \node at (0,-.25) {$V$};
     \node at (1,-.25) {$V$};
     \node at (4,-.25) {$V$};
     \node at (2,.9) {$V$};
     \node at (3,.9) {$V$};
    \end{tikzpicture}
\end{matrix} =
\begin{matrix}
    \begin{tikzpicture}[scale = 1, baseline = {(current bounding box.center)}, line width=0.75pt]
    \draw (0,0) -- (0,3) .. controls(0,4.2) .. (1.75,4.5);
    \draw (3.5,3) .. controls (3.5,4.2) .. (1.75,4.5);
    \draw[dashed] (1.5,2) .. controls (1.5,2.6) .. (3,2.7);
    \draw (1,0) -- (1,1.5) .. controls (1,2.3) and (2,2.3) .. (1.5,2);
    \draw (1.5, 2) .. controls (1,2.3) and (2,2.3) .. (2,1.5) .. controls (2,.8) and (3,.8) .. (3,1.5)--(3,2.5) .. controls (3,2.9) .. (3.3,3.05);
    \draw (4,0) -- (4,2.5) .. controls (4,2.9) .. (3.7,3.05);
    \draw[dashed] (1,.4) .. controls (2.5,.5) .. (2.5,1);
    \draw[dashed] (1.75,4.5) -- (1.75,5.25);
     \node at (3.5,3.25) [draw,minimum width=20pt,minimum height=10pt,thick, fill=white] {$\mu_V$};
     \node at (1.5,2) [draw,minimum width=20pt,minimum height=10pt,thick, fill=white] {$\varepsilon_V\mu_V$};
     \node at (1.75,4.5) [draw,minimum width=20pt,minimum height=10pt,thick, fill=white] {$\varepsilon_V\mu_V$};
     \node at (0,-.25) {$V$};
     \node at (1,-.25) {$V$};
     \node at (4,-.25) {$V$};
     \node at (2,.9) {$V$};
     \node at (3,.9) {$V$};
    \end{tikzpicture}
\end{matrix} =
\begin{matrix}
 \begin{tikzpicture}[scale = 1, baseline = {(current bounding box.center)}, line width=0.75pt]
 \draw (0,0) -- (0,1.5) .. controls (0,3) .. (1.1,3);
 \draw (1.5, 0) .. controls (1.5,1) .. (2.05,1.3);
 \draw (2.25,1.5) .. controls (2.25,3) .. (1.3,3);
 \draw (3,0) .. controls (3,1) .. (2.45,1.3);
 \draw[dashed] (1.1,3) -- (1.1,3.75);
  \node at (2.25,1.5) [draw,minimum width=20pt,minimum height=10pt,thick, fill=white] {$\mu_V$};
     \node at (1.1,3) [draw,minimum width=20pt,minimum height=10pt,thick, fill=white] {$\varepsilon_V\mu_V$};
     \node at (0,-.25) {$V$};
     \node at (1.5,-.25) {$V$};
     \node at (3,-.25) {$V$};
 \end{tikzpicture}
\end{matrix},
  \end{align}
where we have used the rigidity of $V$ for the second step. On the other hand, $e_{V\tens V}(1_{V\tens V}\tens F_R)$ becomes: 
\begin{align}\label{eqn:rigidlike_lemma_2}
  \begin{matrix}
   \begin{tikzpicture}[scale = 1, baseline = {(current bounding box.center)}, line width=0.75pt]
   \draw (0,0) -- (0,2.5) .. controls (0,4.25) .. (2,4.5);
   \draw (1,0) -- (1,2.25) .. controls (1,2.9) .. (1.75,3);
   \draw (2,0) -- (2,1.5) .. controls (2,1.7) .. (2.3,1.8);
   \draw (2.5,2.2) -- (2.5,2.25) .. controls (2.5,2.9) .. (1.75,3);
   \draw (2.7,1.8) .. controls (3,1.7) ..(3,1.5) .. controls (3,.8) and (4,.8) .. (4,1.5) -- (4,2.5) .. controls (4,4.25) .. (2,4.5);
   \draw[dashed] (2,.4) .. controls (3.5,.5) .. (3.5,1);
   \draw[dashed] (1.75,3) .. controls (1.75,3.6) .. (3.95,3.75);
   \draw[dashed] (2,4.5)--(2,5.25);
    \node at (2.5,2) [draw,minimum width=20pt,minimum height=10pt,thick, fill=white] {$\mu_V$};
    \node at (1.75,3) [draw,minimum width=20pt,minimum height=10pt,thick, fill=white] {$\varepsilon_V\mu_V$};
    \node at (2,4.5) [draw,minimum width=20pt,minimum height=10pt,thick, fill=white] {$\varepsilon_V\mu_V$};
    \node at (0,-.25) {$V$};
    \node at (1,-.25) {$V$};
    \node at (2,-.25) {$V$};
    \node at (3,.9) {$V$};
    \node at (4,.9) {$V$};
   \end{tikzpicture}
  \end{matrix}
   =
   \begin{matrix}
    \begin{tikzpicture}[scale = 1, baseline = {(current bounding box.center)}, line width=0.75pt]
   \draw (0,0) -- (0,2.5) .. controls (0,4.25) .. (2,4.5);
   \draw (1,0) -- (1,1) .. controls (1,1.5) .. (1.3,1.8);
   \draw (2,0) -- (2,1) .. controls (2,1.5) .. (1.7,1.8);
   \draw (1.5,2.2) -- (1.5,2.25) .. controls (1.5,2.9) .. (2.25,3);
   \draw (2.25,3) .. controls (3,2.9) ..(3,2.25) -- (3,1.5) .. controls (3,.8) and (4,.8) .. (4,1.5) -- (4,2.5) .. controls (4,4.25) .. (2,4.5);
   \draw[dashed] (2,.4) .. controls (3.5,.5) .. (3.5,1);
   \draw[dashed] (2.25,3) .. controls (2.25,3.6) .. (3.95,3.75);
   \draw[dashed] (2,4.5)--(2,5.25);
    \node at (1.5,2) [draw,minimum width=20pt,minimum height=10pt,thick, fill=white] {$\mu_V$};
    \node at (2.25,3) [draw,minimum width=20pt,minimum height=10pt,thick, fill=white] {$\varepsilon_V\mu_V$};
    \node at (2,4.5) [draw,minimum width=20pt,minimum height=10pt,thick, fill=white] {$\varepsilon_V\mu_V$};
    \node at (0,-.25) {$V$};
    \node at (1,-.25) {$V$};
    \node at (2,-.25) {$V$};
    \node at (3,.9) {$V$};
    \node at (4,.9) {$V$};     
    \end{tikzpicture}
   \end{matrix}
   =
   \begin{matrix}
    \begin{tikzpicture}[scale = 1, baseline = {(current bounding box.center)}, line width=0.75pt]
   \draw (0,0) -- (0,2.5) .. controls (0,4.25) .. (2,4.5);
   \draw (1,0) .. controls (1,.5) .. (1.3,.8);
   \draw (2,0) .. controls (2,.5) .. (1.7,.8);
   \draw (1.5,1.2) -- (1.5,2.5) .. controls (1.5,2.9) .. (2.25,3);
   \draw (2.25,3) .. controls (3,2.9) .. (3,2.5) .. controls (3,1.8) and (4,1.8) .. (4,2.5) .. controls (4,4.25) .. (2,4.5);
   \draw[dashed] (1.5,1.5) .. controls (3.5,1.6) .. (3.5,2);
   \draw[dashed] (2.25,3) .. controls (2.25,3.6) .. (3.95,3.75);
   \draw[dashed] (2,4.5)--(2,5.25);
    \node at (1.5,1) [draw,minimum width=20pt,minimum height=10pt,thick, fill=white] {$\mu_V$};
    \node at (2.25,3) [draw,minimum width=20pt,minimum height=10pt,thick, fill=white] {$\varepsilon_V\mu_V$};
    \node at (2,4.5) [draw,minimum width=20pt,minimum height=10pt,thick, fill=white] {$\varepsilon_V\mu_V$};
    \node at (0,-.25) {$V$};
    \node at (1,-.25) {$V$};
    \node at (2,-.25) {$V$};
    \node at (3,1.9) {$V$};
    \node at (4,1.9) {$V$};     
    \end{tikzpicture}
   \end{matrix},
\end{align}
which by rigidity reduces to the right side of \eqref{eqn:rigidlike_lemma_1}.
 \end{proof}
 
 \begin{lemma}\label{trace_of_g_lemma}
  For $g\in G$, the composition $\vac\xrightarrow{\widetilde{i}_V} V\tens V\xrightarrow{1_V\tens g} V\tens V\xrightarrow{\mu_V} V$
equals $\vert G\vert\delta_{g,1}\iota_V$.
 \end{lemma}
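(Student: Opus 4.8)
The plan is to exhibit the scalar by which the composite acts and to show it equals $\vert G\vert\delta_{g,1}$. First, since $g$, $\mu_V$, and $\widetilde{i}_V$ are all even, the composite $\vac\xrightarrow{\widetilde{i}_V}V\tens V\xrightarrow{1_V\tens g}V\tens V\xrightarrow{\mu_V}V$ is an even morphism $\vac\to V$, so by haploidness it equals $\lambda_g\iota_V$ for a unique $\lambda_g\in\mathbb{F}$; composing with $\varepsilon_V$ and using $\varepsilon_V\iota_V=1_\vac$ gives $\lambda_g=\varepsilon_V\mu_V(1_V\tens g)\widetilde{i}_V$. The case $g=1$ is immediate, since the last condition of Assumption~\ref{mainassum} is precisely $\mu_V\widetilde{i}_V=\vert G\vert\iota_V$, whence $\lambda_1=\vert G\vert$. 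So everything reduces to showing $\lambda_g=0$ for $g\neq1$.

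For this, I would observe that $\lambda_g$ is (a version of) the categorical trace of the automorphism $g\colon V\to V$ with respect to the self-duality $(V,\varepsilon_V\mu_V,\widetilde{i}_V)$ of Assumption~\ref{mainassum}: the supercommutativity of $\mu_V$ absorbs the braiding needed to turn $\varepsilon_V\mu_V(1_V\tens g)\widetilde{i}_V$ into an honest trace (one also uses $\cR_{V,V}\widetilde{i}_V=\widetilde{i}_V$, which holds since $(V,\varepsilon_V\mu_V,\cR_{V,V}\widetilde{i}_V)$ is again left-dual data for $V$ and the coevaluation is determined by the evaluation). It then suffices to identify the $G$-representation $h\mapsto h$ on $V$ with the regular representation of $G$, as the regular character is $\vert G\vert\delta_{g,1}$. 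Two inputs go into this identification. The averaging idempotent $\iota_V\varepsilon_V=\frac{1}{\vert G\vert}\sum_{h\in G}h$ is split by $\iota_V$ and $\varepsilon_V$, so its image $V^G$ is isomorphic to $\vac$; hence the trivial summand of $V$ has dimension $1$. And, using Lemma~\ref{rigidlike_lemma}, the morphism $\Delta=(1_V\tens\mu_V)\cA_{V,V,V}^{-1}(\widetilde{i}_V\tens1_V)l_V^{-1}\colon V\to V\tens V$ satisfies $\mu_V\Delta=\vert G\vert\,1_V$ (by associativity, the left unit axiom, and $\mu_V\widetilde{i}_V=\vert G\vert\iota_V$), has $\varepsilon_V$ as a counit (the snake identities in Assumption~\ref{mainassum}), and is a $V$-bimodule map, so $\frac{1}{\vert G\vert}\Delta$ splits $\mu_V$ and $V$ is a connected separable (hence \'etale) commutative algebra in $\cC$ acted on by $G$ through algebra automorphisms with invariants $\vac$ and $\dim V=\vert G\vert$. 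Such a $V$ is a $G$-Galois extension of $\vac$: the canonical map $V\tens V\to\bigoplus_{h\in G}V$, $x\tens y\mapsto(\mu_V(1_V\tens h)(x\tens y))_{h}$, is an isomorphism, and from this one reads off that $V$ carries the regular representation of $G$, whence $\lambda_g=\vert G\vert\delta_{g,1}$ for all $g$.

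I expect the last step to be the crux. All the purely diagrammatic identities available — applying $\Delta$, $\mu_V$, $\varepsilon_V$, or $\sum_h h=\vert G\vert\iota_V\varepsilon_V$ to $\lambda_g\iota_V=\mu_V(1_V\tens g)\widetilde{i}_V$, or exploiting the Frobenius relations for $\Delta$ — only reproduce $\lambda_1=\vert G\vert$, $\sum_{g\in G}\lambda_g=\vert G\vert$, and $\lambda_g=\lambda_{g^{-1}}$, which determine the $\lambda_g$ only for special $G$ (e.g.\ $G$ of prime order). One genuinely needs that the nontrivial isotypic components of $V$ have the regular-representation multiplicities, which is where the algebra structure (and not just the $G$-action) enters. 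Over a splitting field this is the Artin--Wedderburn/Galois-descent computation: decompose $V$ along the central idempotents of $\mathbb{F}[G]$ — available since $\vert G\vert$ is invertible in $\mathbb{F}$ — and use that the $\vac$-algebra structure forces each isotypic block $V^{(\rho)}$ to have dimension $(\dim\rho)^2$; the statement for general $\mathbb{F}$ then follows since $\lambda_g\in\mathbb{F}$ is unaffected by scalar extension.
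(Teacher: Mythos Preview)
Your reduction to a scalar $\lambda_g\in\mathbb{F}$ via haploidness and the $g=1$ case are correct. The gap is in the second half: your assertion that ``all the purely diagrammatic identities available'' only yield $\lambda_1=|G|$, $\sum_g\lambda_g=|G|$, and $\lambda_g=\lambda_{g^{-1}}$ is mistaken, and this is precisely where the paper's argument lives.

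The paper proves $\lambda_g=0$ for $g\neq1$ by a short diagrammatic computation showing that
\[
\lambda_g\cdot 1_V \;=\; \lambda_g\cdot g^{-1}
\]
as endomorphisms of $V$. Concretely, one writes $\lambda_g\cdot 1_V$ as the composite $V\to V$ obtained by creating $\widetilde{i}_V$, applying $g$ to one leg, and multiplying everything down with two copies of $\mu_V$. Using the automorphism identity $g\,\mu_V=\mu_V(g\tens g)$ to redistribute copies of $g^{\pm1}$ across the multiplications, associativity of $\mu_V$ to reparenthesise, and Lemma~\ref{rigidlike_lemma} to switch which leg of $\widetilde{i}_V$ the external $V$-strand is attached to, the diagram deforms in six steps to the same picture precomposed with $g^{-1}$ on the incoming $V$. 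Hence $\lambda_g(1_V-g^{-1})=0$, and for $g\neq1$ this forces $\lambda_g=0$. Nothing beyond Assumption~\ref{mainassum} and Lemma~\ref{rigidlike_lemma} is used.

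By contrast, your route through ``$V$ carries the regular representation'' outsources the vanishing to a structural theorem you have not established. The inference from ``connected separable commutative algebra of dimension $|G|$ with $G$-action and invariants $\vac$'' to ``the canonical map $V\tens V\to\bigoplus_{h\in G}V$ is an isomorphism'' is not formal in a braided tensor category that is neither rigid nor semisimple; the standard proofs of such Galois/\'etale statements use duals for general objects, which Assumption~\ref{mainassum} deliberately avoids. And the Artin--Wedderburn multiplicity count you sketch essentially presupposes the character identity you want. So as written the proposal has a genuine gap, whereas the paper's argument is a self-contained six-line manipulation that you overlooked.
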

 
 \allowdisplaybreaks
 
\begin{proof}
Since $V$ is haploid, the morphism in the lemma is a multiple of $\iota_V$, which we denote $\mathrm{Tr}_\cC\,g$. By assumption, $\mathrm{Tr}_\cC\, 1=\dim_\cC V=\vert G\vert$, so we just need to show $\mathrm{Tr}_\cC\,g=0$ for $g\neq 1$. We calculate using the left and right unit properties of $V$, the automorphism property of $g$, the associativity of $\mu_V$, and Lemma \ref{rigidlike_lemma}: 
\begin{align}\label{eqn:Tr_g_zero}
 (\mathrm{Tr}_\mathcal{C}\,g)1_V  & =  \begin{matrix}
    \begin{tikzpicture}[scale = 1, baseline = {(current bounding box.center)}, line width=0.75pt]
    \draw (2,0) -- (2,3.2) .. controls (2,3.6) .. (1.45,3.8);
    \draw (.5,3.2) .. controls (.5,3.6) .. (1.05,3.8);
    \draw (1,2.2) .. controls (1,2.6) .. (.7,2.8);
    \draw (1,1.8) -- (1,1.5) .. controls (1,.8) and (0,.8) .. (0,1.5) -- (0,2.2) .. controls (0,2.6) .. (.3,2.8);
    \draw (1.25,4.2) -- (1.25,4.6);
    \draw[dashed] (2,.4) .. controls (.5,.5) .. (.5,1);
    \node at (1,2) [draw,minimum width=10pt,minimum height=10pt,thick, fill=white] {$g$};
      \node at (.5,3) [draw,minimum width=20pt,minimum height=10pt,thick, fill=white] {$\mu_V$};
      \node at (1.25,4) [draw,minimum width=20pt,minimum height=10pt,thick, fill=white] {$\mu_V$};
      \node at (0,.9) {$V$};
      \node at (1,.9) {$V$};
      \node at (1.25,4.85) {$V$};
      \node at (2,-.25) {$V$};
    \end{tikzpicture}
\end{matrix} =
\begin{matrix}
    \begin{tikzpicture}[scale = 1, baseline = {(current bounding box.center)}, line width=0.75pt]
    \draw (2,0) -- (2,3.2) .. controls (2,3.6) .. (1.45,3.8);
    \draw (.5,3.2) .. controls (.5,3.6) .. (1.05,3.8);
    \draw (0,2.2) .. controls (0,2.6) .. (.3,2.8);
    \draw (0,1.8) -- (0,1.5) .. controls (0,.8) and (1,.8) .. (1,1.5) -- (1,2.2) .. controls (1,2.6) .. (.7,2.8);
    \draw (1.25,4.2) -- (1.25,4.8);
    \draw (1.25,5.1) -- (1.25, 5.6);
    \draw (2,-.9) -- (2,-.3);
    \draw[dashed] (2,.4) .. controls (.5,.5) .. (.5,1);
    \node at (0,2) [draw,minimum width=10pt,minimum height=10pt,thick, fill=white] {$g^{-1}$};
      \node at (.5,3) [draw,minimum width=20pt,minimum height=10pt,thick, fill=white] {$\mu_V$};
      \node at (1.25,4) [draw,minimum width=20pt,minimum height=10pt,thick, fill=white] {$\mu_V$};
      \node at (2,-.2) [draw,minimum width=10pt,minimum height=10pt,thick, fill=white] {$g^{-1}$};
      \node at (1.25,4.9) [draw,minimum width=10pt,minimum height=10pt,thick, fill=white] {$g$};
      \node at (0,.9) {$V$};
      \node at (1,.9) {$V$};
      \node at (1.25,5.85) {$V$};
      \node at (2,-1.15) {$V$};
    \end{tikzpicture}
\end{matrix} =
\begin{matrix}
    \begin{tikzpicture}[scale = 1, baseline = {(current bounding box.center)}, line width=0.75pt]
    \draw (2,0) -- (2,2.2) .. controls (2,2.6) .. (1.7,2.8);
    \draw (1.5,3.2) .. controls (1.5,3.6) .. (.95,3.8);
    \draw (0,2.2) -- (0,3.2) .. controls (0,3.6) .. (.55,3.8);
    \draw (0,1.8) -- (0,1.5) .. controls (0,.8) and (1,.8) .. (1,1.5) -- (1,2.2) .. controls (1,2.6) .. (1.3,2.8);
    \draw (.75,4.2) -- (.75,4.8);
    \draw (.75,5.1) -- (.75, 5.6);
    \draw (2,-.9) -- (2,-.3);
    \draw[dashed] (2,.4) .. controls (.5,.5) .. (.5,1);
    \node at (0,2) [draw,minimum width=10pt,minimum height=10pt,thick, fill=white] {$g^{-1}$};
      \node at (1.5,3) [draw,minimum width=20pt,minimum height=10pt,thick, fill=white] {$\mu_V$};
      \node at (.75,4) [draw,minimum width=20pt,minimum height=10pt,thick, fill=white] {$\mu_V$};
      \node at (2,-.2) [draw,minimum width=10pt,minimum height=10pt,thick, fill=white] {$g^{-1}$};
      \node at (.75,4.9) [draw,minimum width=10pt,minimum height=10pt,thick, fill=white] {$g$};
      \node at (0,.9) {$V$};
      \node at (1,.9) {$V$};
      \node at (.75,5.85) {$V$};
      \node at (2,-1.15) {$V$};
    \end{tikzpicture}
\end{matrix}\nonumber\\
& =
\begin{matrix}
\begin{tikzpicture}[scale = 1, baseline = {(current bounding box.center)}, line width=0.75pt]
    \draw (.5,2.2) -- (0.5,3.2) .. controls (0.5,3.6) .. (1.05,3.8);
    \draw (.7,1.8) .. controls (1,1.7) .. (1,1.5) .. controls (1,.8) and (2,.8) .. (2,1.5) -- (2,3.2) .. controls (2,3.6) .. (1.45,3.8);
    \draw (1.25,4.2) -- (1.25,5.6);
    \draw (0,-.9) -- (0, 1.2) .. controls (0,1.6) .. (.3,1.8);
    \draw[dashed] (0,.4) .. controls (1.5,.5) .. (1.5,1);
    \node at (.5,3) [draw,minimum width=10pt,minimum height=10pt,thick, fill=white] {$g^{-1}$};
      \node at (.5,2) [draw,minimum width=20pt,minimum height=10pt,thick, fill=white] {$\mu_V$};
      \node at (1.25,4) [draw,minimum width=20pt,minimum height=10pt,thick, fill=white] {$\mu_V$};
      \node at (0,-.2) [draw,minimum width=10pt,minimum height=10pt,thick, fill=white] {$g^{-1}$};
      \node at (1.25,5) [draw,minimum width=10pt,minimum height=10pt,thick, fill=white] {$g$};
      \node at (2,.9) {$V$};
      \node at (1,.9) {$V$};
      \node at (1.25,5.85) {$V$};
      \node at (0,-1.15) {$V$};
    \end{tikzpicture}
\end{matrix} =
\begin{matrix}
\begin{tikzpicture}[scale = 1, baseline = {(current bounding box.center)}, line width=0.75pt]
    \draw (0.5,3.2) .. controls (0.5,3.6) .. (1.05,3.8);
    \draw (.7,2.8) .. controls (1,2.6) .. (1,2.2) -- (1,2) .. controls (1,1.3) and (2,1.3) .. (2,2) -- (2,3.2) .. controls (2,3.6) .. (1.45,3.8);
    \draw (1.25,4.2) -- (1.25,4.6);
    \draw (0,-.4) -- (0, 2.2) .. controls (0,2.6) .. (.3,2.8);
    \draw[dashed] (0,.9) .. controls (1.5,1) .. (1.5,1.5);
    \node at (2,3) [draw,minimum width=10pt,minimum height=10pt,thick, fill=white] {$g$};
      \node at (.5,3) [draw,minimum width=20pt,minimum height=10pt,thick, fill=white] {$\mu_V$};
      \node at (1.25,4) [draw,minimum width=20pt,minimum height=10pt,thick, fill=white] {$\mu_V$};
      \node at (0,.3) [draw,minimum width=10pt,minimum height=10pt,thick, fill=white] {$g^{-1}$};
      \node at (2,1.4) {$V$};
      \node at (1,1.4) {$V$};
      \node at (1.25,4.85) {$V$};
      \node at (0,-.65) {$V$};
    \end{tikzpicture}
\end{matrix} =
\begin{matrix}
\begin{tikzpicture}[scale = 1, baseline = {(current bounding box.center)}, line width=0.75pt]
    \draw (1.5,3.7) .. controls (1.5,4.1) .. (.95,4.3);
    \draw (1.3, 3.3) .. controls (1,3.1) .. (1,2.7) -- (1,2) .. controls (1,1.3) and (2,1.3) .. (2,2) -- (2,2.7) .. controls (2,3.1) .. (1.7,3.3);
    \draw (.75,4.7) -- (.75,5.1);
    \draw (0,-.4) -- (0, 3.7) .. controls (0,4.1) .. (.55,4.3);
    \draw[dashed] (0,.9) .. controls (1.5,1) .. (1.5,1.5);
    \node at (2,2.5) [draw,minimum width=10pt,minimum height=10pt,thick, fill=white] {$g$};
      \node at (1.5,3.5) [draw,minimum width=20pt,minimum height=10pt,thick, fill=white] {$\mu_V$};
      \node at (.75,4.5) [draw,minimum width=20pt,minimum height=10pt,thick, fill=white] {$\mu_V$};
      \node at (0,.3) [draw,minimum width=10pt,minimum height=10pt,thick, fill=white] {$g^{-1}$};
      \node at (2,1.4) {$V$};
      \node at (1,1.4) {$V$};
      \node at (.75,5.35) {$V$};
      \node at (0,-.65) {$V$};
    \end{tikzpicture}
\end{matrix} =(\mathrm{Tr}_\mathcal{C}\,g)g^{-1}.
\end{align}
Thus $\mathrm{Tr}_\cC\,g=0$ unless $g$ is the identity.
\end{proof}
 
 Now we begin checking that the $\cC$-morphisms $\pi_g$ (or equivalently, the $\Pi_g$) satisfy the required properties:
 
 \bigskip
 
 \noindent\textbf{1. Each $\Pi_g$ is a morphism in $\rep V$.} We need to show that $\mu_W(1_V\tens\Pi_g)=\Pi_g\mu_W$. The proof goes as indicated by the braid diagrams:
 \begin{align*}
 \begin{matrix}
  \begin{tikzpicture}[scale = 1, baseline = {(current bounding box.center)}, line width=0.75pt]
   \node (w) at (2,-0.3) {$W$};
   \node (g) at (1, 3.5) [draw,minimum width=10pt,minimum height=10pt,thick, fill=white] {$g$};
   \draw(1,2.25) .. controls (1,1.75) and (2,2) .. (2,1.5) -- (2,0);
   \draw[white, double=black, line width = 3pt ] (2,2.25) .. controls (2,1.75) and (1,2) .. (1,1.5) .. controls (1,0.8) and (0,0.8) .. (0,1.5) -- (0, 4.5) .. controls (0,5) .. (0.55,5.28);
   \draw[dashed] (2,0.4) .. controls (1,.4) .. (0.5,1);
   \draw (1,3.28) -- (1,3) .. controls (1,2.5) and (2,2.75) .. (2,2.25);
   \draw[white, double=black, line width = 3pt ] (1.7, 4.3) .. controls (2, 4) .. (2,3) .. controls (2,2.5) and (1,2.75) .. (1,2.25);
   \draw(1, 3.74) .. controls (1, 4) .. (1.3, 4.3);
   \node (mu1) at (1.5, 4.5) [draw,minimum width=20pt,minimum height=10pt,thick, fill=white] {$\mu_W$};
   \node (mu2) at (.75, 5.5) [draw,minimum width=20pt,minimum height=10pt,thick, fill=white] {$\mu_W$};
   \draw (1.5, 4.72) .. controls (1.5,5) .. (.95, 5.28);
   \draw (.75,5.72) .. controls (.75, 6) .. (.1,6.28);
   \node at (-.1,.9) {$V$};
   \node at (1.1,.9) {$V$};
   \node at (-0.1, 6.5) [draw,minimum width=20pt,minimum height=10pt,thick, fill=white] {$\mu_W$};
   \draw (-1, 0) -- (-1, 5.5) .. controls (-1, 5.9) .. (-0.3, 6.28);
   \node at (-1,-.3) {$V$};
   \draw(-0.1, 6.72) -- (-0.1, 7.25);
   \node at (-.1, 7.5) {$W$};
  \end{tikzpicture}
 \end{matrix} = \hspace{-4.35em} 
 \begin{matrix}
  \begin{tikzpicture}[scale = 1, baseline = {(current bounding box.center)}, line width=0.75pt]
   \node (w) at (2,-0.3) {$W$};
   \node (g) at (1, 3.5) [draw,minimum width=10pt,minimum height=10pt,thick, fill=white] {$g$};
   \draw(1,2.25) .. controls (1,1.75) and (2,2) .. (2,1.5) -- (2,0);
   \draw[white, double=black, line width = 3pt ] (2,2.25) .. controls (2,1.75) and (1,2) .. (1,1.5) .. controls (1,0.8) and (0,0.8) .. (0,1.5) -- (0, 3.5) .. controls (0,4) .. (0.3,4.28);
   \draw[dashed] (-1,0.3) .. controls (-3, .8) and (1,.4) .. (0.5,1);
   \draw (1,3.28) -- (1,3) .. controls (1,2.5) and (2,2.75) .. (2,2.25);
   \draw[white, double=black, line width = 3pt ] (1.45, 5.3) .. controls (2, 5) .. (2,4.5)--(2,3) .. controls (2,2.5) and (1,2.75) .. (1,2.25);
   \draw(1, 3.74) .. controls (1, 4) .. (0.7, 4.3);
   \node (mu1) at (0.5, 4.5) [draw,minimum width=20pt,minimum height=10pt,thick, fill=white] {$\mu_V$};
   \node (mu2) at (1.25, 5.5) [draw,minimum width=20pt,minimum height=10pt,thick, fill=white] {$\mu_W$};
   \draw (0.5, 4.72) .. controls (0.5,5) .. (1.05, 5.28);
   \draw (1.25,5.72) .. controls (1.25, 6) .. (.3,6.28);
   \node at (-.1,.9) {$V$};
   \node at (1.1,.9) {$V$};
   \node at (0.1, 6.5) [draw,minimum width=20pt,minimum height=10pt,thick, fill=white] {$\mu_W$};
   \draw (-1, 0) -- (-1, 5.5) .. controls (-1, 5.9) .. (-0.1, 6.28);
   \node at (-1,-.3) {$V$};
   \draw(0.1, 6.72) -- (0.1, 7.25);
   \node at (0.1, 7.5) {$W$};
  \end{tikzpicture}
 \end{matrix} = \hspace{-.25em}
 \begin{matrix}
  \begin{tikzpicture}[scale = 1, baseline = {(current bounding box.center)}, line width=0.75pt]
   \node at (1, -0.3) {$V$};
   \node at (2, -0.3) {$W$};
   \node at (1, 3.5) [draw,minimum width=10pt,minimum height=10pt,thick, fill=white] {$g$};
   \draw (1,0) -- (1,0.75) .. controls (1, 1.25) and (0,1) .. (0, 1.5) .. controls (0, 2) and (-1,1.75) .. (-1, 2.25) -- (-1,4.5) .. controls (-1, 5) .. (-.45, 5.28);
   \draw (2,2.25) .. controls (2, 2.75) and (1, 2.5) .. (1, 3) -- (1, 3.28);
   \draw[white, double=black, line width = 3pt ] (2,0) -- (2, 1.5) .. controls (2, 2) and (1, 1.75) .. (1,2.25) .. controls (1, 2.75) and (2, 2.5) .. (2,3) -- (2,5.5) .. controls (2,6) .. (1.1, 6.28);
   \draw[white, double=black, line width = 3pt ] (2,2.25) .. controls (2, 1.75) and (1, 2) .. (1,1.5) .. controls (1, 1) and (0,1.25) .. (0,0.75) .. controls (0, 0.2) and (-1,0.2) .. (-1, 0.75) -- (-1, 1.5) .. controls (-1, 2) and (0, 1.75) .. (0,2.25) -- (0, 3.5) .. controls (0,4) .. (0.3, 4.28);
   \draw(1, 3.72) .. controls (1,4) .. (.7,4.28);
   \draw(0.5, 4.72) .. controls (0.5, 5) .. (-.05,5.28);
   \draw(-.25,5.72) .. controls (-.25, 6) .. (.7,6.28);
   \draw (0.9,6.72) -- (0.9, 7.25);
   \draw[dashed] (1,.2) .. controls (-0.5,0.2) .. (-0.5,0.3);
   \node at (0.5, 4.5) [draw,minimum width=20pt,minimum height=10pt,thick, fill=white] {$\mu_V$};
   \node at (-0.25, 5.5) [draw,minimum width=20pt,minimum height=10pt,thick, fill=white] {$\mu_V$};
   \node at (0.9, 6.5) [draw,minimum width=20pt,minimum height=10pt,thick, fill=white] {$\mu_W$};
   \node at (0.9, 7.5) {$W$};
   \node at (-1.1, 0.3) {$V$};
   \node at (0.2,0.5) {$V$};
  \end{tikzpicture}
 \end{matrix} = \hspace{-.25em}
 \begin{matrix}
  \begin{tikzpicture}[scale = 1, baseline = {(current bounding box.center)}, line width=0.75pt]
   \node at (1, -0.3) {$V$};
   \node at (2, -0.3) {$W$};
   \node at (1, 3.5) [draw,minimum width=10pt,minimum height=10pt,thick, fill=white] {$g$};
   \draw (1,0) -- (1,0.75) .. controls (1, 1.25) and (0,1) .. (0, 1.5) -- (0, 3.5) .. controls (0,4) .. (-.3, 4.28);
   \draw (2,2.25) .. controls (2, 2.75) and (1, 2.5) .. (1, 3) -- (1, 3.28);
   \draw[white, double=black, line width = 3pt ] (2,0) -- (2, 1.5) .. controls (2, 2) and (1, 1.75) .. (1,2.25) .. controls (1, 2.75) and (2, 2.5) .. (2,3) -- (2,5.5) .. controls (2,6) .. (1.3, 6.28);
   \draw[white, double=black, line width = 3pt ] (2,2.25) .. controls (2, 1.75) and (1, 2) .. (1,1.5) .. controls (1, 1) and (0,1.25) .. (0,0.75) .. controls (0, 0.2) and (-1,0.2) .. (-1, 0.75) -- (-1, 3.5) .. controls (-1, 4) .. (-.7, 4.28);
   \draw(1, 3.72) -- (1, 4.5) .. controls (1,5) .. (.45,5.28);
   \draw(-0.5, 4.72) .. controls (-0.5, 5) .. (.05,5.28);
   \draw(.25,5.72) .. controls (.25, 6) .. (.9,6.28);
   \draw (1.1,6.72) -- (1.1, 7.25);
   \draw[dashed] (1,.2) .. controls (-0.5,0.2) .. (-0.5,0.3);
   \node at (-0.5, 4.5) [draw,minimum width=20pt,minimum height=10pt,thick, fill=white] {$\mu_V$};
   \node at (0.25, 5.5) [draw,minimum width=20pt,minimum height=10pt,thick, fill=white] {$\mu_V$};
   \node at (1.1, 6.5) [draw,minimum width=20pt,minimum height=10pt,thick, fill=white] {$\mu_W$};
   \node at (1.1, 7.5) {$W$};
   \node at (-1.1, 0.3) {$V$};
   \node at (0.2,0.5) {$V$};
  \end{tikzpicture}
 \end{matrix}
\end{align*}
\begin{align}\label{eqn:Pi_g_Vhom}
 =
 \begin{matrix}
  \begin{tikzpicture}[scale = 1, baseline = {(current bounding box.center)}, line width=0.75pt]
  \node at (1, -0.3) {$V$};
   \node at (2, -0.3) {$W$};
  \draw(2,2) .. controls (2, 2.3) and (1, 2.2) .. (1,2.5) -- (1, 2.8);
  \draw(1,3.2) -- (1, 3.5) .. controls (1, 3.8) and (0, 3.7) .. (0, 4) .. controls (0,4.15) .. (.3, 4.28);
   \draw[white, double=black, line width = 3pt ] (1,0) -- (1,0.75) .. controls (1, 1.25) and (0,1) .. (0, 1.5) -- (0, 3.5) .. controls (0,3.8) and (1,3.7) .. (1,4) .. controls (1,4.15) .. (.7, 4.28);
  \draw[white, double=black, line width = 3pt ] (2,0) -- (2, 1.5) .. controls (2, 1.8) and (1, 1.7) .. (1,2) .. controls (1, 2.3) and (2, 2.2) .. (2,2.5) -- (2,4.5) .. controls (2,5) .. (1.45, 5.28);
    \draw[white, double=black, line width = 3pt ] (2,2) .. controls (2, 1.7) and (1, 1.8) .. (1,1.5) .. controls (1, 1) and (0,1.25) .. (0,0.75) .. controls (0, 0.2) and (-1,0.2) .. (-1, 0.75) -- (-1, 5.5) .. controls (-1, 6) .. (-.1, 6.28);
    \draw(.5,4.72) .. controls (.5,5) .. (1.05, 5.28);
    \draw(1.25,5.72) .. controls (1.25,6) .. (.3,6.28);
    \draw(0.1, 6.72) -- (.1,7.25);
    \node at (1, 3) [draw,minimum width=10pt,minimum height=10pt,thick, fill=white] {$g$};
    \node at (0.5, 4.5) [draw,minimum width=20pt,minimum height=10pt,thick, fill=white] {$\mu_V$};
   \node at (1.25, 5.5) [draw,minimum width=20pt,minimum height=10pt,thick, fill=white] {$\mu_W$};
   \node at (0.1, 6.5) [draw,minimum width=20pt,minimum height=10pt,thick, fill=white] {$\mu_W$};
   \draw[dashed] (1,.2) .. controls (-0.5,0.2) .. (-0.5,0.3);
   \node at (0.1, 7.5) {$W$};
   \node at (-1.1, 0.3) {$V$};
   \node at (0.2,0.5) {$V$};
  \end{tikzpicture}
 \end{matrix} =
 \begin{matrix}
  \begin{tikzpicture}[scale = 1, baseline = {(current bounding box.center)}, line width=0.75pt]
  \draw(-0.1, 6.72) -- (-.1,7.25);
  \draw(.75,5.72) .. controls (.75,6) .. (.1, 6.28);
  \draw(0,4.72) .. controls (0,5) .. (.55, 5.28);
  \draw(1.5,4.72) .. controls (1.5,5) .. (.95,5.28);
  \draw(0,4.28) -- (0,4) .. controls (0,3.5) and (1,3.75) .. (1,3.25) .. controls (1,2.75) and (2,3) .. (2,2.5);
  \draw[white, double=black, line width = 3pt ] (1,0) -- (1,1) .. controls (1,1.5) and (0,1.25) .. (0,1.75) -- (0,3.25) .. controls (0,3.75) and (1,3.5) .. (1,4) .. controls (1,4.15) .. (1.3,4.28);
  \draw[white, double=black, line width = 3pt ] (2,0) -- (2,1.75) .. controls (2,2.25) and (1,2) .. (1,2.5) .. controls (1,3) and (2,2.75) .. (2,3.25) -- (2,4) .. controls (2,4.15) .. (1.7,4.28);
 \draw[white, double=black, line width = 3pt ] (2,2.5) .. controls (2,2) and (1,2.25) .. (1,1.75) .. controls (1,1.25) and (0,1.5) .. (0,1) .. controls (0,.3) and (-1,.3) .. (-1,1) -- (-1,5.5) .. controls (-1,6) .. (-.3,6.28);
 \draw[dashed] (1,.3) .. controls (-0.5,0.3) .. (-0.5,0.5);
   \node at (0,4.5) [draw,minimum width=10pt,minimum height=10pt,thick, fill=white] {$g$};
   \node at (1.5,4.5) [draw,minimum width=20pt,minimum height=10pt,thick, fill=white] {$\mu_W$};
   \node at (.75, 5.5) [draw,minimum width=20pt,minimum height=10pt,thick, fill=white] {$\mu_W$};
   \node at (-.1,6.5) [draw,minimum width=20pt,minimum height=10pt,thick, fill=white] {$\mu_W$};
   \node at (-.1, 7.5) {$W$};
   \node at (-1.1,.5) {$V$};
   \node at (.1,.5) {$V$};
   \node at (1,-.3) {$V$};
   \node at (2,-.3) {$W$};
  \end{tikzpicture} 
\end{matrix} =
\begin{matrix}
  \begin{tikzpicture}[scale = 1, baseline = {(current bounding box.center)}, line width=0.75pt]
  \node at (1.5,-1.3) {$V$};
   \node (w) at (2.5,-1.3) {$W$};
   \draw(1.5,-1) .. controls (1.5,-.75) .. (1.8,-.48);
   \draw(2.5,-1) .. controls (2.5,-.75) .. (2.2,-.48);
   \node at (2,-.25) [draw,minimum width=20pt,minimum height=10pt,thick, fill=white] {$\mu_W$};
   \node (g) at (1, 3.5) [draw,minimum width=10pt,minimum height=10pt,thick, fill=white] {$g$};
   \draw(1,2.25) .. controls (1,1.75) and (2,2) .. (2,1.5) -- (2,0);
   \draw[white, double=black, line width = 3pt ] (2,2.25) .. controls (2,1.75) and (1,2) .. (1,1.5) .. controls (1,0.8) and (0,0.8) .. (0,1.5) -- (0,4.5) .. controls (0,5) .. (0.55,5.28);
   \draw[dashed] (2,0.4) .. controls (1,.4) .. (0.5,1);
   \draw (1,3.28) -- (1,3) .. controls (1,2.5) and (2,2.75) .. (2,2.25);
   \draw[white, double=black, line width = 3pt ] (1.7, 4.3) .. controls (2, 4) .. (2,3.5) -- (2,3) .. controls (2,2.5) and (1,2.75) .. (1,2.25);
   \draw(1, 3.74) .. controls (1, 4) .. (1.3, 4.3);
   \node (mu1) at (1.5, 4.5) [draw,minimum width=20pt,minimum height=10pt,thick, fill=white] {$\mu_W$};
   \node (mu2) at (.75, 5.5) [draw,minimum width=20pt,minimum height=10pt,thick, fill=white] {$\mu_W$};
   \draw (1.5, 4.72) .. controls (1.5,5) .. (.95, 5.28);
   \draw (.75,5.72) -- (.75,6.25);
   \node at (.75, 6.5) {$W$};
   \node at (-.1,.9) {$V$};
   \node at (1.1,.9) {$V$};
  \end{tikzpicture}
 \end{matrix}
\end{align}
 The third equality uses both the associativity and commutativity of $\mu_V$, and the last step uses naturality of braiding and unit isomorphisms to move the first $\mu_W$.

 \bigskip
 
\noindent\textbf{2. For each $g\in G$, the image $\Pi_g(W)$ is a $g$-twisted $V$-module.} Since $\Pi_g$ is an even morphism in $\rep\,V$, $\Pi_g(W)$ is an object of $\rep\,V$: it is the kernel of the (even) cokernel of $\Pi_g$. Then $\Pi_g(W)$ will be $g$-twisted if
\begin{equation*}
 \mu_W(g\tens 1_W)\cM_{V,W}(1_V\tens\Pi_g)=\mu_W(1_V\tens\Pi_g).
\end{equation*}
By naturality of the monodromy isomorphisms, the left side is $\mu_W(g\tens \Pi_g)\cM_{V,W}$, which we analyze as follows:
\begin{align*}
 \begin{matrix}
  \begin{tikzpicture}[scale = 1, baseline = {(current bounding box.center)}, line width=0.75pt]
  \draw(3,.75) .. controls (3,1.25) and (0,1) .. (0,1.5) -- (0,4.8);
  \draw (2,4.8) -- (2,4.5) .. controls (2,4) and (3,4.25) .. (3,3.75);
   \draw[white, double=black, line width = 3pt ] (3,0) .. controls (3,.5) and (0,.25) .. (0,.75) .. controls (0,1.25) and (3,1) .. (3,1.5) -- (3,3) .. controls (3,3.5) and (2,3.25) .. (2, 3.75) .. controls (2,4.25) and (3,4) .. (3, 4.5) -- (3,5) .. controls (3,5.5) .. (2.7,5.8);
   \draw[white, double=black, line width = 3pt ] (3,3.75) .. controls (3,3.25) and (2,3.5) .. (2,3) .. controls (2,2.28) and (1,2.28) .. (1,3) -- (1,6) .. controls (1,6.5) .. (1.55,6.8);
   \draw[white, double=black, line width = 3pt ] (0,0) .. controls (0,.5) and (3,.25) .. (3,.75);
   \draw (2,5.2) .. controls (2,5.5) .. (2.3,5.8);
   \draw (2.5,6.2) .. controls (2.5,6.5) .. (1.95,6.8);
   \draw (0,5.2) -- (0,7) .. controls (0,7.5) .. (.7,7.8);
   \draw (1.75,7.2) .. controls (1.75,7.5) .. (1.1,7.8);
   \draw (.9,8.2) -- (.9,8.75);
   \node at (0,5) [draw,minimum width=10pt,minimum height=10pt,thick, fill=white] {$g$};
   \node at (2,5) [draw,minimum width=10pt,minimum height=10pt,thick, fill=white] {$g$};
   \node at (2.5,6) [draw,minimum width=20pt,minimum height=10pt,thick, fill=white] {$\mu_W$};
   \node at (1.75,7) [draw,minimum width=20pt,minimum height=10pt,thick, fill=white] {$\mu_W$};
   \node at (0.9,8) [draw,minimum width=20pt,minimum height=10pt,thick, fill=white] {$\mu_W$};
   \node at (0.9,9) {$W$};
   \node at (0,-.25) {$V$};
   \node at (3,-.25) {$W$};
   \node at (.9,2.5) {$V$};
   \node at (2.1,2.5) {$V$};
   \draw[dashed] (3,1.9) .. controls (2,1.9) .. (1.5,2.48);
  \end{tikzpicture}
 \end{matrix} \hspace{1em} = \hspace{-2.5em}
 \begin{matrix}
  \begin{tikzpicture}[scale = 1, baseline = {(current bounding box.center)}, line width=0.75pt]
  \draw(3,.75) .. controls (3,1.25) and (0,1) .. (0,1.5) -- (0,4.3);
  \draw (2,4.3) -- (2,4) .. controls (2,3.7) and (3,3.8) .. (3,3.5);
   \draw[white, double=black, line width = 3pt ] (3,0) .. controls (3,.5) and (0,.25) .. (0,.75) .. controls (0,1.25) and (3,1) .. (3,1.5) -- (3,3) .. controls (3,3.3) and (2,3.2) .. (2, 3.5) .. controls (2,3.8) and (3,3.7) .. (3, 4) -- (3,6) .. controls (3,6.5) .. (2.45,6.8);
   \draw (2,4.6) -- (2,4.8) .. controls (2,5.1) and (1,5) .. (1,5.3) .. controls (1,5.6) .. (1.3,5.8);
   \draw[white, double=black, line width = 3pt ] (3,3.5) .. controls (3,3.2) and (2,3.3) .. (2,3) .. controls (2,2.28) and (1,2.28) .. (1,3) -- (1,4.8) .. controls (1,5.1) and (2,5) .. (2,5.3) .. controls (2,5.6) .. (1.7,5.8);
   \draw[white, double=black, line width = 3pt ] (0,0) .. controls (0,.5) and (3,.25) .. (3,.75);
   \draw (1.5,6.2) .. controls (1.5,6.5) .. (2.15,6.8);
   \draw (0,4.6) -- (0,7) .. controls (0,7.5) .. (.9,7.8);
   \draw (2.25,7.2) .. controls (2.25,7.5) .. (1.3,7.8);
   \draw (1.1,8.2) -- (1.1,8.75);
   \node at (0,4.4) [draw,minimum width=10pt,minimum height=10pt,thick, fill=white] {$g$};
   \node at (2,4.4) [draw,minimum width=10pt,minimum height=10pt,thick, fill=white] {$g$};
   \node at (1.5,6) [draw,minimum width=20pt,minimum height=10pt,thick, fill=white] {$\mu_V$};
   \node at (2.25,7) [draw,minimum width=20pt,minimum height=10pt,thick, fill=white] {$\mu_W$};
   \node at (1.1,8) [draw,minimum width=20pt,minimum height=10pt,thick, fill=white] {$\mu_W$};
   \node at (1.1,9) {$W$};
   \node at (0,-.25) {$V$};
   \node at (3,-.25) {$W$};
   \node at (.9,2.5) {$V$};
   \node at (2.1,2.5) {$V$};
   \draw[dashed] (0,1.8) .. controls (-2, 2.3) and (2,1.9) .. (1.5,2.5);
  \end{tikzpicture}
 \end{matrix} \hspace{1em} = \hspace{1em}
 \begin{matrix}
  \begin{tikzpicture}[scale = 1, baseline = {(current bounding box.center)}, line width=0.75pt]
  \draw (3,.75) .. controls (3,1.25) and (2,1) .. (2,1.5) .. controls (2,2) and (1,1.75) .. (1,2.25) .. controls (1,2.75) and (0,2.5) .. (0,3) -- (0,4.8);
  \draw (3,3) .. controls (3,3.5) and (2,3.25) .. (2,3.75) .. controls (2,4.25) and (1,4) .. (1,4.5) -- (1,4.8);
  \draw[white, double=black, line width = 3pt ] (3,0) .. controls (3,.5) and (2,.25) .. (2,.75) .. controls (2,1.25) and (3,1) .. (3,1.5) -- (3,2.25) .. controls (3,2.75) and (2,2.5) .. (2,3) .. controls (2,3.5) and (3,3.25) .. (3,3.75) -- (3,7) .. controls (3,7.5) .. (2.3,7.8);
  \draw[white, double=black, line width = 3pt ] (2,0) .. controls (2,.5) and (3,.25) .. (3,.75);
  \draw[white, double=black, line width = 3pt ] (3,3) .. controls (3,2.5) and (2,2.75) .. (2,2.25) .. controls (2,1.75) and (1,2) .. (1,1.5) .. controls (1,.8) and (0,.8) .. (0,1.5) -- (0,2.25) .. controls (0,2.75) and (1,2.5) ..(1,3) -- (1,3.75) .. controls (1,4.25) and (2,4) .. (2,4.5) -- (2,6) .. controls (2,6.5) .. (1.45,6.8);
  \draw (0,5.2) .. controls (0,5.5) .. (.3,5.8);
  \draw (1,5.2) .. controls (1,5.5) .. (.7,5.8);
  \draw (.5,6.2) .. controls (.5,6.5) .. (1.05,6.8);
  \draw (1.25,7.2) .. controls (1.25, 7.5) .. (1.9,7.8);
  \draw (2.1,8.2) -- (2.1,8.75);
   \node at (0,5) [draw,minimum width=10pt,minimum height=10pt,thick, fill=white] {$g$};
   \node at (1,5) [draw,minimum width=10pt,minimum height=10pt,thick, fill=white] {$g$};
   \node at (.5,6) [draw,minimum width=20pt,minimum height=10pt,thick, fill=white] {$\mu_V$};
   \node at (1.25,7) [draw,minimum width=20pt,minimum height=10pt,thick, fill=white] {$\mu_V$};
   \node at (2.1,8) [draw,minimum width=20pt,minimum height=10pt,thick, fill=white] {$\mu_W$};
   \node at (2.1,9) {$W$};
   \draw[dashed] (2,0) .. controls (.75,.25) .. (.5,1);
   \node at (2,-.25) {$V$};
   \node at (3,-.25) {$W$};
   \node at (0,.9) {$V$};
   \node at (1,.9) {$V$};
  \end{tikzpicture}
 \end{matrix}
\end{align*}
\begin{align}\label{eqn:Im_of_Pi_g_twisted_1}
 = \hspace{1em} \begin{matrix}
   \begin{tikzpicture}[scale = 1, baseline = {(current bounding box.center)}, line width=0.75pt]
    \draw (3,.75) .. controls (3,1.25) and (2,1) .. (2,1.5) .. controls (2,2) and (1,1.75) .. (1,2.25) -- (1,3.75) .. controls (1,3.9) .. (1.3,4);
  \draw (3,3) .. controls (3,3.5) and (2,3.25) .. (2,3.75) .. controls (2,3.9) .. (1.7,4);
  \draw[white, double=black, line width = 3pt ] (3,0) .. controls (3,.5) and (2,.25) .. (2,.75) .. controls (2,1.25) and (3,1) .. (3,1.5) -- (3,2.25) .. controls (3,2.75) and (2,2.5) .. (2,3) .. controls (2,3.5) and (3,3.25) .. (3,3.75) -- (3,7) .. controls (3,7.5) .. (2.1,7.8);
  \draw[white, double=black, line width = 3pt ] (2,0) .. controls (2,.5) and (3,.25) .. (3,.75);
  \draw (1.5,5.2) -- (1.5,5.5) .. controls (1.5,6) and (0,5.75) .. (0,6.25) .. controls (0,6.55) .. (.55,6.8);
  \draw[white, double=black, line width = 3pt ] (3,3) .. controls (3,2.5) and (2,2.75) .. (2,2.25) .. controls (2,1.75) and (1,2) .. (1,1.5) .. controls (1,.8) and (0,.8) .. (0,1.5) -- (0,5.5) .. controls (0,6) and (1.5,5.75) ..(1.5,6.25) .. controls (1.5,6.55) .. (.95,6.8);
  \draw (.75,7.2) .. controls (.75, 7.5) .. (1.7,7.8);
  \draw (1.9,8.2) -- (1.9,8.75);
  \draw (1.5,4.4) -- (1.5,4.8);
   \node at (1.5,5) [draw,minimum width=10pt,minimum height=10pt,thick, fill=white] {$g$};
   \node at (1.5,4.2) [draw,minimum width=20pt,minimum height=10pt,thick, fill=white] {$\mu_V$};
   \node at (.75,7) [draw,minimum width=20pt,minimum height=10pt,thick, fill=white] {$\mu_V$};
   \node at (1.9,8) [draw,minimum width=20pt,minimum height=10pt,thick, fill=white] {$\mu_W$};
   \node at (1.9,9) {$W$};
   \draw[dashed] (2,0) .. controls (.75,.25) .. (.5,1);
   \node at (2,-.25) {$V$};
   \node at (3,-.25) {$W$};
   \node at (0,.9) {$V$};
   \node at (1,.9) {$V$};
   \end{tikzpicture}
  \end{matrix} \hspace{1em} = \hspace{1em}
  \begin{matrix}
   \begin{tikzpicture}[scale = 1, baseline = {(current bounding box.center)}, line width=0.75pt]
    \draw (3,.75) .. controls (3,1.25) and (2,1) .. (2,1.5) .. controls (2,2) and (1,1.75) .. (1,2.25) -- (1,3.75) .. controls (1,4.1) .. (1.3,4.3);
  \draw (3,3) .. controls (3,3.5) and (2,3.25) .. (2,3.75) .. controls (2,4.1) .. (1.7,4.3);
  \draw[white, double=black, line width = 3pt ] (3,0) .. controls (3,.5) and (2,.25) .. (2,.75) .. controls (2,1.25) and (3,1) .. (3,1.5) -- (3,2.25) .. controls (3,2.75) and (2,2.5) .. (2,3) .. controls (2,3.5) and (3,3.25) .. (3,3.75) -- (3,7) .. controls (3,7.5) .. (2.1,7.8);
  \draw[white, double=black, line width = 3pt ] (2,0) .. controls (2,.5) and (3,.25) .. (3,.75);
  \draw (1.5,5.7) .. controls (1.5,6.5) .. (.95,6.8);
  \draw[white, double=black, line width = 3pt ] (3,3) .. controls (3,2.5) and (2,2.75) .. (2,2.25) .. controls (2,1.75) and (1,2) .. (1,1.5) .. controls (1,.8) and (0,.8) .. (0,1.5) -- (0,5.5) .. controls (0,6.5) .. (.55,6.8);
  \draw (.75,7.2) .. controls (.75, 7.5) .. (1.7,7.8);
  \draw (1.9,8.2) -- (1.9,8.75);
  \draw (1.5,4.7) -- (1.5,5.3);
   \node at (1.5,5.5) [draw,minimum width=10pt,minimum height=10pt,thick, fill=white] {$g$};
   \node at (1.5,4.5) [draw,minimum width=20pt,minimum height=10pt,thick, fill=white] {$\mu_V$};
   \node at (.75,7) [draw,minimum width=20pt,minimum height=10pt,thick, fill=white] {$\mu_V$};
   \node at (1.9,8) [draw,minimum width=20pt,minimum height=10pt,thick, fill=white] {$\mu_W$};
   \node at (1.9,9) {$W$};
   \draw[dashed] (2,0) .. controls (.75,.25) .. (.5,1);
   \node at (2,-.25) {$V$};
   \node at (3,-.25) {$W$};
   \node at (0,.9) {$V$};
   \node at (1,.9) {$V$};
   \end{tikzpicture}
  \end{matrix} .
\end{align}
We simplify the braidings here with the Yang-Baxter relation, the commutativity of $\mu_V$, the hexagon axioms, and the naturality of braiding:
\begin{align}\label{eqn:Im_of_Pi_g_twisted_2}
 \begin{matrix}
  \begin{tikzpicture}[scale = 1, baseline = {(current bounding box.center)}, line width=0.75pt]
  \draw (2,.75) .. controls (2,1.25) and (1,1) .. (1,1.5) .. controls (1,2) and (0,1.75) .. (0,2.25) -- (0,3.75) .. controls (0,4) .. (.3,4.3);
  \draw (2,3) .. controls (2,3.5) and (1,3.25) .. (1,3.75) .. controls (1,4) .. (.7,4.3);
   \draw[white, double=black, line width = 3pt ] (2, 0) .. controls (2,.5) and (1,.25) .. (1,.75) .. controls (1,1.25) and (2,1) ..(2,1.5) -- (2,2.25) .. controls (2,2.75) and (1,2.5) .. (1,3) .. controls (1,3.5) and (2,3.25) .. (2,3.75) -- (2,5.15);
   \draw[white, double=black, line width = 3pt ] (1,0) .. controls (1,.5) and (2,.25) .. (2,.75);
   \draw[white, double=black, line width = 3pt ] (0,0) -- (0,1.5) .. controls (0,2) and (1,1.75) .. (1,2.25) .. controls (1,2.75) and (2,2.5) .. (2,3);
   \draw (.5,4.7)--(.5,5.15);
   \node at (.5,4.5) [draw,minimum width=20pt,minimum height=10pt,thick, fill=white] {$\mu_V$};
   \node at (0,-.25) {$V$};
   \node at (1,-.25) {$V$};
   \node at (2,-.25) {$W$};
   \node at (.5,5.4) {$V$};
   \node at (2,5.4) {$W$};
  \end{tikzpicture}
  \end{matrix} =
  \begin{matrix}
  \begin{tikzpicture}[scale = 1, baseline = {(current bounding box.center)}, line width=0.75pt]
  \draw (2,.75) -- (2,1.5) .. controls (2,2) and (1,1.75) .. (1,2.25) .. controls (1,2.75) and (0,2.5) .. (0,3) -- (0,3.75).. controls (0,4) .. (.3,4.3);
  \draw (2,3) .. controls (2,3.5) and (1,3.25) .. (1,3.75) .. controls (1,4) .. (.7,4.3);
   \draw[white, double=black, line width = 3pt ] (2, 0) .. controls (2,.5) and (1,.25) .. (1,.75) .. controls (1,1.25) and (0,1) .. (0,1.5) -- (0,2.25) .. controls (0,2.75) and (1,2.5) .. (1,3).. controls (1,3.5) and (2,3.25) .. (2,3.75) -- (2,5.15);
   \draw[white, double=black, line width = 3pt ] (1,0) .. controls (1,.5) and (2,.25) .. (2,.75);
   \draw[white, double=black, line width = 3pt ] (0,0) -- (0,.75) .. controls (0,1.25) and (1,1) .. (1,1.5) .. controls (1,2) and (2,1.75) .. (2,2.25) -- (2,3);
   \draw (.5,4.7)--(.5,5.15);
   \node at (.5,4.5) [draw,minimum width=20pt,minimum height=10pt,thick, fill=white] {$\mu_V$};
   \node at (0,-.25) {$V$};
   \node at (1,-.25) {$V$};
   \node at (2,-.25) {$W$};
   \node at (.5,5.4) {$V$};
   \node at (2,5.4) {$W$};
  \end{tikzpicture}
  \end{matrix} =
  \begin{matrix}
  \begin{tikzpicture}[scale = 1, baseline = {(current bounding box.center)}, line width=0.75pt]
  \draw (2,.75) -- (2,2.25) .. controls (2,2.75) and (1,2.5) .. (1,3) .. controls (1,3.5) and (0,3.25) .. (0,3.75) .. controls (0,4) .. (.3,4.3);
  \draw[white, double=black, line width = 3pt ] (1,1.5) .. controls (1,2) and (0,1.75) .. (0,2.25) -- (0,3) .. controls (0,3.5) and (1,3.25) .. (1,3.75) .. controls (1,4) .. (.7,4.3);
   \draw[white, double=black, line width = 3pt ] (2, 0) .. controls (2,.5) and (1,.25) .. (1,.75) .. controls (1,1.25) and (0,1) .. (0,1.5) .. controls (0,2) and (1,1.75) .. (1,2.25).. controls (1,2.75) and (2,2.5) .. (2,3) -- (2,5.15);
   \draw[white, double=black, line width = 3pt ] (1,0) .. controls (1,.5) and (2,.25) .. (2,.75);
   \draw[white, double=black, line width = 3pt ] (0,0) -- (0,.75) .. controls (0,1.25) and (1,1) .. (1,1.5);
   \draw (.5,4.7)--(.5,5.15);
   \node at (.5,4.5) [draw,minimum width=20pt,minimum height=10pt,thick, fill=white] {$\mu_V$};
   \node at (0,-.25) {$V$};
   \node at (1,-.25) {$V$};
   \node at (2,-.25) {$W$};
   \node at (.5,5.4) {$V$};
   \node at (2,5.4) {$W$};
  \end{tikzpicture}
  \end{matrix} =
  \begin{matrix}
  \begin{tikzpicture}[scale = 1, baseline = {(current bounding box.center)}, line width=0.75pt]
  \draw (2,.75) -- (2,2.25) .. controls (2,2.75) and (1,2.5) .. (1,3) .. controls (1,3.25) .. (.7,3.55);
  \draw[white, double=black, line width = 3pt ] (1,1.5) .. controls (1,2) and (0,1.75) .. (0,2.25) -- (0,3) .. controls (0,3.25) .. (.3,3.55);
   \draw[white, double=black, line width = 3pt ] (2, 0) .. controls (2,.5) and (1,.25) .. (1,.75) .. controls (1,1.25) and (0,1) .. (0,1.5) .. controls (0,2) and (1,1.75) .. (1,2.25).. controls (1,2.75) and (2,2.5) .. (2,3) -- (2,4.5);
   \draw[white, double=black, line width = 3pt ] (1,0) .. controls (1,.5) and (2,.25) .. (2,.75);
   \draw[white, double=black, line width = 3pt ] (0,0) -- (0,.75) .. controls (0,1.25) and (1,1) .. (1,1.5);
   \draw (.5,3.95)--(.5,4.5);
   \node at (.5,3.75) [draw,minimum width=20pt,minimum height=10pt,thick, fill=white] {$\mu_V$};
   \node at (0,-.25) {$V$};
   \node at (1,-.25) {$V$};
   \node at (2,-.25) {$W$};
   \node at (.5,4.75) {$V$};
   \node at (2,4.75) {$W$};
  \end{tikzpicture}
  \end{matrix} =
  \begin{matrix}
  \begin{tikzpicture}[scale = 1, baseline = {(current bounding box.center)}, line width=0.75pt]
  \draw (0,0) .. controls (0,.25) .. (.3,.55);
  \draw (1,0) .. controls (1,.25) .. (.7,.55);
  \draw (2,2.25) .. controls (2,2.75) and (.5,2.5) .. (.5,3);
   \draw[white, double=black, line width = 3pt ] (2,0) -- (2,1.5) .. controls (2,2) and (.5,1.75) .. (.5,2.25) .. controls (.5,2.75) and (2,2.5) .. (2,3); 
   \draw[white, double=black, line width = 3pt ] (.5,.95) -- (.5,1.5) .. controls (.5,2) and (2,1.75) .. (2,2.25);
   \node at (.5,.75) [draw,minimum width=20pt,minimum height=10pt,thick, fill=white] {$\mu_V$};
   \node at (0,-.25) {$V$};
   \node at (1,-.25) {$V$};
   \node at (2,-.25) {$W$};
   \node at (.5,3.25) {$V$};
   \node at (2,3.25) {$W$};
  \end{tikzpicture}
  \end{matrix}
  \end{align}
Finally, we insert \eqref{eqn:Im_of_Pi_g_twisted_2} back into \eqref{eqn:Im_of_Pi_g_twisted_1} and apply Lemma \ref{rigidlike_lemma}:
\begin{align}\label{eqn:Im_of_Pi_g_twisted_3}
 \begin{matrix}
    \begin{tikzpicture}[scale = 1, baseline = {(current bounding box.center)}, line width=0.75pt]
    \draw[white, double=black, line width = 3pt ] (3,3.75) .. controls (3,4.25) and (1.5,4) .. (1.5,4.5) -- (1.5,4.8);
     \draw[white, double=black, line width = 3pt ] (3,0) -- (3,3) .. controls (3,3.5) and (1.5,3.25) .. (1.5,3.75) .. controls (1.5,4.25) and (3,4) .. (3,4.5) -- (3,6) .. controls (3,6.5) .. (2.1,6.8);
     \draw[white, double=black, line width = 3pt ] (1.5,2.45) -- (1.5,3) .. controls (1.5,3.5) and (3,3.25) .. (3,3.75);
     \draw (2,0) -- (2,1.5) .. controls (2,1.8) .. (1.7,2.05);
     \draw (1.3,2.05) .. controls (1,1.8) .. (1,1.5) .. controls (1,.8) and (0,.8) .. (0,1.5) -- (0,5) .. controls (0,5.5) .. (.55,5.8);
     \draw (1.5,5.2) .. controls (1.5,5.5) .. (.95,5.8);
     \draw (.75,6.2) .. controls (.75,6.5) .. (1.7,6.8);
     \draw (1.9,7.2) -- (1.9,7.7);
     \draw[dashed] (2,.4) .. controls (.5,.5) .. (.5,1);
     \node at (1.9,7) [draw,minimum width=20pt,minimum height=10pt,thick, fill=white] {$\mu_W$};
     \node at (.75,6) [draw,minimum width=20pt,minimum height=10pt,thick, fill=white] {$\mu_V$};
     \node at (1.5,2.25) [draw,minimum width=20pt,minimum height=10pt,thick, fill=white] {$\mu_V$};
     \node at (1.5,5) [draw,minimum width=10pt,minimum height=10pt,thick, fill=white] {$g$};
     \node at (1.9,7.95) {$W$};
     \node at (2,-.25) {$V$};
     \node at (3,-.25) {$W$};
     \node at (0,.9) {$V$};
     \node at (1,.9) {$V$};
    \end{tikzpicture}
   \end{matrix} =
   \begin{matrix}
    \begin{tikzpicture}[scale = 1, baseline = {(current bounding box.center)}, line width=0.75pt]
    \draw[white, double=black, line width = 3pt ] (3,3.75) .. controls (3,4.25) and (2,4) .. (2,4.5) -- (2,4.8);
     \draw[white, double=black, line width = 3pt ] (3,0) -- (3,3) .. controls (3,3.5) and (2,3.25) .. (2,3.75) .. controls (2,4.25) and (3,4) .. (3,4.5) -- (3,6) .. controls (3,6.5) .. (2.3,6.8);
     \draw[white, double=black, line width = 3pt ] (.7,2.05) .. controls (1,1.8) .. (1,1.5) .. controls (1,.8) and (2,.8) .. (2,1.5) -- (2,3) .. controls (2,3.5) and (3,3.25) .. (3,3.75);
     \draw (0,0) -- (0,1.5) .. controls (0,1.8) .. (.3,2.05);
     \draw (.5,2.45) -- (.5,5) .. controls (.5,5.5) .. (1.05,5.8);
     \draw (2,5.2) .. controls (2,5.5) .. (1.45,5.8);
     \draw (1.25,6.2) .. controls (1.25,6.5) .. (1.9,6.8);
     \draw (2.1,7.2) -- (2.1,7.7);
     \draw[dashed] (0,.4) .. controls (1.5,.5) .. (1.5,1);
     \node at (2.1,7) [draw,minimum width=20pt,minimum height=10pt,thick, fill=white] {$\mu_W$};
     \node at (1.25,6) [draw,minimum width=20pt,minimum height=10pt,thick, fill=white] {$\mu_V$};
     \node at (.5,2.25) [draw,minimum width=20pt,minimum height=10pt,thick, fill=white] {$\mu_V$};
     \node at (2,5) [draw,minimum width=10pt,minimum height=10pt,thick, fill=white] {$g$};
     \node at (2.1,7.95) {$W$};
     \node at (0,-.25) {$V$};
     \node at (3,-.25) {$W$};
     \node at (1,.9) {$V$};
     \node at (2,.9) {$V$};
    \end{tikzpicture}
   \end{matrix} = 
   \begin{matrix}
    \begin{tikzpicture}[scale = 1, baseline = {(current bounding box.center)}, line width=0.75pt]
    \draw[white, double=black, line width = 3pt ] (3,2.75) .. controls (3,3.25) and (2,3) .. (2,3.5) -- (2,3.8);
     \draw[white, double=black, line width = 3pt ] (3,0) -- (3,2) .. controls (3,2.5) and (2,2.25) .. (2,2.75) .. controls (2,3.25) and (3,3) .. (3,3.5) -- (3,6) .. controls (3,6.5) .. (2.3,6.8);
     \draw[white, double=black, line width = 3pt ] (.7,4.8) .. controls (1,4.5) .. (1,4) -- (1,1.5) .. controls (1,.8) and (2,.8) .. (2,1.5) -- (2,2) .. controls (2,2.5) and (3,2.25) .. (3,2.75);
     \draw (0,0) -- (0,4) .. controls (0,4.5) .. (.3,4.8);
     \draw (.5,5.2) .. controls (.5,5.5) .. (1.05,5.8);
     \draw (2,4.2) -- (2,5) .. controls (2,5.5) .. (1.45,5.8);
     \draw (1.25,6.2) .. controls (1.25,6.5) .. (1.9,6.8);
     \draw (2.1,7.2) -- (2.1,7.7);
     \draw[dashed] (3,.4) .. controls (1.5,.5) .. (1.5,1);
     \node at (2.1,7) [draw,minimum width=20pt,minimum height=10pt,thick, fill=white] {$\mu_W$};
     \node at (1.25,6) [draw,minimum width=20pt,minimum height=10pt,thick, fill=white] {$\mu_V$};
     \node at (.5,5) [draw,minimum width=20pt,minimum height=10pt,thick, fill=white] {$\mu_V$};
     \node at (2,4) [draw,minimum width=10pt,minimum height=10pt,thick, fill=white] {$g$};
     \node at (2.1,7.95) {$W$};
     \node at (0,-.25) {$V$};
     \node at (3,-.25) {$W$};
     \node at (1,.9) {$V$};
     \node at (2,.9) {$V$};
    \end{tikzpicture}
   \end{matrix} = 
   \begin{matrix}
    \begin{tikzpicture}[scale = 1, baseline = {(current bounding box.center)}, line width=0.75pt]
    \draw[white, double=black, line width = 3pt ] (3,2.75) .. controls (3,3.25) and (2,3) .. (2,3.5) -- (2,3.8);
     \draw[white, double=black, line width = 3pt ] (3,0) -- (3,2) .. controls (3,2.5) and (2,2.25) .. (2,2.75) .. controls (2,3.25) and (3,3) .. (3,3.5) -- (3,4) .. controls (3,4.5) .. (2.7,4.8);
     \draw[white, double=black, line width = 3pt ] (1.55,5.8) .. controls (1,5.5) .. (1,5) -- (1,1.5) .. controls (1,.8) and (2,.8) .. (2,1.5) -- (2,2) .. controls (2,2.5) and (3,2.25) .. (3,2.75);
     \draw (0,0) -- (0,6) .. controls (0,6.5) .. (.7,6.8);
     \draw (2.5,5.2) .. controls (2.5,5.5) .. (1.95,5.8);
     \draw (2,4.2) .. controls (2,4.5) .. (2.3,4.8);
     \draw (1.75,6.2) .. controls (1.75,6.5) .. (1.1,6.8);
     \draw (.9,7.2) -- (.9,7.7);
     \draw[dashed] (3,.4) .. controls (1.5,.5) .. (1.5,1);
     \node at (.9,7) [draw,minimum width=20pt,minimum height=10pt,thick, fill=white] {$\mu_W$};
     \node at (1.75,6) [draw,minimum width=20pt,minimum height=10pt,thick, fill=white] {$\mu_W$};
     \node at (2.5,5) [draw,minimum width=20pt,minimum height=10pt,thick, fill=white] {$\mu_W$};
     \node at (2,4) [draw,minimum width=10pt,minimum height=10pt,thick, fill=white] {$g$};
     \node at (.9,7.95) {$W$};
     \node at (0,-.25) {$V$};
     \node at (3,-.25) {$W$};
     \node at (1,.9) {$V$};
     \node at (2,.9) {$V$};
    \end{tikzpicture}
   \end{matrix},
\end{align}
which is $\mu_W(1_V\tens\Pi_g)$.

\bigskip

\noindent\textbf{3. For all $g,h\in G$, $\Pi_g\Pi_h=\vert G\vert\delta_{g,h}\Pi_h$, and $\sum_{g\in G} \pi_g= 1_W$.} Since we have just shown that $\Pi_h(W)$ is an $h$-twisted module for any $h\in G$, for the first relation it is enough to prove that
\begin{equation*}
 \Pi_g=\vert G\vert\delta_{g,h} 1_W
\end{equation*}
when $W$ is an $h$-twisted $V$-module. In fact, when $W$ is $h$-twisted, $\Pi_g$ is given by 
\begin{align}\label{eqn:PiG_PiH}
 \begin{matrix}
    \begin{tikzpicture}[scale = 1, baseline = {(current bounding box.center)}, line width=0.75pt]
    \draw[white, double=black, line width = 3pt ] (3,2.25) .. controls (3,2.75) and (2,2.5) .. (2,3) -- (2,3.3);
     \draw[white, double=black, line width = 3pt ] (3,0) -- (3,1.5) .. controls (3,2) and (2,1.75) .. (2,2.25) .. controls (2,2.75) and (3,2.5) .. (3,3) -- (3,3.5) .. controls (3,4) .. (2.7,4.3);
     \draw[white, double=black, line width = 3pt ] (1.55,5.3) .. controls (1,5) .. (1,4.5) -- (1,1.5) .. controls (1,.8) and (2,.8) .. (2,1.5) .. controls (2,2) and (3,1.75) .. (3,2.25);
     \draw (2.5,4.7) .. controls (2.5,5.1) .. (1.95,5.3);
     \draw (2,3.7) .. controls (2,4) .. (2.3,4.3);
     \draw (1.75,5.7) -- (1.75,6.1);
     \draw[dashed] (3,.4) .. controls (1.5,.5) .. (1.5,1);
     \node at (1.75,5.5) [draw,minimum width=20pt,minimum height=10pt,thick, fill=white] {$\mu_W$};
     \node at (2.5,4.5) [draw,minimum width=20pt,minimum height=10pt,thick, fill=white] {$\mu_W$};
     \node at (2,3.5) [draw,minimum width=10pt,minimum height=10pt,thick, fill=white] {$g$};
     \node at (3,-.25) {$W$};
     \node at (1,.9) {$V$};
     \node at (2,.9) {$V$};
     \node at (1.75,6.35) {$W$};
    \end{tikzpicture}
   \end{matrix} =
   \begin{matrix}
    \begin{tikzpicture}[scale = 1, baseline = {(current bounding box.center)}, line width=0.75pt]
    \draw (2.5,4.7) .. controls (2.5,5.1) .. (1.95, 5.3);
    \draw[white, double=black, line width = 3pt ] (3,3.25) .. controls (3,3.75) and (2,3.5) .. (2,4) .. controls (2,4.2) .. (2.3,4.3);
     \draw[white, double=black, line width = 3pt ] (3,0) -- (3,2.5) .. controls (3,3) and (2,2.75) .. (2,3.25) .. controls (2,3.75) and (3,3.5) .. (3,4) .. controls (3,4.2) .. (2.7,4.3);
     \draw[white, double=black, line width = 3pt ] (2,2.2) -- (2,2.5) .. controls (2,3) and (3,2.75) .. (3,3.25);
     \draw[white, double=black, line width = 3pt ] (1.55,5.3) .. controls (1,5) .. (1,4.5) -- (1,1.5) .. controls (1,.8) and (2,.8) .. (2,1.5) -- (2,1.8);
     \draw (1.75,5.7) -- (1.75,6.1);
     \draw[dashed] (3,.4) .. controls (1.5,.5) .. (1.5,1);
     \node at (1.75,5.5) [draw,minimum width=20pt,minimum height=10pt,thick, fill=white] {$\mu_W$};
     \node at (2.5,4.5) [draw,minimum width=20pt,minimum height=10pt,thick, fill=white] {$\mu_W$};
     \node at (2,2) [draw,minimum width=10pt,minimum height=10pt,thick, fill=white] {$g$};
     \node at (3,-.25) {$W$};
     \node at (1,.9) {$V$};
     \node at (2,.9) {$V$};
     \node at (1.75,6.35) {$W$};
    \end{tikzpicture}
   \end{matrix} =
    \begin{matrix}
    \begin{tikzpicture}[scale = 1, baseline = {(current bounding box.center)}, line width=0.75pt]
    \draw (2.5,4.2) .. controls (2.5,4.6) .. (1.95, 4.8);
     \draw[white, double=black, line width = 3pt ] (3,0) -- (3,3.2) .. controls (3,3.6) .. (2.7,3.8);
     \draw[white, double=black, line width = 3pt ] (1.55,4.8) .. controls (1,4.5) .. (1,4) -- (1,1.5) .. controls (1,.8) and (2,.8) .. (2,1.5) -- (2,3.2) .. controls (2,3.6) .. (2.3,3.8);
     \draw (1.75,5.2) -- (1.75,5.6);
     \draw[dashed] (3,.4) .. controls (1.5,.5) .. (1.5,1);
     \node at (1.75,5) [draw,minimum width=20pt,minimum height=10pt,thick, fill=white] {$\mu_W$};
     \node at (2.5,4) [draw,minimum width=20pt,minimum height=10pt,thick, fill=white] {$\mu_W$};
     \node at (2,2) [draw,minimum width=10pt,minimum height=10pt,thick, fill=white] {$g$};
     \node at (2,3) [draw,minimum width=10pt,minimum height=10pt,thick, fill=white] {$h^{-1}$};
     \node at (3,-.25) {$W$};
     \node at (1,.9) {$V$};
     \node at (2,.9) {$V$};
     \node at (1.75,5.85) {$W$};
    \end{tikzpicture}
   \end{matrix} =
   \begin{matrix}
    \begin{tikzpicture}[scale = 1, baseline = {(current bounding box.center)}, line width=0.75pt]
    \draw (1.5,3.2) .. controls (1.5,3.6) .. (2.05, 3.8);
     \draw[white, double=black, line width = 3pt ] (3,0) -- (3,3.2) .. controls (3,3.6) .. (2.45,3.8);
     \draw[white, double=black, line width = 3pt ] (1.3,2.8) .. controls (1,2.5) .. (1,2) -- (1,1.5) .. controls (1,.8) and (2,.8) .. (2,1.5) -- (2,2) .. controls (2,2.5) .. (1.7,2.8);
     \draw (2.25,4.2) -- (2.25,4.6);
     \draw[dashed] (3,.4) .. controls (1.5,.5) .. (1.5,1);
     \node at (2.25,4) [draw,minimum width=20pt,minimum height=10pt,thick, fill=white] {$\mu_W$};
     \node at (1.5,3) [draw,minimum width=20pt,minimum height=10pt,thick, fill=white] {$\mu_V$};
     \node at (2,2) [draw,minimum width=10pt,minimum height=10pt,thick, fill=white] {$h^{-1}g$};
     \node at (3,-.25) {$W$};
     \node at (1,.9) {$V$};
     \node at (2,.9) {$V$};
     \node at (2.25,4.85) {$W$};
    \end{tikzpicture}
   \end{matrix},
\end{align}
which is $\vert G\vert \delta_{g,h} 1_W$ by Lemma \ref{trace_of_g_lemma} and the unit property of $W$.

 Finally we compute $\sum_{g\in G}\pi_g$ using bilinearity of composition and tensor products of morphisms in a tensor category, the assumption $\frac{1}{\vert G\vert}\sum_{g\in G} g =\iota_V\varepsilon_V$, the triviality of $\cM_{\vac,W}$, and the associativity of $\mu_W$:
 \begin{align}\label{eqn:sum_pi_g_identity}
 \begin{matrix}
    \begin{tikzpicture}[scale = 1, baseline = {(current bounding box.center)}, line width=0.75pt]
    \draw[white, double=black, line width = 3pt ] (4.5,2.25) .. controls (4.5,2.75) and (3,2.5) .. (3,3) -- (3,3.3);
     \draw[white, double=black, line width = 3pt ] (4.5,0) -- (4.5,1.5) .. controls (4.5,2) and (3,1.75) .. (3,2.25) .. controls (3,2.75) and (4.5,2.5) .. (4.5,3) -- (4.5,3.5) .. controls (4.5,4) .. (4.05,4.3);
     \draw[white, double=black, line width = 3pt ] (2.325,5.3) .. controls (1.5,5) .. (1.5,4.5) -- (1.5,1.5) .. controls (1.5,.8) and (3,.8) .. (3,1.5) .. controls (3,2) and (4.5,1.75) .. (4.5,2.25);
     \draw (3.75,4.7) .. controls (3.75,5.1) .. (2.925,5.3);
     \draw (3,3.7) .. controls (3,4) .. (3.45,4.3);
     \draw (2.625,5.7) -- (2.625,6.1);
     \draw[dashed] (4.5,.4) .. controls (2.25,.5) .. (2.25,1);
     \node at (2.625,5.5) [draw,minimum width=20pt,minimum height=10pt,thick, fill=white] {$\mu_W$};
     \node at (3.75,4.5) [draw,minimum width=20pt,minimum height=10pt,thick, fill=white] {$\mu_W$};
     \node at (3,3.5) [draw,minimum width=10pt,minimum height=10pt,thick, fill=white] {$\frac{1}{\vert G\vert}\sum_{g\in G} g$};
     \node at (4,-.25) {$W$};
     \node at (1.5,.9) {$V$};
     \node at (3,.9) {$V$};
     \node at (2.625,6.35) {$W$};
    \end{tikzpicture}
   \end{matrix} =
 \begin{matrix}
    \begin{tikzpicture}[scale = 1, baseline = {(current bounding box.center)}, line width=0.75pt]
    \draw[white, double=black, line width = 3pt ] (3,2.25) .. controls (3,2.75) and (2,2.5) .. (2,3) -- (2,3.3);
     \draw[white, double=black, line width = 3pt ] (3,0) -- (3,1.5) .. controls (3,2) and (2,1.75) .. (2,2.25) .. controls (2,2.75) and (3,2.5) .. (3,3) -- (3,3.5) .. controls (3,4) .. (2.7,4.3);
     \draw[white, double=black, line width = 3pt ] (1.55,5.3) .. controls (1,5) .. (1,4.5) -- (1,1.5) .. controls (1,.8) and (2,.8) .. (2,1.5) .. controls (2,2) and (3,1.75) .. (3,2.25);
     \draw (2.5,4.7) .. controls (2.5,5.1) .. (1.95,5.3);
     \draw (2,3.7) .. controls (2,4) .. (2.3,4.3);
     \draw (1.75,5.7) -- (1.75,6.1);
     \draw[dashed] (3,.4) .. controls (1.5,.5) .. (1.5,1);
     \node at (1.75,5.5) [draw,minimum width=20pt,minimum height=10pt,thick, fill=white] {$\mu_W$};
     \node at (2.5,4.5) [draw,minimum width=20pt,minimum height=10pt,thick, fill=white] {$\mu_W$};
     \node at (2,3.5) [draw,minimum width=10pt,minimum height=10pt,thick, fill=white] {$\iota_V\varepsilon_V$};
     \node at (3,-.25) {$W$};
     \node at (1,.9) {$V$};
     \node at (2,.9) {$V$};
     \node at (1.75,6.35) {$W$};
    \end{tikzpicture}
   \end{matrix} =
 \begin{matrix}
    \begin{tikzpicture}[scale = 1, baseline = {(current bounding box.center)}, line width=0.75pt]
     \draw[white, double=black, line width = 3pt ] (3,0) -- (3,2) .. controls (3,2.5) and (2,2.25) .. (2,2.75) .. controls (2,3.25) and (3,3) .. (3,3.5) .. controls (3,4) .. (2.7,4.3);
     \draw[white, double=black, line width = 3pt ] (1.55,5.3) .. controls (1,5) .. (1,4.5) -- (1,1.5) .. controls (1,.8) and (2,.8) .. (2,1.5) -- (2,1.8);
     \draw[dashed] (2,2) .. controls (2,2.5) and (3,2.25) .. (3,2.75) .. controls (3,3.25) and (2,3) .. (2,3.5);
     \draw (2.5,4.7) .. controls (2.5,5.1) .. (1.95,5.3);
     \draw (2,3.7) .. controls (2,4) .. (2.3,4.3);
     \draw (1.75,5.7) -- (1.75,6.1);
     \draw[dashed] (3,.4) .. controls (1.5,.5) .. (1.5,1);
     \node at (1.75,5.5) [draw,minimum width=20pt,minimum height=10pt,thick, fill=white] {$\mu_W$};
     \node at (2.5,4.5) [draw,minimum width=20pt,minimum height=10pt,thick, fill=white] {$\mu_W$};
     \node at (2,3.7) [draw,minimum width=10pt,minimum height=10pt,thick, fill=white] {$\iota_V$};
     \node at (2,1.8) [draw,minimum width=10pt,minimum height=10pt,thick, fill=white] {$\varepsilon_V$};
     \node at (3,-.25) {$W$};
     \node at (1,.9) {$V$};
     \node at (2,.9) {$V$};
     \node at (1.75,6.35) {$W$};
    \end{tikzpicture}
   \end{matrix} =
   \begin{matrix}
    \begin{tikzpicture}[scale = 1, baseline = {(current bounding box.center)}, line width=0.75pt]
     \draw[white, double=black, line width = 3pt ] (3,0) -- (3,4) .. controls (3,4.5) .. (2.45,4.8);
     \draw[white, double=black, line width = 3pt ] (1.3,3.8) .. controls (1,3.5) .. (1,3) -- (1,1.5) .. controls (1,.8) and (2,.8) .. (2,1.5) -- (2,1.8);
     \draw[dashed] (2,2) -- (2,3);
     \draw (1.5,4) .. controls (1.5,4.5) .. (2.05,4.8);
     \draw (2,3) .. controls (2,3.5) .. (1.7,3.8);
     \draw (2.25,5.2) -- (2.25,5.6);
     \draw[dashed] (3,.4) .. controls (1.5,.5) .. (1.5,1);
     \node at (2.25,5) [draw,minimum width=20pt,minimum height=10pt,thick, fill=white] {$\mu_W$};
     \node at (1.5,4) [draw,minimum width=20pt,minimum height=10pt,thick, fill=white] {$\mu_V$};
     \node at (2,3) [draw,minimum width=10pt,minimum height=10pt,thick, fill=white] {$\iota_V$};
     \node at (2,2) [draw,minimum width=10pt,minimum height=10pt,thick, fill=white] {$\varepsilon_V$};
     \node at (3,-.25) {$W$};
     \node at (1,.9) {$V$};
     \node at (2,.9) {$V$};
     \node at (2.25,5.85) {$W$};
    \end{tikzpicture}
   \end{matrix},
   \end{align}
 which is the identity on $W$ by Lemma \ref{iota_lemma} and the unit property of $W$. This completes the proof of the theorem.

\begin{rema}
 When $g=1$, $\pi_g$ projects $W$ onto its maximal untwisted submodule. This projection is defined for general rigid commutative algebra objects in braided tensor categories (see \cite[Lemma 4.3]{KO}).
\end{rema}

\section{Twisted modules for vertex operator superalgebras}\label{sec:VOAs}

Here we interpret the categorical results of the preceding sections as theorems for vertex operator (super)algebras.

\subsection{Definitions}\label{subsec:VOAdefs}

There are several slightly variant notions of vertex operator superalgebra (see for example \cite{DL, Xu, Li3, CKL}); we will use the following definition:
\begin{defi}
 A \textit{vertex operator superalgebra} is a $\frac{1}{2}\ZZ$-graded superspace $V=\bigoplus_{n\in\frac{1}{2}\ZZ} V_{(n)}$ equipped with an even \textit{vertex operator} map
 \begin{align*}
  Y: V\otimes V &\rightarrow{V}[[x,x^{-1}]]\nonumber\\
  u\otimes v\hspace{.25em} & \mapsto Y(u,x)v=\sum_{n\in\ZZ} u_n v\,x^{-n-1}
 \end{align*}
and two distinguished vectors $\mathbf{1}\in V_{(0)}\cap V^\even$ called the \textit{vacuum} and $\omega\in V_{(2)}\cap V^\even$ called the \textit{conformal vector}. The data satisfy the following axioms:
\begin{enumerate}

\item \textit{Grading compatibility}: For $i\in\ZZ/2\ZZ$, $V^i=\bigoplus_{n\in\frac{1}{2}\ZZ} V_{(n)}\cap V^i$.

 \item The \textit{grading restriction conditions}: For each $n\in\frac{1}{2}\ZZ$, $V_{(n)}$ is finite dimensional, and for $n\in\frac{1}{2}\ZZ$ sufficiently negative, $V_{(n)}=0$.
 
 \item \textit{Lower truncation}: For any $u, v\in V$, $Y(u,x)v\in V((x))$, that is, $u_n v=0$ for $n$ sufficiently negative.
 
 \item The \textit{vacuum property}: $Y(\mathbf{1}, x)=1_V$.
 
 \item The \textit{creation property}: For any $v\in V$, $Y(v,x)\mathbf{1}\in V[[x]]$ with constant term $v$.
 
 \item The \textit{Jacobi identity}: For any parity-homogeneous $u,v\in V$,
 \begin{align*}
  x_0^{-1}\delta\left(\dfrac{x_1-x_2}{x_0}\right) Y(u,x_1)Y(v,x_2) & -(-1)^{\vert u\vert \vert v\vert}x_0^{-1}\delta\left(\dfrac{-x_2+x_1}{x_0}\right)Y(v,x_2)Y(u,x_1)\nonumber\\
  & =x_2^{-1}\delta\left(\dfrac{x_1-x_0}{x_2}\right) Y(Y(u,x_0)v,x_2).
 \end{align*}

 \item The \textit{Virasoro algebra properties}: If $Y(\omega, x)=\sum_{n\in\ZZ} L(n)\,x^{-n-2}$, then
 \begin{equation*}
  [L(m), L(n)]=(m-n)L(m+n)+\dfrac{m^3-m}{12}\delta_{m+n,0} c 1_V,
 \end{equation*}
where $c\in\CC$ is the \textit{central charge} of $V$. Moreover, for any $n\in\frac{1}{2}\ZZ$, $V_{(n)}$ is the eigenspace for $L(0)$ with eigenvalue $n$; for $v\in V_{(n)}$, we say that $n$ is the \textit{(conformal) weight} of $v$.

\item The \textit{$L(-1)$-derivative property}: For any $v\in V$,
\begin{equation*}
 Y(L(-1)v,x)=\dfrac{d}{dx}Y(v,x).
\end{equation*}
\end{enumerate}
\end{defi}
\begin{rema}
 Some definitions of vertex operator superalgebra require $V^{\bar{i}}=\bigoplus_{n\in\frac{i}{2}+\ZZ} V_{(n)}$ for $i=0,1$, but this is too restrictive. For example, vertex operator superalgebras based on affine Lie superalgebras are $\ZZ$-graded.
\end{rema}

Next we recall the definition of automorphism of a vertex operator superalgebra:
\begin{defi}
 An \textit{automorphism} of a vertex operator superalgebra $(V, Y, \vac,\omega)$ is an even linear automorphism $g$ of $V$ such that $g\cdot\mathbf{1}=\mathbf{1}$, $g\cdot\omega=\omega$, and for any $v\in V$,
 \begin{equation*}
  g\cdot Y(v,x) = Y(g\cdot v, x)g.
 \end{equation*}
\end{defi}
\begin{rema}
 An automorphism $g$ preserves all the vertex operator superalgebra structure of $V$, including both gradings: the $\ZZ/2\ZZ$ grading because $g$ is even and the $\frac{1}{2}\ZZ$-grading because
 \begin{equation*}
  g Y(\omega,x)=Y(g\cdot\omega,x)g=Y(\omega,x)g
 \end{equation*}
implies $g$ commutes with $L(0)$.
\end{rema}
\begin{rema}
 Since $Y$, $\mathbf{1}$, and $\omega$ are all even in a vertex operator superalgebra $V$, the parity automorphism $P_V=1_{V^\even}\oplus(-1_{V^\odd})$ is an automorphism of the vertex operator superalgebra structure.
\end{rema}

Let $g$ be any even grading-preserving linear automorphism of a vertex operator superalgebra $V$. Then since $V_{(n)}$ is finite dimensional for any $n\in\frac{1}{2}\ZZ$, $g\vert_{V_{(n)}}$ is the exponential of some even linear endomorphism of $V_{(n)}$. Thus $g=e^{2\pi i\gamma}$ where $\gamma$ is a (non-unique) even grading-preserving linear endomorphism of $V$. For concreteness, we choose a specific $\gamma$, following \cite{Ba, HY}: On each $V_{(n)}$, $g$ decomposes uniquely as the product of commuting semisimple and unipotent parts, and the unipotent part is the exponential of a nilpotent endomorphism. Putting these parts together, $g=\sigma e^{2\pi i\mathcal{N}}$ where $\sigma$ is semisimple and $\mathcal{N}$ is locally nilpotent. On any generalized $g$-eigenspace of $V$, $\sigma$ equals a constant $e^{2\pi i\alpha}$ for a unique $\alpha\in\CC$ such that $0\leq\mathrm{Re}\,\alpha<1$. We define $\gamma=\alpha+\mathcal{N}$ on such a generalized eigenspace. As $V$ is the direct sum of its generalized $g$-eigenspaces, this completely specifies $\gamma$.
%

For any grading-preserving linear endomorphism $\gamma$ of $V$, necessarily decomposable as a sum of commuting grading-preserving semisimple and locally nilpotent parts $\gamma_S$ and $\gamma_N$, we define the operator
\begin{equation*}
 x^\gamma: V\rightarrow V[\log x]\{ x\}
\end{equation*}
such that if $v\in V$ is a generalized eigenvector for $\gamma$ with generalized eigenvalue $\alpha$,
\begin{equation*}
 x^\gamma\cdot v= x^\alpha e^{(\log x)\gamma_N}\cdot v,
\end{equation*}
where the exponential sum truncates because $\gamma_N$ is locally nilpotent. Now we can define twisted modules associated to an automorphism of a vertex operator superalgebra:

\begin{defi}\label{def:VoaTwistMods}
 Let $g=e^{2\pi i\gamma}$ be an automorphism of a vertex operator superalgebra $V$ with $\gamma$ chosen as above. A \textit{strongly-graded generalized $g$-twisted $V$-module} is a $\CC$-graded superspace $W=\bigoplus_{h\in\CC} W_{[h]}$ equipped with an even \textit{vertex operator} map
 \begin{align*}
  Y_W: V\otimes W & \rightarrow W[\log x]\{ x\}\nonumber\\
   v\otimes w & \mapsto Y_W(v,x)w=\sum_{h\in\CC} \sum_{k\in\NN} v_{h; k} w\,x^{-h-1} (\log x)^k
 \end{align*}
satisfying the following properties:
\begin{enumerate}
 \item \textit{Grading compatibility}: For $i\in\ZZ/2\ZZ$, $W^i=\bigoplus_{h\in\CC} W_{[h]}\cap W^i$,

 \item The \textit{grading restriction conditions}: For any $h\in\CC$, $W_{[h]}$ is finite dimensional and $W_{[h+r]}=0$ for $r\in\RR$ sufficiently negative.

 \item \textit{Lower truncation}: For any $v\in V$, $w\in W$, and $h\in\CC$, $v_{h+n; k} w = 0$ for $n\in\NN$ sufficiently large, independently of $k\in\NN$.
 
 \item The \textit{$g$-equivariance property}: For any $v\in V$, $Y_W(g\cdot v, e^{2\pi i} x) = Y_W(v,x)$. 
 
 \item The \textit{vacuum property}: $Y_W(\mathbf{1}, x)=1_W$.
 
 \item The \textit{Jacobi identity}: For any parity-homogeneous $u, v\in V$,
 \begin{align*}
  x_0^{-1}\delta\left(\dfrac{x_1-x_2}{x_0}\right) Y_W(u,x_1) & Y_W(v,x_2)  -(-1)^{\vert u\vert \vert v\vert}x_0^{-1}\delta\left(\dfrac{-x_2+x_1}{x_0}\right)Y_W(v,x_2)Y_W(u,x_1)\nonumber\\
  & =x_1^{-1}\delta\left(\dfrac{x_2+x_0}{x_1}\right) Y_W\left(Y\left(\left(\dfrac{x_2+x_0}{x_1}\right)^{\gamma}\cdot u,x_0\right)v,x_2\right).
 \end{align*}

\item If $Y_W(\omega, x)=\sum_{n\in\ZZ} L_W(n) x^{-n-2}$, then for any $h\in\CC$, $W_{[h]}$ is the generalized eigenspace of $L_W(0)$ with generalized eigenvalue $h$.

\item The \textit{$L(-1)$-derivative property}: For any $v\in V$,
\begin{equation*}
 Y_W(L(-1)v,x)=\dfrac{d}{dx}Y_W(v,x).
\end{equation*}
\end{enumerate}
\end{defi}

\begin{rema}
We will typically refer to a strongly-graded generalized $g$-twisted $V$-module simply as a $g$-twisted $V$-module. Note that this term sometimes refers on which $L_W(0)$ acts semisimply. The $g=1_V$ case of Definition \ref{def:VoaTwistMods} is the definition of \textit{(strongly-graded generalized) $V$-module}.
\end{rema}

\begin{rema}
 Although the logarithm of $g$ is not unique, Definition \ref{def:VoaTwistMods} at least does not depend on the choice of semisimple part of $\gamma$. If $\gamma'$ is another choice of logarithm with locally nilpotent part $\mathcal{N}$ (that is, we lift the restriction on the real part of the eigenvalues of $\gamma'$), then for any $v\in V$,
 \begin{equation*}
  x^{\gamma'}\cdot v=\sum x^{\gamma+n_i}\cdot v_i
 \end{equation*}
for some integers $n_i$ and vectors $v_i$ such that $v=\sum v_i$. Then in the Jacobi identity, the extra factors of $\left(\frac{x_2+x_0}{x_1}\right)^{n_i}$ absorb into the delta function. Our specific choice of the semisimple part of $\gamma$ was chosen for simplicity and for consistency with \cite{H,Ba,HY}.
\end{rema}


\begin{rema}
When $g$ has infinite order, it follows from \cite[Theorem 5.2]{Ba} and \cite[Theorem 2.7]{HY} that the Jacobi identity in Definition \ref{def:VoaTwistMods} is equivalent to the duality property in Huang's definition of twisted module \cite{H} (see also \cite[Theorem 3.8]{Huang-TwistOps}). In fact, the only difference between Definition \ref{def:VoaTwistMods} and the definition of twisted module given in \cite{HY} is that here we do not assume a linear automorphism $g_W$ of a $g$-twisted $V$-module $W$ such that
\begin{equation}\label{g_W_condition}
 g_W\cdot Y_W(v, x) w = Y_W(g\cdot v, x)g_W\cdot w.
\end{equation}
Actually, we automatically have such a $g_W$ in some cases. When $V$ is $\ZZ$-graded, we can take $g_W=e^{-2\pi i L_W(0)}$ and when $V^{\bar{i}}=\bigoplus_{n\in\frac{i}{2}+\ZZ}$ for $i=0,1$, we can take $g_W=P_W e^{-2\pi i L_W(0)}$. To show that such $g_W$ satisfy \eqref{g_W_condition}, one uses the evenness of $Y_W$ and the $L_W(0)$-conjugation formula
\begin{equation}\label{L0-conj}
 e^{h L_W(0)} Y_W(v,x)e^{-h L_W(0)} =Y_W(e^{h L(0)}\cdot v, e^h x)
\end{equation}
for $h\in\CC$, $v\in V$ (see for instance \cite[Proposition 3.36(b)]{HLZ2}, which applies because $Y_W$ is an intertwining operator among modules for the vertex operator subalgebra of even $g$-fixed points in $V$).
\end{rema}

%
%
%

The following equivalent form of the $g$-equivariance property of a $g$-twisted $V$-module will be useful:
\begin{lemma}\label{lem:g-equiv}
 The lower truncation and $g$-equivariance properties of Definition \ref{def:VoaTwistMods} are equivalent to the condition that for any $v\in V$ and $w\in W$,
 \begin{equation*}
  Y_W(x^\gamma\cdot v, x)w\in W((x)).
 \end{equation*}
\end{lemma}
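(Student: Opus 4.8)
The plan is to show the equivalence by unpacking the expansion $Y_W(v,x)w = \sum_{h\in\CC}\sum_{k\in\NN} v_{h;k}w\, x^{-h-1}(\log x)^k$ and tracking how the substitution $v\mapsto x^\gamma\cdot v$ redistributes powers of $x$ and $\log x$. Assume first that $v$ is a generalized eigenvector for $\gamma$ with generalized eigenvalue $\alpha$, so $x^\gamma\cdot v = x^\alpha e^{(\log x)\gamma_N}\cdot v$ with the exponential sum finite; by linearity of $Y_W$ in its first argument it suffices to treat such $v$. Then $Y_W(x^\gamma\cdot v,x)w$ is a finite sum of terms $x^\alpha (\log x)^j/j!\cdot Y_W(\gamma_N^j v,x)w$, and each $\gamma_N^j v$ is again a generalized $\gamma$-eigenvector with generalized eigenvalue $\alpha$. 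So it is enough to understand, for a single generalized eigenvector $v$ of generalized eigenvalue $\alpha$, when $x^\alpha Y_W(v,x)w$ lies in $W((x))$ and to relate that to the lower truncation and $g$-equivariance properties applied to $v$.

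The key observation is that $g$-equivariance, $Y_W(g\cdot v, e^{2\pi i}x) = Y_W(v,x)$, combined with $g = e^{2\pi i\gamma}$, forces the generalized eigenvalue structure: writing out both sides using $g\cdot v$ in terms of $\gamma_S$ and $\gamma_N$, and comparing coefficients of $(\log x)^k$ after the monodromy substitution $\log x \mapsto \log x + 2\pi i$, one finds that $v_{h;k}w = 0$ unless $h \equiv -\alpha \pmod{\ZZ}$ — more precisely, that the only exponents $-h-1$ of $x$ appearing in $Y_W(v,x)w$ are of the form $-\alpha - 1 + n$ with $n\in\ZZ$. (This is the standard argument that $g$-equivariance is equivalent to the $\delta$-function form of the twisted Jacobi identity; one can also cite \cite[Theorem 5.2]{Ba} and \cite[Theorem 2.7]{HY} as the excerpt does, but the direct computation is short.) Granting this, $x^\alpha Y_W(v,x)w$ has only integer powers of $x$, times a polynomial in $\log x$; lower truncation then says those integer powers are bounded below, and a $\log x$-polynomial coefficient with bounded-below integer $x$-powers is exactly an element of $W((x))$ (since $W[\log x]$ is the coefficient ring and the $\log x$-degree is uniformly bounded on any fixed $w$). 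Conversely, if $Y_W(x^\gamma\cdot v,x)w \in W((x))$ for all $v,w$, then reading off that there are no negative non-integer powers recovers $g$-equivariance (again via the monodromy argument, run backwards), and the bounded-below condition recovers lower truncation.

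The main obstacle is bookkeeping the logarithmic terms carefully: I need the finiteness of the $\log x$-degree to be uniform enough that "polynomial in $\log x$ with bounded-below integer powers of $x$" genuinely coincides with $W((x))$ rather than something larger. For a fixed $v$ (hence fixed bound on $\gamma_N$-nilpotency degree) and fixed $w$, the $\log x$-degree of $Y_W(v,x)w$ is bounded — this follows from lower truncation together with $g$-equivariance, or alternatively is built into the meaning of $Y_W$ mapping into $W[\log x]\{x\}$ with the stated expansion — so after multiplying by $x^\alpha e^{(\log x)\gamma_N}$ we still have a bounded $\log x$-degree, and the claim is that the only way $Y_W(x^\gamma\cdot v,x)w$ can fail to be in $W((x))$ is via either unbounded-below $x$-powers (ruled out $\iff$ lower truncation for $v$) or non-integer $x$-powers (ruled out $\iff$ $g$-equivariance for $v$). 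I would state this decomposition $W[\log x]\{x\} = \bigoplus$ over representatives of $\CC/\ZZ$ cleanly and then the equivalence is immediate on each summand. The rest is routine substitution, and I would keep it brief in the write-up, possibly deferring the monodromy comparison to a one-line remark invoking \cite{Ba,HY}.
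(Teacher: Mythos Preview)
Your forward direction has a genuine gap. The reduction ``it is enough to understand, for a single generalized eigenvector $v$ of generalized eigenvalue $\alpha$, when $x^\alpha Y_W(v,x)w$ lies in $W((x))$'' is not valid when $\gamma_N v\neq 0$. The full expression is
\[
Y_W(x^\gamma\cdot v,x)w=\sum_{j\geq 0}\frac{x^\alpha(\log x)^j}{j!}\,Y_W(\gamma_N^j v,x)w,
\]
and for this to land in $W((x))$ the $(\log x)^j$ factors must cancel against logarithms already present in the $Y_W(\gamma_N^j v,x)w$. In particular the individual terms $x^\alpha Y_W(\gamma_N^j v,x)w$ are \emph{not} in $W((x))$ when $\gamma_N v\neq 0$; they genuinely contain $\log x$. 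Your sentence ``a $\log x$-polynomial coefficient with bounded-below integer $x$-powers is exactly an element of $W((x))$ (since $W[\log x]$ is the coefficient ring\dots)'' is where the error crystallizes: $W((x))$ is the space of Laurent series with coefficients in $W$, not in $W[\log x]$, so the presence of any $\log x$ obstructs membership. The $g$-equivariance argument you sketch for a single $v$ only controls powers of $x$ modulo $\ZZ$; when $v$ is merely a generalized eigenvector, $g\cdot v$ involves $\gamma_N v,\gamma_N^2 v,\ldots$, so $g$-equivariance for $v$ alone does not force the logarithms in $Y_W(v,x)w$ to vanish.

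The paper sidesteps this entirely: it observes that $f(x)=Y_W(x^\gamma\cdot v,x)$ as a whole satisfies $f(e^{2\pi i}x)=f(x)$ (a one-line consequence of $g$-equivariance, since $(e^{2\pi i}x)^\gamma=g\,x^\gamma$), and then proves the elementary fact that any element of $(\mathrm{End}\,W)[\log x]\{x\}$ invariant under $x\mapsto e^{2\pi i}x$ must be a Laurent series. That monodromy argument kills non-integer powers and $\log x$ terms simultaneously, by comparing coefficients of $x^h(\log x)^K$ and $x^h(\log x)^{K-1}$. This is the step you should replace your reduction with; once you have it, lower truncation finishes the forward direction, and your converse is essentially fine (though it too is cleaner via monodromy: $Y_W(x^\gamma\cdot\mathcal{N}^i v,x)w\in W((x))$ is manifestly invariant under $x\mapsto e^{2\pi i}x$, and inverting $x^\gamma$ on $v$ gives $g$-equivariance directly).
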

\begin{proof}
If $Y_W$ satisfies lower truncation and $g$-equivariance, then for $v\in V$, the $g$-equivariance property implies
\begin{equation*}
 Y_W((e^{2\pi i} x)^\gamma\cdot v, e^{2\pi i} x) = Y(g x^{\gamma}\cdot v, e^{2\pi i} x)= Y(x^\gamma\cdot v,x). 
\end{equation*}
But any $f(x)\in(\mathrm{End}\,W)[\log x]\lbrace x\rbrace$ that satisfies $f(e^{2\pi i} x)=f(x)$ must be a Laurent series. To show this, suppose  $f(x)=\sum_{h\in\CC, k\in\NN} f_{h,k}\,x^h (\mathrm{log}\,x)^k$, so that 
 \begin{equation}\label{fmono}
  f(e^{2\pi i} x)=\sum_{h\in\CC, k\in\NN} e^{2\pi i h}f_{h,k}\, x^h(\mathrm{log}\,x+2\pi i)^k.
 \end{equation}
If $h\in\CC$ satisfies $f_{h,k}\neq 0$ for some $k\in\NN$, let $K$ be maximal so that $f_{h,K}\neq 0$. We must show that $h\in\ZZ$ and $K=0$. Since $f(e^{2\pi i } x)=f(x)$, the coefficient of $x^h (\mathrm{log}\,x)^K$ in $f(e^{2\pi i} x)$ equals the coefficient of $x^h (\mathrm{log}\,x)^K$ in $f(x)$. So \eqref{fmono} implies $e^{2\pi i h} f_{h,K}=f_{h,K}$. Since $f_{h,K}\neq 0$, this meanss $e^{2\pi i h}=1$, or $h\in\ZZ$.

Next, if $K>0$, we compare coefficients of $x^h (\mathrm{log}\,x)^{K-1}$ in $f(e^{2\pi i} x)$ and $f(x)$ and find
\begin{equation*}
 e^{2\pi i h}(f_{h,K-1}+2\pi i K f_{h,K})=f_{h,K-1}.
\end{equation*}
We already know $e^{2\pi i h}=1$, so $2\pi i K f_{h,K}=0$. This is a contradiction since $f_{h,K}\neq 0$, so $K=0$.

Now for $w\in W$, lower truncation implies $Y_W(x^\gamma\cdot v,x)w$ is also lower-truncated, that is, $Y_W(x^\gamma\cdot v,x)w\in W((x))$.

Conversely, assume $Y_W$ satisfies $Y_W(x^\gamma\cdot v,x)w\in W((x))$ for $v\in V$, $w\in W$. To show lower truncation, assume without loss of generality that $v$ is a generalized eigenvector for $\gamma$ with generalized eigenvalue $\alpha$. Then
\begin{equation*}
 Y_W(v,x)w =Y_W(x^\gamma x^{-\gamma}\cdot v, x)w =\sum_{i\geq 0} \frac{(-1)^i}{i!} x^{-\alpha} (\log x)^i Y_W(x^\gamma\cdot\mathcal{N}^i v, x)w.
\end{equation*}
As the sum over $i$ is finite, lower truncation follows because each $Y_W(x^\gamma\cdot\mathcal{N}^i v, x)w\in W((x))$. Moreover, since
$$Y_W((e^{2\pi i}x)^\gamma\cdot\mathcal{N}^i v, e^{2\pi i}x)= Y_W(x^\gamma\cdot\mathcal{N}^i v,x)$$
for each $i$, we get
\begin{align*}
 Y_W(v, x) = Y_W(x^\gamma x^{-\gamma}\cdot v,x) = Y_W((e^{2\pi i}x)^\gamma x^{-\gamma}\cdot v,e^{2\pi i} x)= Y_W(e^{2\pi i\gamma} x^\gamma x^{-\gamma}\cdot v, e^{2\pi i}x)=Y_W(g\cdot v,e^{2\pi i}x),
\end{align*}
which is the $g$-equivariance property.
\end{proof}

\subsection{General theorems}\label{subsec:VOAthms}

Let $G$ be an automorphism group of a vertex operator superalgebra $V$ that includes $P_V$. Then the $G$-fixed points
\begin{equation*}
 V^G=\lbrace v\in V\,\vert\,g\cdot v=v\,\,\mathrm{for\,\,all}\,\,g\in G\rbrace
\end{equation*}
form a vertex operator subalgebra of $V^\even$. If $\mathcal{C}$ is a category of strongly-graded generalized $V^G$-modules that includes $V$ and admits vertex tensor category structure as constructed in \cite{HLZ1}-\cite{HLZ8}, then by \cite[Theorem 3.2]{HKL}, \cite[Theorem 3.13]{CKL}, $V$ is a superalgebra in the braided tensor category $\mathcal{C}$ and we have the monoidal supercategory $\rep\,V$ as in Section \ref{sec:BGXSC}. In this setting, we need to verify that the definition of $g$-twisted $V$-module for $g\in G$ from the previous subsection agrees with the categorical definition of Section \ref{sec:TwistMods}. To accomplish this, we first recall from \cite{CKM} how to characterize modules in $\rep\,V$ in terms of intertwining operators:
\begin{propo}\label{propo:RepV_using_intwops}\cite[Proposition 3.46]{CKM}
 Let $G$ be an automorphism group of a vertex operator superalgebra $V$ that includes $P_V$ and let $\mathcal{C}$ be a category of strongly-graded generalized $V^G$-modules that includes $V$ and admits vertex tensor category structure. Then an object of $\rep\,V$ is precisely a $V^G$-module $W$ in $\cC$ equipped with a $V^G$-module intertwining operator $Y_W$ of type $\binom{W}{V\,W}$ satisfying the following two properties:
 \begin{enumerate}
  \item Unit: $Y_W(\vac, x) = 1_W$.
  \item Associativity: For $v_1, v_2\in V$, $w\in W$, and $w'\in W'=\bigoplus_{h\in\CC} W_{[h]}^*$, the multivalued analytic functions
  \begin{equation*}
  P(z_1,z_2) = \langle w', Y_W(v_1,z_1)Y_W(v_2,z_2)w\rangle
  \end{equation*}
on the region $\vert z_1\vert>\vert z_2\vert>0$ and
\begin{equation*}
I(z_1,z_2) = \langle w', Y_W(Y(v_1,z_1-z_2)v_2,z_2)w\rangle
\end{equation*}
on the region $\vert z_2\vert>\vert z_1-z_2\vert>0$ have equal restrictions to their common domain. Specifically, the equality
\begin{equation*}
 \langle w', Y_W(v_1, e^{\ln r_1})Y_W(v_2,e^{\ln r_2})w\rangle =\langle w', Y_W(Y(v_1, r_1-r_2)v_2, e^{\ln r_2})w\rangle
\end{equation*}
of single-valued branches holds on the simply-connected region $r_1>r_2>r_1-r_2>0$ of $(\RR_+)^2$, where the notation means the real-valued branch $\ln$ of logarithm on $\RR_+$ is used to evaluate powers and logarithms.
 \end{enumerate}
\end{propo}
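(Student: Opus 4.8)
The plan is to derive the proposition from the fundamental dictionary of vertex tensor category theory: for objects $W_1,W_2,W_3$ of $\cC$, evaluation against the universal intertwining operator $\cY_\boxtimes$ of type $\binom{W_1\boxtimes W_2}{W_1\,W_2}$ provides a natural isomorphism between $\hom_\cC(W_1\boxtimes W_2,W_3)$ and the space of (logarithmic) $V^G$-module intertwining operators of type $\binom{W_3}{W_1\,W_2}$; this is built into the construction of $\boxtimes$ in \cite{HLZ1}-\cite{HLZ8}, and its super-linear refinement — which is what threads the parity sign conventions of $\sC$ through the Jacobi identity — is recorded in \cite{CKM, CKL}. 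Under this dictionary an object $(W,\mu_W)$ of $\rep\,V$, that is, a $V^G$-module $W$ in $\sC$ together with an even $\sC$-morphism $\mu_W\colon V\tens W\to W$, corresponds precisely to $W$ equipped with an even $V^G$-module intertwining operator $Y_W$ of type $\binom{W}{V\,W}$. The remaining task is to show that the categorical unit and associativity axioms for $\mu_W$ correspond exactly to the two analytic properties of $Y_W$ in the statement.

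For the unit axiom I would invoke the HLZ characterization of the left unit isomorphism $l_W\colon\vac\boxtimes W\to W$: the intertwining operator of type $\binom{W}{\vac\,W}$ attached to $l_W$ is essentially the canonical one. A short computation with the universal property of $\boxtimes$ then identifies $\mu_W(\iota_V\tens 1_W)l_W^{-1}$, as an element of $\Endo\,W$, with the operator $w\mapsto Y_W(\vac,x)w$; since $\vac$ has weight $0$ and is annihilated by $L(-1)$, $Y_W(\vac,x)$ is automatically a constant, so the categorical unit axiom $\mu_W(\iota_V\tens 1_W)l_W^{-1}=1_W$ is equivalent to $Y_W(\vac,x)=1_W$. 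The reverse implication uses the same identification read backwards.

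For the associativity axiom this is where the real content lies. The point is that the associativity isomorphism $\cA_{W_1,W_2,W_3}$ of $\cC$ is constructed in \cite{HLZ1}-\cite{HLZ8} precisely so as to intertwine the ``iterate'' and ``product'' intertwining operators obtained from $\cY_\boxtimes$ by analytic continuation along a path from the region $\vert z_1\vert>\vert z_2\vert>0$ to the region $\vert z_2\vert>\vert z_1-z_2\vert>0$. Specializing to $W_1=W_2=V$, $W_3=W$ and chasing definitions, the morphism $\mu_W(1_V\tens\mu_W)$ corresponds to the iterated operator $(v_1,v_2,w)\mapsto Y_W(v_1,z_1)Y_W(v_2,z_2)w$ on $\vert z_1\vert>\vert z_2\vert>0$, while $\mu_W(\mu_V\tens 1_W)\cA_{V,V,W}$ corresponds to the iterate $(v_1,v_2,w)\mapsto Y_W(Y(v_1,z_1-z_2)v_2,z_2)w$ on $\vert z_2\vert>\vert z_1-z_2\vert>0$, analytically continued. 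Since a $V^G$-module intertwining operator is determined by its matrix coefficients against the graded dual $W'$, equality of these two $\sC$-morphisms is equivalent to equality of the two multivalued functions $P(z_1,z_2)$ and $I(z_1,z_2)$ on their common domain, and on the simply connected real region $r_1>r_2>r_1-r_2>0$, with the $\ln$-branch convention pinning down the branch, this is exactly the displayed identity. Conversely, given $Y_W$ with this analytic property, the universal property characterizing $\cA$ returns it to the categorical associativity equation.

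The main obstacle will be this associativity step, and specifically making the correspondence between $\cA_{V,V,W}$ and the branch-matched equality of $P(z_1,z_2)$ and $I(z_1,z_2)$ fully precise: one must track the exact normalizations of $\cY_\boxtimes$ and of $\cA$ from the HLZ construction, confirm that all powers and logarithms are evaluated with the prescribed $\ln$-branch on the stated region, and — in the super setting — check that the parity sign factors built into $\cA$, $\cR$, and the tensor product of morphisms in $\sC$ are exactly those appearing in the super Jacobi identity for $Y_W$. I would manage the super bookkeeping by reducing to parity-homogeneous $v_1,v_2,w$ and using that $\mu_V$, $\mu_W$, and $\iota_V$ are even, so the only surviving signs are those already absorbed into the conventions of \cite{CKM}. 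With these conventions aligned, both directions of the equivalence follow from the universal properties of $\boxtimes$ and $\cA$, which is the content of \cite[Proposition 3.46]{CKM}.
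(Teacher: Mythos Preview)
The paper does not give its own proof of this proposition: it is stated as a direct citation of \cite[Proposition 3.46]{CKM}, with only a remark afterward clarifying that the associativity condition as phrased here agrees with the slightly different formulation in \cite{CKM}. Your outline is the correct strategy and is essentially the argument carried out in \cite{CKM}: use the universal property of $\boxtimes$ to identify even morphisms $\mu_W\colon V\boxtimes W\to W$ with even intertwining operators $Y_W$ of type $\binom{W}{V\,W}$, then translate the unit and associativity axioms for $\mu_W$ into the unit property $Y_W(\vac,x)=1_W$ and the analytic associativity of products and iterates via the HLZ characterization of $l_W$ and $\cA_{V,V,W}$. So there is nothing to compare against in the present paper, and your sketch matches what the cited reference does.
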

\begin{rema}
 The associativity property of a module in $\rep\,V$ is stated somewhat differently in \cite[Proposition 3.46]{CKM}, using a simply-connected open region of $(\CC^\times)^2$ containing the region $r_1>r_2>r_1-r_2>0$ in its boundary. However, this difference is irrelevant in light of Proposition 3.18 and Remark 3.19 of \cite{CKM}.
\end{rema}

Next, the relationship between the intertwining operator $Y_W$ and the morphism $\mu_W: V\tens W\rightarrow W$ for a module $W$ in $\rep\,V$, given in the proof of \cite[Proposition 3.46]{CKM}, together with \cite[Equation 3.15]{CKM} for the monodromy isomorphism in $\cC$, imply:
\begin{propo}\label{cor:g-equiv_and_mono}
 In the setting of Proposition \ref{propo:RepV_using_intwops}, a module $W$ in $\rep\,V$ is a $g$-twisted $V$-module for $g\in G$ in the sense of Definition \ref{def:CatTwistMods}, that is,
 \begin{equation*}
  \mu_W(g\tens 1_W)\cM_{V,W}=\mu_W,
 \end{equation*}
if and only if $Y_W$ satisfies the $g$-equivariance property of Definition \ref{def:VoaTwistMods}.
\end{propo}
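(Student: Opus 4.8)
The plan is to translate the categorical $g$-twisting condition of Definition~\ref{def:CatTwistMods} into an equation for the intertwining operator $Y_W$, using the two ingredients named just before the proposition: the description of $\mu_W$ in terms of $Y_W$ from the proof of \cite[Proposition 3.46]{CKM}, and the formula for the monodromy isomorphism from \cite[Equation 3.15]{CKM}. Recall from the proof of \cite[Proposition 3.46]{CKM} that for $(W,\mu_W)$ in $\rep\,V$, the morphism $\mu_W\colon V\tens W\to W$ is the $V^G$-module morphism induced from the intertwining operator $Y_W$ of type $\binom{W}{V\,W}$ by the universal property of the tensor product: writing $\cY_{V,W}$ for the canonical tensor-product intertwining operator of type $\binom{V\tens W}{V\,W}$, one has $\overline{\mu_W}\big(\cY_{V,W}(v,x)w\big)=Y_W(v,x)w$ for all $v\in V$, $w\in W$, as an identity in $\overline{W}[\log x]\{x\}$. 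Since a $V^G$-module morphism out of $V\tens W$ is determined by its composition with $\cY_{V,W}$, the equation $\mu_W(g\tens 1_W)\cM_{V,W}=\mu_W$ holds if and only if it holds after precomposing both sides with $\cY_{V,W}$.

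First I would evaluate the left-hand side on $\cY_{V,W}(v,x)w$. By \cite[Equation 3.15]{CKM}, the monodromy isomorphism implements one loop around the origin, $\overline{\cM_{V,W}}\big(\cY_{V,W}(v,x)w\big)=\cY_{V,W}(v,e^{2\pi i}x)w$, where the substitution $e^{2\pi i}x$ means replacing $\log x$ by $\log x+2\pi i$. Because $g$ is an automorphism of $V$ fixing $V^G$ pointwise, it is a morphism of $V^G$-modules, and naturality of the canonical intertwining operator gives $\overline{(g\tens 1_W)}\big(\cY_{V,W}(u,x)w\big)=\cY_{V,W}(g\cdot u,x)w$; applying this with $u=v$ and $x$ replaced by $e^{2\pi i}x$, and then applying $\overline{\mu_W}$, yields $\overline{\mu_W(g\tens 1_W)\cM_{V,W}}\big(\cY_{V,W}(v,x)w\big)=Y_W(g\cdot v,e^{2\pi i}x)w$, using that the identity $\overline{\mu_W}(\cY_{V,W}(\cdot,x)\cdot)=Y_W(\cdot,x)\cdot$ is a formal identity in $\log x$ and so is preserved under $\log x\mapsto\log x+2\pi i$. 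Since precomposing the right-hand side $\mu_W$ with $\cY_{V,W}$ gives back $Y_W(v,x)w$, the categorical condition is equivalent to $Y_W(g\cdot v,e^{2\pi i}x)w=Y_W(v,x)w$ for all $v\in V$, $w\in W$, which is precisely the $g$-equivariance property of Definition~\ref{def:VoaTwistMods}. Every step in this chain is an equivalence (the universal property is a bijection, the monodromy formula an equality), so both directions of the ``if and only if'' follow at once; and since a $V^G$-module intertwining operator is automatically lower-truncated, Lemma~\ref{lem:g-equiv} then also yields $Y_W(x^\gamma\cdot v,x)w\in W((x))$.

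The main difficulty is not conceptual but a matter of conventions: one must check that the normalizations in \cite{CKM} for the $P(1)$-tensor product (the point at which $Y_W$ is recovered from $\mu_W$), the orientation built into $\cM_{V,W}=\cR_{W,V}\cR_{V,W}$ (so that the monodromy substitution is $e^{+2\pi i}x$ rather than $e^{-2\pi i}x$), and the precise meaning of the completions $\overline{(\,\cdot\,)}$ all line up so that the chain of substitutions above is literally correct. With \cite[Proposition 3.46]{CKM} and \cite[Equation 3.15]{CKM} in hand, this verification is routine.
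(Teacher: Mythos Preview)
Your proposal is correct and follows exactly the approach the paper indicates: the paper does not give a detailed argument but simply cites the relationship between $\mu_W$ and $Y_W$ from the proof of \cite[Proposition 3.46]{CKM} together with the monodromy formula \cite[Equation 3.15]{CKM}, and you have spelled out precisely how these two ingredients combine to give the equivalence $\mu_W(g\tens 1_W)\cM_{V,W}=\mu_W \Longleftrightarrow Y_W(g\cdot v,e^{2\pi i}x)=Y_W(v,x)$.
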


Now we establish the equivalence of Definitions \ref{def:CatTwistMods} and \ref{def:VoaTwistMods}. The proof is technical but uses standard vertex algebraic techniques and has similarities to the proofs of \cite[Theorems 3.6.3 and 4.4.5]{LL}, \cite[Theorem 2.10]{HY}, \cite[Theorem 3.53]{CKM}, and \cite[Lemma 3.2]{DLXY}.
\begin{theo}\label{thm:tw_mod_defs}
 In the setting of Proposition \ref{propo:RepV_using_intwops}, a $V^G$-module in $\cC$ is a $g$-twisted $V$-module for some $g\in G$ in the sense of Definition \ref{def:CatTwistMods} if and only if it is $g$-twisted in the sense of Definition \ref{def:VoaTwistMods}.
\end{theo}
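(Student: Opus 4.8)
The strategy is to funnel everything through the intertwining-operator description of $\rep\,V$. By Proposition~\ref{propo:RepV_using_intwops}, an object of $\rep\,V$ is precisely a $V^G$-module $W$ in $\cC$ equipped with a $V^G$-module intertwining operator $Y_W$ of type $\binom{W}{V\,W}$ satisfying the unit and associativity properties listed there; and by Proposition~\ref{cor:g-equiv_and_mono}, such a $W$ is $g$-twisted in the sense of Definition~\ref{def:CatTwistMods} exactly when $Y_W$ satisfies the $g$-equivariance property of Definition~\ref{def:VoaTwistMods}. So the theorem reduces to the following formal-variable statement: for a $V^G$-module $W$ in $\cC$ and an even map $Y_W\colon V\otimes W\to W[\log x]\{x\}$ restricting on $V^G$ to the module structure of $W$, the package ``$Y_W$ is an intertwining operator of type $\binom{W}{V\,W}$ obeying the unit property, the $g$-equivariance property, and the associativity property of Proposition~\ref{propo:RepV_using_intwops}'' is equivalent to ``$W$ with $Y_W$ is a $g$-twisted $V$-module in the sense of Definition~\ref{def:VoaTwistMods}.''

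For the implication from Definition~\ref{def:CatTwistMods} to Definition~\ref{def:VoaTwistMods}, $g$-equivariance is already in hand (Proposition~\ref{cor:g-equiv_and_mono}), and most of the remaining axioms are immediate: grading compatibility and the grading-restriction conditions are built into the notion of strongly-graded generalized $V^G$-module; since $\omega\in V^G$, the $L_W(0)$-grading statement is the $V^G$-module grading and the $L(-1)$-derivative property is the $L(-1)$-derivative property of intertwining operators; the vacuum property is the unit property; and lower truncation follows from lower truncation of intertwining operators together with Lemma~\ref{lem:g-equiv}. The substantive step is the twisted Jacobi identity of Definition~\ref{def:VoaTwistMods}(6). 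Here I would combine the associativity of $Y_W$ from Proposition~\ref{propo:RepV_using_intwops}, the (skew-)commutativity of products $Y_W(u,x_1)Y_W(v,x_2)$ coming from the convergence and braiding of intertwining operators in the Huang--Lepowsky--Zhang theory \cite{HLZ1}-\cite{HLZ8}, and the $g$-equivariance property, and then carry out the standard formal-calculus argument deriving a Jacobi identity from commutativity plus associativity (as in \cite[Theorems~3.6.3 and 4.4.5]{LL}, \cite[Theorem~5.2]{Ba}, \cite[Theorem~2.7]{HY}); $g$-equivariance is precisely what turns the iterate term $Y(u,x_0)v$ into $Y\big(\big(\tfrac{x_2+x_0}{x_1}\big)^{\gamma}\cdot u,x_0\big)v$.

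For the converse, start from a $g$-twisted $V$-module $W$ in $\cC$ in the sense of Definition~\ref{def:VoaTwistMods}. The key observation is that $\gamma$ annihilates $V^G$: if $a\in V^G$ then $g\cdot a=a$, so $a$ lies in the generalized $1$-eigenspace of $g$, forcing the eigenvalue-exponent to be $0$, while $e^{2\pi i\mathcal N}a=a$ together with local nilpotence of $\mathcal N$ forces $\mathcal N a=0$. Hence $\big(\tfrac{x_2+x_0}{x_1}\big)^{\gamma}\cdot a=a$, and substituting $u=a\in V^G$ into the twisted Jacobi identity of Definition~\ref{def:VoaTwistMods}(6) yields, after the standard delta-function rewriting $x_1^{-1}\delta\big(\tfrac{x_2+x_0}{x_1}\big)=x_2^{-1}\delta\big(\tfrac{x_1-x_0}{x_2}\big)$, exactly the $V^G$-module intertwining-operator Jacobi identity for $Y_W$ of type $\binom{W}{V\,W}$; together with the $L(-1)$-derivative property and lower truncation from the hypotheses, this makes $Y_W$ an intertwining operator. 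The unit property is the vacuum property, and the associativity property of Proposition~\ref{propo:RepV_using_intwops} is recovered from the full twisted Jacobi identity by the usual extraction of associativity of products of twisted vertex operators (\cite[Theorem~5.2]{Ba}, \cite[Theorem~2.7]{HY}, \cite[Lemma~3.2]{DLXY}). Thus $W$ is an object of $\rep\,V$ by Proposition~\ref{propo:RepV_using_intwops}, and since $Y_W$ satisfies $g$-equivariance, it is $g$-twisted in the sense of Definition~\ref{def:CatTwistMods} by Proposition~\ref{cor:g-equiv_and_mono}.

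The main obstacle is the passage between the ``global'' analytic-continuation formulation of associativity used in \cite{CKM} (and recorded in Proposition~\ref{propo:RepV_using_intwops}) and the ``local'' formal-variable twisted Jacobi identity, in particular the bookkeeping of the twist $\big(\tfrac{x_2+x_0}{x_1}\big)^{\gamma}$ and of the logarithmic terms in $W[\log x]\{x\}$: one must match the branch conventions of the Huang--Lepowsky--Zhang formalism (powers of $z_1-z_2$, single-valued branches on $(\RR_+)^2$) against the binomial expansions prescribed by the formal delta functions. This is handled much as in the proof of \cite[Theorem~3.53]{CKM}, combined with the twisted-module formal calculus of \cite{Ba, HY, DLXY}.
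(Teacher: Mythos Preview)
Your outline is correct and follows essentially the same route as the paper: reduce via Propositions~\ref{propo:RepV_using_intwops} and~\ref{cor:g-equiv_and_mono} to a formal-variable equivalence, then in the forward direction combine associativity, skew-associativity (the paper cites \cite[Remark~3.47]{CKM}), and $g$-equivariance to obtain the twisted Jacobi identity, and in the converse extract weak associativity from the Jacobi identity and match branches. Your explicit verification that $\gamma|_{V^G}=0$, so that the twisted Jacobi identity specializes to the ordinary intertwining-operator Jacobi identity for $u\in V^G$, is a point the paper leaves implicit; conversely, the paper makes explicit the key mechanism you gesture at via references---replacing $v_1$ by $x_1^\gamma v_1$ so that the resulting function of $z_1$ is single-valued (by Lemma~\ref{lem:g-equiv} around $0$ and by integrality of $Y$ around $r_2$), hence rational, after which the three-term delta-function identity yields a Jacobi identity in $r_2$ that is promoted to the formal variable $x_2$ via $L(0)$-conjugation.
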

\begin{proof}
If $W$ is a $g$-twisted $V$-module in the sense of Definition \ref{def:CatTwistMods}, then by Propositions \ref{propo:RepV_using_intwops} and \ref{cor:g-equiv_and_mono}, $W$ satisfies the $g$-equivariance and vacuum properties of Definition \ref{def:VoaTwistMods}. Also, $W$ satisfies all grading conditions in Definition \ref{def:VoaTwistMods} because it is a strongly-graded generalized $V^G$-module, and $Y_W$ satisfies lower truncation and the $L(-1)$-derivative property because it is an intertwining operator among $V^G$-modules. It remains to derive the Jacobi identity from the associativity of $Y_W$.

By \cite[Remark 3.47]{CKM}, $Y_W$ satisfies the following skew-associativity property in addition to associativity: for $w\in W$, $w'\in W'$, and parity-homogeneous $v_1,v_2\in V$, the multivalued analytic functions $I(z_1,z_2)$ on the region $\vert z_2\vert>\vert z_1-z_2\vert>0$ and
\begin{equation*}
 Q(z_1,z_2) = (-1)^{\vert v_1\vert\vert v_2\vert}\langle w', Y_W(v_2,z_2)Y_W(v_1,z_1)w\rangle
\end{equation*}
on the region $\vert z_2\vert>\vert z_1\vert>0$ have equal restrictions to their common domain. Specifically, the equality
\begin{equation*}
 (-1)^{\vert v_1\vert\vert v_2\vert}\langle w', Y_W(v_2,e^{\ln r_2})Y_W(v_1,e^{\ln r_1})w\rangle = \langle w', Y_W(Y(v_1,r_1-r_2)v_2,e^{\ln r_2})w\rangle
\end{equation*}
of single-valued branches holds on the simply-connected region $r_2>r_1>r_2-r_1>0$ of $(\RR_+)^2$. Then we use \cite[Lemma 4.1]{H-genlratl} to extend the multivalued analytic functions $P(z_1,z_2)$, $Q(z_1,z_2)$, and $I(z_1,z_2)$, which agree on their common domains, to a multivalued analytic function $F(v_1; z_1,z_2)$ defined on $(\CC^\times)^2\setminus\lbrace(z,z)\,\vert\,z\in\CC^\times\rbrace$. (The convergence, associativity, and commutativity properties for intertwining operators among $V^G$-modules required in the proof of this lemma from \cite{H-genlratl} are subsumed under the assumption that these intertwining operators satisfy the sufficient conditions of \cite{HLZ1}-\cite{HLZ8} for vertex tensor category structure on $\cC$.) For $\gamma$ a grading-preserving linear endomorphism of $V$ such that $e^{2\pi i\gamma} =g$, we define a new multivalued analytic function $f(z_1,z_2)=F(z_1^\gamma v_1; z_1, z_2)$.

We now define an (\textit{a priori} multivalued) function of the single variable $z_1$. Fix $r_2\in\RR_+$ and choose $r_1\in\RR_+$ such that $r_2>r_1>r_2-r_1>0$. Then for $z_1\in\CC\setminus\lbrace 0,r_2\rbrace$, define $f_{r_2}(z_1)$ to take all values of $f(z_1,r_2)$ that can be obtained by analytic continuation along continuous paths from $r_1$ to $z_1$ in $\CC\setminus\lbrace 0, r_2\rbrace$, starting from the value 
\begin{align*}
(-1)^{\vert v_1\vert\vert v_2\vert} \langle w',Y_W(v_2, e^{\ln r_2}) Y_W(e^{(\ln r_1)\gamma} v_1, & e^{\ln r_1})w\rangle  = \langle w', Y_W(Y(e^{(\ln r_1)\gamma} v_1, r_1-r_2)v_2, e^{\ln r_2})w\rangle\nonumber\\
& = \left\langle w', Y_W\left(Y\left(e^{(\ln r_2)\gamma}\left(1+\frac{r_1-r_2}{r_2}\right)^\gamma v_1, r_1-r_2\right)v_2, e^{\ln r_2}\right)w\right\rangle
\end{align*}
of $f(r_1,r_2)$. We claim that $f_{r_2}(z_1)$ is actually single-valued, that is, the value of $f_{r_2}(z_1)$ obtained by analytic continuation from $r_1$ to $z_1$ is independent of the path. Equivalently, analytic continuation along any continuous path from $r_1$ to $r_1$ in $\CC\setminus\lbrace 0,r_2\rbrace$ does not change the starting value of $f(r_1,r_2)$. To prove this, note that any continuous path from $r_1$ to itself in $\CC\setminus\lbrace 0, r_2\rbrace$ is homotopic to a sequence of loops based at $r_1$ with each loop encircling either $r_2$ or $0$ and remaining within the region $r_2>\vert z_1-r_2\vert>0$ or $r_2>\vert z_1\vert>0$, respectively. But the value of $f(r_1,r_2)$ does not change going around $r_2$ because the series
\begin{equation*}
 Y\left(e^{(\ln r_2)\gamma}\left(1+\frac{x_0}{r_2}\right)^\gamma v_1, x_0\right)v_2
\end{equation*}
has no monodromy in $x_0$, and the value of $f(r_1,r_2)$ does not change going around $0$ because $Y_W( x_1^\gamma v_1, x_1)w$ has no monodromy in $x_1$ by the $g$-equivariance property (recall Lemma \ref{lem:g-equiv}).

The analytic function $f_{r_2}(z_1)$ has singularities at $0$, $r_2$, and $\infty$. Its Laurent series expansion around $\infty$ is
\begin{equation*}
 P_{r_2}(x_1)\vert_{x_1=z_1}=\langle w', Y_W(x_1^\gamma v_1, x_1)Y_W(v_2,e^{\ln r_2})w\rangle\vert_{x_1=z_1},
\end{equation*}
its Laurent series expansion around $0$ is
\begin{equation*}
 Q_{r_2}(x_1)\vert_{x_1=z_1} =(-1)^{\vert v_1\vert\vert v_2\vert}\langle w', Y_W(v_2, e^{\ln r_2})Y_W( x_1^\gamma v_1,x_1)w\rangle\vert_{x_1=z_1},
\end{equation*}
and its is Laurent series expansion around $r_2$ is
\begin{equation*}
 I_{r_2}(x_0)\vert_{x_0=z_1-r_2} =\left\langle w', Y_W\left(Y\left( e^{(\ln r_2)\gamma}\left(1+\frac{x_0}{r_2}\right)^\gamma v_1, x_0\right)v_2, e^{\ln r_2}\right)w\right\rangle.
\end{equation*}
All singularities are poles because $W$ is strongly graded and $Y_W$ is lower truncated, so $f_{r_2}$ is a rational function:
\begin{equation*}
 f_{r_2}(z_1)=\frac{p_{r_2}(z_1)}{z_1^M (z_1-r_2)^N}
\end{equation*}
where $p_{r_2}(z_1)$ is a polynomial and $M,N\in\NN$. Then
\begin{equation*}
 P_{r_2}(x_1)=\frac{p_{r_2}(x_1)}{x_1^M (x_1-r_2)^N},\qquad Q_{r_2}(x_1)=\frac{p_{r_2}(x_1)}{x_1^M (-r_2+x_1)^N},\qquad I_{r_2}(x_0)=\frac{p_{r_2}(r_2+x_0)}{(r_2+x_0)^M x_0^N},
\end{equation*}
where the binomial terms are expanded in non-negative powers of the second variable.

To get a Jacobi identity involving $r_2$, we multiply both sides of the three-term delta-function identity
\begin{equation*}
 x_0^{-1}\delta\left(\frac{x_1-r_2}{x_0}\right)-x_0^{-1}\delta\left(\frac{-r_2+x_1}{x_0}\right) = r_2^{-1}\delta\left(\frac{x_1-x_0}{r_2}\right)
\end{equation*}
by $p_{r_2}(x_1)/x_1^M x_0^N$. Delta-function substitution properties from \cite[Remark 2.3.25]{LL} then yield
\begin{equation*}
 x_0^{-1}\delta\left(\frac{x_1-r_2}{x_0}\right)P_{r_2}(x_1)-x_0^{-1}\delta\left(\frac{-r_2+x_1}{x_0}\right)Q_{r_2}(x_1) = r_2^{-1}\delta\left(\frac{x_1-x_0}{r_2}\right)I_{r_2}(x_0).
\end{equation*}
Since the $w$ and $w'$ in the definition of $P$, $Q$, and $I$ were arbitrary, we get the Jacobi identity
\begin{align}\label{r_2-Jacobi}
 x_0^{-1}\delta\left(\frac{x_1-r_2}{x_0}\right) & Y_W(x_1^\gamma v_1, x_1)Y_W(v_2, e^{\ln r_2})- (-1)^{\vert v_1\vert\vert v_2\vert}x_0^{-1}\delta\left(\frac{-r_2+x_1}{x_0}\right)Y_W(v_2, e^{\ln r_2})Y_W(x_1^\gamma v_1, x_1)\nonumber\\
 & = r_2^{-1}\delta\left(\frac{x_1-x_0}{r_2}\right)Y_W\left(Y\left(e^{(\ln r_2)\gamma}\left(1+\frac{x_0}{r_2}\right)^\gamma v_1, x_0\right)v_2, e^{\ln r_2}\right).
\end{align}
To replace the real number $r_2$ in the Jacobi identity by the formal variable $x_2$, we use the method of \cite[Proposition 4.8]{HLZ3}. First replace $v_1$ in \eqref{r_2-Jacobi} with $x_1^{-\gamma} (r_2 x_2^{-1})^{L(0)}$, and then make the substitutions $x_0\mapsto x_0 r_2 x_2^{-1}$ and $x_1\mapsto x_1 e^{\ln r_2} x_2^{-1}$. This yields the identity
\begin{align*}
r_2^{-1} x_2 x_0^{-1} & \delta\left(\frac{x_1-x_2}{x_0}\right)  Y_W((r_2 x_2^{-1})^{L(0)} v_1, x_1 e^{\ln r_2} x_2^{-1})Y_W(v_2, e^{\ln r_2})\nonumber\\
& \hspace{2em}-(-1)^{\vert v_1\vert\vert v_2\vert}r_2^{-1} x_2 x_0^{-1}\delta\left(\frac{-x_2+x_1}{x_0}\right)Y_W(v_2, e^{\ln r_2})Y_W((r_2 x_2^{-1})^{L(0)} v_1, x_1 e^{\ln r_2} x_2^{-1})\nonumber\\
 & = r_2^{-1}\delta\left(\frac{x_1-x_0}{x_2}\right)Y_W\left(Y\left(\left(\frac{x_2+x_0}{x_1}\right)^\gamma (r_2 x_2^{-1})^{L(0)} v_1, x_0 r_2 x_2^{-1}\right)v_2, e^{\ln r_2}\right).
\end{align*}
By the $L(0)$-conjugation property of $V^G$-module intertwining operators to $Y_W$ and $Y$, this is equivalent to
\begin{align*}
r_2^{-1} & x_2 x_0^{-1}\delta\left(\frac{x_1-x_2}{x_0}\right)  e^{(\ln r_2) L(0)} x_2^{-L(0)} Y_W(v_1, x_1)Y_W((r_2^{-1} x_2)^{L(0)}v_2, x_2)e^{-(\ln r_2)L(0)} x_2^{L(0)}\nonumber\\
& \hspace{2em}-(-1)^{\vert v_1\vert\vert v_2\vert}r_2^{-1} x_2 x_0^{-1}\delta\left(\frac{-x_2+x_1}{x_0}\right)e^{(\ln r_2) L(0)} x_2^{-L(0)}Y_W((r_2^{-1} x_2)^{L(0)}v_2, x_2)Y_W(v_1, x_1)e^{-(\ln r_2)L(0)} x_2^{L(0)}\nonumber\\
 & = r_2^{-1}\delta\left(\frac{x_1-x_0}{x_2}\right)e^{(\ln r_2)L(0)} x_2^{-L(0)}Y_W\left(Y\left(\left(\frac{x_2+x_0}{x_1}\right)^\gamma v_1, x_0\right)(r_2^{-1} x_2)^{L(0)}v_2, x_2\right)e^{-(\ln r_2)L(0)} x_2^{L(0)}.
\end{align*}
To get the Jacobi identity of Definition \ref{def:VoaTwistMods} from this identity, just multiply both sides by  $r_2 x_2^{-1} e^{-(\ln r_2)L(0)} x_2^{L(0)}$ on the left and $e^{(\ln r_2)L(0)} x_2^{-L(0)}$ on the right, then replace $v_2$ with $(r_2 x_2^{-1})^{L(0)} v_2$. This completes the proof that a $g$-twisted $V$-module in the sense of Definition \ref{def:CatTwistMods} is a $g$-twisted $V$-module in the sense of Definition \ref{def:VoaTwistMods}.

Conversely, suppose a strongly-graded generalized $V^G$-module $(W,Y_W)$ in $\cC$ is a $g$-twisted $V$-module in the sense of the Definition \ref{def:VoaTwistMods}. We just need to show that $Y_W$ satisfies the associativity property of \cite[Proposition 3.46]{CKM}, as the unit property $Y_W(\vac,x)=1_W$ of Proposition \ref{propo:RepV_using_intwops}  is already part of Definition \ref{def:VoaTwistMods} and
\begin{equation*}
 \mu_W(g\tens 1_W)\cM_{V,W}=\mu_W
\end{equation*} 
follows from the $g$-equivariance property by Proposition \ref{cor:g-equiv_and_mono}.

We start by noting the following weak associativity for $u,v\in V$. Replacing $u$ in the Jacobi identity with $x_1^\gamma\cdot u$ and extracting a sufficiently  negative (integer) power of $x_1$, we get
\begin{equation*}
 Y_W((x_0+x_2)^{\gamma+M}\cdot u, x_0+x_2)Y_W(v,x_2)w = Y_W(Y((x_2+x_0)^{\gamma+M}\cdot u, x_0)v,x_2)w
\end{equation*}
as series in $x_0$ and $x_2$ when $M\in\NN$ is sufficiently large (depending on $u$ and $w$). If we further replace $v$ by $x_2^\gamma\cdot v$ and pair with $w'\in W'$, the grading-restriction conditions, lower truncation, and $g$-equivariance show that the series
\begin{equation*}
 \langle w', Y_W((x_0+x_2)^{\gamma+M}\cdot u, x_0+x_2)Y_W(x_2^\gamma\cdot v,x_2)w\rangle\quad\mathrm{and}\quad\langle w', Y_W(Y((x_2+x_0)^{\gamma+M}\cdot u,x_0)x_2^{\gamma}\cdot v,x_2)w\rangle
\end{equation*}
equal a common Laurent polynomial in $x_0$ and $x_2$.

Now take $v_1,v_2\in V$, $w\in W$, and $w'\in W'$, assuming without loss of generality that $v_1$ and $v_2$ are generalized eigenvectors for $\gamma$ with generalized eigenvalues $\alpha_1$ and $\alpha_2$, respectively. Then
\begin{align*}
\langle w',  Y_W(Y(v_1,x_0)v_2,x_2)w\rangle & = \langle w', Y_W(Y((x_2+x_0)^{\gamma+M}(x_2+x_0)^{-\gamma-M}\cdot v_1, x_0)x_2^{\gamma}x_2^{-\gamma}\cdot v_2,x_2)w\rangle\nonumber\\
 & =\sum_{i=0}^I\sum_{j=0}^J \frac{(-1)^{i+j}}{i!j!} (x_2+x_0)^{-\alpha_1-M} x_2^{-\alpha_2}(\log(x_2+x_0))^i(\log x_2)^j\cdot\nonumber\\
 &\hspace{7em}\cdot\langle w', Y_W(Y((x_2+x_0)^{\gamma+M}\cdot\mathcal{N}^i v_1,x_0)x_2^{\gamma}\cdot\mathcal{N}^j v_2,x_2)w\rangle
\end{align*}
for any $M\in\NN$. Since $I$ and $J$ are finite, we use weak associativity to choose $M$ sufficiently large so that each
\begin{equation*}
 \langle w', Y_W(Y((x_2+x_0)^{\gamma+M}\cdot\mathcal{N}^i v_1,x_0)x_2^{\gamma}\cdot\mathcal{N}^j v_2,x_2)w\rangle
\end{equation*}
is a Laurent polynomial $p_{i,j}(x_0,x_2)$. The same argument applied to $\langle w', Y_W(v_1,x_0+x_2)Y_W(v_2,x_2)w_2\rangle$ together with weak associativity shows that
\begin{equation*}
 \langle w', Y_W(v_1,x_0+x_2)Y_W(v_2,x_2)w_2\rangle =\sum_{i=0}^I\sum_{j=0}^J \frac{(-1)^{i+j}}{i!j!} (\log(x_0+x_2))^i(\log x_2)^j\cdot\frac{p_{i,j}(x_0,x_2)}{(x_0+x_2)^{\alpha_1+M} x_2^{\alpha_2}},
\end{equation*}
and thus
\begin{equation*}
 \langle Y_W(v_1,x_1)Y_W(v_2,x_2)w\rangle =\sum_{i=0}^I\sum_{j=0}^J \frac{(-1)^{i+j}}{i! j!}(\log x_1)^i(\log x_2)^j\cdot \frac{p_{i,j}(x_1-x_2,x_2)}{x_1^{\alpha_1+M} x_2^{\alpha_2}}.
\end{equation*}

Now for any $r_1,r_2\in\RR$ such that $r_1>r_2>r_1-r_2>0$, make the substitutions $x_1\mapsto e^{\ln r_1}$, $x_2\mapsto e^{\ln r_2}$ and $x_0\mapsto r_1-r_2$. Using $\log(1+x)$ for a real number $x$ to denote the power series expansion of $\ln(1+x)$ when $\vert x\vert<1$, we get
\begin{align*}
 \langle w', Y_W(Y(v_1, & r_1-r_2)v_2, e^{\ln r_2})w\rangle\nonumber\\
 & = \sum_{i=0}^I\sum_{j=0}^J \frac{(-1)^{i+j}}{i! j!}\left(\ln r_2+\log\left(1+\frac{r_1-r_2}{r_2}\right)\right)^i(\ln r_2)^j\cdot\frac{p_{i,j}(r_1-r_2,r_2)}{\left(1+\frac{r_1-r_2}{r_2}\right)^{\alpha_1+M} e^{(\alpha_1+\alpha_2+M)\ln r_2}}\nonumber\\
 & = \sum_{i=0}^I\sum_{j=0}^J \frac{(-1)^{i+j}}{i! j!} (\ln r_1)^i(\ln r_2)^j\cdot\frac{p_{i,j}(r_1-r_2,r_2)}{e^{(\alpha_1+M)\ln r_1} e^{\alpha_2 \ln r_2}}\nonumber\\
 & =\langle w', Y_W(v_1,e^{\ln r_1})Y_W(v_2, e^{\ln r_2})w\rangle.
\end{align*}
Thus the multivalued functions $\langle w', Y_W(v_1,z_1)Y_W(v_2,z_2)w\rangle $ and $\langle w', Y_W(Y(v_1,z_1-z_2)v_2,z_2)w\rangle$ have equal restrictions to their common domain, with equality of single-valued branches on a simply-connected domain as specified in the associativity property of \cite[Proposition 3.46]{CKM}. This proves that $W$ is a $g$-twisted $V$-module in the sense of Definition \ref{def:CatTwistMods}.
\end{proof}

Now that we have unified the categorical and vertex algebraic definitions of twisted module, we apply the categorical theorems of the previous sections to vertex operator superalgebras. In the next theorem, we verify the conditions of Assumption \ref{mainassum} using results from \cite{DLM} and \cite{McR}:
\begin{theo}\label{thm:mainVOAthm1}
 Let $V$ be a simple vertex operator superalgebra, $G$ a finite automorphism group of $V$ that includes $P_V$, and $\cC$ an abelian category of strongly-graded generalized $V^G$-modules that includes $V$ and admits vertex tensor category structure as in \cite{HLZ1}-\cite{HLZ8}. Then:
 \begin{enumerate}
  \item Every indecomposable object of the monoidal supercategory $\rep\,V$ is a $g$-twisted $V$-module for some $g\in G$.
  
  \item The monoidal supercategory $\rep\,V$ admits the structure of a braided $G$-crossed supercategory.
 \end{enumerate}
\end{theo}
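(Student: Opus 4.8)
The plan is to deduce both assertions from the main categorical theorem, Theorem~\ref{thm:repV=repGV}, and Corollary~\ref{cor:RepV_G-crossed}; the real content is thus to check that $(V,\mu_V,\iota_V)$ and $G$ satisfy Assumption~\ref{mainassum} when $V$ is a simple vertex operator superalgebra (that $V$ is a superalgebra in $\cC$ at all, with $\rep\,V$ a monoidal supercategory, is \cite{HKL,CKL} together with Section~\ref{subsec:Superalgebras}). Granting Assumption~\ref{mainassum}, part~(2) is precisely Corollary~\ref{cor:RepV_G-crossed}, while for part~(1): Theorem~\ref{thm:repV=repGV} decomposes every object $W$ of $\rep\,V$ as $W=\bigoplus_{g\in G}W_g$ with $W_g$ a $g$-twisted $V$-module in the sense of Definition~\ref{def:CatTwistMods}, so an indecomposable $W$ equals $W_g$ for a single $g\in G$, and then Theorem~\ref{thm:tw_mod_defs} identifies it with a $g$-twisted $V$-module in the vertex-algebraic sense of Definition~\ref{def:VoaTwistMods}.

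Most conditions of Assumption~\ref{mainassum} are immediate or routine. Finiteness of $G$, $P_V\in G$, and invertibility of $\vert G\vert$ in $\mathbb{F}=\CC$ are hypotheses. Haploidness $\hom_\sC(\vac,V)^\even=\CC\iota_V$ follows from simplicity of $V$: an even $V^G$-module map $\phi\colon V^G\to V$ is determined by $\phi(\vac)$, which satisfies $L(0)\phi(\vac)=0=L(-1)\phi(\vac)$ and hence is a scalar multiple of $\vac$ in the simple $V$ (alternatively, this can be read off the $V^G$-module decomposition of $V$ in \cite{DLM}). For $\varepsilon_V$ take the averaging operator $\frac1{\vert G\vert}\sum_{g\in G}g$: it is even, and it is a $\cC$-morphism since each $g\in G$ fixes $V^G$ pointwise and intertwines the vertex operators, hence commutes with the $V^G$-action; it is idempotent with image $\vac=V^G$, so co-restricting gives $\varepsilon_V\colon V\to\vac$ with $\varepsilon_V\iota_V=1_\vac$ and $\iota_V\varepsilon_V=\frac1{\vert G\vert}\sum_{g\in G}g$.

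The substantive conditions are rigidity of $V$ in $\cC$ with the prescribed duality datum, and the normalization $\dim V=\vert G\vert$. Since $V$ is simple and $\cC$ carries the Huang--Lepowsky--Zhang braided tensor structure, \cite{McR} shows $V$ is rigid and self-dual in $\cC$, with an evaluation morphism that can be taken to be $\varepsilon_V\mu_V$; this produces $\widetilde i_V$ and the two duality identities. It then remains to verify $\mu_V\widetilde i_V=\vert G\vert\iota_V$, i.e.\ $\dim_\cC V=\vert G\vert$, and here is my plan: because $e:=\iota_V\varepsilon_V$ is an idempotent in $\Endo_\cC(V)$ with image $\vac$, cyclicity of the categorical trace (available once $V$ is rigid) gives $\mathrm{Tr}_\cC(e)=\dim_\cC\vac=1$, and since $\vert G\vert\,e=\sum_{g\in G}g$ this says $\sum_{g\in G}\mathrm{Tr}_\cC(g)=\vert G\vert$; as $\mathrm{Tr}_\cC(1_V)=\dim_\cC V$, one is reduced to proving $\mathrm{Tr}_\cC(g)=0$ for $g\neq1$. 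Invoking $\Endo_\cC(V)\cong\CC[G]$ and the decomposition $V=\bigoplus_\chi(\dim\chi)M_\chi$ of $V$ into pairwise non-isomorphic simple $V^G$-modules $M_\chi$ (with $M_{\mathrm{triv}}=V^G$) from \cite{DLM}, one computes $\mathrm{Tr}_\cC(g)=\sum_\chi\dim_\cC(M_\chi)\,\chi(g)$, so the desired vanishing amounts to $\dim_\cC(M_\chi)=\dim\chi$ for every irreducible character $\chi$ of $G$.

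This ``correct quantum dimensions'' statement --- automatic in the rigid semisimple setting of Kirillov and M\"{u}ger --- is the step I expect to be the main obstacle, since here it is not a formal consequence of the tensor-categorical machinery of Section~\ref{sec:MainCatThm}: rigidity of $V$ alone does not pin down the dimensions of its simple summands, and one must instead draw the equality from \cite{McR} (for instance from the symmetric Frobenius structure of the algebra $V$ over $V^G$ and the rigidity of the direct summands $M_\chi\subseteq V$, which forces $\sum_\chi\dim_\cC(M_\chi)\chi$ to be a multiple of the regular character and hence $\dim_\cC(M_\chi)=\dim\chi$ once $\dim_\cC(V^G)=1$ is used). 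Once $\dim_\cC V=\vert G\vert$ is in hand, Assumption~\ref{mainassum} is fully verified and Theorem~\ref{thm:repV=repGV} and Corollary~\ref{cor:RepV_G-crossed} complete the proof as described in the first paragraph.
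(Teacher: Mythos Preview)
Your overall architecture is exactly the paper's: reduce to verifying Assumption~\ref{mainassum} and then invoke Theorem~\ref{thm:repV=repGV} and Corollary~\ref{cor:RepV_G-crossed} (plus Theorem~\ref{thm:tw_mod_defs} for the dictionary). The routine conditions are handled the same way; note only that the paper gets self-duality with evaluation $\varepsilon_V\mu_V$ by first obtaining rigidity of $V$ from \cite{McR} and then applying \cite[Lemma~1.20]{KO} (plus an evenness argument for $\widetilde{i}_V$), rather than citing \cite{McR} directly for that specific evaluation.

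Where you diverge from the paper is the dimension condition $\mu_V\widetilde{i}_V=\vert G\vert\iota_V$. The paper does not argue via traces of $g\in G$; instead it invokes \cite[Corollary~4.8]{McR} to obtain a fully faithful braided tensor functor $\Phi\colon\rep_{\ZZ/2\ZZ}G\to\cC$ whose image $\cC_V$ contains $V$, transports the ribbon structure of $\rep_{\ZZ/2\ZZ}G$ to $\cC_V$, computes $\dim_{\cC_V}V=\sum_\chi(\dim M_\chi)^2=\vert G\vert$ there, and then checks directly (splitting into even and odd parts and using the twist) that $\varepsilon_V\mu_V\widetilde{i}_V$ coincides with this ribbon dimension. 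Your appeal to ``cyclicity of the categorical trace'' is not on solid ground as stated, since $\cC$ carries no pivotal structure; however, your conclusion $\mathrm{Tr}_\cC(\iota_V\varepsilon_V)=1$ is correct and follows cleanly from Lemma~\ref{iota_lemma} together with the right unit property of $V$. More interestingly, your detour through $\dim_\cC(M_\chi)=\dim\chi$ is unnecessary: the computation \eqref{eqn:Tr_g_zero} in Lemma~\ref{trace_of_g_lemma} already shows $\mathrm{Tr}_\cC(g)=0$ for $g\neq1$ using only rigidity (via Lemma~\ref{rigidlike_lemma}) and the algebra axioms, \emph{not} the dimension condition. Combining that with your identity $\sum_g\mathrm{Tr}_\cC(g)=\vert G\vert\,\mathrm{Tr}_\cC(\iota_V\varepsilon_V)=\vert G\vert$ yields $\dim_\cC V=\mathrm{Tr}_\cC(1_V)=\vert G\vert$ immediately. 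So your route, once the ``cyclicity'' step is replaced by the concrete computation via Lemma~\ref{iota_lemma}, actually gives a shorter verification of the last condition than the paper's, avoiding the full force of the ribbon equivalence from \cite{McR}; you just did not notice that the vanishing $\mathrm{Tr}_\cC(g)=0$ was already available to you without the quantum-dimension identification you flagged as the obstacle.
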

\begin{proof}
 Because of the dictionary between twisted modules for vertex operator superalgebras and twisted modules for superalgebra objects in braided tensor categories provided by \cite[Theorem 3.2]{HKL}, \cite[Theorem 3.13]{CKL}, and Theorem \ref{thm:tw_mod_defs}, the conclusions follow from Theorem \ref{Gcrossedfromtwist} (or Corollary \ref{cor:RepV_G-crossed}) and Theorem \ref{thm:repV=repGV}   once we verify the necessary conditions. The assumption in Theorem \ref{Gcrossedfromtwist} that tensoring functors in $\cC$ are right exact, needed for the construction of the monoidal supercategory structure on $\rep\,V$, follows from \cite[Proposition 4.26]{HLZ3}. It remains to verify the conditions of Assumption \ref{mainassum}.
 
 The first two conditions on $G$ in Assumption \ref{mainassum} hold by assumption and because $\mathbb{F}$ here is $\CC$. For the remaining conditions, we use \cite[Theorem 2.4]{DLM} (see also \cite[Theorem 3.2]{McR} which covers the superalgebra generality) which states that $V$ is a semisimple $G\times V^G$-module:
 \begin{equation}\label{SW-duality}
  V=\bigoplus_{\chi\in\widehat{G}} M_\chi\otimes V_\chi
 \end{equation}
where the $M_\chi$ are irreducible $G$-modules with character $\chi$ and the $V_\chi$ are non-zero, simple, and distinct $V^G$-modules. Since $V^G$ is paired with the one-dimensional trivial character of $G$ in this decomposition, $\mathrm{Hom}_{V^G}(V^G,V)=\CC\iota_V$ where $\iota_V$ is the inclusion, and thus $V$ is haploid. Next we define the $V^G$-module homomorphism $\varepsilon_V: V\rightarrow V^G$ to be projection onto $V^G$ with respect to the decomposition \eqref{SW-duality}. Then $\varepsilon_V\iota_V$ is the identity on $V^G$, while $\iota_V\varepsilon_V$ is projection onto the subspace of $G$-fixed points in $V$ and hence equals $\frac{1}{\vert G\vert}\sum_{g\in G} g$.

To verify the rigidity and dimension conditions of Assumption \ref{mainassum}, we first note that $\cC$ includes each irreducible $V^G$-module $V_\chi$ because $\cC$ is abelian and includes $V$. Then the assumptions of \cite[Corollary 4.8]{McR} hold, so there is a fully faithful braided tensor functor
\begin{equation*}
 \Phi: \rep_{\ZZ/2\ZZ}\,G\rightarrow\cC
\end{equation*}
such that $\Phi(M_\chi^*)\cong V_\chi$ for $\chi\in\widehat{G}$. Here $\rep_{\ZZ/2\ZZ}\,G$ is the tensor category of finite-dimensional $G$-modules with the usual symmetric braiding on $M_\chi\otimes M_\psi$ for $\chi,\psi\in\widehat{G}$ modified by $(-1)^{ij}$ when $M_\chi\otimes V_\chi\subseteq V^{\bar{i}}$ and $M_\psi\otimes V_\psi\subseteq V^{\bar{j}}$ (see \cite[Section 2.2]{McR}). This category is a ribbon tensor category with twist $(-1)^i$ on $M_\chi$ for $\chi\in\widehat{G}$ when $M_\chi\otimes V_\chi\subseteq V^{\bar{i}}$. Because $\Phi$ is fully faithful, it is a braided tensor equivalence from $\rep_{\ZZ/2\ZZ}\,G$ to its image $\cC_V\subseteq\cC$, so that $\cC_V$ inherits the ribbon structure of $\rep_{\ZZ/2\ZZ}\,G$ via $\Phi$. (Note, however, that this ribbon structure does not come from conformal weight gradings unless $V^\even$ is the $\ZZ$-graded part of $V$ and $V^\odd$ is the $(\ZZ+\frac{1}{2})$-graded part.) 

Since $V\cong\bigoplus_{\chi\in\widehat{G}} M_\chi\otimes\Phi(M_\chi^*)$ is an object of $\cC_V$, it is a rigid $V^G$-module. Then since $V$ is simple, \cite[Lemma 1.20]{KO} shows that $V$ is self-dual with evaluation $\varepsilon_V\mu_V: V\tens V\rightarrow V^G$ and some coevaluation $\widetilde{i}_V: V^G\rightarrow V\tens V$. Moreover, we may assume $\widetilde{i}_V$ is even: given a parity decomposition $\widetilde{i}_V = \widetilde{i}_V^\even+\widetilde{i}_V^\odd$, rigidity implies
\begin{align*}
 1_V = r_V(1_V\tens\varepsilon_V\mu_V)\cA_{V,V,V}^{-1}(\widetilde{i}_V^\even\tens 1_V)l_V^{-1} + r_V(1_V\tens\varepsilon_V\mu_V)\cA_{V,V,V}^{-1}(\widetilde{i}_V^\odd\tens 1_V)l_V^{-1}
\end{align*}
where, because $\varepsilon_V\mu_V$ is even, the first and second terms on the right side are the even and odd parts of $1_V$, respectively. Thus the first rigidity axiom holds with $\widetilde{i}_V^\even$ replacing $\widetilde{i}_V$, and similarly for the second rigidity axiom. This verifies the fifth condition of Assumption \ref{mainassum}.

Finally, we need to show $\varepsilon_V\mu_V\widetilde{i}_V=\vert G\vert 1_{V^G}$. It is enough to show that $d_V=\varepsilon_V\mu_V\widetilde{i}_V$ is the categorical dimension of $V$ in the ribbon category $\cC_V$ since by Section 2.2 and Corollary 4.8 of \cite{McR},
\begin{align*}
 \dim_{\cC_V} V & =\sum_{\chi\in\widehat{G}} (\dim_\CC M_\chi)(\dim_{\cC_V} V_\chi) = \sum_{\chi\in\widehat{G}}(\dim_\CC M_\chi)(\dim_{\cC_V} \Phi(M_\chi^*))\nonumber\\
 & = \sum_{\chi\in\widehat{G}}(\dim_\CC M_\chi)(\dim_{\rep_{\ZZ/2\ZZ}\,G} M_\chi^*)=\sum_{\chi\in\widehat{G}}(\dim_\CC M_\chi)(\dim_\CC M_\chi^*)\nonumber\\
 & =\sum_{\chi\in\widehat{G}}\dim_\CC \mathrm{End}\,M_\chi =\dim_\CC \CC[G]=\vert G\vert. 
\end{align*}
Because $\widetilde{i}_V$ and $\mu_V$ are even, we have $d_V=d_{V^\even }+d_{V^\odd }$ where $d_{V^{\bar{i}}}$ is the composition
\begin{equation*}
 V^G\xrightarrow{\widetilde{i}_V} V\tens V\xrightarrow{p^{\bar{i}}} V^{\bar{i}}\tens V^{\bar{i}}\xrightarrow{\mu_V} V^\even\xrightarrow{\varepsilon_V} V^G
\end{equation*}
for $i=0,1$, with $p^{\bar{i}}$ the canonical projection. We need to show that
\begin{equation}\label{dim_identity}
 d_{V^{\bar{i}}} =\dim_{\cC_V} V^{\bar{i}}
\end{equation}
for $i=0,1$, with the categorical dimension defined as usual to be
\begin{equation*}
 V^G\xrightarrow{i_{V^{\bar{i}}}} V^{\bar{i}}\tens V^{\bar{i}}\xrightarrow{\theta_{V^{\bar{i}}}\tens 1_{V^{\bar{i}}}} V^{\bar{i}}\tens V^{\bar{i}} \xrightarrow{\cR_{V^{\bar{i}},V^{\bar{i}}}} V^{\bar{i}}\tens V^{\bar{i}}\xrightarrow{e_{V^{\bar{i}}}} V^G,
\end{equation*}
where $e_{V^{\bar{i}}}$ and $i_{V^{\bar{i}}}$ are an evaluation and coevaluation for $V^{\bar{i}}$, respectively. Because $\widetilde{i}_V$ and $\varepsilon_V\mu_V$ are even, we can take $e_{V^{\bar{i}}}=\varepsilon_V\mu_V\vert_{V^{\bar{i}}\tens V^{\bar{i}}}$ and $i_{V^{\bar{i}}}=p^{\bar{i}} \widetilde{i}_V$. Then \eqref{dim_identity} follows because the twists satisfy $\theta_{V^{\bar{i}}}=(-1)^i$ and supercommutativity of $V$ implies $\mu_V\cR_{V^{\bar{i}},V^{\bar{i}}}=(-1)^i\mu_V\vert_{V^{\bar{i}}\tens V^{\bar{i}}}$.
\end{proof}

Before relating $\cC$ to the equivariantization of the braided $G$-crossed supercategory $\rep\,V$, we discuss why the monoidal structure on $\rep\,V$ is natural from a vertex algebraic point of view. For more details, see \cite[Section 3.5]{CKM}. Given three modules $W_1$, $W_2$, and $W_3$ in $\rep\,V$, we say that an even or odd $V^G$-module intertwining operator $\cY$ of type $\binom{W_3}{W_1\,W_2}$ is a \textit{$V$-intertwining operator} if for any $w_2\in W_2$, $w_3'\in W_3'$, and parity-homogeneous $v\in V$, $w_1\in W_1$, the multivalued analytic functions
\begin{align*}
 (-1)^{\vert\cY\vert\vert v\vert}\langle w_3', Y_{W_3}(v,z_1)\cY(w_1,z_2)w_2\rangle, & \qquad\vert z_1\vert>\vert z_2\vert>0,\nonumber\\
 (-1)^{\vert v\vert\vert w_1\vert}\langle w_3', \cY(w_1,z_2)Y_{W_2}(v,z_1)w_2\rangle, & \qquad \vert z_2\vert >\vert z_1\vert>0,\nonumber\\
 \langle w_3', \cY(Y_{W_1}(v,z_1-z_2)w_1,z_2)w_2\rangle, & \qquad \vert z_2\vert>\vert z_1-z_2\vert>0
\end{align*}
defined on the indicated regions have equal restrictions to their common domains, with specified equalities of certain single-valued branches on certain simply-connected domains. Such intertwining operators correspond precisely to the categorical $\rep\,V$-intertwining operators of type $\binom{W_3}{W_1\,W_2}$
defined in Section \ref{subsec:Superalgebras}. When $W_1$ is $g_1$-twisted, $W_2$ is $g_2$-twisted, and $W_3$ is $g_1g_2$-twisted (recall Proposition \ref{prop:TensProdandGgrading}), it is natural to call $\cY$ a \textit{twisted intertwining operator}.
\begin{rema}
For previous definitions of twisted intertwining operator, see \cite{Xu} (for commuting $g_1$ and $g_2$) and \cite{Huang-TwistIntwOps} (for general $g_1$ and $g_2$). When $g_1$ and $g_2$ commute, the definition here agrees with that of \cite{Xu} (see \cite[Theorem 3.6]{DLXY}, whose proof uses a slight modification of \cite[Theorem 3.53]{CKM}). Whether the definition of twisted intertwining operator given here is equivalent to that of \cite{Huang-TwistIntwOps} for general $g_1$ and $g_2$ is a question we plan to address in a future publication.
\end{rema}

With the definition of twisted intertwining operator given here, Proposition \ref{prop:TensProdandGgrading} and \cite[Proposition 3.50]{CKM} show that the tensor product in $\rep\,V$ satisfies a natural vertex algebraic universal property. If $W_1$ is a $g_1$-twisted $V$-module and $W_2$ is a $g_2$-twisted $V$-module, then the tensor product $W_1\tens_V W_2$ is a $g_1g_2$-twisted $V$-module equipped with a canonical even twisted intertwining operator $\cY_{W_1,W_2}$ of type $\binom{W_1\tens_V W_2}{W_1\,W_2}$ corresponding to the categorical intertwining operator $I_{W_1,W_2}: W_1\tens W_2\rightarrow W_1\tens_V W_2$. Then if $W_3$ is any $g_1g_2$-twisted $V$-module and $\cY$ any twisted intertwining operator of type $\binom{W_3}{W_1\,W_2}$, there is a unique $V$-homomorphism
\begin{equation*}
 f: W_1\tens_V W_2\rightarrow W_3
\end{equation*}
such that $\cY=f\circ\cY_{W_1,W_2}$. This universal property is comparable to the one in \cite[Definition 4.15]{HLZ3} satisfied by the $P(z)$-tensor product of (untwisted) $V$-modules.

We can also naturally describe the unit and associativity isomorphisms in $\rep\,V$ using intertwining operators. From \cite[Section 3.5.4]{CKM}, the left and right unit isomorphisms
\begin{equation*}
 l^V_W: V\tens_V W \rightarrow W,\qquad r^V_W: W\tens_V V\rightarrow W
\end{equation*}
associated to a module $(W, Y_W)$ in $\rep\,V$ are characterized by
\begin{equation*}
 l^V_W(\cY_{V,W}(v,x)w) = Y_W(v,x)w,\qquad r^V_W(\cY_{W,V}(w,x)v)= (-1)^{\vert v\vert\vert w\vert}e^{xL(-1)}Y_W(v, e^{-\pi i} x)w
\end{equation*}
for parity-homogeneous $v\in V$, $w\in W$. Note that for the right unit isomorphisms, we need to specify the branch of $\log(-1)$ used for the substitution $x\mapsto -x$ since $Y_W$ may involve non-integral powers of $x$. 

For three modules $W_1$, $W_2$, and $W_3$ in $\rep\,V$, \cite[Proposition 3.62]{CKM} shows that the associativity isomorphism
\begin{equation*}
 \cA^V_{W_1,W_2,W_3}: W_1\tens_V(W_2\tens_V W_3)\rightarrow (W_1\tens_V W_2)\tens_V W_3
\end{equation*}
is characterized by the equality
\begin{align*}
& \left\langle w', \overline{\cA^V_{W_1,W_2,W_3}} \left(\cY_{W_1, W_2\tens_V W_3}(w_1,e^{\ln r_1})\cY_{W_2,W_3}(w_2,e^{\ln r_2})w_3\right)\right\rangle\nonumber\\
&\qquad\qquad\qquad\qquad=\left\langle w', \cY_{W_1\tens_V  W_2, W_3}\big(\cY_{W_1,W_2}(w_1,e^{\ln(r_1-r_2)})w_2,e^{\ln r_2}\big)w_3\right\rangle 
\end{align*}
for $w_1\in W_1$, $w_2\in W_2$, $w_3\in W_3$, and $w'$ in the contragredient module $\left((W_1\tens_V W_2)\tens_V W_3\right)'$. Here $r_1$ and $r_2$ are any positive real numbers that satisfy $r_1>r_2>r_1-r_2>0$.

We also describe the braided $G$-crossed supercategory structure on $\rep\,V$ of Theorem \ref{Gcrossedfromtwist} with intertwining operators. For the $G$-action on $\rep\,V$,
\begin{equation*}
 T_g(W, Y_W) = (W, Y_W\circ(g^{-1}\otimes 1_W))
\end{equation*}
for $g\in G$. Then as in \cite[Section 3.5.5]{CKM}, for a $g$-twisted $V$-module $W_1$ and any $W_2$ in $\rep\,V$, the braiding isomorphism
\begin{equation*}
 \cR^V_{W_1,W_2}: W_1\tens_V W_2\rightarrow T_g(W_2)\tens_V W_1
\end{equation*}
and its inverse are characterized by
\begin{equation*}
 (\cR^V_{W_1,W_2})^{\pm 1}(\cY_{W_1,W_2}(w_1,x)w_2) = (-1)^{\vert w_1\vert \vert w_2\vert} e^{xL(-1)}\cY_{T_g(W_2),W_1}(w_2, e^{\pm \pi i} x) w_1
\end{equation*}
for parity-homogeneous $w_1\in W_1$, $w_2\in W_2$.

Now we prove the final theorem of this section; for $\cC$ rigid and semisimple, it has appeared as \cite[Theorem 1.5]{KirillovOrbifoldII} and \cite[Theorem 3.12]{Mu2}. Here we assume only the existence of a suitable tensor category of $V^G$-modules.
\begin{theo}\label{thm:mainVOAthm2}
 Let $V$ be a simple vertex operator superalgebra, $G$ a finite automorphism group of $V$ that includes $P_V$, and $\cC$ an abelian category of strongly-graded generalized $V^G$-modules that includes $V$ and admits vertex tensor category structure as in \cite{HLZ1}-\cite{HLZ8}. Then the induction functor $\cF: \cC\rightarrow(\rep\,V)^{G}$ is an equivalence of braided tensor categories.
\end{theo}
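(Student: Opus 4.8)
The plan is to show that $\cF\colon\cC\to(\rep\,V)^G$ is an equivalence of the underlying categories; since it is already a braided tensor functor (Theorem~\ref{thm:F_even_braided}, whose hypothesis $\repGV=\rep\,V$ holds by Theorem~\ref{thm:repV=repGV}, as Assumption~\ref{mainassum} is verified in the proof of Theorem~\ref{thm:mainVOAthm1}, together with the dictionary of Theorem~\ref{thm:tw_mod_defs}), it will then automatically be an equivalence of braided tensor categories, proving Main Theorem~\ref{mthm1}(3). To prove $\cF$ is an equivalence I would exhibit a right adjoint $\cG\colon(\rep\,V)^G\to\cC$ and verify that the unit and counit of the adjunction $\cF\dashv\cG$ are natural isomorphisms.

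\emph{The right adjoint and full faithfulness.} For an object $(W,\mu_W,\varphi_W)$ of $(\rep\,V)^G$, each $\varphi_W(g)$ is an even automorphism of $W$ commuting with the $V^G$-action (because $g$ fixes $V^G$ pointwise), hence a morphism in $\cC$; since $\varphi_W$ is a group homomorphism, $e_W:=\frac{1}{|G|}\sum_{g\in G}\varphi_W(g)$ is an idempotent in $\Endo_\cC(W)$, and it splits because $\cC$ is abelian. Set $\cG(W):=W^G:=\im e_W$ (this lies in the even part of $W$ as an object of $\sC$, since $P_V\in G$, so it is genuinely an object of $\cC$). The adjunction $\hom_{(\rep\,V)^G}(\cF(X),W)\cong\hom_\cC(X,W^G)$ then follows from the free/forgetful adjunction of Section~\ref{subsec:Superalgebras}, the only new point being that $\Psi(f)=\mu_W(1_V\tens f)$ is $G$-equivariant precisely when $f\colon X\to W$ factors through $W^G$. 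Now $\cG\cF(X)=\cF(X)^G=\im\big((\tfrac{1}{|G|}\sum_{g}g)\tens 1_X\big)=\im(\iota_V\varepsilon_V\tens 1_X)$ by the fourth bullet of Assumption~\ref{mainassum}, and since $\varepsilon_V\iota_V=1_{\unit}$ this is $\unit\tens X$; the unit $\eta_X\colon X\to\cG\cF(X)$ is $(\iota_V\tens 1_X)l_X^{-1}$, with inverse induced by $(\varepsilon_V\tens 1_X)l_X$. Hence $\eta$ is a natural isomorphism and $\cF$ is fully faithful.

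\emph{Essential surjectivity.} Given $W$ in $(\rep\,V)^G$, set $X:=W^G$ and let $q\colon X\hookrightarrow W$ be the inclusion. The counit $\epsilon_W\colon\cF(X)=V\tens X\to W$ is the canonical $V$-module extension $\Psi(q)=\mu_W(1_V\tens q)$ of $q$, and it is a morphism in $(\rep\,V)^G$ because $\varphi_W(g)q=q$ gives $\varphi_W(g)\epsilon_W=\epsilon_W(g\tens 1_X)$. It remains to show $\epsilon_W$ is an isomorphism. I would do this by constructing an explicit two-sided inverse, namely (up to the scalar $|G|^{-1}$) the composite
\begin{align*}
W & \xrightarrow{l_W^{-1}}\unit\tens W\xrightarrow{\widetilde{i}_V\tens 1_W}(V\tens V)\tens W\xrightarrow{\cA_{V,V,W}^{-1}}V\tens(V\tens W)\\
& \xrightarrow{1_V\tens\mu_W}V\tens W\xrightarrow{1_V\tens e_W}V\tens X,
\end{align*}
built from the coevaluation $\widetilde{i}_V$ and evaluation $\varepsilon_V\mu_V$ of Assumption~\ref{mainassum}, and then checking the two composites with $\epsilon_W$ are identities by braid-diagram computations in the style of Section~\ref{sec:MainCatThm}. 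The relations to be used are the duality axioms for $V$, the identity $\mu_V\widetilde{i}_V=|G|\,\iota_V$, the supercommutativity and associativity of $\mu_V$, the automorphism properties $g\mu_V=\mu_V(g\tens g)$ and $g\iota_V=\iota_V$, and the compatibility $\varphi_W(g)\mu_W=\mu_W(g\tens\varphi_W(g))$; morally this is the de-equivariantization fact that a $G$-equivariant $V$-module is freely induced from its $G$-invariants exactly because $V$ is self-dual of categorical dimension $|G|$.

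\emph{The main obstacle.} Everything except the invertibility of the counit $\epsilon_W$ is a formal consequence of the adjunction and Assumption~\ref{mainassum}; the invertibility of $\epsilon_W$ is the one place where the full Frobenius structure of $V$ (self-duality plus $\dim V=|G|$) genuinely enters, and it is the analogue, for arbitrary equivariant modules, of the computation of the projections $\pi_g$ in the proof of Theorem~\ref{thm:repV=repGV}. Once $\eta$ and $\epsilon$ are established to be natural isomorphisms, $\cF$ is an equivalence of categories, and being a braided tensor functor it is an equivalence of braided tensor categories, completing the proof.
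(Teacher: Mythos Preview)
Your approach is correct but differs from the paper's in a meaningful way. The paper also constructs the $G$-invariants functor and the counit $\Psi_W=\mu_W(1_V\tens\iota_W)$ and reduces its invertibility to showing that any object of $(\rep\,V)^G$ with trivial $G$-invariants vanishes; however, it proves this last step by a vertex-algebraic argument (Lemma~\ref{lem:Dong-Mason_lemma}, a generalization of a Dong--Mason lemma), using simplicity of $V$ and a Jacobson density argument to exhibit, for any nonzero $W$, an explicit nonzero element of $W^G$ as a coefficient of $\sum_i Y_W(v^{(i)},x)w^{(i)}$ for dual bases $\{v^{(i)}\}\subset V^{\chi^*}$, $\{w^{(i)}\}\subset W^\chi$. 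Your argument instead stays entirely inside the tensor category and uses only the Frobenius-type data of Assumption~\ref{mainassum} (already verified in Theorem~\ref{thm:mainVOAthm1}); this has the advantage of applying verbatim in the abstract setting of Section~\ref{sec:MainCatThm}, not just for vertex operator superalgebras.

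One caution: of the two composites, only $\epsilon_W\psi_W=1_W$ reduces cleanly to a braid computation. Expanding $e_W=\tfrac{1}{|G|}\sum_g\varphi_W(g)$, using $\varphi_W(g)\mu_W=\mu_W(g\tens\varphi_W(g))$ and associativity, the sum collapses by Lemma~\ref{trace_of_g_lemma} to the $g=1$ term and gives $1_W$ (so no extra scalar $|G|^{-1}$ is needed beyond the one already in $e_W$). The other composite $\psi_W\epsilon_W=1_{\cF(W^G)}$ is not as direct: your $\psi_W$ is not obviously a $\rep\,V$-morphism, so one cannot simply check the identity on the generator $W^G\hookrightarrow\cF(W^G)$. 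The clean fix is to note that $\epsilon_W\psi_W=1_W$ alone already yields ``$W^G=0\Rightarrow W=0$'' (since then $1_W$ factors through $\cF(0)=0$), and apply this to $K=\ker\epsilon_W$: the triangle identity together with the invertibility of $\eta$ makes $\cG(\epsilon_W)$ an isomorphism, so $K^G=\cG(K)=\ker\cG(\epsilon_W)=0$ (as $\cG$ is a right adjoint), whence $K=0$ and $\epsilon_W$ is an isomorphism.
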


The proof requires a generalization of \cite[Lemma 3.1]{DM1}:
\begin{lemma}\label{lem:Dong-Mason_lemma}
 In the setting of Theorem \ref{thm:mainVOAthm2}, in particular assuming $V$ is simple, let $W$ be a module in $\rep\,V$, $\lbrace v^{(i)}\rbrace_{i=1}^I\subseteq V$ a set of linearly-independent $L(0)$-eigenvectors, and $\lbrace w^{(i)}\rbrace_{i=1}^I\subseteq W$ a set of parity-homogeneous (non-zero) $L(0)$-eigenvectors. Then
 \begin{equation*}
  \sum_{i=1}^I Y_W(v^{(i)},x)w^{(i)}\neq 0.
 \end{equation*}
\end{lemma}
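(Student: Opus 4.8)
The plan is to prove the contrapositive: if $\sum_{i=1}^I Y_W(v^{(i)},x)w^{(i)} = 0$ identically in $x$, then the vectors $v^{(1)},\dots,v^{(I)}$ are linearly dependent in $V$ (or some $w^{(i)}=0$), using the simplicity of $V$. First I would reduce to the case where all the $v^{(i)}$ are generalized $L(0)$-eigenvectors (indeed genuine eigenvectors, since $V$ has semisimple $L(0)$-action) of a \emph{fixed} conformal weight $m$, and likewise all the $w^{(i)}$ have a fixed generalized $L(0)$-weight $h$ and fixed parity: since $Y_W(v,x)w$ for $v\in V_{(m)}$ and $w\in W_{[h]}$ lands in a single "weight sector" $\bigoplus_k W_{[h+m-n-1]}(\log x)^k\, x^{-n-1}$ determined by the pair $(m,h)$, the hypothesized vanishing sum splits over the distinct pairs $(m,h)$ and over parities, so it suffices to treat each homogeneous piece separately.

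Having made this reduction, the key step is to exploit simplicity of $V$ as in \cite[Lemma 3.1]{DM1}: for a genuinely simple vertex operator superalgebra, any nonzero vector $v\in V$ "acts injectively enough" in the sense that the only way $\sum_i Y_W(v^{(i)},x)w^{(i)}=0$ can hold for a module $W$ in $\rep\,V$ is the trivial way. Concretely, I would use that $W$ is a module over the superalgebra $V$ in $\cC$ — equivalently (by Theorem \ref{thm:tw_mod_defs} and Proposition \ref{propo:RepV_using_intwops}) carries the intertwining operator $Y_W$ — together with the fact that $V$ is simple, hence has no proper ideals, to run the standard argument: if $\sum_i Y_W(v^{(i)},x)w^{(i)}=0$, apply further vertex operators $Y_W(u,y)$ for arbitrary $u\in V$ and use the Jacobi/associativity identity of Definition \ref{def:VoaTwistMods} (valid here because everything is a genuine $g$-twisted module for the appropriate $g\in G$, once $W$ is decomposed via Theorem \ref{thm:repV=repGV} — or one works componentwise on each twisted summand), to derive that the span of $\{v^{(i)}\}$ inside $V$, saturated under the $u_n$-action, gives a proper submodule annihilating $W$ in a suitable sense, contradicting simplicity unless the $v^{(i)}$ were dependent. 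Iterating induction on $I$: pick the $v^{(i)}$ of maximal weight, use the creation/skew-symmetry properties to isolate its leading term, and peel it off.

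The cleanest route is probably to mimic the original Dong--Mason argument directly. Assume $I$ minimal with a nontrivial relation $\sum_{i=1}^I Y_W(v^{(i)},x)w^{(i)}=0$; then all $w^{(i)}\ne 0$ and the $v^{(i)}$ are linearly independent. Using that $V$ is simple, choose $u\in V$ so that the coefficient of an appropriate power of $y$ in $Y(u,y)v^{(I)}$ is a nonzero multiple of $\vac$ while $Y(u,y)v^{(i)}$ has no such term for $i<I$ — this is possible because $V$ simple implies the bilinear form pairing is nondegenerate, or more elementarily, because for a simple VOSA the "spanning set" $\{u_n v^{(I)}\}$ generates all of $V$ including $\vac$, and one can separate $v^{(I)}$ from the finitely many other weight vectors $v^{(i)}$. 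Substituting this $u$ into the associativity relation for $Y_W$ (taking the residue in the relevant formal variable) collapses the $I$-th term to $Y_W(\vac,x)w^{(I)}=w^{(I)}\ne 0$ up to scalar while leaving a relation among fewer than $I$ terms, contradicting minimality. I expect the \textbf{main obstacle} to be the bookkeeping with logarithms and twisted substitutions: because $Y_W$ for a $g$-twisted module produces elements of $W[\log x]\{x\}$ with genuinely non-integral and log powers of $x$, extracting "coefficients" and running the delta-function substitution manipulations requires the careful analytic-continuation formalism of Propositions \ref{propo:RepV_using_intwops} and \ref{cor:g-equiv_and_mono} and Lemma \ref{lem:g-equiv} rather than naive formal-variable algebra; one must phrase the whole argument in terms of the multivalued analytic functions $\langle w', Y_W(v,z)w\rangle$ and their expansions, and verify that the separation-of-weights reduction is compatible with the $\gamma$-twist. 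Once that analytic framework is in place, the algebraic core is exactly the simplicity argument of \cite[Lemma 3.1]{DM1}, adapted from the untwisted VOA setting to the present $\rep\,V$ setting via the dictionary already established in this section.
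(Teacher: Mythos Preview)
Your overall strategy is right in spirit---use the associativity of $Y_W$ to pass from $\sum_i Y_W(v^{(i)},x)w^{(i)}=0$ to $\sum_i Y_W(a\cdot v^{(i)},x)w^{(i)}=0$ for operators $a$ built from modes of $V$, then invoke simplicity---but your reduction step is wrong. The weight sector in which $Y_W(v,x)w$ lands for $v\in V_{(m)}$, $w\in W_{[h]}$ depends only on the sum $m+h$, not on the pair $(m,h)$: terms with different $(m_i,h_i)$ but equal $m_i+h_i$ contribute to the same generalized weight space of $W$ at every power of $x$ and cannot be separated this way. So you cannot assume the $v^{(i)}$ share a common weight.

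The paper avoids any such reduction and instead uses the Jacobson Density Theorem. First, the analytic associativity of Proposition~\ref{propo:RepV_using_intwops} (applied directly at real points $r_1>r_2$; no decomposition of $W$ into twisted summands is needed, so your invocation of Theorem~\ref{thm:repV=repGV} is unnecessary) gives $\sum_i Y_W(a\cdot v^{(i)}, e^{\ln r_2})w^{(i)}=0$ for every $a$ in the algebra $A$ generated by all modes $u_n$. Since $V$ is simple, each weight space $V_{(n)}$ is an irreducible module for the weight-preserving subalgebra $A_0\subset A$, and distinct weight spaces are inequivalent $A_0$-modules because $L(0)\in A_0$. Density then produces $a_i\in A_0$ with $a_i\cdot v^{(j)}=\delta_{ij}v^{(j)}$ \emph{even when the weights differ}, yielding $Y_W(v^{(i)},x)w^{(i)}=0$ for each $i$ separately (the passage from the fixed real $r_2$ back to the formal variable $x$ uses $L(0)$-conjugation and the eigenvector hypothesis on both $v^{(i)}$ and $w^{(i)}$). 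Finally, $\lbrace v\in V: Y_W(v,x)w^{(i)}=0\rbrace$ is a nonzero two-sided ideal of $V$---here parity-homogeneity of $w^{(i)}$ is used, via \cite[Lemma~3.73]{CKM}---hence all of $V$, so $w^{(i)}=Y_W(\vac,x)w^{(i)}=0$.

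Your single-mode projection onto $\vac$ also tacitly assumes $V_{(0)}=\CC\vac$ and that $V$ has no negative weight spaces, neither of which is part of the hypotheses here; the Jacobson density argument needs no such assumptions, and it dispenses with the induction on $I$ as well.
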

\begin{proof}
 We will show that if $\sum_{i=1}^I Y_W(v^{(i)},x)w^{(i)} = 0$ when the $v^{(i)}$ are linearly independent $L(0)$-eigenvectors and the $w^{(i)}$ are parity-homogeneous and contained in $L(0)$-eigenspaces of $W$, then the $w^{(i)}$ must all be zero.
 
 If the sum is zero, then also
\begin{equation*}
 0=\sum_{i=1}^I Y_W(v^{(i)}, e^{\ln r_2})w^{(i)}\in\overline{W}=\prod_{h\in\CC} W_{[h]}
\end{equation*}
for any fixed $r_2\in\RR_+$. We first show that the sum is still zero after replacing each $v^{(i)}$ with $u_n v^{(i)}$ for any $u\in V$ and $n\in\ZZ$. This uses the associativity of $Y_W$ from the proof of Theorem \ref{thm:tw_mod_defs}: for $r_1\in\RR_+$ such that $r_1>r_2>r_1-r_2>0$ and $w'\in W'$,
\begin{align*}
 \sum_{n\in\ZZ}\sum_{i=1}^I \langle w', Y_W(u_n v^{(i)}, e^{\ln r_2})w^{(i)}\rangle (r_1-r_2)^{-n-1} & = \sum_{i=1}^I \langle w', Y_W(Y(u,r_1-r_2)v^{(i)}, e^{\ln r_2})w^{(i)}\rangle \nonumber\\
 & =\left\langle w', Y_W(u, e^{\ln r_1})\sum_{i=1}^I Y_W(v^{(i)},e^{\ln r_2})w^{(i)}\right\rangle = 0.
\end{align*}
Thus the Laurent series $\sum_{n\in\ZZ} \sum_{i=1}^I \langle w', Y_W(u_n v^{(i)}, e^{\ln r_2})w^{(i)}\rangle z_0^{-n-1}$, which converges absolutely in the region $0<\vert z_0\vert<r_2$, is identically zero on a non-empty open interval of the real line, and hence is identically zero on its entire domain. Then each coefficient 
\begin{equation*}
 \sum_{i=1}^I \langle w', Y_W(u_n v^{(i)}, e^{\ln r_2})w^{(i)}
\end{equation*}
of the Laurent series is zero.

We have not yet used the linear indepedence or conformal weight homogeneity of the $v^{(i)}$, so we can iterate this argument to show that, if $A\subseteq\mathrm{End}_\CC\,V$ is the subalgebra generated by the $u_n$ for $u\in V$ and $n\in\ZZ$, then 
\begin{equation*}
 \sum_{i=1}^I \langle w', Y_W(a\cdot v^{(i)}, e^{\ln r_2})w^{(i)}=0
\end{equation*}
for any $a\in A$. Letting $A_0\subseteq A$ denote the subalgebra of conformal-weight-grading-preserving operators,  each conformal weight space $V_{(n)}$ is a (finite-dimensional) irreducible $A_0$-module because $V$ is simple. Moreover, they are inequivalent $A_0$-modules because $L(0)\in A_0$ acts differently on each one. Thus the finitely many $v^{(i)}$ are contained in a finite-dimensional completely-reducible $A_0$-module. Then the Jacobson Density Theorem (see for example \cite[Section 4.3]{Ja}) implies that for any $i\in\lbrace 1,\ldots, I\rbrace$, there is some $a_i\in A_0$ such that $a_i\cdot v^{(j)}=\delta_{i,j} v^{(j)}$ for all $j\in\lbrace 1,\ldots I\rbrace$. In particular,
\begin{equation*}
 Y_W(v^{(i)}, e^{\ln r_2})w^{(i)}=0
\end{equation*}
for each $i$. Now using the assumption that $v^{(i)}$ and $w^{(i)}$ are $L(0)$-eigenvectors, we also have
\begin{align*}
 0  & =\left(\frac{x}{e^{\ln r_2}}\right)^{L(0)}Y_W(v^{(i)}, e^{\ln r_2})w^{(i)}\nonumber\\
 &= Y_W\left(\left(\frac{x}{r_2}\right)^{L(0)} v^{(i)}, x\right)\left(\frac{x}{e^{\ln r_2}}\right)^{L(0)} w^{(i)}\nonumber\\
 & =\left(\frac{x}{e^{\ln r_2}}\right)^{\mathrm{wt}\,v^{(i)}+\mathrm{wt}\,w^{(i)}} Y_W(v^{(i)}, x)w^{(i)},
\end{align*}
so that $Y_W(v^{(i)}, x)w^{(i)} =0$ for each $i$.

Now for each $i$, the annihilator $$\mathrm{Ann}_V(w^{(i)}) =\lbrace v\in V\,\vert\,Y_W(v,x)w^{(i)}=0\rbrace$$ is non-zero, containing $v^{(i)}$. But because each $w^{(i)}$ is parity-homogeneous, $\mathrm{Ann}_V(w^{(i)})$ is a (two-sided) ideal of $V$ by \cite[Lemma 3.73]{CKM}. Since $V$ is simple, this means $\mathrm{Ann}_V(w^{(i)})=V$, forcing
\begin{equation*}
 w^{(i)}=Y_W(\vac, x)w^{(i)}=0
\end{equation*}
for all $i$.
\end{proof}

Now we proceed with the proof of Theorem \ref{thm:mainVOAthm2}:

\begin{proof}
 Since induction is a braided tensor functor by Theorems \ref{thm:F_braided} and \ref{thm:F_even_braided}, we just need to show it is an equivalence of categories. For this we use the $G$-invariants functor from $(\rep\,V)^{G}$ to $\cC$:
\begin{itemize}
 \item For an object $(W, Y_W, \varphi_W)$ in $(\rep\,V)^{G}$, we define
 \begin{equation*}
  W^G = \lbrace w\in W\,\vert\,\varphi_W(g)w=w\,\,\mathrm{for\,all}\,\,g\in G\rbrace.
 \end{equation*}
Since $\varphi_W(g)\circ Y_W =Y_W\circ(g\otimes\varphi_W(g))$ for $g\in G$, each $\varphi_W(g)$ is a $V^G$-module endomorphism. Then $W^G$, as the image of $\frac{1}{\vert G\vert}\sum_{g\in G}\varphi_W(g)$, is an object of $\cC$ because $\cC$ is abelian.

\item For a morphism $f: (W_1,Y_{W_1},\varphi_{W_1})\rightarrow(W_2,Y_{W_2},\varphi_{W_2})$ in $(\rep\,V)^{G}$, we define $f^G=f\vert_{W_1^G}$. Since $f$ intertwines the $G$-actions on $W_1$ and $W_2$, the image of $f^G$ is contained in $W_2^G$. Hence
\begin{equation*}
 f^G: W_1^G\rightarrow W_2^G
\end{equation*}
is a morphism in $\cC$.
\end{itemize}
Now to show that induction is an equivalence of categories, we will find natural isomorphisms $\cF(W)^G\cong W$ for $W$ in $\cC$ and $\cF(W^G)\cong W$ for $(W,Y_W,\varphi_W)$ in $(\rep\,V)^{G}$.

First if $W$ is an object of $\cC$, then $\cF(W)=V\tens W$ and $\varphi_{\cF(W)}(g)=g\tens 1_W$ for $g\in G$. Thus 
\begin{equation*}
 l_W(\varepsilon_V\tens 1_W)\vert_{\cF(W)^G}: \cF(W)^G\rightarrow W
\end{equation*}
is a natural isomorphism, with inverse $(\iota_V\tens 1_W)l_W^{-1}$ because $\varepsilon_V\iota_V=1_{V^G}$ and
\begin{equation*}
 (\iota_V\varepsilon_V\tens 1_W)\vert_{\cF(W)^G}=\frac{1}{\vert G\vert}\sum_{g\in G} g\tens 1_W\vert_{\cF(W)^G} =\frac{1}{\vert G\vert}\sum_{g\in G}\varphi_{\cF(W)}(g)\vert_{\cF(W)^G} = 1_{\cF(W)^G}.
\end{equation*}

Now if $(W, \mu_W,\varphi_W)$ is an object of $(\rep\,V)^{G}$, let $\iota_W: W^G\rightarrow W$ denote the inclusion. Note that $\varphi_W(g)\iota_W=\iota_W$ for all $g\in G$. We take the $V^G$-module homomorphism
\begin{equation*}
 \Psi_W=\mu_W(1_V\tens\iota_W): V\tens W^G\rightarrow W.
\end{equation*}
The associativity of $\mu_W$ implies $\Psi_W$ is a morphism in $\rep\,V$, and $\Psi_W$ is a morphism in $(\rep\,V)^{G}$ because
\begin{align*}
 \varphi_W(g)\Psi_W & =\varphi_W(g)\mu_W(1_V\tens\iota_W)\nonumber\\
 & =\mu_W(g\tens\varphi_W(g))(1_V\tens\iota_W)\nonumber\\
 & =\mu_W(1_V\tens\iota_W)(g\tens 1_{W^G})\nonumber\\
 & = \Psi_W\varphi_{\cF(W^G)}(g)
\end{align*}
for $g\in G$. The homomorphisms $\Psi_W$ are natural because if $f: W_1\rightarrow W_2$ is a morphism in $(\rep\,V)^{G}$, then
\begin{align*}
 \Psi_{W_2}\cF(f^G) &= \mu_{W_2}(1_V\tens \iota_{W_2})(1_V\tens f\vert_{W_1^G})\nonumber\\
 & = \mu_{W_2}(1_V\tens f)(1_V\tens\iota_{W_1})\nonumber\\
 & = f\mu_{W_1}(1_V\tens\iota_{W_1})\nonumber\\
 & = f\Psi_{W_1}.
\end{align*}
We need to show that each $\Psi_W$ is actually an isomorphism.

As a $V^G$-module, $\cF(W^G)=\bigoplus_{\chi\in\widehat{G}} V^\chi\tens W^G$, where $V^\chi=M_\chi\otimes V_\chi$ is the sum of all $G$-modules isomorphic to $M_\chi$ in $V$. Also $W$ is a semisimple $G$-module because it is a $V^G$-module with finite-dimensional $L(0)$-generalized eigenspaces and because $L(0)$ commutes with each $\varphi_W(g)$. So $W=\bigoplus_{\chi\in\widehat{G}} W^\chi$ where $W^\chi$ is the sum of all $G$-submodules of $W$ isomorphic to $M_\chi$. As $\Psi_W$ intertwines the $G$-actions on $\cF(W^G)$ and $W$, it maps each $V^\chi\tens W^G$ to $W^\chi$. Moreover,
\begin{equation*}
 \Psi_W\vert_{V^G\tens W^G}: V^G\tens W^G\rightarrow W^G
\end{equation*}
is an isomorphism, since it amounts to $l_{W^G}$ by the unit property of $\mu_W$. Consequently, the kernel and cokernel of $\Psi_W$ are objects of $(\rep\,V)^{G}$ with no $G$-invariants.

To complete the proof, we show that any object $W$ of $(\rep\,V)^{G}$ with $W^G=0$ is itself $0$; equivalently, if $W\neq 0$, then $W^G\neq 0$ as well. As before, $W=\bigoplus_{\chi\in\widehat{G}} W^\chi$ where $W^\chi$ is the sum of all $G$-submodules of $W$ that are isomorphic to $M_\chi$. If $W\neq 0$, then $W^\chi\neq 0$ for some $\chi$;  let $\chi^*$ to denote the character of $G$ dual to $\chi$. Now choose a basis $\lbrace v^{(i)}\rbrace_{i=1}^I\subseteq V^{\chi^*}_{(n)}$ for some copy of $M_{\chi^*}$ contained in a non-zero homogeneous subspace of $V$. Then choose $\lbrace w^{(i)}\rbrace_{i=1}^I\subseteq W^\chi_{[h]}$ to be a dual basis for some copy of $M_\chi$ contained in some non-zero homogeneous subspace of $W$. Although $L(0)$ might not act semisimply on $W$, the $L(0)$-eigenspace of $W^\chi$ with eigenvalue $h$ will be non-zero, so we may assume the $w^{(i)}$ are $L(0)$-eigenvectors. Moreover, because $\varphi_W(P_V)=P_W$, the $G$-submodule $W^\chi$ is either purely even or purely odd, so the $w^{(i)}$ are parity-homogeneous. We now apply Lemma \ref{lem:Dong-Mason_lemma} to conclude that
\begin{equation*}
 \sum_{i=1}^I Y_W(v^{(i)},x)w^{(i)}\neq 0.
\end{equation*}
But we have chosen the $v^{(i)}$ and $w^{(i)}$ so that $\sum_{i=1}^I v^{(i)}\otimes w^{(i)}\in (V\otimes W)^G$. Thus because each coefficient of $Y_W$ provides a $G$-module homomorphism from $V\otimes W$ to $W$, we have
\begin{equation*}
 \sum_{i=1}^I Y_W(v^{(i)},x)w^{(i)}\in W^G[\log x]\lbrace x\rbrace
\end{equation*}
and $W^G\neq 0$.
%
%
\end{proof}

\subsection{\texorpdfstring{$\ZZ/2\ZZ$}{Z/2Z}-equivariantization for superalgebras}\label{subsec:VOSAs}

Here we discuss the implications of Theorems \ref{thm:mainVOAthm1} and \ref{thm:mainVOAthm2} in perhaps the simplest non-trivial case: $V$ is a vertex operator superalgebra and $G=\langle P_V\rangle\cong\ZZ/2\ZZ$ so that $V^G=V^\even$. Let $V$ be simple and $\cC$ an abelian category of strongly-graded generalized $V^\even$-modules that includes $V$ and admits vertex tensor category structure as in \cite{HLZ1}-\cite{HLZ8}. By Theorem \ref{thm:mainVOAthm2}, $\cC$ is braided tensor equivalent to the $\ZZ/2\ZZ$-equivariantization of $\rep\,V$, which by Theorem \ref{thm:mainVOAthm1} is the category of untwisted and parity-twisted $V$-modules (referred to in the physics literature as the Neveu-Schwarz and Ramond sectors, respectively). Here we explicitly describe $(\rep\,V)^{\ZZ/2\ZZ}$.

\textbf{Objects.} By Theorem \ref{thm:mainVOAthm1}, the objects of $\rep\,V$ are (direct sums of) untwisted and parity-twisted $V$-modules. Then objects of $(\rep\,V)^{\ZZ/2\ZZ}$ are such modules with the additional data of a $\ZZ/2\ZZ$-action; however, $P_V$ must act as $P_W$ on a module $W$ in $(\rep\,V)^{\ZZ/2\ZZ}$, so the additional data is simply the parity decomposition of $W$.

\textbf{Morphisms.} Morphisms in $(\rep\,V)^{\ZZ/2\ZZ}$ are homomorphisms of (twisted) $V$-modules that also preserve parity decompositions, that is, they must be even. This means that $(\rep\,V)^{\ZZ/2\ZZ}$ is the underlying category of the supercategory $\rep\,V$.

\textbf{Tensor product functor.} The tensor product $W_1\tens_V W_2$ of two (twisted) $V$-modules is characterized a universal property: There is an (even) twisted intertwining operator $\cY_{W_1,W_2}$ of type $\binom{W_1\tens_V W_2}{W_1\,W_2}$ such that for any (twisted) $V$-module $W_3$ and (even) twisted intertwining operator $\cY$ of type $\binom{W_3}{W_1\,W_2}$, there is a unique homomorphism
\begin{equation*}
 f: W_1\tens_V W_2\rightarrow W_3
\end{equation*}
such that $f\circ\cY_{W_1,W_2}=\cY$. 


The tensor product of two homomorphisms $f_1: W_1\rightarrow\widetilde{W}_1$ and $f_2: W_2\rightarrow\widetilde{W}_2$ in $(\rep\,V)^{\ZZ/2\ZZ}$ is induced by the intertwining operator $\cY_{\widetilde{W}_1,\widetilde{W}_2}\circ(f_1\otimes f_2)$ of type $\binom{\widetilde{W}_1\tens_V\widetilde{W}_2}{W_1\,W_2}$ and the universal propertyof $W_1\tens_V W_2$.

\textbf{Unit isomorphisms.} The unit object of $(\rep\,V)^{\ZZ/2\ZZ}$ is $V$ and for any module $(W,Y_W)$ in $(\rep\,V)^{\ZZ/2\ZZ}$, the left and right unit isomorphisms are characterized respectively by
\begin{equation*}
 l^V_W(\cY_{V,W}(v,x)w) = Y_W(v,x)w,\qquad r^V_W(\cY_{W,V}(w,x)v)= (-1)^{\vert v\vert\vert w\vert}e^{xL(-1)}Y_W(v, e^{-\pi i} x)w
\end{equation*}
for parity-homogeneous $v\in V$, $w\in W$.

\textbf{Associativity isomorphisms.} For three modules $W_1$, $W_2$, and $W_3$ in $(\rep\,V)^{\ZZ/2\ZZ}$, the associativity isomorphism $\cA^V_{W_1,W_2,W_3}$ is characterized by the equality
\begin{align*}
& \left\langle w', \overline{\cA^V_{W_1,W_2,W_3}} \left(\cY_{W_1, W_2\tens_V W_3}(w_1,e^{\ln r_1})\cY_{W_2,W_3}(w_2,e^{\ln r_2})w_3\right)\right\rangle\nonumber\\
&\qquad\qquad\qquad\qquad=\left\langle w', \cY_{W_1\tens_V W_2, W_3}\big(\cY_{W_1,W_2}(w_1,e^{\ln(r_1-r_2)})w_2,e^{\ln r_2}\big)w_3\right\rangle 
\end{align*}
for $w_1\in W_1$, $w_2\in W_2$, $w_3\in W_3$, and $w'\in\left((W_1\tens_V W_2)\tens_V W_3\right)'$, and $r_1, r_2\in\RR_+$ satisfy $r_1>r_2>r_1-r_2>0$.

\textbf{Braiding isomorphisms.} If $W_1$ is untwisted and $W_2$ is any module in $(\rep\,V)^{\ZZ/2\ZZ}$, the braiding isomorphism $\widetilde{\cR}^V_{W_1,W_2}$ is given by
\begin{equation*}
 \widetilde{\cR}^V_{W_1,W_2}(\cY_{W_1,W_2}(w_1,x)w_2) = (-1)^{\vert w_1\vert \vert w_2\vert} e^{xL(-1)}\cY_{W_2,W_1}(w_2, e^{\pi i} x) w_1
\end{equation*}
for parity-homogeneous $w_1\in W_1$, $w_2\in W_2$. If $W_1$ is parity-twisted, then
\begin{align*}
 \widetilde{\cR}^V_{W_1,W_2}(\cY_{W_1,W_2}(w_1,x)w_2) & = (P_{W_2}\tens_V 1_{W_1})\left(\cR^V_{W_1,W_2}(\cY_{W_1,W_2}(w_1,x)w_2)\right)\nonumber\\
 & =(P_{W_2}\tens_V 1_{W_1})\left((-1)^{\vert w_1\vert\vert w_2\vert} e^{xL(-1)}\cY_{P_V(W_2),W_1}(w_2, e^{\pi i}x)w_1\right)\nonumber\\
 & =(-1)^{\vert w_1\vert\vert w_2\vert} e^{xL(-1)}\cY_{W_2,W_1}(P_{W_2}(w_2),e^{\pi i}x)w_1
\end{align*}
for parity-homogeneous $w_1\in W_1$, $w_2\in W_2$. Recall that $P_V(W_2)$ is the superspace $W_2$ with vertex operator $Y_{W_2}(P_V(\cdot),x)$.

This is a complete description of $\cC$ as a braided tensor category, assuming one understands untwisted and parity-twisted $V$-modules and the twisted intertwining operators among them, since $\cC$ is braided tensor equivalent to $(\rep\,V)^{\ZZ/2\ZZ}$ by Theorem \ref{thm:mainVOAthm2}. For example, the following is a simple consequence of Theorem \ref{thm:mainVOAthm2} in this setting:
\begin{corol}
 Let $V$ be a simple vertex operator superalgebra and $\cC$ an abelian category of strongly-graded generalized $V^\even$ modules that includes $V$ and admits vertex tensor category structure as in \cite{HLZ1}-\cite{HLZ8}. Then every indecomposable $V^\even$-module in $\cC$ is the even summand of an untwisted or parity-twisted $V$-module.
\end{corol}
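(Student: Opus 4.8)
The plan is to read this off directly from Theorems~\ref{thm:mainVOAthm1} and~\ref{thm:mainVOAthm2} together with the explicit description of $(\rep\,V)^{\ZZ/2\ZZ}$ above. Let $W$ be an indecomposable object of $\cC$. By Theorem~\ref{thm:mainVOAthm2}, induction $\cF:\cC\rightarrow(\rep\,V)^{\ZZ/2\ZZ}$ is a braided tensor equivalence, in particular an equivalence of categories, so $\cF(W)$ is an indecomposable object of $(\rep\,V)^{\ZZ/2\ZZ}$. As observed above, the $\ZZ/2\ZZ$-action on any object of $(\rep\,V)^{\ZZ/2\ZZ}$ is forced to be the parity involution and all morphisms are even, so $(\rep\,V)^{\ZZ/2\ZZ}$ is nothing but the underlying even category $\underline{\rep\,V}$ of the supercategory $\rep\,V$; and since biproducts in $\rep\,V$ have even structure maps, an object is indecomposable in $\underline{\rep\,V}$ if and only if it is indecomposable in $\rep\,V$. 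Hence $\cF(W)$ is indecomposable in $\rep\,V$.

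Next I would apply Theorem~\ref{thm:mainVOAthm1}(1) (equivalently, Theorem~\ref{thm:repV=repGV} with $G=\langle P_V\rangle$): every indecomposable object of $\rep\,V$ is $g$-twisted for some $g\in\{1,P_V\}$, i.e. either untwisted or parity-twisted. Thus $\cF(W)$ is itself an untwisted or parity-twisted $V$-module; concretely, its underlying superspace is $V\otimes W$ with even part $V^\even\tens W$ and odd part $V^\odd\tens W$, and by Theorem~\ref{thm:tw_mod_defs} it is a $g$-twisted $V$-module in the sense of Definition~\ref{def:VoaTwistMods} for the corresponding $g$.

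Finally, I would invoke the natural isomorphism $\cF(W)^G\cong W$ constructed in the proof of Theorem~\ref{thm:mainVOAthm2}, given by $l_W(\varepsilon_V\tens 1_W)$ restricted to the $G$-fixed points. Since $V^G=V^\even$ here and $\varphi_{\cF(W)}(P_V)=P_{\cF(W)}=P_V\tens 1_W$, the $G$-fixed subspace $\cF(W)^G$ is exactly the even part $\cF(W)^\even=V^\even\tens W$. Therefore $W\cong\cF(W)^\even$, the even summand of the untwisted or parity-twisted $V$-module $\cF(W)$, which proves the corollary. I do not expect a genuine obstacle: the argument is pure bookkeeping on top of the two main theorems, and the only points needing a little care are that the $G$-grading decomposition of $\rep\,V$ descends to a direct-sum decomposition in $(\rep\,V)^{\ZZ/2\ZZ}$ (because all its morphisms are even) and that the $G$-fixed points of an induced module coincide with its even part.
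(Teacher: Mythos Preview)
Your proof is correct and follows essentially the same route as the paper: use $W\cong\cF(W)^\even$ from the proof of Theorem~\ref{thm:mainVOAthm2} together with the decomposition of $\cF(W)$ into twisted summands from Theorem~\ref{thm:repV=repGV}. The paper's argument differs only cosmetically, applying indecomposability of $W$ directly to the even parts of the twisted summands rather than first deducing that $\cF(W)$ is indecomposable; since the projections $\pi_g$ are even, your remark about indecomposability in $\underline{\rep\,V}$ versus $\rep\,V$ is in any case not needed.
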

\begin{proof}
 From the proof of Theorem \ref{thm:mainVOAthm2}, any (indecomposable) $V^\even$-module $W$ in $\cC$ is isomorphic to the even part of $V\tens W$. Since $W$ is indecomposable, this even part cannot be divided between non-zero untwisted and twisted summands of $V\tens W$. So $W$ is the even summand of either an untwisted or parity-twisted $V$-module in $\cC$.
\end{proof}

If $V^\even$ is $C_2$-cofinite and non-negatively graded, with $V^\even_{(0)}=\CC\vac$ (that is, $V^\even$ has positive energy/is CFT-type), then the full category of strongly-graded generalized $V^\even$-modules has vertex tensor category structure \cite{H-cofin}. Thus all our results apply when $V$ is simple, $G=\langle P_V\rangle$, and $V^\even$ is $C_2$-cofinite and positive energy. Examples of such $V$ with non-semisimple modules (that is, they are not rational) include the symplectic fermion vertex operator superalgebras $SF(d)$, $d\in\ZZ_+$, of $d$ pairs of symplectic fermions \cite{Ka, Ab, Ru}. A major motivation for this paper was Runkel's construction \cite{Ru} of a braided tensor category that is conjecturally equivalent to the braided tensor category of strongly-graded generalized $SF(d)^\even$-modules. The category in \cite{Ru} seems to be the equivariantization of the braided $\ZZ/2\ZZ$-crossed supercategory of (twisted) $SF(d)$-modules that we have discussed here. We plan to verify this in future work and thus prove the conjectured equivalence with the category of $SF(d)^\even$-modules. Combined with \cite{GR, FGR}, this would mean that the category of $SF(d)^\even$-modules is braided equivalent to the category of finite-dimensional representations of a quasi-Hopf algebra and is a (non-semisimple) modular tensor category.

\subsection{Application to orbifold rationality}\label{sec:OrbRat}

We say that a vertex operator algebra $V$ is \textit{strongly rational} if it satisfies the following conditions:
\begin{itemize}
 \item $V$ is simple and self-contragredient.
 \item Positive energy: $V_{(n)}=0$ for $n<0$ and $V_{(0)}=\CC\vac$. (Such $V$ is also said to be CFT-type.)
 \item $C_2$-cofiniteness: $\dim V/C_2(V)<\infty$ where $C_2(V)=\mathrm{span}\,\lbrace u_{-2} v\,\vert\,u,v\in V\rbrace$.
 \item Rationality: Every $\NN$-gradable weak $V$-module $W=\bigoplus_{n\in\NN} W(n)$ (where the $W(n)$ could be infinite dimensional) is a direct sum of simple strongly-graded $V$-modules.
\end{itemize}
The orbifold rationality problem asks whether strongly rationality of $V$ implies strong rationality of $V^G$ when $G$ is a finite automorphism group. The answer is yes for $G$ solvable by \cite{CarM}, but the problem has remained open for general finite $G$. Here we show that Theorem \ref{thm:mainVOAthm1} combined with \cite{CarM} reduces the orbifold rationality problem to the question of $C_2$-cofiniteness for $V^G$.

We first show that if $V$ is strongly rational and $G$ is a finite automorphism group of $V$, then categories of $V^G$-modules that admit vertex tensor category structure are semisimple:
\begin{theo}\label{thm:VG_ss}
 Let $V$ be a strongly rational vertex operator algebra and $G$ any finite group of automorphisms of $V$. If $\mathcal{C}$ is an abelian category of strongly-graded generalized $V^G$-modules that includes $V$ and admits vertex and braided tensor category structure as in \cite{HLZ1}-\cite{HLZ8}, then $\mathcal{C}$ is semisimple.
\end{theo}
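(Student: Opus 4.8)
The plan is to reduce the semisimplicity of $\cC$ to the semisimplicity of $\rep\,V$, and then to establish the latter by decomposing $\rep\,V$ into twisted sectors and invoking rationality of the cyclic orbifolds $V^{\langle g\rangle}$ from \cite{CarM}. First I would observe that $V$ itself is strongly rational, so by \cite{H-rigidity} the category of $V$-modules is a (semisimple) modular tensor category; in particular $V$ is a rigid, haploid commutative algebra object in that category, and the hypotheses of Assumption \ref{mainassum} hold (using $G$ any finite group, with $P_V = 1_V$ trivial in the non-super case, or including $P_V$ if one prefers to work uniformly). Then Theorem \ref{thm:repV=repGV} applies, giving $\rep\,V = \rep^G\,V = \bigoplus_{g\in G}\rep^g\,V$, and by Corollary \ref{cor:RepV_G-crossed} this is a braided $G$-crossed supercategory. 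Here, however, one must be careful: in the statement of Theorem \ref{thm:VG_ss} the category $\cC$ is a category of $V^G$-modules that need not a priori contain all the $V_\chi$, so I would first replace $\cC$ by the (possibly larger) category generated by $V$ and its simple subquotients, or equivalently note that it suffices to prove every object of $\rep\,V$ constructed over $\cC$ is semisimple and then pull this back.

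The key step is then: \emph{each category $\rep^g\,V$ is semisimple.} Here I would use that a $g$-twisted $V$-module $W$ restricts to an (ordinary, untwisted) module for the fixed-point subalgebra $V^{\langle g\rangle}$, and that $V^{\langle g\rangle}$ is rational by the cyclic case of the orbifold rationality theorem in \cite{CarM}; rationality of $V^{\langle g\rangle}$ together with its $C_2$-cofiniteness (also from \cite{CarM}) means every $\NN$-gradable weak $V^{\langle g\rangle}$-module is semisimple. Since any short exact sequence of $g$-twisted $V$-modules is in particular a short exact sequence of $V^{\langle g\rangle}$-modules, it splits as $V^{\langle g\rangle}$-modules, and one checks the splitting can be chosen to commute with the $V$-action using that $W$ is generated over $V^{\langle g\rangle}$ by finitely many weight spaces together with the $g$-equivariance property — more cleanly, one argues that $\rep^g\,V$ is closed under subquotients inside the semisimple category of $V^{\langle g\rangle}$-modules, hence is itself semisimple. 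This gives that $\rep\,V$ is semisimple, and since induction $\cF: \cC \to (\rep\,V)^G$ is an equivalence by Theorem \ref{thm:mainVOAthm2} (or by the discussion in Section \ref{sec:Gequiv} when $P_V$ is handled appropriately), and the $G$-equivariantization of a semisimple category with $|G|$ invertible is semisimple (averaging over $G$ splits equivariant sequences), we conclude $\cC$ is semisimple.

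The main obstacle I expect is the reduction from semisimplicity of $\rep^g\,V$ as a category of $V$-modules to rationality of $V^{\langle g\rangle}$: one needs to know that the forgetful functor $\rep^g\,V \to \rep\,V^{\langle g\rangle}$ detects semisimplicity, i.e. that a $g$-twisted $V$-module which is semisimple over $V^{\langle g\rangle}$ is semisimple over $V$. This is where the interaction of the twisted intertwining operator $Y_W$ with the $V^{\langle g\rangle}$-action matters — a $V^{\langle g\rangle}$-submodule of $W$ need not be a $V$-submodule, so one cannot directly transport the $V^{\langle g\rangle}$-decomposition. The cleanest route is to use that $\rep^g\,V$ is an abelian category in which every object has finite length (again using $C_2$-cofiniteness of $V^{\langle g\rangle}$, which bounds the composition series), so it suffices to show $\mathrm{Ext}^1_{\rep^g\,V}(S_1,S_2) = 0$ for simple $g$-twisted modules $S_1, S_2$; such an extension restricts to a split extension of $V^{\langle g\rangle}$-modules, and the resulting $V^{\langle g\rangle}$-module projection, averaged suitably or corrected using the structure of $V$ as a $V^{\langle g\rangle}$-module (which decomposes as a finite direct sum of simple modules since $V^{\langle g\rangle}$ is rational), can be promoted to a $V$-module splitting. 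This last promotion is the technical heart and would be spelled out carefully; everything else is formal manipulation with equivariantization and the already-established Theorems \ref{thm:repV=repGV} and \ref{thm:mainVOAthm2}.
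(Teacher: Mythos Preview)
Your overall strategy matches the paper's: reduce semisimplicity of $\cC$ to that of $\rep\,V$, then show each $\rep^g\,V$ is semisimple using rationality of $V^{\langle g\rangle}$ from \cite{CarM}. Your reduction via $G$-equivariantization and Maschke is fine; the paper gives a more elementary direct splitting argument but remarks that your route works too. (Your worry about $\cC$ not containing the $V_\chi$ is unfounded: $\cC$ is abelian and contains $V$, hence all its subquotients.)

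The genuine gap is in the semisimplicity of $\rep^g\,V$. Your first suggestion, that $\rep^g\,V$ is ``closed under subquotients inside the semisimple category of $V^{\langle g\rangle}$-modules, hence is itself semisimple,'' is wrong: a $V^{\langle g\rangle}$-submodule of a $g$-twisted $V$-module need not be $V$-stable, so $\rep^g\,V$ is not a Serre subcategory of $V^{\langle g\rangle}$-modules. Your fallback $\mathrm{Ext}^1$ argument correctly isolates the real issue --- promoting a $V^{\langle g\rangle}$-splitting to a $V$-splitting --- but ``averaged suitably or corrected'' is not a proof, and it is not clear how to make it one working over $\cC$, which is neither rigid nor semisimple. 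The paper's key move, which you are missing, is to \emph{change the base category}: let $\mathcal{D}$ be the full category of strongly-graded $V^{\langle g\rangle}$-modules, which by \cite{CarM} and \cite{H-rigidity} is semisimple and rigid. Then $V$ is a simple commutative algebra in $\mathcal{D}$ with $\dim_\mathcal{D} V = \vert g\vert \neq 0$, so Lemma~1.20 and Theorem~3.3 of \cite{KO} give that $\rep_\mathcal{D}\,V$ is semisimple; applying Theorem~\ref{thm:mainVOAthm1} with the cyclic group $\langle g\rangle$ decomposes this as $\bigoplus_i \rep^{g^i}\,V$, and in particular $\rep^g\,V$ is semisimple. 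The averaging you gesture at is exactly what \cite{KO} does, but it requires rigidity of the ambient tensor category --- available over $\mathcal{D}$, not over $\cC$.
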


For a category $\cC$ of $V^G$-modules as in the statement of the theorem, we will use $\rep_\cC\,V$ to denote the braided $G$-crossed category of twisted $V$-modules in $\cC$ because we will soon need to consider twisted $V$-modules in smaller braided tensor categories. The following lemma reduces semisimplicity of $\cC$ to semisimplicity of $\rep_\cC\,V$:
\begin{lemma}
 If $\rep_\cC\,V$ is semisimple, then $\cC$ is also semisimple.
\end{lemma}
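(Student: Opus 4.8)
The plan is to reduce the statement to a Maschke-type averaging argument via the equivalence already established in Theorem~\ref{thm:mainVOAthm2}. Since $V$ is strongly rational it is in particular a simple vertex operator superalgebra whose odd part is zero, so the parity involution $P_V=1_V$ lies in the finite group $G$ automatically, and $\cC$ meets the hypotheses of Theorem~\ref{thm:mainVOAthm2}; hence induction $\cF\colon\cC\to(\rep_\cC\,V)^G$ is a braided tensor equivalence. It therefore suffices to prove that the $G$-equivariantization $(\rep_\cC\,V)^G$ is semisimple whenever $\rep_\cC\,V$ is. I would first observe that the objects of $\rep_\cC\,V$ have finite length: they are strongly-graded twisted $V$-modules, each $V^{\langle g\rangle}$ is rational by \cite{CarM}, and the grading-restriction conditions force a semisimple twisted $V$-module to be a finite direct sum of simple twisted modules; since the forgetful functor $(\rep_\cC\,V)^G\to\rep_\cC\,V$ is faithful and exact, the objects of $(\rep_\cC\,V)^G$ then also have finite length. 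So it is enough to show that every monomorphism in $(\rep_\cC\,V)^G$ splits.

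To do this, I would take a monomorphism $i\colon(W_1,\varphi_{W_1})\to(W_2,\varphi_{W_2})$ in $(\rep_\cC\,V)^G$. Forgetting the $G$-equivariant structure gives a monomorphism $i\colon W_1\to W_2$ in the semisimple category $\rep_\cC\,V$, which therefore admits a retraction $r\colon W_2\to W_1$ in $\rep_\cC\,V$ with $r i=1_{W_1}$. For each $g\in G$, applying the twisting functor $T_g$ (which replaces the action $\mu_W$ by $\mu_W(g^{-1}\tens 1_W)$ and leaves morphisms unchanged) turns $r$ into a morphism $T_g(r)\colon T_g(W_2)\to T_g(W_1)$ in $\rep_\cC\,V$, so the composite $\varphi_{W_1}(g)\,T_g(r)\,\varphi_{W_2}(g)^{-1}\colon W_2\to W_1$ lies in $\rep_\cC\,V$. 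Because $|G|$ is invertible in $\CC$, I can form the averaged morphism
\begin{equation*}
 \bar r=\frac{1}{|G|}\sum_{g\in G}\varphi_{W_1}(g)\,T_g(r)\,\varphi_{W_2}(g)^{-1}\colon W_2\to W_1
\end{equation*}
in $\rep_\cC\,V$. Using that $i$ is $G$-equivariant, in the form $\varphi_{W_2}(g)^{-1}i=T_g(i)\,\varphi_{W_1}(g)^{-1}$, together with $T_g(r)\,T_g(i)=T_g(r i)=1_{T_g(W_1)}$, one gets $\bar r\,i=\frac{1}{|G|}\sum_{g\in G}\varphi_{W_1}(g)\,\varphi_{W_1}(g)^{-1}=1_{W_1}$; and a standard reindexing $g\mapsto hg$ using the cocycle relations $\varphi_{W_j}(gh)=\varphi_{W_j}(g)\,T_g(\varphi_{W_j}(h))$ shows $\varphi_{W_1}(h)\,T_h(\bar r)=\bar r\,\varphi_{W_2}(h)$ for all $h\in G$, i.e.\ $\bar r$ is a morphism in $(\rep_\cC\,V)^G$. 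Hence $i$ splits in $(\rep_\cC\,V)^G$.

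Putting these together, every monomorphism in $(\rep_\cC\,V)^G$ is split and every object has finite length, so $(\rep_\cC\,V)^G$ is semisimple; transporting along $\cF$ then shows $\cC$ is semisimple. I expect the only point requiring care to be the bookkeeping with the twisting functors $T_g$: checking that $T_g$ sends a $\rep_\cC\,V$-morphism again to a $\rep_\cC\,V$-morphism, and that the averaged map $\bar r$ is genuinely $G$-equivariant. This is purely a matter of unwinding the definitions of the $G$-action on $\rep_\cC\,V$ and of the cocycle conditions defining $(\rep_\cC\,V)^G$, and involves no analytic input.
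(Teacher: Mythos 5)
Your proof is correct, and your Maschke-type argument via $G$-equivariantization checks out: the averaged retraction $\bar r=\frac{1}{|G|}\sum_{g\in G}\varphi_{W_1}(g)\,T_g(r)\,\varphi_{W_2}(g)^{-1}$ is well-defined in $\rep_\cC V$, splits $i$, and the reindexing $g\mapsto hg$ together with the fact that $\varphi_W$ is a genuine homomorphism $G\to\mathrm{Aut}_{\sC}(W)$ (since $T_g$ is the identity on underlying $\sC$-morphisms) shows $\bar r$ is $G$-equivariant. However, this is a genuinely different route from the paper's own proof of the lemma. The paper gives a short, elementary, purely hands-on argument that does not invoke Theorem~\ref{thm:mainVOAthm2} at all: given a surjection $f\colon W_1\to W_2$ in $\cC$, one inducts to a surjection $1_V\tens f$ in $\rep_\cC V$, splits it there by semisimplicity via some $s$, and then pushes the splitting back down to $\cC$ as $\sigma=l_{W_1}(\varepsilon_V\tens 1_{W_1})\,s\,(\iota_V\tens 1_{W_2})\,l_{W_2}^{-1}$, using only right exactness of $V\tens\bullet$ and $\varepsilon_V\iota_V=1_{V^G}$. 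The route you took is in fact exactly what the paper sketches in the Remark immediately after this lemma, labeling it the ``less elementary proof'': apply Maschke to conclude $(\rep_\cC V)^G$ is semisimple, then invoke the equivalence $\cC\simeq(\rep_\cC V)^G$ from Theorem~\ref{thm:mainVOAthm2}. Your version fills in the details of that remark; the paper's proof has the advantage of being self-contained and not depending on the heavier equivalence theorem, while yours is conceptually cleaner if one already has the equivariantization picture in hand. One small point worth flagging: you frame the argument around split monomorphisms plus finite length, whereas the paper splits epimorphisms directly and does not discuss length; both suffice, but yours adds a finite-length step that is not strictly needed if one argues (as the paper does) that arbitrary short exact sequences split.
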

\begin{proof}
 We need to show that if $f: W_1\rightarrow W_2$ is a surjection in $\cC$, then there exists $\sigma: W_2\rightarrow W_1$ such that $f\sigma=1_{W_2}$.  Since the functor $V\tens\bullet$ is right exact, $1_V\tens f: V\tens W_1\rightarrow V\tens W_2$ is a surjection in $\rep_\cC\,V$. Then because $\rep_\cC\,V$ is semisimple, there is some $s: V\tens W_2\rightarrow V\tens W_1$ such that $(1_V\tens f)s=1_{V\tens W_2}$. We define $\sigma: W_2\rightarrow W_1$ in $\cC$ to be the composition
 \begin{align*}
  W_2\xrightarrow{l_{W_2}^{-1}} V^G\tens W_2\xrightarrow{\iota_V\tens 1_{W_2}} V\tens W_2\xrightarrow{s} V\tens W_1\xrightarrow{\varepsilon_V\tens 1_{W_1}} V^G\tens W_1\xrightarrow{l_{W_1}} W_1.
 \end{align*}
Then
\begin{align*}
 f\sigma & =f l_{W_1}(\varepsilon_V\tens 1_{W_1})s(\iota_V\tens 1_{W_2})l_{W_2}^{-1}\nonumber\\
 & = l_{W_2}(\varepsilon_V\tens 1_{W_2})(1_V\tens f)s(\iota_V\tens 1_{W_2})l_{W_2}^{-1}\nonumber\\
 & = l_{W_2}(\varepsilon_V\tens 1_{W_2})(\iota_V\tens 1_{W_2})l_{W_2}^{-1}\nonumber\\
 & = l_{W_2}l_{W_2}^{-1}\nonumber\\
 & = 1_{W_2}
\end{align*}
as desired.
\end{proof}
\begin{rema}
 Here is a less elementary proof of the lemma: If $\rep_\cC\,V$ is semisimple, then so is $(\rep_\cC\,V)^G$ by Maschke's Theorem. Then $\cC$ is semisimple by Theorem \ref{thm:mainVOAthm2}.
\end{rema}

Now we prove the theorem by showing that $\rep_\cC\,V$ is semisimple:
\begin{proof}
Since every object of $\rep_\cC\,V$ is a direct sum of twisted modules by Theorem \ref{thm:mainVOAthm1}, it is enough to show that the category $\rep^g\,V$ of $g$-twisted $V$-modules is semisimple for any $g\in G$. Fix $g\in G$ and let $\mathcal{D}$ be the category of strongly-graded generalized $V^{\langle g\rangle}$-modules. By the main theorem of \cite{CarM}, $V^{\langle g\rangle}$ is strongly rational so that $\mathcal{D}$ is a (semisimple) modular tensor category \cite{H-rigidity}. Moreover, Theorem \ref{thm:mainVOAthm1} shows that the subcategory $\rep_\mathcal{D}\,V\subseteq\rep_{\cC}\,V$ consisting of untwisted $V^{\langle g\rangle}$-modules is a braided $\langle g\rangle$-crossed tensor category and
\begin{equation*}
 \rep_{\mathcal{D}}\,V=\bigoplus_{i=0}^{\vert g\vert-1} \rep^{g^i}\,V.
\end{equation*}
Thus it is enough to show $\rep_\mathcal{D}\,V$ is semisimple. This follows from Lemma 1.20 and Theorem 3.3 of \cite{KO}, since $\mathcal{D}$ is semisimple and rigid, $V$ is a simple algebra in $\mathcal{D}$, and $\dim_\mathcal{D}\,V =\vert g\vert\neq 0$ \cite[Proposition 4.15]{McR}.
\end{proof}

As a corollary, we get strong rationality of $V^G$ from strong rationality of $V$ and $C_2$-cofiniteness of $V^G$:
\begin{corol}\label{cor:OrbRat}
 Let $V$ be a strongly rational vertex operator algebra and $G$ any finite group of automorphisms of $V$. If $V^G$ is $C_2$-cofinite, then $V^G$ is strongly rational.
\end{corol}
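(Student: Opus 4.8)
\textbf{Proof proposal for Corollary~\ref{cor:OrbRat}.}
The plan is to check, one by one, the four conditions in the definition of strong rationality for $V^G$, given that $V$ is strongly rational and $V^G$ is $C_2$-cofinite. Three of these are either hypotheses or essentially immediate. The $C_2$-cofiniteness of $V^G$ is assumed. The positive energy (CFT-type) condition holds because $V^G_{(n)}=V^G\cap V_{(n)}$ vanishes for $n<0$ and equals $\CC\vac$ for $n=0$, since $V$ has positive energy and $\vac\in V^G$. Finally, $V^G$ is simple and self-contragredient by classical facts about fixed-point subalgebras of finite groups (see \cite{DLM,DM1}): simplicity is immediate from the decomposition \eqref{SW-duality}, in which $V^G$ occurs as one of the simple summands $V_\chi$; and self-contragredience follows by averaging an invariant bilinear form on $V$ over $G$ and restricting it to $V^G$, which is orthogonal to the nontrivial $G$-isotypic components of $V$ and so inherits a nondegenerate invariant form.

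The substantive point is rationality of $V^G$. Since $V^G$ is $C_2$-cofinite and of positive energy, by \cite{H-cofin} the full category $\cC$ of strongly-graded generalized $V^G$-modules carries vertex tensor category structure, hence also braided tensor category structure, as in \cite{HLZ1}-\cite{HLZ8}. Moreover $\cC$ contains $V$: indeed $V$ is grading-restricted as a module over itself, hence over $V^G$, or equivalently, by \eqref{SW-duality}, $V$ is a finite direct sum of simple $V^G$-modules. Thus the hypotheses of Theorem~\ref{thm:VG_ss} are satisfied for this $\cC$ (with $V$ strongly rational and $G$ finite), and we conclude that $\cC$ is semisimple.

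It then remains to pass from semisimplicity of $\cC$ to rationality of $V^G$, i.e.\ to complete reducibility of arbitrary $\NN$-gradable weak $V^G$-modules into simple strongly-graded modules. Here I would invoke the fact, available in the $C_2$-cofinite, positive-energy setting via \cite{H-cofin} and the analysis of \cite{CarM}, that rationality is equivalent to semisimplicity of the category of strongly-graded generalized modules: $C_2$-cofiniteness forces every finitely generated $\NN$-gradable weak module to lie in $\cC$ after suitable regrading, so semisimplicity of $\cC$ propagates to all $\NN$-gradable weak modules, each being a directed union of its finitely generated submodules. Applying this to $V^G$ yields rationality, and combined with simplicity, self-contragredience, and positive energy, this shows $V^G$ is strongly rational. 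The main obstacle in writing this out cleanly is precisely this final step: one must be careful to cite the equivalence ``rational $\Leftrightarrow$ module category semisimple'' in the correct form and to confirm that its standing hypotheses ($C_2$-cofiniteness and positive energy, both now verified for $V^G$) are exactly the ones we have in hand; everything else is bookkeeping against the definition of strong rationality and Theorem~\ref{thm:VG_ss}.
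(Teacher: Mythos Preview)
Your proposal is correct and follows essentially the same route as the paper: verify positive energy, simplicity, and self-contragredience of $V^G$ by elementary means, then use $C_2$-cofiniteness together with \cite{H-cofin} to equip the full category of strongly-graded generalized $V^G$-modules with vertex tensor structure, apply Theorem~\ref{thm:VG_ss} to get semisimplicity, and finally invoke the known equivalence (under $C_2$-cofiniteness and positive energy) between rationality and semisimplicity of this module category, which the paper cites as Lemma~3.6 and Proposition~3.7 of \cite{CarM}. The only cosmetic difference is in the self-contragredience step: the paper simply restricts the nondegenerate invariant form on $V$ to $V^G$, observes it is nonzero since $(\vac,\vac)\neq 0$, and concludes nondegeneracy from simplicity of $V^G$; your averaging is unnecessary (the form is automatically $G$-invariant by uniqueness up to scalar on a simple self-contragredient $V$), and deducing nondegeneracy via orthogonality of isotypic components is valid but slightly more roundabout than just using simplicity.
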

\begin{proof}
Positive energy for $V^G$ follows immediately from positive energy for $V$. Since $V$ is also simple, $V^G$ is simple by the main theorem of \cite{DLM}. The self-contragrediency and positive energy of $V$ mean that there is a nondegenerate invariant bilinear form
\begin{equation*}
 (\cdot,\cdot): V\times V\rightarrow\CC
\end{equation*}
such that $(\vac,\vac)\neq 0$. This restricts to a non-zero invariant bilinear form on $V^G$, which must be nondegenerate since $V^G$ is simple. Thus $V^G$ is also self-contragredient.

Since $V^G$ has positive energy and is $C_2$-cofinite, Lemma 3.6 and Proposition 3.7 of \cite{CarM} (see also \cite[Proposition 4.16]{McR}) imply that $V^G$ will be rational if its full category of strongly-graded generalized modules is semisimple. But this category admits vertex tensor category structure by Proposition 4.1 and Theorem 4.11 of \cite{H-cofin}, so Theorem \ref{thm:VG_ss} applies.
\end{proof}

\begin{rema}
The recent preprint \cite{Mi} has proposed an argument for proving $C_2$-cofiniteness of $V^G$ for general finite $G$ and positive energy, self-contragredient, $C_2$-cofinite $V$, but unfortunately it seems to have a gap.
\end{rema}

\appendix

\section{Proof of Theorem \ref{Gcrossedfromtwist}}\label{app:BGC_proof}

We use the notation and setting of Section \ref{sec:TwistMods}.  We have already seen that $\repGV$ is an $\mathbb{F}$-additive supercategory with a $G$-grading, and Corollary \ref{g1g2corol} shows that $\repGV$ has a monoidal structure compatible with the grading. It remains to show that the $G$-action and braiding isomorphisms on $\repGV$ discussed in Section \ref{sec:TwistMods} are well defined and satisfy the required properties.
 
 First we show that $T_g$ is a superfunctor on $\repGV$. To show that $(T_g(W), \mu_{T_g(W)})$ is an object of $\rep V$, we first verify associativity:
\begin{align*}
 \mu_{T_g(W)}(1_V\tens\mu_{T_g(W)}) & = \mu_W(1_V\tens\mu_W)(g^{-1}\tens(g^{-1}\tens 1_W))\nonumber\\
 &=\mu_W(\mu_V\tens 1_W)\cA_{V,V,W}(g^{-1}\tens(g^{-1}\tens 1_W))\nonumber\\
 & =\mu_W(g^{-1}\tens 1_V)(\mu_V\tens 1_W)\cA_{V,V,W}\nonumber\\
 & =\mu_{T_g(W)}(\mu_V\tens 1_V)\cA_{V,V,T_g(W)},
\end{align*}
using $g^{-1}\mu_V=\mu_V(g^{-1}\tens g^{-1})$. For the unit property,
\begin{equation*}
 \mu_{T_g(W)}(\iota_V\tens 1_W) l_{T_g(W)}^{-1} =\mu_W(g^{-1}\tens 1_W)(\iota_V\tens 1_W) l_W^{-1}=\mu_W(\iota_V\tens 1_W) l_W^{-1}=1_W=1_{T_g(W)}
\end{equation*}
using $g^{-1}\iota_V=\iota_V$. A morphism $f: W_1\rightarrow W_2$ in $\rep V$, is still a morphism from $T_g(W_1)$ to $T_g(W_2)$ because
\begin{equation*}
 f\mu_{T_g(W_1)}=f\mu_{W_1}(g^{-1}\tens 1_W)=\mu_{W_2}(1_V\tens f)(g^{-1}\tens 1_W)=\mu_{W_2}(g^{-1}\tens 1_W)(1_V\tens f) =\mu_{T_g(W_2)}(1_V\tens f).
\end{equation*}
Because $g^{-1}$ is even, we avoid a sign factor in the third equality here. Clearly $T_g$ induces an even linear map on morphisms, so $T_g$ is a superfunctor on $\rep\,V$. Then $T_g$ restricts to a superfunctor on $\repGV$ because if $(W,\mu_W)$ is an $h$-twisted $V$-module for $h\in G$, then $(T_g(W),\mu_{T_g(W)})$ is a $g h g^{-1}$-twisted $V$-module. Indeed,
\begin{align*}
 \mu_{T_g(W)}(ghg^{-1}\tens 1_{T_g(W)})\cM_{V,T_g(W)} & =\mu_W(g^{-1}\tens 1_W)(ghg^{-1}\tens 1_W)\cM_{V,W}\nonumber\\
 & =\mu_W(h\tens 1_W)\cM_{V,W}(g^{-1}\tens 1_W)\nonumber\\
 & =\mu_W(g^{-1}\tens 1_W) =\mu_{T_g(W)},
\end{align*}
using the naturality of the monodromy isomorphisms in the second equality.

Next we construct the even natural isomorphism $\tau_g: T_g\circ\tens_V\rightarrow\tens_V\circ(T_g\times T_g)$. For objects $W_1$, $W_2$ in $\rep V$, recall that $(W_1\tens_V W_2, I_{W_1,W_2})$ is the cokernel of the morphism $\mu^{(1)}_{W_1,W_2}-\mu^{(2)}_{W_1,W_2}$ and similarly for $(T_g(W_1)\tens_V T_g(W_2), I_{T_g(W_1),T_g(W_2)})$. We claim that there are unique morphisms
\begin{equation*}
 \tau_{g; W_1, W_2}: W_1\tens_V W_2\rightarrow T_g(W_1)\tens_V T_g(W_2),\hspace{2em} \widetilde{\tau}_{g; W_1,W_2}: T_g(W_1)\tens_V T_g(W_2)\rightarrow W_1\tens_V W_2
\end{equation*}
in $\sC$ such that the diagrams
\begin{equation*}
 \xymatrixcolsep{4pc}
 \xymatrix{
 W_1\tens W_2 \ar[d]^{I_{W_1,W_2}} \ar[rd]^{I_{T_g(W_1),T_g(W_2)}} & \\
 W_1\tens_V W_2 \ar@<.6ex>[r]^(.4){\tau_{g; W_1, W_2}} & T_g(W_1)\tens_V T_g(W_2) \ar@<.6ex>[l]^(.6){\widetilde{\tau}_{g; W_1, W_2}}\\
 }
\end{equation*}
commute. This follows from the universal properties of the cokernels and the equalities
\begin{align*}
 I_{T_g(W_1),T_g(W_2)} & (\mu_{W_1,W_2}^{(1)}-\mu_{W_1,W_2}^{(2)})\nonumber\\
 & =  I_{T_g(W_1),T_g(W_2)}(\mu_{W_1,W_2}^{(1)}-\mu_{W_1,W_2}^{(2)})(g^{-1}\tens 1_{W_1\tens W_2})(g\tens 1_{W_1\tens W_2})\nonumber\\
 & =I_{T_g(W_1),T_g(W_2)}(\mu_{T_g(W_1),T_g(W_2)}^{(1)}-\mu_{T_g(W_1),T_g(W_2)}^{(2)})(g\tens 1_{W_1\tens W_2})= 0
\end{align*}
and
\begin{align*}
I_{W_1, W_2}(\mu_{T_g(W_1),T_g(W_2)}^{(1)}-\mu_{T_g(W_1),T_g(W_2)}^{(2)}) = I_{W_1,W_2}(\mu_{W_1,W_2}^{(1)}-\mu_{W_1, W_2}^{(2)})(g^{-1}\tens 1_{W_1\tens W_2}) = 0.
\end{align*}
These equalities use the definitions of the $\mu^{(i)}$, the naturality of associativity and braiding isomorphisms in $\sC$, and the evenness of all morphisms involved. Now $\tau_{g; W_1, W_2}$ and $\widetilde{\tau}_{g; W_1, W_2}$ are mutual inverses: because
\begin{equation*}
 \widetilde{\tau}_{g; W_1, W_2} \tau_{g; W_1, W_2} I_{W_1,W_2} =\widetilde{\tau}_{g; W_1,W_2} I_{T_g(W_1),T_g(W_2)} =I_{W_1,W_2}
\end{equation*}
and $I_{W_1,W_2}$ is surjective, $\widetilde{\tau}_{g; W_1, W_2} \tau_{g; W_1, W_2}= 1_{W_1\tens_V W_2}$, and similarly $\tau_{g; W_1, W_2}\widetilde{\tau}_{g; W_1,W_2}= 1_{T_g(W_1)\tens_V T_g(W_2)}$. Also, $\tau_{g; W_1, W_2}$ is even because $I_{W_1,W_2}$ and $I_{T_g(W_1),T_g(W_1)}$ are even and surjective.

Now we show that $\tau_{g; W_1, W_2}$ is a morphism in $\rep V$ from $T_g(W_1\tens_V W_2)$ to $T_g(W_1)\tens_V T_g(W_2)$. Then its inverse $\widetilde{\tau}_{g; W_1, W_2}: T_g(W_1)\tens_V T_g(W_2)\rightarrow T_g(W_1\tens_V W_2)$ will also be a morphism in $\rep V$. This uses the commutative diagrams
\begin{equation*}
 \xymatrixcolsep{6pc}
 \xymatrix{
 V\tens(W_1\tens W_2) \ar[r]^(.55){\mu^{(i)}_{W_1,W_2}(g^{-1}\tens 1_{W_1\tens W_2})} \ar[d]^{1_V\tens I_{W_1,W_2}} & W_1\tens W_2 \ar[d]^{I_{W_1,W_2}} \ar[rd]^{I_{T_g(W_1),T_g(W_2)}} & \\
 V\tens T_g(W_1\tens_V W_2) \ar[r]^(.55){\mu_{T_g(W_1\tens_V W_2)}} & T_g(W_1\tens_V W_2) \ar[r]^(.45){\tau_{g; W_1, W_2}} & T_g(W_1)\tens_V T_g(W_2)\\ 
 }
\end{equation*}
for $i=1$ or $i=2$ and
\begin{equation*}
 \xymatrixcolsep{6pc}
 \xymatrix{
 V\tens(W_1\tens W_2) \ar[d]^{1_V\tens I_{W_1,W_2}} \ar[rd]^{1_V\tens I_{T_g(W_1),T_g(W_2)}} & &\\
 V\tens T_g(W_1\tens_V W_2) \ar[r]^(.45){1_V\tens \tau_{g; W_1,W_2}} & V\tens(T_g(W_1)\tens_V T_g(W_2)) \ar[r]^(.55){\mu_{T_g(W_1)\tens_V T_g(W_2)}} & T_g(W_1)\tens_V T_g(W_2)
 } .
\end{equation*}
The top compositions in these two diagrams agree because $I_{T_g(W_1),T_g(W_2)}$ is an intertwining operator and because $\mu^{(i)}_{W_1,W_2}(g^{-1}\tens 1_{W_1\tens W_2})=\mu^{(i)}_{T_g(W_1),T_g(W_2)}$ for $i=1,2$. Thus
\begin{equation*}
 \tau_{g; W_1,W_2}\mu_{T_g(W_1\tens_V W_2)}(1_V\tens I_{W_1,W_2}) =\mu_{T_g(W_1)\tens_V T_g(W_2)}(1_V\tens \tau_{g; W_1,W_2})(1_V\tens I_{W_1,W_2})
\end{equation*}
as well. Since $I_{W_1, W_2}$ is a surjective cokernel morphism and $V\tens\bullet$ is right exact, $1_V\tens I_{W_1,W_2}$ is surjective as well and it follows that $\tau_{g; W_1,W_2}$ is a morphism in $\rep V$.

Next we show that the $\tau_{g; W_1, W_2}$ define a natural isomorphism, that is, for morphisms $f_1: W_1\rightarrow\widetilde{W}_1$ and $f_2: W_2\rightarrow\widetilde{W}_2$ in $\rep V$,
\begin{equation*}
 \tau_{g; \widetilde{W}_1,\widetilde{W}_2} T_g(f_1\tens_V f_2)=(T_g(f_1)\tens_V T_g(f_2)) \tau_{g; W_1, W_2}.
\end{equation*}
This follows from the commutative diagrams
\begin{equation*}
 \xymatrixcolsep{4pc}
 \xymatrix{
 W_1\tens W_2 \ar[r]^{f_1\tens f_2} \ar[d]^{I_{W_1,W_2}} & \widetilde{W}_1\tens\widetilde{W}_2 \ar[d]^{I_{\widetilde{W}_1,\widetilde{W}_2}} \ar[rd]^{I_{T_g(\widetilde{W}_1),T_g(\widetilde{W}_2)}} & \\
 W_1\tens_V W_2 \ar[r]^{f_1\tens_V f_2} & \widetilde{W}_1\tens_V\widetilde{W}_2 \ar[r]^(.4){\tau_{g; \widetilde{W}_1,\widetilde{W}_2}} & T_g(\widetilde{W}_1)\tens_V T_g(\widetilde{W}_2) \\
 }
\end{equation*}
and
\begin{equation*}
 \xymatrixcolsep{5pc}
 \xymatrix{
  & W_1\tens W_2 \ar[ld]_{I_{W_1,W_2}} \ar[r]^{f_1\tens f_2} \ar[d]^{I_{T_g(W_1),T_g(W_2)}} & \widetilde{W}_1\tens\widetilde{W}_2 \ar[d]^{I_{T_g(\widetilde{W}_1),T_g(\widetilde{W}_2)}} \\
 W_1\tens_V W_2 \ar[r]^(.45){\tau_{g; W_1,W_2}} & T_g(W_1)\tens_V T_g(W_2) \ar[r]^{T_g(f_1)\tens_V T_g(f_2)} & T_g(\widetilde{W}_1)\tens_V T_g(\widetilde{W}_2) \\
 } ,
\end{equation*}
as well as the surjectivity of $I_{W_1,W_2}$.

The even natural isomorphism $\tau_g$ needs to be compatible with the associativity isomorphisms in the sense that the diagram
\begin{equation*}
 \xymatrixcolsep{7pc}
 \xymatrix{
 T_g(W_1\tens_V(W_2\tens_V W_3)) \ar[r]^{T_g(\cA^V_{W_1,W_2,W_3})} \ar[d]^{\tau_{g; W_1; W_2\tens_V W_3}} & T_g((W_1\tens_V W_2)\tens_V W_3) \ar[d]^{\tau_{g; W_1\tens_V W_2, W_3}} \\
 T_g(W_1)\tens_V T_g(W_2\tens_V W_3) \ar[d]^{1_{T_g(W_1)}\tens_V \tau_{g; W_2,W_3}} & T_g(W_1\tens_V W_2)\tens_V T_g(W_3) \ar[d]^{\tau_{g; W_1,W_2}\tens_V 1_{T_g(W_3)}} \\
 T_g(W_1)\tens_V(T_g(W_2)\tens_V T_g(W_3)) \ar[r]^{\cA^V_{T_g(W_1),T_g(W_2),T_g(W_3)}} & (T_g(W_1)\tens_V T_g(W_2))\tens_V T_g(W_3)
 } 
\end{equation*}
commutes for any objects $W_1$, $W_2$, and $W_3$ in $\rep V$. For the proof, recall that $T_g(W)=W$ as objects of $\sC$ and $T_g(f)=f$ when $(W,\mu_W)$ is an object and $f$ is a morphism in $\rep V$. Consider the composition
\begin{align*}
 W_1\tens (W_2\tens & W_3)  \xrightarrow{1_{W_1}\tens I_{W_2,W_3}} W_1\tens(W_2\tens_V W_3)\xrightarrow{I_{W_1,W_2\tens_V W_3}} W_1\tens_V(W_2\tens_V W_3)\nonumber\\
 & \xrightarrow{\cA^V_{W_1,W_2,W_3}} (W_1\tens_V W_2)\tens_V W_3\xrightarrow{\tau_{g; W_1\tens_V W_2, W_3}} T_g(W_1\tens_V W_2)\tens_V T_g(W_3)\nonumber\\
 &\xrightarrow{\tau_{g; W_1, W_2}\tens_V 1_{T_g(W_3)}} (T_g(W_1)\tens_V T_g(W_2))\tens_V T_g(W_3).
\end{align*}
By the definition of the associativity isomorphisms in $\rep V$, this equals
\begin{align*}
 W_1\tens(W_2\tens & W_3) \xrightarrow{\cA_{W_1,W_2,W_3}} (W_1\tens W_2)\tens W_3\xrightarrow{I_{W_1,W_2}\tens 1_{W_3}} (W_1\tens_V W_2)\tens W_3\nonumber\\
 &\xrightarrow{I_{W_1\tens_V W_2, W_3}} (W_1\tens_V W_2)\tens_V W_3 \xrightarrow{\tau_{g; W_1\tens_V W_2, W_3}} T_g(W_1\tens_V W_2)\tens_V T_g(W_3)\nonumber\\
 &\xrightarrow{\tau_{g; W_1, W_2}\tens_V 1_{T_g(W_3)}} (T_g(W_1)\tens_V T_g(W_2))\tens_V T_g(W_3).
\end{align*}
Then the definition of $\tau_{g; W_1\tens_V W_2, W_3}$ implies that we get
\begin{align}\label{calc2}
  W_1\tens & (W_2\tens W_3)  \xrightarrow{\cA_{W_1,W_2,W_3}} (W_1\tens W_2)\tens W_3\xrightarrow{I_{W_1,W_2}\tens 1_{W_3}} (W_1\tens_V W_2)\tens W_3\nonumber\\
  &\xrightarrow{I_{T_g(W_1\tens_V W_2),T_g(W_3)}} T_g(W_1\tens_V W_2)\tens_V T_g(W_3)\xrightarrow{\tau_{g; W_1, W_2}\tens_V 1_{T_g(W_3)}} (T_g(W_1)\tens_V T_g(W_2))\tens_V T_g(W_3).
\end{align}
From the definition of the tensor product of morphisms in $\rep V$,
\begin{equation*}
 (\tau_{g; W_1, W_2}\tens_V 1_{T_g(W_3)})  I_{T_g(W_1\tens_V W_2),T_g(W_3)}=I_{T_g(W_1)\tens_V T_g(W_2),T_g(W_3)}(\tau_{g; W_1,W_2}\tens 1_{T_g(W_3)}),
\end{equation*}
and then the definition of $\tau_{g; W_1, W_2}$ implies that \eqref{calc2} becomes
\begin{align*}
 W_1\tens(W_2\tens W_3)\xrightarrow{\cA_{W_1,W_2,W_3}} & (W_1\tens W_2)\tens W_3\xrightarrow{I_{T_g(W_1),T_g(W_2)}\tens 1_{W_3}} (T_g(W_1)\tens_V T_g(W_2))\tens T_g(W_3)\nonumber\\
 &\xrightarrow{I_{T_g(W_1)\tens_V T_g(W_2), T_g(W_3)}} (T_g(W_1)\tens_V T_g(W_2))\tens_V T_g(W_3).
\end{align*}
Next, the definition of the associativity isomorphisms in $\rep V$ implies that this composition equals
\begin{align*}
 W_1\tens (W_2\tens W_3)\xrightarrow{1_{W_1}\tens I_{T_g(W_2),T_g(W_3)}} & T_g(W_1)\tens(T_g(W_2)\tens_V T_g(W_3))\nonumber\\
 &\hspace{-5em} \xrightarrow{I_{T_g(W_1),T_g(W_2)\tens_V T_g(W_3)}} T_g(W_1)\tens_V(T_g(W_2)\tens_V T_g(W_3))\nonumber\\
 &\hspace{-5em}\xrightarrow{\cA^V_{T_g(W_1),T_g(W_2),T_g(W_3)}} (T_g(W_1)\tens_V T_g(W_2))\tens_V T_g(W_3).
\end{align*}
We replace $I_{T_g(W_2),T_g(W_3)}$ with $\tau_{g; W_2,W_3} I_{W_2,W_3}$ and use the definition of tensor product of morphisms in $\rep V$:
\begin{align*}
 W_1\tens  (W_2\tens W_3)\xrightarrow{1_{W_1}\tens I_{W_2,W_3}} & W_1\tens(W_2\tens_V W_3)\xrightarrow{I_{T_g(W_1),T_g(W_2\tens_V W_3)}} T_g(W_1)\tens_V T_g(W_2\tens_V W_3)\nonumber\\
 &\xrightarrow{1_{T_g(W_1)}\tens_V \tau_{g; W_2,W_3}} T_g(W_1)\tens_V(T_g(W_2)\tens_V T_g(W_3))\nonumber\\
 &\xrightarrow{\cA^V_{T_g(W_1),T_g(W_2),T_g(W_3)}} (T_g(W_1)\tens_V T_g(W_2))\tens_V T_g(W_3). 
\end{align*}
Finally we use the definition of $\tau_{g; W_1, W_2\tens_V W_3}$ to obtain
\begin{align*}
 W_1\tens & (W_2\tens W_3)\xrightarrow{1_{W_1}\tens I_{W_2,W_3}}   W_1\tens(W_2\tens_V W_3)\xrightarrow{I_{W_1,W_2\tens_V W_3}} W_1\tens_V(W_2\tens_V W_3)\nonumber\\
 &\xrightarrow{\tau_{g; W_1, W_2\tens_V W_3}} T_g(W_1)\tens_V T_g(W_2\tens_V W_3)\xrightarrow{1_{T_g(W_1)}\tens_V \tau_{g; W_2,W_3}}T_g(W_1)\tens_V(T_g(W_2)\tens_V T_g(W_3))\nonumber\\
 & \xrightarrow{\cA^V_{T_g(W_1),T_g(W_2),T_g(W_3)}} (T_g(W_1)\tens_V T_g(W_2))\tens_V T_g(W_3),
\end{align*}
and compatibility follows from the surjectivity of $1_{W_1}\tens I_{W_1,W_2}$ and $I_{W_1,W_2\tens_V W_3}$, and hence of their composition.

Now the even morphism $\varphi_g=g: T_g(V)\rightarrow V$ needs to be is an isomorphism in $\rep V$. In fact,
\begin{equation*}
 g\mu_{T_g(V)}=g\mu_V(g^{-1}\tens 1_V) =\mu_V(1\tens g)
\end{equation*}
because $g$ is an automorphism of $V$. The isomorphism $\varphi_g$ also needs to be compatible with $\tau_g$ and the unit isomorphisms in $\rep V$ in the sense that
\begin{equation}\label{leftcompat}
 l^V_{T_g(W)}(\varphi_g\tens_V 1_{T_g(W)}) \tau_{g; V, W} = T_g(l^V_W): T_g(V\tens_V W)\rightarrow T_g(W)
\end{equation}
and
\begin{equation}\label{rightcompat}
 r^V_{T_g(W)}(1_{T_g(W)}\tens_V \varphi_g) \tau_{g; W, V} =T_g(r^V_W): T_g(W\tens_V V)\rightarrow T_g(W)
\end{equation}
for any object $W$ in $\rep V$. Since $I_{V,W}$ and $I_{W, V}$ are surjective, it is sufficient to show that the equalities in \eqref{leftcompat} and \eqref{rightcompat} hold when both sides are precomposed with
\begin{equation*}
 I_{V,W}: V\tens W\rightarrow V\tens_V W=T_g(V\tens_V W)
\end{equation*}
and
\begin{equation*}
 I_{W,V}: W\tens V\rightarrow W\tens_V V=T_g(W\tens_V W),
\end{equation*}
respectively. 

For \eqref{leftcompat}, we get the composition
\begin{align*}
 V\tens W\xrightarrow{I_{V,W}}T_g(V\tens_V W)\xrightarrow{\tau_{g; V, W}} T_g(V)\tens_V T_g(W)\xrightarrow{g\tens_V 1_{T_g(W)}} V\tens_V T_g(W)\xrightarrow{l^V_{T_g(W)}} T_g(W).
\end{align*}
Using $\tau_{g; V,W} I_{V,W}=I_{T_g(V),T_g(W)}$ and the definition of the tensor product of morphisms in $\rep V$, this becomes
\begin{equation*}
 V\tens W\xrightarrow{g\tens 1_W} V\tens W\xrightarrow{I_{V,T_g(W)}} V\tens_V T_g(W)\xrightarrow{l^V_{T_g(W)}} T_g(W).
\end{equation*}
By the definition of the left unit isomorphism in $\rep V$, the last two arrows above can be replaced with $\mu_{T_g(W)}=\mu_W(g^{-1}\tens 1_W)$, so that in total the composition is simply $\mu_W$. But this is
\begin{equation*}
 l^V_W I_{V,W}=T_g(l^V_W) I_{V,W},
\end{equation*}
as required. Now for \eqref{rightcompat}, we have the composition
\begin{equation*}
 W\tens V\xrightarrow{I_{W,V}} W\tens_V V\xrightarrow{\tau_{g; W, V}} T_g(W)\tens_V T_g(V)\xrightarrow{1_{T_g(W)}\tens_V g} T_g(W)\tens_V V\xrightarrow{r^V_{T_g(W)}} T_g(W).
\end{equation*}
Similar to before, this composition is
\begin{equation*}
 W\tens V\xrightarrow{1_W\tens g} W\tens V\xrightarrow{I_{T_g(W),V}} T_g(W)\tens_V V\xrightarrow{r^V_{T_g(W)}} T_g(W).
\end{equation*}
By definition of $r^V_{T_g(W)}$, this equals
\begin{equation*}
 W\tens V\xrightarrow{1_W\tens g} W\tens V\xrightarrow{\cR_{V,W}^{-1}} V\tens W\xrightarrow{\mu_{T_g(W)}} T_g(W).
\end{equation*}
Since $\mu_{T_g(W)}=\mu_W(g^{-1}\tens 1_W)$, naturality of the braiding isomorphisms in $\sC$ implies that we get
\begin{equation*}
 W\tens V\xrightarrow{\cR_{V,W}^{-1}} V\tens W\xrightarrow{\mu_W} W=T_g(W).
\end{equation*}
By definition, this is $r^V_{W} I_{W,V}=T_g(r^V_W) I_{W,V}$, as desired. This completes the proof that $(T_g, \tau_g, \varphi_g)$ is a tensor endofunctor of $\rep V$, restricting to a tensor endofunctor on $\repGV$.

To finish the construction of the $G$-action on $\repGV$, we need to prove that $g\mapsto(T_g, \tau_g, \varphi_g)$ is a group homomorphism. Note first that $(T_1, \tau_1, \varphi_1)$ is the identity functor on $\rep V$ and $\repGV$, and we also need to show that $(T_{gh}, \tau_{gh}, \varphi_{gh})$ is the composition of $(T_g, \tau_g, \varphi_g)$ and $(T_h, \tau_h, \varphi_h)$ for $g,h\in G$, that is:
\begin{itemize}
 \item $T_g(T_h(W,\mu_W))=T_{gh}(W,\mu_W)$ for any object $(W,\mu_W)$ in $\rep V$, and $T_g(T_h(f))=T_{gh}(f)$ for any morphism in $\rep V$.
 
 \item $\tau_{g; T_h(W_1),T_h(W_2)} T_g(\tau_{h; W_1,W_2})=\tau_{gh; W_1,W_2}$ for all objects $W_1$ and $W_2$ in $\rep V$.
 
 \item $\varphi_g T_g(\varphi_h)=\varphi_{gh}$.
\end{itemize}
The first point is easy because
\begin{equation*}
 \mu_{T_g(T_h(W))}=\mu_{T_h(W)}(g^{-1}\tens 1_W)=\mu_{W}(h^{-1}\tens 1_W)(g^{-1}\tens 1_W)=\mu_W((gh)^{-1}\tens 1_W)=\mu_{T_{gh}(W)}
\end{equation*}
and because $T_g(T_h(f))=f=T_{gh}(f)$. Also, $\varphi_g T_g(\varphi_h)=gh=\varphi_{gh}$. Then because $\tau_{gh; W_1, W_2}$ is the unique morphism such that
\begin{equation*}
 \xymatrixcolsep{4pc}
 \xymatrix{
 W_1\tens W_2 \ar[d]^{I_{W_1,W_2}} \ar[rd]^{I_{T_{gh}(W_1),T_{gh}(W_2)}} & \\
 T_{gh}(W_1\tens_V W_2) \ar[r]^(.45){\tau_{gh; W_1, W_2}} & T_{gh}(W_1)\tens_V T_{gh}(W_2) \\
 }
\end{equation*}
commutes, the commutative diagram
\begin{equation*}
 \xymatrixcolsep{5pc}
 \xymatrix{
  & W_1\tens W_2 \ar[ld]_{I_{W_1,W_2}} \ar[d]^{I_{T_h(W_1),T_h(W_2)}} \ar[rd]^{\hspace{2em}I_{T_g(T_h(W_1)),T_g(T_h(W_2))}} & \\
  T_g(T_h(W_1\tens_V W_2))\ar[r]_(.47){T_g(\tau_{h; W_1,W_2})} & T_g(T_h(W_1)\tens_V T_h(W_2)) \ar[r]_(.48){\tau_{g; T_h(W_1),T_h(W_2)}} & T_g(T_h(W_1))\tens_V T_g(T_h(W_2))\\
}
  \end{equation*}
shows that the second point holds as well.

Having constructed the $G$-action on $\repGV$, we now construct the braiding isomorphisms. For objects $W_1$, $W_2$ in $\repGV$ with $W_1$ a $g$-twisted $V$-module for some $g\in G$, we will show that there are unique morphisms $\cR^V_{W_1,W_2}$ and $(\cR^V_{W_1,W_2})^{-1}$ such that
\begin{equation*}
 \xymatrixcolsep{4pc}
 \xymatrix{
 W_1\tens W_2 \ar[d]^{I_{W_1,W_2}} \ar[r]^{\cR_{W_1,W_2}} & W_2\tens W_1 \ar[d]^{I_{T_g(W_2), W_1}} \\
 W_1\tens_V W_2 \ar[r]^(.45){\cR^V_{W_1,W_2}} & T_g(W_2)\tens_V W_1\\
 } \hspace{2em}\mathrm{and}\hspace{2em}
 \xymatrix{
 W_2\tens W_1 \ar[d]^{I_{T_g(W_2),W_1}} \ar[r]^{\cR_{W_1,W_2}^{-1}} & W_1\tens W_2 \ar[d]^{I_{W_1,W_2}} \\
 T_g(W_2)\tens_V W_1 \ar[r]^(.55){(\cR^V_{W_1,W_2})^{-1}} & W_1\tens_V W_2\\
 }
\end{equation*}
commute. Such morphisms would be mutual inverses by the surjectivity of $I_{W_1,W_2}$ and $I_{T_g(W_2),W_1}$, so it remains to show their existence as morphisms in $\sC$ and that $\cR^V_{W_1,W_2}$ is a morphism in $\rep V$.

The existence and uniqueness of the morphisms $\cR^V_{W_1,W_2}$ and $(\cR^V_{W_1,W_2})^{-1}$ in $\sC$ will follow from the universal properties of the cokernels $(W_1\tens_V W_2, I_{W_1,W_2})$ and $(T_g(W_2)\tens_V W_1, I_{T_g(W_2), W_1})$ provided we can show:
\begin{align}
 I_{T_g(W_2), W_1}\cR_{W_1,W_2}\mu^{(1)}_{W_1,W_2}=I_{T_g(W_2),W_1}\cR_{W_1,W_2}\mu^{(2)}_{W_1,W_2}\label{Rexist}\\
 I_{W_1,W_2}\cR_{W_1,W_2}^{-1}\mu^{(1)}_{T_g(W_2), W_1}=I_{W_1,W_2}\cR_{W_1,W_2}^{-1}\mu^{(2)}_{T_g(W_2), W_1}\label{Rinvexist}
\end{align}
To verify \eqref{Rexist}, we start with $I_{T_g(W_2), W_1}\cR_{W_1,W_2}\mu^{(2)}_{W_1,W_2}$, which is the composition
\begin{align*}
 V\tens & (W_1\tens W_2)\xrightarrow{\cA_{V,W_1,W_2}} (V\tens W_1)\tens W_2\xrightarrow{\cR_{V,W_1}\tens 1_{W_2}} (W_1\tens V)\tens W_2\nonumber\\
& \xrightarrow{\cA_{W_1,V,W_2}^{-1}} W_1\tens(V\tens W_2)\xrightarrow{1_{W_1}\tens\mu_{W_2}} W_1\tens W_2\xrightarrow{\cR_{W_1,W_2}} W_2\tens W_1\xrightarrow{I_{T_g(W_2)\tens W_1}} T_g(W_2)\tens_V W_1.
\end{align*}
By the naturality of the braiding isomorphisms and the hexagon axiom in $\sC$, this equals
\begin{align*}
 & V\tens(W_1\tens W_2)\xrightarrow{\cA_{V,W_1,W_2}} (V\tens W_1)\tens W_2\xrightarrow{\cM_{V,W_1}\tens 1_{W_2}} (V\tens W_1)\tens W_2\xrightarrow{\cA_{V,W_1,W_2}^{-1}} V\tens (W_1\tens W_2)\nonumber\\
 &\xrightarrow{1_{V}\tens\cR_{W_1,W_2}} V\tens(W_2\tens W_1)\xrightarrow{\cA_{V,W_2,W_1}} (V\tens W_2)\tens W_1\xrightarrow{\mu_{W_2}\tens 1_{W_1}} W_2\tens W_1\xrightarrow{I_{T_g(W_2), W_1}}T_g(W_2)\tens_V W_1.
\end{align*}
We replace $\mu_{W_2}$ with $\mu_{T_g(W_2)}(g\tens 1_{W_2})$ and then use the intertwining operator property of $I_{T_g(W_2), W_1}$ and naturality of the associativity isomorphisms:
\begin{align*}
 & V\tens(W_1\tens W_2) \xrightarrow{\cA_{V,W_1,W_2}} (V\tens W_1)\tens W_2\xrightarrow{\cM_{V,W_1}\tens 1_{W_2}} (V\tens W_1)\tens W_2\xrightarrow{(g\tens 1_{W_1})\tens 1_{W_2}}(V\tens W_1)\tens W_2\nonumber\\
 &\xrightarrow{\cA_{V,W_1,W_2}^{-1}} V\tens(W_1\tens W_2)\xrightarrow{1_{V}\tens\cR_{W_1,W_2}} V\tens(W_2\tens W_1)\xrightarrow{\cA_{V,W_2,W_1}} (V\tens W_2)\tens W_1\nonumber\\
 &\xrightarrow{\cR_{V,W_2}\tens 1_{W_1}} (W_2\tens V)\tens W_1\xrightarrow{\cA_{W_2,V,W_1}^{-1}} W_2\tens(V\tens W_1)\xrightarrow{1_{W_2}\tens\mu_{W_1}} W_2\tens W_1\xrightarrow{I_{T_g(W_2),W_1}} T_g(W_2)\tens_V W_1.
\end{align*}
Now we apply the hexagon axiom and naturality of the braiding in $\sC$ to reduce this composition to
\begin{align*}
 V\tens(W_1\tens W_2) \xrightarrow{\cA_{V,W_1,W_2}} & (V\tens W_1)\tens W_2\xrightarrow{\cM_{V,W_1}\tens 1_{W_2}} (V\tens W_1)\tens W_2\xrightarrow{(g\tens 1_{W_1})\tens 1_{W_2}}(V\tens W_1)\tens W_2\nonumber\\
 &\xrightarrow{\mu_{W_1}\tens 1_{W_2}} W_1\tens W_2\xrightarrow{\cR_{W_1,W_2}} W_2\tens W_1\xrightarrow{I_{T_g(W_2),W_1}} T_g(W_2)\tens_V W_1.
\end{align*}
We replace $\mu_{W_1}(g\tens 1_{W_1})\cM_{V,W_1}$ with $\mu_{W_1}$ since $W_1$ is a $g$-twisted $V$-module, and the resulting composition is $I_{T_g(W_2),W_1}\cR_{W_1,W_2}\mu^{(1)}_{W_1,W_2}$, as desired.

Now to prove \eqref{Rinvexist}, we start with $I_{W_1,W_2}\cR_{W_1,W_2}^{-1}\mu^{(1)}_{T_g(W_2),W_1}$, which is the composition
\begin{align*}
 V\tens(W_2\tens W_1)\xrightarrow{\cA_{V,W_2,W_1}} & (V\tens W_2)\tens W_1\xrightarrow{(g^{-1}\tens 1_{W_2})\tens 1_{W_1}} (V\tens W_2)\tens W_1\nonumber\\
 &\xrightarrow{\mu_{W_2}\tens 1_{W_1}} W_2\tens W_1\xrightarrow{\cR_{W_1,W_2}^{-1}} W_1\tens W_2\xrightarrow{I_{W_1,W_2}} W_1\tens_V W_2.
\end{align*}
Using naturality of the braiding isomorphisms and the hexagon axiom in $\sC$, this becomes
\begin{align*}
 V\tens & (W_2\tens W_1)\xrightarrow{1_V\tens\cR_{W_1,W_2}^{-1}} V\tens(W_1\tens W_2)\xrightarrow{\cA_{V,W_1,W_2}} (V\tens W_1)\tens W_2\xrightarrow{\cR_{W_1,V}^{-1}\tens 1_{W_2}} (W_1\tens V)\tens W_2\nonumber\\
 &\xrightarrow{\cA^{-1}_{W_1,V,W_2}} W_1\tens (V\tens W_2)\xrightarrow{1_{W_1}\tens(g^{-1}\tens 1_{W_2})} W_1\tens(V\tens W_2)\xrightarrow{1_{W_1}\tens\mu_{W_2}} W_1\tens W_2\xrightarrow{I_{W_1,W_2}} W_1\tens_V W_2.
\end{align*}
Since $I_{W_1,W_2}$ is an intertwining operator,
\begin{equation*}
 I_{W_1,W_2}(1_{W_1}\tens\mu_{W_2})=I_{W_1,W_2}(\mu_{W_1}\tens 1_{W_2})(\cR_{V,W_1}^{-1}\tens 1_{W_2})\cA_{W_1,V,W_2};
\end{equation*}
this leads to the composition
\begin{align*}
 V\tens(W_2\tens W_1)\xrightarrow{1_V\tens\cR_{W_1,W_2}^{-1}} V\tens(W_1\tens W_2)\xrightarrow{\cA_{V,W_1,W_2}} (V\tens W_1)\tens W_2\xrightarrow{\cM_{V,W_1}^{-1}\tens 1_{W_2}} (V\tens W_1)\tens W_2\nonumber\\
 \xrightarrow{(g^{-1}\tens 1_{W_1})\tens 1_{W_2}} (V\tens W_1)\tens W_2\xrightarrow{\mu_{W_1}\tens 1_{W_2}} W_1\tens W_2\xrightarrow{I_{W_1,W_2}} W_1\tens_V W_2.
\end{align*}
Since $W_1$ is a $g$-twisted $V$-module, we can eliminate $(g^{-1}\tens 1_{W_1})\cM_{W_1,W_2}^{-1}$ here and then add associativity and braiding isomorphisms and their inverses to obtain:
\begin{align*}
 V\tens(W_2\tens W_1) & \xrightarrow{\cA_{V,W_2,W_1}} (V\tens W_2)\tens W_1\xrightarrow{\cR_{V,W_2}\tens 1_{W_1}} (W_2\tens V)\tens W_1\xrightarrow{\cR_{V,W_2}^{-1}\tens 1_{W_1}} (V\tens W_2)\tens W_1\nonumber\\
 &\xrightarrow{\cA_{V,W_2,W_1}^{-1}} V\tens(W_2\tens W_1)\xrightarrow{1_V\tens\cR_{W_1,W_2}^{-1}} V\tens(W_1\tens W_2)\xrightarrow{\cA_{V,W_1,W_2}}(V\tens W_1)\tens W_2\nonumber\\
 & \xrightarrow{\mu_{W_1}\tens 1_{W_2}} W_1\tens W_2\xrightarrow{I_{W_1,W_2}} W_1\tens_V W_2.
\end{align*}
By the hexagon axiom and naturality of the braiding isomorphisms, this is
\begin{align*}
 V\tens(W_2\tens W_1) & \xrightarrow{\cA_{V,W_2,W_1}} (V\tens W_2)\tens W_1\xrightarrow{\cR_{V,W_2}\tens 1_{W_1}} (W_2\tens V)\tens W_1\xrightarrow{\cA_{W_2,V,W_1}^{-1}} W_2\tens(V\tens W_1)\nonumber\\
 & \xrightarrow{1_{W_2}\tens \mu_{W_1}} W_2\tens W_1\xrightarrow{\cR_{W_1,W_2}^{-1}} W_1\tens W_2\xrightarrow{I_{W_1,W_2}} W_1\tens_V W_2,
\end{align*}
which is the right side of \eqref{Rinvexist}. We have now proved that $\cR^V_{W_1,W_2}$ exists and is an isomorphism in $\sC$.

Now we prove that $\cR^V_{W_1,W_2}$ is a morphism in $\rep V$ (and thus in $\repGV$). From the commutative diagrams
\begin{equation*}
 \xymatrixcolsep{4pc}
 \xymatrix{
 V\tens(W_1\tens W_2) \ar[d]^{1_V\tens I_{W_1,W_2}} \ar[r]^(.55){\mu^{(i)}_{W_1,W_2}} & W_1\tens W_2 \ar[d]^{I_{W_1,W_2}} \ar[r]^{\cR_{W_1,W_2}} & W_2\tens W_1 \ar[d]^{I_{T_g(W_2),W_1}} \\
 V\tens(W_1\tens_V W_2) \ar[r]^(.55){\mu_{W_1\tens_V W_2}} & W_1\tens_V W_2 \ar[r]^(.45){\cR^V_{W_1,W_2}} & T_g(W_2)\tens_V W_1 \\
 }
\end{equation*}
and
\begin{equation*}
 \xymatrixcolsep{4pc}
 \xymatrix{
 V\tens(W_1\tens W_2) \ar[d]^{1_V\tens I_{W_1,W_2}} \ar[r]^{1_V\tens\cR_{W_1,W_2}} & V\tens(W_2\tens W_1) \ar[d]^{1_V\tens I_{T_g(W_2),W_1}} \ar[r]^(.55){\mu^{(i)}_{T_g(W_2),W_1}} & W_2\tens W_1 \ar[d]^{I_{T_g(W_2),W_1}} \\
 V\tens(W_1\tens_V W_2) \ar[r]^(.45){1_V\tens\cR^V_{W_1,W_2}} & V\tens(T_g(W_2)\tens_V W_1) \ar[r]^(.55){\mu_{T_g(W_2), W_1}} & T_g(W_2)\tens_V W_1 \\
 }
\end{equation*}
for $i=1$ and $i=2$, together with the surjectivity of $1_V\tens I_{W_1,W_2}$, it is sufficient to show
\begin{equation*}
 I_{T_g(W_2),W_1}\cR_{W_1,W_2}\mu^{(1)}_{W_1,W_2}=I_{T_g(W_2),W_1}\mu^{(2)}_{T_g(W_2), W_1}(1_V\tens\cR_{W_1,W_2}).
\end{equation*}
We start with the right side of this equation, which is the composition
\begin{align*}
 V\tens(W_1\tens W_2) & \xrightarrow{1_V\tens\cR_{W_1,W_2}} V\tens(W_2\tens W_1)\xrightarrow{\cA_{V,W_2,W_1}} (V\tens W_2)\tens W_1\xrightarrow{\cR_{V,W_2}\tens 1_{W_1}} (W_2\tens V)\tens W_1\nonumber\\
 & \xrightarrow{\cA_{W_2,V,W_1}^{-1}} W_2\tens(V\tens W_1)\xrightarrow{1_{W_2}\tens\mu_{W_1}} W_2\tens W_1\xrightarrow{I_{T_g(W_2),W_1}} T_g(W_2)\tens_V W_1.
\end{align*}
By the hexagon axioms in $\sC$, this composition simplifies to
\begin{align*}
 V\tens(W_1\tens W_2)\xrightarrow{\cA_{V,W_1,W_2}}  (V\tens W_1)\tens  W_2\xrightarrow{\cR_{V\tens W_1, W_2}} & W_2\tens(V\tens W_1)\nonumber\\
 &\xrightarrow{1_{W_2}\tens\mu_{W_1}} W_2\tens W_1\xrightarrow{I_{T_g(W_2),W_1}} T_g(W_2)\tens_V W_1.
\end{align*}
Then we get $I_{T_g(W_2),W_1}\cR_{W_1,W_2}\mu^{(1)}_{W_1,W_2}$ from the naturality of the braiding isomorphisms.

Next we show that the $\cR^V_{W_1,W_2}$ define an even natural isomorphism from $\boxtimes$ to $\boxtimes\circ(T_g\times 1_{\repgV})\circ\sigma$, that is, for parity-homogeneous morphisms $f_1: W_1\rightarrow\widetilde{W}_1$ in $\repgV$ and $f_2: W_2\rightarrow\widetilde{W}_2$ in $\rep V$,
\begin{equation*}
 \cR^V_{\widetilde{W}_1,\widetilde{W}_2}(f_1\tens_V f_2) =(-1)^{\vert f_1\vert\vert f_2\vert} (T_g(f_2)\tens_V f_1)\cR_{W_1, W_2}.
\end{equation*}
First, $\cR^V_{W_1,W_2}$ is even because $\cR_{W_1,W_2}$, $I_{W_1,W_2}$, and $I_{T_g(W_2),W_1}$ are even. Then from the commutativity of
\begin{equation*}
 \xymatrixcolsep{4pc}
 \xymatrix{
 W_1\tens W_2 \ar[d]^{I_{W_1,W_2}} \ar[r]^{f_1\tens f_2} & \widetilde{W}_1\tens\widetilde{W}_2 \ar[d]^{I_{\widetilde{W}_1,\widetilde{W}_2}} \ar[r]^{\cR_{\widetilde{W}_1,\widetilde{W}_2}} & \widetilde{W}_2\tens\widetilde{W}_1 \ar[d]^{I_{T_g(\widetilde{W}_2),\widetilde{W}_1}} \\
 W_1\tens_V W_2 \ar[r]^{f_1\tens_V f_2} & \widetilde{W}_1\tens_V\widetilde{W}_2 \ar[r]^(.45){\cR^V_{\widetilde{W}_1,\widetilde{W}_2}} & T_g(\widetilde{W}_2)\tens_V\widetilde{W}_1 \\
 }
\end{equation*}
and
\begin{equation*}
 \xymatrixcolsep{4pc}
 \xymatrix{
 W_1\tens W_2 \ar[d]^{I_{W_1,W_2}} \ar[r]^{\cR_{W_1,W_2}} & W_2\tens W_1 \ar[d]^{I_{T_g(W_2), W_1}} \ar[r]^{f_2\tens f_1} & \widetilde{W}_2\tens\widetilde{W}_1 \ar[d]^{I_{T_g(\widetilde{W}_2), \widetilde{W}_1}} \\
 W_1\tens_V W_2 \ar[r]^(.45){\cR^V_{W_1,W_2}} & T_g(W_2)\tens_V W_1 \ar[r]^{T_g(f_2)\tens_V f_1} & T_g(\widetilde{W}_2)\tens_V\widetilde{W}_1
 } ,
\end{equation*}
the surjectivity of $I_{W_1,W_2}$, and the naturality of the braiding in $\sC$, we get the naturality of $\cR^V$.

To complete the proof, we need to check that the braiding $\cR^V$ is compatible with the $G$-action and satisfies the hexagon/heptagon axioms. First, for $g,h\in G$, $W_1$ a $g$-twisted $V$-module, and $W_2$ any object in $\rep V$, we need
\begin{equation*}
 \tau_{h; T_g(W_2),W_1} T_h(\cR^V_{W_1,W_2}) = \cR^V_{T_h(W_1),T_h(W_2)} \tau_{h; W_1,W_2}.
\end{equation*}
This follows from the commutative diagrams
\begin{equation*}
 \xymatrixcolsep{4pc}
 \xymatrix{
 W_1\tens W_2 \ar[d]^{I_{W_1,W_2}} \ar[r]^{\cR_{W_1,W_2}} & W_2\tens W_1 \ar[d]^{I_{T_g(W_2), W_1}} \ar[rd]^{I_{T_{hg}(W_2),T_h(W_1)}} & \\
 T_h(W_1\tens_V W_2) \ar[r]^(.47){T_h(\cR^V_{W_1,W_2})} & T_h(T_g(W_2)\tens_V W_1) \ar[r]^{\tau_{h; T_g(W_2), W_1}} & T_{hg}(W_2)\tens_V T_h(W_1)
 }
\end{equation*}
and
\begin{equation*}
 \xymatrixcolsep{4pc}
 \xymatrix{
  & W_1\tens W_2 \ar[ld]_{I_{W_1,W_2}} \ar[d]^{I_{T_h(W_1),T_h(W_2)}} \ar[r]^{\cR_{W_1,W_2}} & W_2\tens W_1 \ar[d]^{I_{T_{hg}(W_2),T_h(W_1)}} \\
  T_h(W_1\tens_V W_2) \ar[r]_(.45){\tau_{h; W_1,W_2}} & T_h(W_1)\tens_V \cF_h(W_2) \ar[r]_{\cR^V_{T_h(W_1),T_h(W_2)}} & T_{hg}(W_2)\tens_V T_h(W_1) \\
 }
\end{equation*}
as well as the surjectivity of $I_{W_1,W_2}$. In the second diagram here, the image of $\cR^V_{T_h(W_1),T_h(W_2)}$ is indeed $T_{gh}(W_2)\tens_V T_h(W_1)$: because $W_1$ is $g$-twisted, $T_h(W_1)$ is $hgh^{-1}$-twisted, and then $T_{hgh^{-1}}(T_h(W_2))=T_{hg}(W_2)$.

Now suppose $g_1, g_2\in G$, $W_1$ is a $g_1$-twisted $V$-module, $W_2$ is a $g_2$-twisted $V$-module, and $W_3$ is any object of $\rep V$. The first hexagon axiom follows from the commutative diagrams
\begin{equation*}
 \xymatrixcolsep{5.3pc}
 \xymatrix{
 W_1\tens(W_2\tens W_3) \ar[d]^{1_{W_1}\tens\cR_{W_2,W_3}} \ar[r]^{1_{W_1}\tens I_{W_2,W_3}} & W_1\tens(W_2\tens_V W_3) \ar[d]^{1_{W_1}\tens\cR^V_{W_2,W_3}} \ar[r]^{I_{W_1,W_2\tens_V W_3}} & W_1\tens_V(W_2\tens_V W_3) \ar[d]^{1_{W_1}\tens_V \cR^V_{W_2,W_3}} \\
 W_1\tens(W_3\tens W_2) \ar[d]^{\cA_{W_1,W_3,W_2}} \ar[r]^(.45){1_{W_1}\tens I_{T_{g_2}(W_3), W_2}} & W_1\tens(T_{g_2}(W_3)\tens_V W_2) \ar[r]^(.48){I_{W_1, T_{g_2}(W_3)\tens_V W_2}} & W_1\tens_V(T_{g_2}(W_3)\tens_V W_2) \ar[d]^{\cA^V_{W_1,T_{g_2}(W_3), W_2}} \\
 (W_1\tens W_3)\tens W_2 \ar[d]^{\cR_{W_1,W_3}\tens 1_{W_2}} \ar[r]^(.45){I_{W_1,T_{g_2}(W_3)}\tens 1_{W_2}} & (W_1\tens_V T_{g_2}(W_3))\tens W_2 \ar[d]^{\cR^V_{W_1,T_{g_2}(W_3)}\tens 1_{W_2}} \ar[r]^(.48){I_{W_1\tens_V T_{g_2}(W_3), W_2}} & (W_1\tens_V T_{g_2}(W_3))\tens_V W_2 \ar[d]^{\cR^V_{W_1,T_{g_2}(W_3)}\tens_V 1_{W_2}} \\
 (W_3\tens W_1)\tens W_2 \ar[r]^(.45){I_{T_{g_1 g_2}(W_3), W_1}\tens 1_{W_2}} & (T_{g_1 g_2}(W_3)\tens_V W_1)\tens W_2 \ar[r]^{I_{T_{g_1 g_2}(W_3)\tens_V W_1, W_2}} & (T_{g_1 g_2}(W_3)\tens_V W_1)\tens_V W_2 \\
 }
\end{equation*}
and
\begin{equation*}
 \xymatrixcolsep{5.3pc}
 \xymatrix{
 W_1\tens(W_2\tens W_3) \ar[d]^{\cA_{W_1,W_2,W_3}} \ar[r]^{1_{W_1}\tens I_{W_2,W_3}} & W_1\tens(W_2\tens_V W_3) \ar[r]^{I_{W_1,W_2\tens_V W_3}} & W_1\tens_V(W_2\tens_V W_3) \ar[d]^{\cA^V_{W_1,W_2,W_3}} \\
 (W_1\tens W_2)\tens W_3 \ar[d]^{\cR_{W_1\tens W_2, W_3}} \ar[r]^(.48){I_{W_1,W_2}\tens 1_{W_3}} & (W_1\tens_V W_2)\tens W_3 \ar[d]^{\cR_{W_1\tens_V W_2, W_3}} \ar[r]^(.47){I_{W_1\tens_V W_2, W_3}} & (W_1\tens_V W_2)\tens_V W_3 \ar[d]^{\cR^V_{W_1\tens_V W_2, W_3}} \\
 W_3\tens(W_1\tens W_2) \ar[d]^{\cA_{W_3,W_1,W_2}} \ar[r]^(.47){1_{W_3}\tens I_{W_1,W_2}} & W_3\tens(W_1\tens_V W_2) \ar[r]^(.45){I_{T_{g_1 g_2}(W_3), W_1\tens_V W_2}} & T_{g_1 g_2}(W_3)\tens_V(W_1\tens_V W_2) \ar[d]^{\cA^V_{T_{g_1 g_2}(W_3), W_1,W_2}} \\
 (W_3\tens W_1)\tens W_2 \ar[r]^(.45){I_{T_{g_1 g_2}(W_3), W_1}\tens 1_{W_2}} & (T_{g_1 g_2}(W_3)\tens_V W_1)\tens W_2 \ar[r]^{I_{T_{g_1 g_2}(W_3)\tens_V W_1, W_2}} & (T_{g_1 g_2}(W_3)\tens_V W_1)\tens_V W_2 \\
 } ,
\end{equation*}
the surjectivity of $I_{W_1, W_2\tens_V W_3}(1_{W_1}\tens I_{W_2,W_3})$, and the hexagon axioms in $\sC$. For the heptagon, take $g\in G$, a $g$-twisted $V$-module $W_1$, and any objects $W_2$, $W_3$ in $\rep V$. Then the commutative diagrams
\begin{equation*}
\xymatrixcolsep{5.4pc}
\xymatrix{
 (W_1\tens W_2)\tens W_3 \ar[d]^{\cA_{W_1,W_2,W_3}^{-1}} \ar[r]^{I_{W_1,W_2}\tens 1_{W_3}} & (W_1\tens_V W_2)\tens W_3 \ar[r]^{I_{W_1\tens_V W_2, W_3}} & (W_1\tens_V W_2)\tens_V W_3 \ar[d]^{(\cA^V_{W_1,W_2,W_3})^{-1}} \\
 W_1\tens(W_2\tens W_3) \ar[d]^{\cR_{W_1,W_2\tens W_3}} \ar[r]^{1_{W_1}\tens I_{W_2,W_3}} & W_1\tens(W_2\tens_V W_3) \ar[d]^{\cR_{W_1,W_2\tens_V W_3}} \ar[r]^{I_{W_1,W_2\tens_V W_3}} & W_1\tens_V(W_2\tens_V W_3) \ar[d]^{\cR^V_{W_1,W_2\tens_V W_3}} \\
 (W_2\tens W_3)\tens W_1 \ar[d]^{\cA_{W_2,W_3,W_1}^{-1}} \ar[rd]^(.6){I_{T_g(W_2),T_g(W_3)}\tens 1_{W_1}} \ar[r]^{I_{W_2,W_3}\tens 1_{W_1}} & (W_2\tens_V W_3)\tens W_1 \ar[d]^{\tau_{g; W_2,W_3}\tens 1_{W_1}} \ar[r]^{I_{T_g(W_2\tens_V W_3), W_1}} & T_g(W_2\tens_V W_3)\tens_V W_1 \ar[d]^{\tau_{g; W_2,W_3}\tens_V 1_{W_1}} \\
 W_2\tens(W_3\tens W_1) \ar[rd]^(.6){1_{W_2}\tens I_{T_g(W_3), W_1}} & (T_g(W_2)\tens_VT_g(W_3))\tens W_1 \ar[r]^(.49){I_{T_g(W_2)\tens_V T_g(W_3), W_1}} & (T_g(W_2)\tens_V T_g(W_3))\tens_V W_1 \ar[d]_{(\cA^V_{T_g(W_2),T_g(W_3), W_1})^{-1}} \\
  & W_2\tens(T_g(W_3)\tens_V W_1) \ar[r]_(.47){I_{T_g(W_2),T_g(W_3)\tens_V W_1}} & T_g(W_2)\tens_V(T_g(W_3)\tens_V W_1) \\
 }
\end{equation*}
and
\begin{equation*}
 \xymatrixcolsep{5.4pc}
 \xymatrix{
 (W_1\tens W_2)\tens W_3 \ar[d]^{\cR_{W_1,W_2}\tens 1_{W_3}} \ar[r]^{I_{W_1,W_2}\tens 1_{W_3}} & (W_1\tens_V W_2)\tens W_3 \ar[d]^{\cR^V_{W_1,W_2}\tens 1_{W_3}} \ar[r]^{I_{W_1\tens_V W_2, W_3}} & (W_1\tens_V W_2)\tens_V W_3 \ar[d]^{\cR^V_{W_1,W_2}\tens_V 1_{W_3}} \\
 (W_2\tens W_1)\tens W_3 \ar[d]^{\cA_{W_2,W_1,W_3}^{-1}} \ar[r]^(.47){I_{T_g(W_2), W_1}\tens 1_{W_3}} & (T_g(W_2)\tens_V W_1)\tens W_3 \ar[r]^(.49){I_{T_g(W_2)\tens_V W_1, W_3}} & (T_g(W_2)\tens_V W_1)\tens_V W_3 \ar[d]^{(\cA^V_{T_g(W_2), W_1, W_3})^{-1}} \\
 W_2\tens(W_1\tens W_3) \ar[d]^{1_{W_2}\tens\cR_{W_1,W_3}} \ar[r]^{1_{W_2}\tens I_{W_1,W_3}} & W_2\tens(W_1\tens_V W_3) \ar[d]^{1_{W_2}\tens\cR^V_{W_1,W_3}} \ar[r]^(.47){I_{T_g(W_2), W_1\tens_V W_3}} & T_g(W_2)\tens_V (W_1\tens_V W_3) \ar[d]^{1_{W_2}\tens_V\cR^V_{W_1,W_3}} \\
 W_2\tens(W_3\tens W_1) \ar[r]^(.47){1_{W_2}\tens I_{T_g(W_3), W_1}} & W_2\tens(T_g(W_3)\tens_V W_1) \ar[r]^(.48){I_{T_g(W_2),T_g(W_3)\tens_V W_1}} & T_g(W_2)\tens_V(T_g(W_3)\tens_V W_1)\\
 } ,
\end{equation*}
the surjectivity of $I_{W_1\tens_V W_2, W_3}(I_{W_1,W_2}\tens 1_{W_3})$, and the hexagon axiom in $\sC$ complete the proof of the theorem.

\section{Details for Theorem \ref{thm:repV=repGV}}\label{app:mthm_details}

Here we provide detailed calculations for the proofs of Section \ref{sec:MainCatThm}, incorporating all unit and associativity isomorphisms and making heavy use of the triangle, pentagon, and hexagon axioms.

\bigskip

\noindent\textbf{Equations \eqref{eqn:rigidlike_lemma_1} and \eqref{eqn:rigidlike_lemma_2}.} We consider $e_{V\tens V}(1_{V\tens V}\tens F_L)$, which is given by the composition
\begin{align*}
 (V\tens & V)  \tens V\xrightarrow{1_{V\tens V}\tens l_V^{-1}} (V\tens V)\tens(\vac\tens V)\xrightarrow{1_{V\tens V}\tens(\widetilde{i}_V\tens 1_V)} (V\tens V)\tens((V\tens V)\tens V)\nonumber\\
 &\xrightarrow{1_{V\tens V}\tens\cA_{V,V,V}^{-1}} (V\tens V)\tens(V\tens(V\tens V))\xrightarrow{1_{V\tens V}\tens(1_V\tens\mu_V)} (V\tens V)\tens(V\tens V)\xrightarrow{\cA^{-1}_{V,V,V\tens V}} V\tens (V\tens (V\tens V))\nonumber\\
 & \xrightarrow{1_V\tens\cA_{V,V,V}} V\tens((V\tens V)\tens V)\xrightarrow{1_V\tens(\varepsilon_V\mu_V\tens 1_V)} V\tens(\vac\tens V)\xrightarrow{1_V\tens l_V} V\tens V\xrightarrow{\varepsilon_V\mu_V} \vac.
\end{align*}
We move the second associativity isomorphism to the front using its naturality and we move the first $\mu_V$ back using naturality of the associativity and left unit isomorphisms:
\begin{align*}
 (V\tens V) & \tens V\xrightarrow{\cA_{V,V,V}^{-1}} V\tens(V\tens V)\xrightarrow{1_V\tens(1_V\tens l_V^{-1})} V\tens(V\tens(\vac\tens V))\xrightarrow{1_V\tens(1_V\tens(\widetilde{i}_V\tens 1_V))} V\tens(V\tens((V\tens V)\tens V))\nonumber\\
 & \xrightarrow{1_V\tens(1_V\tens\cA_{V,V,V}^{-1})} V\tens(V\tens(V\tens(V\tens V))) \xrightarrow{1_V\tens\cA_{V,V,V\tens V}} V\tens((V\tens V)\tens(V\tens V))\nonumber\\ 
 & \xrightarrow{1_V\tens(\varepsilon_V\mu_V\tens 1_{V\tens V})} V\tens(\vac\tens(V\tens V)) \xrightarrow{1_V\tens l_{V\tens V}} V\tens(V\tens V)\xrightarrow{1_V\tens\mu_V} V\tens V\xrightarrow{\varepsilon_V\mu_V} \vac.
\end{align*}
We rewrite using the triangle axiom and naturality of the associativity isomorphisms:
\begin{align*}
 (V\tens V) & \tens V\xrightarrow{\cA_{V,V,V}^{-1}} V\tens(V\tens V)\xrightarrow{1_V\tens(r_V^{-1}\tens 1_V)} V\tens((V\tens\vac)\tens V) \xrightarrow{1_V\tens((1_V\tens\widetilde{i}_V)\tens 1_V)} V\tens((V\tens(V\tens V))\tens V)\nonumber\\
 & \xrightarrow{1_V\tens\cA_{V,V\tens V,V}^{-1}} V\tens(V\tens((V\tens V)\tens V)) \xrightarrow{1_V\tens(1_V\tens\cA_{V,V,V}^{-1})} V\tens(V\tens(V\tens(V\tens V)))\nonumber\\
 & \xrightarrow{1_V\tens\cA_{V,V,V\tens V}} V\tens((V\tens V)\tens(V\tens V))\xrightarrow{1_V\tens\cA_{V\tens V,V,V}} V\tens(((V\tens V)\tens V)\tens V)\nonumber\\
& \xrightarrow{1_V\tens((\varepsilon_V\mu_V\tens 1_V)\tens 1_V)} V\tens((\vac\tens V)\tens V)\xrightarrow{1_V\tens(l_V\tens 1_V)} V\tens(V\tens V)\xrightarrow{1_V\tens\mu_V} V\tens V\xrightarrow{\varepsilon_V\mu_V} \vac.
\end{align*}
Now we replace the associativity isomorphisms in the second and third lines with $1_V\tens(\cA_{V,V,V}\tens 1_V)$ using the pentagon axiom, and then by rigidity of $V$, the whole composition collapses to
\begin{equation}\label{eqn:two_mult}
 (V\tens V)\tens V\xrightarrow{\cA^{-1}_{V,V,V}} V\tens(V\tens V)\xrightarrow{1_V\tens\mu_V} V\tens V\xrightarrow{\varepsilon_V\mu_V} \vac
\end{equation}
as required.

On the other hand, $e_{V\tens V}(1_{V\tens V}\tens F_R)$ is the composition
\begin{align*}
 (V\tens & V)  \tens V\xrightarrow{1_{V\tens V}\tens r_V^{-1}} (V\tens V)\tens(V\tens\vac)\xrightarrow{1_{V\tens V}\tens(1_V\tens\widetilde{i}_V)} (V\tens V)\tens(V\tens(V\tens V))\nonumber\\
& \xrightarrow{1_{V\tens V}\tens\cA_{V,V,V}} (V\tens V)\tens((V\tens V)\tens V)\xrightarrow{1_{V\tens V}\tens(\mu_V\tens 1_V)} (V\tens V)\tens(V\tens V)\xrightarrow{\cA_{V,V,V\tens V}^{-1}} V\tens (V\tens (V\tens V))\nonumber\\
& \xrightarrow{1_V\tens\cA_{V,V,V}} V\tens((V\tens V)\tens V)\xrightarrow{1_V\tens(\varepsilon_V\mu_V\tens 1_V)} V\tens(\vac\tens V)\xrightarrow{1_V\tens l_V} V\tens V\xrightarrow{\varepsilon_V\mu_V}\vac.
\end{align*}
As before, we move $\cA_{V,V,V\tens V}^{-1}$ forward and the first $\mu_V$ back; we also apply the associativity of $\mu_V$:
\begin{align*}
 (V\tens V) & \tens V\xrightarrow{\cA_{V,V,V}^{-1}} V\tens(V\tens V)\xrightarrow{1_V\tens(1_V\tens r_V^{-1})} V\tens(V\tens(V\tens\vac)) \xrightarrow{1_V\tens(1_V\tens(1_V\tens\widetilde{i}_V))} V\tens(V\tens(V\tens(V\tens V)))\nonumber\\
 & \xrightarrow{1_V\tens(1_V\tens\cA_{V,V,V})} V\tens(V\tens((V\tens V)\tens V))\xrightarrow{1_V\tens\cA_{V,V\tens V,V}} V\tens((V\tens(V\tens V))\tens V)\nonumber\\
 & \xrightarrow{1_V\tens(\cA_{V,V,V}\tens 1_V)} V\tens(((V\tens V)\tens V)\tens V) \xrightarrow{1_V\tens((\mu_V\tens 1_V)\tens 1_V)} V\tens((V\tens V)\tens V)\nonumber\\
 & \xrightarrow{1_V\tens(\varepsilon_V\mu_V\tens 1_V)} V\tens(\vac\tens V) \xrightarrow{1_V\tens l_V} V\tens V\xrightarrow{\varepsilon_V\mu_V} \vac.
\end{align*}
Now we rewrite the associativity isomorphisms in the second and third rows as $\cA_{V\tens V,V,V}\cA_{V,V,V\tens V}$ using the pentagon axiom, and then we apply the naturality of these isomorphisms:
\begin{align*}
 (V & \tens V)  \tens V\xrightarrow{\cA_{V,V,V}^{-1}} V\tens(V\tens V)\xrightarrow{1_V\tens(1_V\tens r_V^{-1})} V\tens(V\tens(V\tens\vac)) \xrightarrow{1_V\tens\cA_{V,V\vac}} V\tens((V\tens V)\tens\vac)\nonumber\\
 & \xrightarrow{1_V\tens(1_{V\tens V}\tens\widetilde{i}_V)} V\tens((V\tens V)\tens(V\tens V)) \xrightarrow{1_V\tens(\mu_V\tens 1_{V\tens V})} V\tens(V\tens(V\tens V)) \xrightarrow{1_V\tens\cA_{V,V,V}} V\tens((V\tens V)\tens V)\nonumber\\
 & \xrightarrow{1_V\tens(\varepsilon_V\mu_V\tens 1_V)} V\tens(\vac\tens V)\xrightarrow{1_V\tens l_V} V\tens V\xrightarrow{\varepsilon_V\mu_V} \vac.
\end{align*}
Next we use the identity $\cA_{V,V,\vac}(1_V\tens r_V^{-1}) =r_{V\tens V}^{-1}$ and the naturality of the right unit isomorphisms:
\begin{align*}
 (V  \tens V)  \tens V\xrightarrow{\cA_{V,V,V}^{-1}} V\tens & (V\tens V)\xrightarrow{1_V\tens\mu_V} V\tens V\xrightarrow{1_V\tens r_V^{-1}} V\tens(V\tens\vac)\xrightarrow{1_V\tens(1_V\tens\widetilde{i}_V)} V\tens(V\tens(V\tens V))\nonumber\\
 & \xrightarrow{1_V\tens\cA_{V,V,V}} V\tens((V\tens V)\tens V)\xrightarrow{1_V\tens(\varepsilon_V\mu_V\tens 1_V)} V\tens(\vac\tens V)\xrightarrow{1_V\tens l_V} V\tens V\xrightarrow{\varepsilon_V\mu_V} \vac.
\end{align*}
Finally, this composition collapses to \eqref{eqn:two_mult} by the rigidity of $V$.

\bigskip

\noindent\textbf{Equation \eqref{eqn:Tr_g_zero}.} By the left unit property of $V$, $(\mathrm{Tr}_\cC\,g)1_V$ is the composition
\begin{equation*}
 V\xrightarrow{l_V^{-1}}\vac\tens V\xrightarrow{\widetilde{i}_V\tens 1_V} (V\tens V)\tens V\xrightarrow{(1_V\tens g)\tens 1_V} (V\tens V)\tens V\xrightarrow{\mu_V\tens 1_V} V\tens V\xrightarrow{\mu_V} V.
\end{equation*}
Because $g$ is an automorphism of $V$, this agrees with
\begin{equation*}
 V\xrightarrow{l_V^{-1}} \vac\tens V\xrightarrow{\widetilde{i}_V\tens 1_V} (V\tens V)\tens V\xrightarrow{(g^{-1}\tens 1_V)\tens g^{-1}} (V\tens V)\tens V\xrightarrow{\mu_V\tens 1_V} V\tens V\xrightarrow{\mu_V} V\xrightarrow{g} V.
\end{equation*}
We then use associativity of $\mu_V$ and naturality of associativity and unit isomorphisms to rewrite as
\begin{equation*}
 V\xrightarrow{g^{-1}} V\xrightarrow{l_V^{-1}} \vac\tens V\xrightarrow{\widetilde{i}_V\tens 1_V} (V\tens V)\tens V\xrightarrow{\cA_{V,V,V}^{-1}} V\tens(V\tens V)\xrightarrow{1_V\tens\mu_V} V\tens V\xrightarrow{g^{-1}\tens 1_V} V\tens V\xrightarrow{\mu_V} V\xrightarrow{g} V.
\end{equation*}
Next we use Lemma \ref{rigidlike_lemma} and the automorphism property of $g$ to obtain
\begin{equation*}
 V\xrightarrow{g^{-1}} V\xrightarrow{r_V^{-1}} V\tens\vac\xrightarrow{1_V\tens\widetilde{i}_V} V\tens(V\tens V)\xrightarrow{\cA_{V,V,V}} (V\tens V)\tens V\xrightarrow{\mu_V\tens 1_V} V\tens V\xrightarrow{1_V\tens g} V\tens V\xrightarrow{\mu_V} V.
\end{equation*}
Naturality of the associativity isomorphisms and one more application of the associativity of $\mu_V$ then yields
\begin{equation*}
 V\xrightarrow{g^{-1}} V\xrightarrow{r_V^{-1}} V\tens\vac\xrightarrow{1_V\tens\widetilde{i}_V} V\tens(V\tens V)\xrightarrow{1_V\tens(1_V\tens g)} V\tens(V\tens V)\xrightarrow{1_V\tens\mu_V} V\tens V\xrightarrow{\mu_V} V,
\end{equation*}
which is $(\mathrm{Tr}_\cC\,g)g^{-1}$ by the right unit property of $V$.

\bigskip

\noindent\textbf{Equation \eqref{eqn:Pi_g_Vhom}.} We start with $\mu_W(1_V\tens\Pi_g)$, which is the composition
 \begin{align*}
  V\tens W & \xrightarrow{1_V\tens l_W^{-1}} V\tens(\vac\tens W)\xrightarrow{1_V\tens(\widetilde{i}_V\tens 1_W)} V\tens((V\tens V)\tens W)\xrightarrow{1_V\tens\cA_{V,V,W}^{-1}} V\tens(V\tens(V\tens W))\nonumber\\
  &\xrightarrow{1_V\tens(1_V\tens[(g\tens 1_W)\cM_{V,W}])} V\tens(V\tens(V\tens W))\xrightarrow{1_V\tens(1_V\tens\mu_W)} V\tens(V\tens W)\xrightarrow{1_V\tens\mu_W} V\tens W\xrightarrow{\mu_W} W.
 \end{align*}
 We replace the first arrow using the triangle axiom and the triviality of $\cR_{\vac,V}$ and rewrite the last two arrows using associativity of $\mu_V$ and $\mu_W$:
 \begin{align*}
  V & \tens W  \xrightarrow{l_V^{-1}\tens 1_W} (\vac\tens V)\tens W\xrightarrow{\cR_{\vac,V}\tens 1_W} (V\tens\vac)\tens W\xrightarrow{\cA_{V,\vac,W}^{-1}} V\tens(\vac\tens W)\xrightarrow{1_V\tens(\widetilde{i}_V\tens 1_W)} V\tens((V\tens V)\tens W)\nonumber\\
  &\xrightarrow{1_V\tens\cA_{V,V,W}^{-1}} V\tens(V\tens(V\tens W))
  \xrightarrow{1_V\tens(1_V\tens[(g\tens 1_W)\cM_{V,W}])} V\tens(V\tens(V\tens W))\xrightarrow{1_V\tens\cA_{V,V,W}} V\tens((V\tens V)\tens W)\nonumber\\
  &\xrightarrow{1_V\tens(\mu_V\tens 1_W)} V\tens(V\tens W)\xrightarrow{1_V\tens\mu_W} V\tens W\xrightarrow{\mu_W} W.
 \end{align*}
Now we write $l_V^{-1}\tens 1_W=\cA_{\vac,V,W} l_{V\tens W}^{-1}$ and apply naturality of associativity and braiding isomorphisms to $\widetilde{i}_V$; meanwhile we rewrite the last three arrows using associativity again and naturality of the associativity isomorphisms:
\begin{align*}
 V & \tens W  \xrightarrow{l_{V\tens W}^{-1}} \vac\tens(V\tens W)\xrightarrow{\widetilde{i}_V\tens 1_{V\tens W}} (V\tens V)\tens(V\tens W)\xrightarrow{\cA_{V\tens V,V,W}} ((V\tens V)\tens V)\tens W\nonumber\\
 & \xrightarrow{\cR_{V\tens V,V}\tens 1_W} (V\tens(V\tens V))\tens W\xrightarrow{\cA_{V,V\tens V,W}^{-1}} V\tens((V\tens V)\tens W)\xrightarrow{1_V\tens\cA_{V,V,W}^{-1}} V\tens(V\tens(V\tens W))\nonumber\\
  & \xrightarrow{1_V\tens(1_V\tens[(g\tens 1_W)\cM_{V,W}])} V\tens(V\tens(V\tens W))\xrightarrow{1_V\tens\cA_{V,V,W}} V\tens((V\tens V)\tens W) \xrightarrow{\cA_{V,V\tens V,W}} (V\tens(V\tens V))\tens W\nonumber\\
  & \xrightarrow{(1_V\tens\mu_V)\tens 1_W} (V\tens V)\tens W\xrightarrow{\mu_V\tens 1_W} V\tens W\xrightarrow{\mu_W} W.
\end{align*}
Next we apply the hexagon and pentagon axioms to the arrows in the second line above; we also apply the associativity $\mu_V$ and the pentagon axiom towards the end of the composition:
\begin{align*}
 V & \tens W  \xrightarrow{l_{V\tens W}^{-1}} \vac\tens(V\tens W)\xrightarrow{\widetilde{i}_V\tens 1_{V\tens W}} (V\tens V)\tens(V\tens W)\xrightarrow{\cA_{V\tens V,V,W}} ((V\tens V)\tens V)\tens W\nonumber\\
  & \xrightarrow{\cA_{V,V,V}^{-1}\tens 1_W} (V\tens(V\tens V))\tens W\xrightarrow{(1_V\tens\cR_{V,V})\tens 1_W} (V\tens(V\tens V))\tens W\xrightarrow{\cA_{V,V,V}\tens 1_W} ((V\tens V)\tens V)\tens W\nonumber\\
  & \xrightarrow{(\cR_{V,V}\tens 1_V)\tens 1_W} ((V\tens V)\tens V)\tens W\xrightarrow{\cA_{V\tens V,V,W}^{-1}} (V\tens V)\tens(V\tens W)\xrightarrow{\cA_{V,V,V\tens W}^{-1}} V\tens(V\tens(V\tens W))\nonumber\\
   &\xrightarrow{1_V\tens(1_V\tens[(g\tens 1_W)\cM_{V,W}])} V\tens(V\tens(V\tens W))\xrightarrow{\cA_{V,V,V\tens W}} (V\tens V)\tens(V\tens W)\xrightarrow{\cA_{V\tens V,V,W}} ((V\tens V)\tens V)\tens W\nonumber\\
   & \xrightarrow{(\mu_V\tens 1_V)\tens 1_W} (V\tens V)\tens W\xrightarrow{\mu_V\tens 1_W} V\tens W\xrightarrow{\mu_W} W.
\end{align*}
We use naturality of the associativity isomorphisms to cancel $\cA_{V,V,V\tens W}$ and its inverse here. With this done, we move the second $\cR_{V,V}$ using naturality of the associativity isomorphisms, in order to cancel it against the first $\mu_V$ using commutativity of $\mu_V$. Then we begin rewriting the fifth line using associativity again:
\begin{align*}
 V & \tens W  \xrightarrow{l_{V\tens W}^{-1}} \vac\tens(V\tens W)\xrightarrow{\widetilde{i}_V\tens 1_{V\tens W}} (V\tens V)\tens(V\tens W)\xrightarrow{\cA_{V\tens V,V,W}} ((V\tens V)\tens V)\tens W\nonumber\\
  & \xrightarrow{\cA_{V,V,V}^{-1}\tens 1_W} (V\tens(V\tens V))\tens W\xrightarrow{(1_V\tens\cR_{V,V})\tens 1_W} (V\tens(V\tens V))\tens W\xrightarrow{\cA_{V,V,V}\tens 1_W} ((V\tens V)\tens V)\tens W\nonumber\\
  &\xrightarrow{\cA_{V\tens V,V,W}^{-1}} (V\tens V)\tens(V\tens W)\xrightarrow{1_{V\tens V}\tens[(g\tens 1_W)\cM_{V,W}]} (V\tens V)\tens(V\tens W)\xrightarrow{\cA_{V\tens V,V,W}} ((V\tens V)\tens V)\tens W\nonumber\\
  &\xrightarrow{\cA_{V,V,V}^{-1}\tens 1_W} (V\tens(V\tens V))\tens W\xrightarrow{(1_V\tens\mu_V)\tens 1_W} (V\tens V)\tens W\xrightarrow{\mu_V\tens 1_W} V\tens W\xrightarrow{\mu_W} W.
\end{align*}
We now rewrite the last five arrows using associativity of $\mu_W$, naturality of the associativity isomorphisms, and the pentagon axiom. Then we use commutativity to insert an $\cR_{V,V}$ in front of $\mu_V$:
\begin{align*}
 V & \tens W  \xrightarrow{l_{V\tens W}^{-1}} \vac\tens(V\tens W)\xrightarrow{\widetilde{i}_V\tens 1_{V\tens W}} (V\tens V)\tens(V\tens W)\xrightarrow{\cA_{V\tens V,V,W}} ((V\tens V)\tens V)\tens W\nonumber\\
  & \xrightarrow{\cA_{V,V,V}^{-1}\tens 1_W} (V\tens(V\tens V))\tens W\xrightarrow{(1_V\tens\cR_{V,V})\tens 1_W} (V\tens(V\tens V))\tens W\xrightarrow{\cA_{V,V,V}\tens 1_W} ((V\tens V)\tens V)\tens W\nonumber\\
  &\xrightarrow{\cA_{V\tens V,V,W}^{-1}} (V\tens V)\tens(V\tens W)\xrightarrow{1_{V\tens V}\tens[(g\tens 1_W)\cM_{V,W}]} (V\tens V)\tens(V\tens W)\xrightarrow{\cA_{V,V,V\tens W}^{-1}} V\tens(V\tens(V\tens W))\nonumber\\
  & \xrightarrow{1_V\tens\cA_{V,V,W}} V\tens((V\tens V)\tens W)\xrightarrow{1_V\tens(\cR_{V,V}\tens 1_W)} V\tens((V\tens V)\tens W)\xrightarrow{1_V\tens(\mu_V\tens 1_W)} V\tens(V\tens W)\nonumber\\
  &\xrightarrow{1_V\tens\mu_W} V\tens W\xrightarrow{\mu_W} W.
\end{align*}
Next we use naturality of the associativity isomorphisms to move $\cA_{V,V,V\tens W}^{-1}$, the pentagon axiom in the second and third rows, the associativity of $\mu_W$, and naturality of the associativity and braiding isomorphisms to move $g$:
\begin{align*}
 V & \tens W  \xrightarrow{l_{V\tens W}^{-1}} \vac\tens(V\tens W)\xrightarrow{\widetilde{i}_V\tens 1_{V\tens W}} (V\tens V)\tens(V\tens W)\xrightarrow{\cA_{V\tens V,V,W}} ((V\tens V)\tens V)\tens W\nonumber\\
  & \xrightarrow{\cA_{V,V,V}^{-1}\tens 1_W} (V\tens(V\tens V))\tens W\xrightarrow{(1_V\tens\cR_{V,V})\tens 1_W} (V\tens(V\tens V))\tens W\xrightarrow{\cA_{V,V\tens V,W}^{-1}} V\tens((V\tens V)\tens W)\nonumber\\
  &\xrightarrow{1_V\tens\cA_{V,V,W}^{-1}} V\tens(V\tens(V\tens W))\xrightarrow{1_V\tens(1_V\tens\cM_{V,W})} V\tens(V\tens(V\tens W))\xrightarrow{1_V\tens\cA_{V,V,W}} V\tens((V\tens V)\tens W)\nonumber\\
  &\xrightarrow{1_V\tens(\cR_{V,V}\tens 1_W)} V\tens((V\tens V)\tens W)\xrightarrow{1_V\tens\cA_{V,V,W}^{-1}} V\tens(V\tens(V\tens W))\xrightarrow{1_V\tens(1_V\tens\mu_W)} V\tens(V\tens W)\nonumber\\
  & \xrightarrow{1_V\tens(g\tens 1_W)} V\tens(V\tens W)\xrightarrow{1_V\tens\mu_W} V\tens W\xrightarrow{\mu_W} W.
\end{align*}
Now we use naturality of the associativity isomorphisms to move the first $\cR_{V,V}$, and then we use the pentagon axiom to rewrite the first three associativity isomorphisms and the hexagon axiom to rewrite all braiding isomorphisms:
\begin{align*}
 V & \tens W  \xrightarrow{l_{V\tens W}^{-1}} \vac\tens(V\tens W)\xrightarrow{\widetilde{i}_V\tens 1_{V\tens W}} (V\tens V)\tens(V\tens W)\xrightarrow{\cA_{V,V,V\tens W}^{-1}} V\tens(V\tens(V\tens W))\nonumber\\
 &\xrightarrow{1_V\tens\cA_{V,V,W}} V\tens((V\tens V)\tens W)\xrightarrow{1_V\tens\cA_{V,V,W}^{-1}} V\tens(V\tens(V\tens W))\xrightarrow{1_V\tens\cR_{V,V\tens W}} V\tens((V\tens W)\tens V)\nonumber\\
 &\xrightarrow{1_V\tens\cA_{V,W,V}^{-1}} V\tens(V\tens(W\tens V))\xrightarrow{1_V\tens\cA_{V,W,V}} V\tens((V\tens W)\tens V)\xrightarrow{1_V\tens\cR_{V\tens W,V}} V\tens(V\tens(V\tens W))\nonumber\\
 &\xrightarrow{1_V\tens\cA_{V,V,W}} V\tens((V\tens V)\tens W)\xrightarrow{1_V\tens\cA_{V,V,W}^{-1}} V\tens(V\tens(V\tens W))\xrightarrow{1_V\tens(1_V\tens\mu_W)} V\tens(V\tens W)\nonumber\\
  & \xrightarrow{1_V\tens(g\tens 1_W)} V\tens(V\tens W)\xrightarrow{1_V\tens\mu_W} V\tens W\xrightarrow{\mu_W} W.
\end{align*}
We cancel all pairs of associativity isomorphisms and their inverses and then apply naturality of the associativity, braiding, and unit isomorphisms to the first $\mu_W$ to finally obtain
\begin{align*}
 V\tens W\xrightarrow{\mu_W} W\xrightarrow{l_W^{-1}}\vac\tens W & \xrightarrow{\widetilde{i}_V\tens 1_W}  (V\tens V)\tens W  \xrightarrow{\cA_{V,V,W}^{-1}} V\tens  (V\tens W)\nonumber\\
 & \xrightarrow{1_V\tens\cM_{V,W}} V\tens(V\tens W)\xrightarrow{1_V\tens(g\tens 1_W)} V\tens(V\tens W)\xrightarrow{1_V\tens\mu_W} V\tens W\xrightarrow{\mu_W} W,
\end{align*}
which is $\Pi_g\mu_W$.

\bigskip

\noindent\textbf{Equations \eqref{eqn:Im_of_Pi_g_twisted_1} through \eqref{eqn:Im_of_Pi_g_twisted_3}.} The morphism $\mu_W(g\tens\Pi_g)\cM_{V,W}$ is the composition 
\begin{align}\label{PiGtwisted}
 V\tens W & \xrightarrow{\cM_{V,W}} V\tens W\xrightarrow{1_V\tens l_W^{-1}} V\tens(\vac\tens W)\xrightarrow{1_V\tens(\widetilde{i}_V\tens 1_W)} V\tens((V\tens V)\tens W)\xrightarrow{1_V\tens\cA_{V,V,W}^{-1}} V\tens(V\tens(V\tens W))\nonumber\\
 & \xrightarrow{g\tens(1_V\tens[(g\tens 1_W)\cM_{V,W}])} V\tens(V\tens(V\tens W))\xrightarrow{1_V\tens(1_V\tens\mu_W)} V\tens(V\tens W)\xrightarrow{1_V\tens\mu_W} V\tens W\xrightarrow{\mu_W} W.
\end{align}
We begin by rewriting the second, third, and fourth arrows:
\begin{align}\label{PiGtwisted2}
 (1_V\tens\cA_{V,V,W}^{-1}) & (1_V\tens(\widetilde{i}_V\tens 1_W))\cA_{V,\vac,W}^{-1}(r_V^{-1}\tens 1_W)\nonumber\\
 & =(1_V\tens\cA_{V,V,W}^{-1})\cA_{V,V\tens V, W}^{-1}((1_V\tens\widetilde{i}_V)\tens 1_W)(\cR_{\vac, V}\tens 1_W)(l_V^{-1}\tens 1_W)\nonumber\\
 & =(1_V\tens\cA_{V,V,W}^{-1})\cA_{V,V\tens V, W}^{-1}(\cR_{V\tens V, V}\tens 1_W)((\widetilde{i}_V\tens 1_V)\tens 1_W)\cA_{\vac,V,W} l_{V\tens W}^{-1}\nonumber\\
 & =\cA_{V,V,V\tens W}^{-1}\cA_{V\tens V,V,W}^{-1}((\cR_{V,V}\tens 1_V)\tens 1_W)(\cA_{V,V,V}\tens 1_W)((1_V\tens\cR_{V,V})\tens 1_W)\circ\nonumber\\
 &\hspace{6em}\circ(\cA_{V,V,V}^{-1}\tens 1_W)\cA_{V\tens V,V,W}(\widetilde{i}_V\tens 1_{V\tens W}) l_{V\tens W}^{-1},
\end{align}
where the last equality uses both the hexagon and pentagon axioms. We also rewrite the last three arrows of \eqref{PiGtwisted} using the associativity and commutativity of $\mu_W$ and $\mu_V$ as well as the pentagon axiom:
\begin{align}\label{PiGtwisted3}
 \mu_W(1_V & \tens\mu_W)  (1_V\tens(1_V\tens\mu_W)) = \mu_W(1_V\tens\mu_W)(1_V\tens(\mu_V\tens 1_W))(1_V\tens\cA_{V,V,W})\nonumber\\
 & =\mu_W(\mu_V\tens 1_W)\cA_{V,V,W}(1_V\tens(\mu_V\tens 1_W))(1_V\tens(\cR_{V,V}\tens 1_W))(1_V\tens\cA_{V,V,W})\nonumber\\
 & =\mu_W(\mu_V\tens 1_W)((1_V\tens\mu_V)\tens 1_W)((1_V\tens\cR_{V,V})\tens 1_W)\cA_{V,V\tens V,W}(1_V\tens\cA_{V,V,W})\nonumber\\
 & =\mu_W(\mu_V\tens 1_W)((\mu_V\tens 1_V)\tens 1_W)(\cA_{V,V,V}\tens 1_W)((1_V\tens\cR_{V,V})\tens 1_W)(\cA_{V,V,V}^{-1}\tens 1_W)\cA_{V\tens V,V,W}\cA_{V,V,V\tens W}.
\end{align}
We insert \eqref{PiGtwisted2} and \eqref{PiGtwisted3} into \eqref{PiGtwisted}, canceling $\cA_{V,V,V\tens W}$ with its inverse:
\begin{align*}
 V & \tens W  \xrightarrow{\cM_{V,W}} V\tens W\xrightarrow{l_{V\tens W}^{-1}} \vac\tens(V\tens W)\xrightarrow{\widetilde{i}_V\tens 1_{V\tens W}} (V\tens V)\tens(V\tens W)\xrightarrow{\cA_{V\tens V, V,W}} ((V\tens V)\tens V)\tens W\nonumber\\
 & \xrightarrow{\cA_{V,V,V}^{-1}\tens 1_W} (V\tens(V\tens V))\tens W\xrightarrow{(1_V\tens\cR_{V,V})\tens 1_W} (V\tens(V\tens V))\tens W\xrightarrow{\cA_{V,V,V}\tens 1_W} ((V\tens V)\tens V)\tens W\nonumber\\
 & \xrightarrow{(\cR_{V,V}\tens 1_V)\tens 1_W} ((V\tens V)\tens V)\tens W\xrightarrow{\cA_{V\tens V,V,W}^{-1}} (V\tens V)\tens(V\tens W)\xrightarrow{(g\tens 1_V)\tens[(g\tens 1_W)\cM_{V,W}]} (V\tens V)\tens(V\tens W)\nonumber\\
 & \xrightarrow{\cA_{V\tens V,V,W}} ((V\tens V)\tens V)\tens W\xrightarrow{\cA_{V,V,V}^{-1}\tens 1_W} (V\tens(V\tens V))\tens W\xrightarrow{(1_V\tens\cR_{V,V})\tens 1_W} (V\tens(V\tens V))\tens W\nonumber\\
 &\xrightarrow{\cA_{V,V,V}\tens 1_W} ((V\tens V)\tens V)\tens W\xrightarrow{(\mu_V\tens 1_V)\tens 1_W} (V\tens V)\tens W\xrightarrow{\mu_V\tens 1_W} V\tens W\xrightarrow{\mu_W} W.
\end{align*}
Next we apply naturality of the left unit isomorphisms to the first two arrows and naturality of the associativity and braiding isomorphisms to $g$. Then we use the automorphism property of $g$ and finally apply naturality of associativity to the second $\cR_{V,V}$:
\begin{align}\label{eqn:calc3}
 V & \tens W  \xrightarrow{l_{V\tens W}^{-1}} \vac\tens(V\tens W)\xrightarrow{\widetilde{i}_V\tens 1_{V\tens W}} (V\tens V)\tens(V\tens W)\xrightarrow{1_{V\tens V}\tens\cM_{V,W}} (V\tens V)\tens(V\tens W)\nonumber\\
 &\xrightarrow{\cA_{V\tens V, V,W}} ((V\tens V)\tens V)\tens W\xrightarrow{\cA_{V,V,V}^{-1}\tens 1_W} (V\tens(V\tens V))\tens W\xrightarrow{(1_V\tens\cR_{V,V})\tens 1_W} (V\tens(V\tens V))\tens W\nonumber\\
 &\xrightarrow{\cA_{V,V,V}\tens 1_W} ((V\tens V)\tens V)\tens W\xrightarrow{\cA_{V\tens V,V,W}^{-1}} (V\tens V)\tens(V\tens W)\xrightarrow{1_{V\tens V}\tens\cM_{V,W}} (V\tens V)\tens(V\tens W)\nonumber\\
 & \xrightarrow{\cA_{V\tens V,V,W}} ((V\tens V)\tens V)\tens W\xrightarrow{(\cR_{V,V}\tens 1_V)\tens 1_W} ((V\tens V)\tens V)\tens W\xrightarrow{\cA_{V,V,V}^{-1}\tens 1_W} (V\tens(V\tens V))\tens W\nonumber\\
 &\xrightarrow{(1_V\tens\cR_{V,V})\tens 1_W} (V\tens(V\tens V))\tens W\xrightarrow{\cA_{V,V,V}\tens 1_W} ((V\tens V)\tens V)\tens W\xrightarrow{(\mu_V\tens 1_V)\tens 1_W} (V\tens V)\tens W\nonumber\\
 &\xrightarrow{(g\tens 1_V)\tens 1_W} (V\tens V)\tens W\xrightarrow{\mu_V\tens 1_W} V\tens W\xrightarrow{\mu_W} W.
\end{align}
Now we use the hexagon axiom and commutativity of $\mu_V$ to simplify the penultimate seven arrows:
\begin{align*}
 \mu_V(g\tens 1_V)(\mu_V\tens 1_V)\cA_{V,V,V}(1_V\tens\cR_{V,V}) & \cA_{V,V,V}^{-1}(\cR_{V,V}\tens 1_V) = \mu_V(g\tens 1_V)(\mu_V\tens 1_V)\cR_{V,V\tens V}\cA_{V,V,V}^{-1}\nonumber\\
 & =\mu_V\cR_{V,V}(1_V\tens g)(1_V\tens\mu_V)\cA_{V,V,V}^{-1} =\mu_V(1_V\tens g)(1_V\tens\mu_V)\cA_{V,V,V}^{-1}.
\end{align*}
We also rewrite associativity isomorphisms in the second and third lines of \eqref{eqn:calc3} using the pentagon axiom:
\begin{align*}
 V & \tens W  \xrightarrow{l_{V\tens W}^{-1}} \vac\tens(V\tens W)\xrightarrow{\widetilde{i}_V\tens 1_{V\tens W}} (V\tens V)\tens(V\tens W)\xrightarrow{1_{V\tens V}\tens\cM_{V,W}} (V\tens V)\tens(V\tens W)\nonumber\\
 &\xrightarrow{\cA_{V,V,V\tens W}^{-1}} V\tens(V\tens(V\tens W)\xrightarrow{1_V\tens\cA_{V,V,W}} V\tens((V\tens V)\tens W)\xrightarrow{\cA_{V,V\tens V,W}} (V\tens(V\tens V))\tens W\nonumber\\
 & \xrightarrow{(1_V\tens\cR_{V,V})\tens 1_W} (V\tens(V\tens V))\tens W\xrightarrow{\cA_{V,V\tens V, W}^{-1}} V\tens((V\tens V)\tens W)\xrightarrow{1_V\tens\cA_{V,V,W}^{-1}} V\tens(V\tens(V\tens W))\nonumber\\
 & \xrightarrow{\cA_{V,V,V\tens W}} (V\tens V)\tens(V\tens W)\xrightarrow{1_{V\tens V}\tens\cM_{V,W}} (V\tens V)\tens(V\tens W)\xrightarrow{\cA_{V\tens V,V,W}} ((V\tens V)\tens V)\tens W\nonumber\\
 & \xrightarrow{\cA_{V,V,V}^{-1}\tens 1_W} (V\tens(V\tens V))\tens W\xrightarrow{(1_V\tens\mu_V)\tens 1_W} (V\tens V)\tens W\xrightarrow{(1_V\tens g)\tens 1_W} (V\tens V)\tens W\xrightarrow{\mu_V\tens 1_W} V\tens W\xrightarrow{\mu_W} W.
\end{align*}
Again using naturality of associativity and the pentagon axiom, we get:
\begin{align}\label{PiGtwisted4}
 V & \tens W  \xrightarrow{l_{V\tens W}^{-1}} \vac\tens(V\tens W)\xrightarrow{\widetilde{i}_V\tens 1_{V\tens W}} (V\tens V)\tens(V\tens W)\xrightarrow{\cA_{V,V,V\tens W}^{-1}} V\tens(V\tens(V\tens W))\nonumber\\
 &\xrightarrow{1_V\tens(1_V\tens\cM_{V,W})} V\tens(V\tens(V\tens W))\xrightarrow{1_V\tens\cA_{V,V,W}} V\tens((V\tens V)\tens W)\xrightarrow{1_V\tens(\cR_{V,V}\tens 1_W)} V\tens((V\tens V)\tens W)\nonumber\\
 &\xrightarrow{1_V\tens\cA_{V,V,W}^{-1}} V\tens(V\tens (V\tens W))\xrightarrow{1_V\tens(1_V\tens\cM_{V,W})} V\tens(V\tens(V\tens W))\xrightarrow{1_V\tens\cA_{V,V,W}} V\tens((V\tens V)\tens W)\nonumber\\
 &\xrightarrow{\cA_{V,V\tens V,W}} (V\tens(V\tens V))\tens W\xrightarrow{(1_V\tens\mu_V)\tens 1_W} (V\tens V)\tens W\xrightarrow{(1_V\tens g)\tens 1_W} (V\tens V)\tens W\xrightarrow{\mu_V\tens 1_W} V\tens W\xrightarrow{\mu_W} W.
\end{align}

We now analyze the isomorphism $V\tens(V\tens W)\rightarrow(V\tens V)\tens W$ in the fourth through ninth arrows. We apply the Yang-Baxter relation once to get
\begin{align*}
 V\tens(V\tens W) & \xrightarrow{1_V\tens\cR_{V,W}} V\tens(W\tens V)\xrightarrow{\cA_{V,W,V}} (V\tens W)\tens V\xrightarrow{\cR_{V,W}\tens 1_V} (W\tens V)\tens V\xrightarrow{\cA_{W,V,V}^{-1}} W\tens(V\tens V)\nonumber\\
 & \xrightarrow{1_W\tens\cR_{V,V}} W\tens(V\tens V)\xrightarrow{\cA_{W,V,V}} (W\tens V)\tens W\xrightarrow{\cR_{W,V}\tens 1_V} (V\tens W)\tens V\xrightarrow{\cA_{V,W,V}^{-1}} V\tens(W\tens V)\nonumber\\
 & \xrightarrow{1_V\tens\cR_{W,V}} V\tens(V\tens W)\xrightarrow{\cA_{V,V,W}} (V\tens V)\tens W
\end{align*}
and a second time to obtain
\begin{align*}
 V\tens(V\tens W) & \xrightarrow{1_V\tens\cR_{V,W}} V\tens(W\tens V)\xrightarrow{\cA_{V,W,V}} (V\tens W)\tens V\xrightarrow{\cR_{V,W}\tens 1_V} (W\tens V)\tens V\xrightarrow{\cR_{W,V}\tens 1_V} (V\tens W)\tens V\nonumber\\
 & \xrightarrow{\cA_{V,W,V}^{-1}} V\tens(W\tens V)\xrightarrow{1_V\tens\cR_{W,V}} V\tens(V\tens W)\xrightarrow{\cA_{V,V,W}} (V\tens V)\tens W\xrightarrow{\cR_{V,V}\tens 1_W} (V\tens V)\tens W.
\end{align*}
By the hexagon axiom, this equals
\begin{align*}
 V\tens(V\tens W) & \xrightarrow{\cA_{V,V,W}} (V\tens V)\tens W\xrightarrow{\cR_{V\tens V, W}} W\tens(V\tens V)\xrightarrow{\cA_{W,V,V}}(W\tens V)\tens V\nonumber\\
 &\xrightarrow{\cA_{W,V,V}^{-1}} W\tens(V\tens V)\xrightarrow{\cR_{W,V\tens V}} (V\tens V)\tens W\xrightarrow{\cR_{V,V}\tens 1_W} (V\tens V)\tens W.
\end{align*}
We cancel associativity isomorphisms and insert this composition into \eqref{PiGtwisted4}:
\begin{align*}
 V & \tens W  \xrightarrow{l_{V\tens W}^{-1}} \vac\tens(V\tens W)\xrightarrow{\widetilde{i}_V\tens 1_{V\tens W}} (V\tens V)\tens(V\tens W)\xrightarrow{\cA_{V,V,V\tens W}^{-1}} V\tens(V\tens(V\tens W))\nonumber\\
 & \xrightarrow{1_V\tens\cA_{V,V,W}} V\tens((V\tens V)\tens W)\xrightarrow{1_V\tens\cM_{V\tens V,W}} V\tens((V\tens V)\tens W)\xrightarrow{1_V\tens(\cR_{V,V}\tens 1_W)} V\tens((V\tens V)\tens W\nonumber\\
 &\xrightarrow{\cA_{V,V\tens V,W}} (V\tens(V\tens V))\tens W\xrightarrow{(1_V\tens\mu_V)\tens 1_W} (V\tens V)\tens W\xrightarrow{(1_V\tens g)\tens 1_W} (V\tens V)\tens W\xrightarrow{\mu_V\tens 1_W} V\tens W\xrightarrow{\mu_W} W.
\end{align*}
By the naturality of associativity and monodromy, the commutativity of $\mu_V$, and properties of the left unit isomorphism, we now get
\begin{align*}
  V & \tens W \xrightarrow{l_V^{-1}\tens 1_W} (\vac\tens V)\tens W\xrightarrow{(\widetilde{i}_V\tens 1_V)\tens 1_W} ((V\tens V)\tens V)\tens W\xrightarrow{\cA_{V\tens V,V,W}^{-1}} (V\tens V)\tens(V\tens W)\nonumber\\
 & \xrightarrow{\cA_{V,V,V\tens W}^{-1}} V\tens(V\tens(V\tens W)) \xrightarrow{1_V\tens\cA_{V,V,W}} V\tens((V\tens V)\tens W)\xrightarrow{1_V\tens(\mu_V\tens 1_W)} V\tens(V\tens W)\nonumber\\
 &\xrightarrow{1_V\tens\cM_{V,W}} V\tens(V\tens W)\xrightarrow{\cA_{V,V,W}} (V\tens V)\tens W\xrightarrow{(1_V\tens g)\tens 1_W} (V\tens V)\tens W\xrightarrow{\mu_V\tens 1_W} V\tens W\xrightarrow{\mu_W} W.
\end{align*}
By the pentagon axiom, the third through fifth arrows equal $\cA_{V,V\tens V, W}^{-1}(\cA_{V,V,V}^{-1}\tens 1_W)$. We also use naturality of the associativity isomorphisms and associativity of $\mu_V$ to end up with:
\begin{align*}
  V & \tens W \xrightarrow{l_V^{-1}\tens 1_W} (\vac\tens V)\tens W\xrightarrow{(\widetilde{i}_V\tens 1_V)\tens 1_W} ((V\tens V)\tens V)\tens W\xrightarrow{\cA_{V,V,V}^{-1}\tens 1_W} (V\tens(V\tens V))\tens W\nonumber\\
 & \xrightarrow{(1_V\tens\mu_V)\tens 1_W} (V\tens V)\tens W\xrightarrow{\cA_{V,V,W}^{-1}} V\tens(V\tens W)\xrightarrow{1_{V}\tens[(g\tens 1_W)\cM_{V,W}]} V\tens(V\tens W)\xrightarrow{1_V\tens\mu_W} V\tens W\xrightarrow{\mu_W} W.
\end{align*}

At this point, we use Lemma \ref{rigidlike_lemma} to replace the morphism $V\rightarrow V\tens V$ in the first four arrows with
\begin{equation*}
 (\mu_V\tens 1_V)\cA_{V,V,V}(1_V\tens\widetilde{i}_V)r_V^{-1}.
\end{equation*}
Inserting this into the above composition and using the triangle axiom, we get
\begin{align*}
 V & \tens W\xrightarrow{1_V\tens l_W^{-1}} V\tens(\vac\tens W)\xrightarrow{\cA_{V,\vac, W}} (V\tens\vac)\tens W\xrightarrow{(1_V\tens\widetilde{i}_V)\tens 1_W} (V\tens(V\tens V))\tens W\nonumber\\
 &\xrightarrow{\cA_{V,V,V}\tens 1_W} ((V\tens V)\tens V)\tens W\xrightarrow{(\mu_V\tens 1_V)\tens 1_W} (V\tens V)\tens W\xrightarrow{\cA_{V,V,W}^{-1}} V\tens(V\tens W)\nonumber\\
 &\xrightarrow{1_{V}\tens[(g\tens 1_W)\cM_{V,W}]} V\tens(V\tens W)\xrightarrow{1_V\tens\mu_W} V\tens W\xrightarrow{\mu_W} W.
\end{align*}
Using naturality of the associativity and the pentagon, we obtain
\begin{align*}
 V  \tens W\xrightarrow{1_V\tens l_W^{-1}} & V\tens(\vac\tens W)\xrightarrow{1_V\tens(\widetilde{i}_V\tens 1_W)} V\tens((V\tens V)\tens W)\xrightarrow{1_V\tens\cA_{V,V,W}^{-1}} V\tens(V\tens(V\tens W))\nonumber\\
 & \xrightarrow{\cA_{V,V,V\tens W}} (V\tens V)\tens(V\tens W)\xrightarrow{1_{V\tens V}\tens[(g\tens 1_W)\cM_{V,W}]} (V\tens V)\tens(V\tens W)\nonumber\\
 &\xrightarrow{1_{V\tens V}\tens\mu_W} (V\tens V)\tens W\xrightarrow{\mu_V\tens 1_W} V\tens W\xrightarrow{\mu_W} W.
\end{align*}
Finally, by naturality of the associativity isomorphisms and the associativity of $\mu_V$ and $\mu_W$, this equals
\begin{align*}
 V  \tens W\xrightarrow{1_V\tens l_W^{-1}} & V\tens(\vac\tens W)\xrightarrow{1_V\tens(\widetilde{i}_V\tens 1_W)} V\tens((V\tens V)\tens W)\xrightarrow{1_V\tens\cA_{V,V,W}^{-1}} V\tens(V\tens(V\tens W))\nonumber\\
 & \xrightarrow{1_{V}\tens(1_V\tens[(g\tens 1_W)\cM_{V,W}])} V\tens(V\tens(V\tens W))\xrightarrow{1_V\tens(1_V\tens\mu_W)} V\tens(V\tens W)\xrightarrow{1_V\tens \mu_W} V\tens W\xrightarrow{\mu_W} W,
\end{align*}
which is $\mu_W(1_V\tens\Pi_g)$.

\bigskip

\noindent\textbf{Equations \eqref{eqn:PiG_PiH} and \eqref{eqn:sum_pi_g_identity}.} When $W$ is $h$-twisted, $\Pi_g$ is the composition
\begin{equation*}
 W\xrightarrow{l_W^{-1}} \vac\tens W\xrightarrow{\widetilde{i}_V\tens 1_W} (V\tens V)\tens W\xrightarrow{\cA_{V,V,W}^{-1}} V\tens(V\tens W)\xrightarrow{1_V\tens(h^{-1}g\tens 1_W)} V\tens(V\tens W)\xrightarrow{1_V\tens\mu_W} V\tens W\xrightarrow{\mu_W} W.
\end{equation*}
Naturality of the associativity isomorphisms and associativity of $\mu_W$ imply this is
\begin{equation*}
 W\xrightarrow{l_W^{-1}} \vac\tens W\xrightarrow{\widetilde{i}_V\tens 1_W} (V\tens V)\tens W\xrightarrow{(1_V\tens h^{-1}g)\tens 1_W} (V\tens V)\tens W\xrightarrow{\mu_V\tens 1_W} V\tens W\xrightarrow{\mu_W} W,
\end{equation*}
which is the right side of \eqref{eqn:PiG_PiH}. Finally, $\sum_{g\in G}\pi_g$ is the composition
 \begin{align*}
 W\xrightarrow{l_W^{-1}}\vac\tens W\xrightarrow{\widetilde{i}_V\tens 1_W} (V\tens V)\tens W\xrightarrow{\cA_{V,V,W}^{-1}} V\tens( & V\tens W)  \xrightarrow{1_V\tens\cM_{V,W}} V\tens(V\tens W)\nonumber\\
  &\xrightarrow{1_V\tens(\frac{1}{\vert G\vert}\sum_{g\in G} g\tens 1_V)} V\tens(V\tens W)\xrightarrow{1_V\tens\mu_W} V\tens W\xrightarrow{\mu_W} W.
 \end{align*}
 Using $\frac{1}{\vert G\vert}\sum_{g\in G} g=\iota_V\varepsilon_V$ and the unit property of $W$, we get
 \begin{align*}
  W\xrightarrow{l_W^{-1}}\vac\tens W\xrightarrow{\widetilde{i}_V\tens 1_W} (V\tens V)\tens W\xrightarrow{\cA_{V,V,W}^{-1}} V\tens( & V\tens W)  \xrightarrow{1_V\tens\cM_{V,W}} V\tens(V\tens W)\nonumber\\
  &\xrightarrow{1_V\tens(\varepsilon_V\tens 1_W)} V\tens(\vac\tens W)\xrightarrow{1_V\tens l_W} V\tens W\xrightarrow{\mu_W} W.
 \end{align*}
 Then we simplify using naturality of the monodromy and associativity isomorphisms together with $\cM_{\vac,W}=1_{\vac,W}$:
\begin{equation*}
 W\xrightarrow{l_W^{-1}}\vac\tens W\xrightarrow{\widetilde{i}_V\tens 1_W} (V\tens V)\tens W\xrightarrow{(1_V\tens\varepsilon_V)\tens 1_W} )(V\tens\vac)\tens W\xrightarrow{\cA_{V,\vac,W}^{-1}} V\tens(\vac\tens W)\xrightarrow{1_V\tens l_W} V\tens W\xrightarrow{\mu_W} W.
\end{equation*}
Now $(1_V\tens l_W)\cA_{V,\vac,W}^{-1} = r_V\tens 1_W$ by the triangle axiom and we get
\begin{equation*}
 W\xrightarrow{l_W^{-1}}\vac\tens W\xrightarrow{[r_V(1_V\tens\varepsilon_V)\widetilde{i}_V]\tens 1_W} V\tens W\xrightarrow{\mu_W} W.
\end{equation*}
This is the right side of \eqref{eqn:sum_pi_g_identity} by the unit property of $W$.


\begin{thebibliography}{EGNO}

\bibitem[Ab]{Ab}
T. Abe, A $\ZZ_2$-orbifold model of the symplectic fermionic vertex operator superalgebra, \textit{Math. Z.} \textbf{255} (2007), 755--792.


\bibitem[Ba]{Ba}
B. Bakalov, Twisted logarithmic modules of vertex algebras, \textit{Comm. Math. Phys.} \textbf{345} (2016), 355--383.

\bibitem[BE]{BE}
J. Brundan and A. Ellis, Monoidal supercategories, \textit{Comm. Math. Phys.} \textbf{351} (2017), 1045--1089.

\bibitem[CM]{CarM}  S. Carnahan and M. Miyamoto, Regularity of fixed-point vertex operator algebras, arXiv:1603.05645.

\bibitem[CKL]{CKL} T. Creutzig, S. Kanade and A. Linshaw, Simple current extensions beyond semi-simplicity, \textit{Commun. Contemp. Math} (2019), 1950001, 49 pp.

\bibitem[CKM]{CKM} T. Creutzig, S. Kanade and R. McRae, Tensor categories for vertex operator superalgebra extensions, arXiv:1705.05017.

\bibitem[DVVV]{DVVV} R. Dijkgraaf, C. Vafa, E. Verlinde and H. Verlinde, The operator algebra of orbifold models, \textit{Comm. Math. Phys.} \textbf{123} (1989), 485--526.

\bibitem[DL]{DL}
C. Dong and J. Lepowsky, \textit{Generalized Vertex Algebras and Relative Vertex Operators}, Progress in Math., Vol. 112, Birkh\"{a}user, Boston, 1993. 

\bibitem[DLM]{DLM} C. Dong, H. Li and G. Mason, Compact automorphism groups of vertex operator algebras, \textit{Int. Math. Res. Not. IMRN 1996}, 913--921.

\bibitem[DLXY]{DLXY} C. Dong, H. Li, F. Xu and N. Yu, Fusion products of twisted modules in permutation orbifolds, arXiv:1907.00094.

\bibitem[DM]{DM1}
C. Dong and G. Mason, On quantum Galois theory, \textit{Duke Math. J.} \textbf{86} (1997), 305--321.	

\bibitem[EGNO]{EGNO} P. Etingof, S. Gelaki, D. Nikshych and V. Ostrik, \textit{Tensor Categories}, Mathematical Surveys
and Monographs, Vol. 205, American Mathematical Society, Providence, RI, 2015.

\bibitem[FGR]{FGR}
V. Farsad, A. Gainutdinov and I. Runkel, The symplectic fermion ribbon quasi-Hopf algebra and the $SL(2,\ZZ)$-action on its centre, arXiv:1706.08164.

\bibitem[FLM]{FLM} 
I. Frenkel, J. Lepowsky and A. Meurman, \textit{Vertex Operator Algebras
and the Monster}, Academic Press, Boston, 1988.

\bibitem[GR]{GR}
A. Gainutdinov and I. Runkel, Symplectic fermions and a quasi-Hopf algebra structure on $\overline{U}_i s\ell(2)$, \textit{J. Algebra} \textbf{476} (2017), 415--458.

\bibitem[Hu1]{H-genlratl} Y.-Z. Huang, 
Generalized rationality and a ``Jacobi identity'' for intertwining operator algebras. 
\emph{Selecta Math. (N.S.)} {\bf 6} (2000), 225--267.

\bibitem[Hu2]{H-rigidity} Y.-Z. Huang,  Rigidity and modularity of vertex tensor categories, \textit{Commun. Contemp. Math.} \textbf{10} (2008), 871--911.

\bibitem[Hu3]{H-cofin} Y.-Z. Huang, Cofiniteness conditions, projective covers and the logarithmic tensor product theory, \textit{J. Pure Appl. Algebra}  \textbf{213}  (2009), 458--475.

\bibitem[Hu4]{H}
Y.-Z. Huang, Generalized twisted modules associated to general automorphisms of a vertex operator algebra, \textit{Comm. Math. Phys.} \textbf{298} (2010), 265--292.

\bibitem[Hu5]{Huang-TwistIntwOps}
Y.-Z. Huang, Intertwining operators among twisted modules associated to not-necessarily-commuting automorphisms, \textit{J. Algebra} \textbf{493} (2018), 346--380.

\bibitem[Hu6]{Huang-TwistOps}
Y.-Z. Huang, Twist vertex operators for twisted modules, \textit{J. Algebra} \textbf{539} (2019), 54--83.

\bibitem[HL]{HL}
Y.-Z. Huang and J. Lepowsky,
Tensor categories and the mathematics of rational and logarithmic conformal field theory,
\emph{J. Phys. A} {\bf 46} (2013), 494009. 

\bibitem[HKL]{HKL} Y.-Z. Huang, A. Kirillov and J. Lepowsky, Braided tensor categories and extensions of vertex operator algebras, \textit{Comm. Math. Phys.}  \textbf{337}  (2015),  1143--1159.

\bibitem[HLZ1]{HLZ1}
Y.-Z. Huang, J. Lepowsky and L. Zhang, Logarithmic tensor category theory for generalized modules for a
conformal vertex algebra, I: Introduction and strongly graded algebras and their generalized modules, {\em Conformal Field Theories and Tensor Categories, Proceedings of a Workshop Held at Beijing International Center
for Mathematics Research}, ed. C. Bai, J. Fuchs, Y.-Z. Huang, L. Kong, I. Runkel and C. Schweigert, Mathematical Lectures from Beijing University, Vol. 2, Springer, New York, 2014, 169--248.
	
\bibitem[HLZ2]{HLZ2}
Y.-Z. Huang, J. Lepowsky and L. Zhang, Logarithmic tensor category theory for 
generalized modules for a conformal vertex algebra, II: Logarithmic formal 
calculus and properties of logarithmic intertwining operators, arXiv:1012.4196.
	
\bibitem[HLZ3]{HLZ3}
Y.-Z. Huang, J. Lepowsky and L. Zhang, Logarithmic tensor category theory for 
generalized modules for a conformal vertex algebra, III: Intertwining maps and 
tensor product bifunctors , arXiv:1012.4197.
	
\bibitem[HLZ4]{HLZ4}
Y.-Z. Huang, J. Lepowsky and L. Zhang, Logarithmic tensor category theory for 
generalized modules for a conformal vertex algebra, IV: Constructions of tensor 
product bifunctors and the compatibility conditions , arXiv:1012.4198.
	
\bibitem[HLZ5]{HLZ5}
Y.-Z. Huang, J. Lepowsky and L. Zhang, Logarithmic tensor category theory for 
generalized modules for a conformal vertex algebra, V: Convergence condition 
for intertwining maps and the corresponding compatibility condition, 
arXiv:1012.4199.
	
\bibitem[HLZ6]{HLZ6}
Y.-Z. Huang, J. Lepowsky and L. Zhang, Logarithmic tensor category theory for 
generalized modules for a conformal vertex algebra, VI: Expansion condition, 
associativity of logarithmic intertwining operators, and the associativity 
isomorphisms, arXiv:1012.4202.
	
\bibitem[HLZ7]{HLZ7}
Y.-Z. Huang, J. Lepowsky and L. Zhang, Logarithmic tensor category theory for 
generalized modules for a conformal vertex algebra, VII: Convergence and 
extension properties and applications to expansion for intertwining maps, 
arXiv:1110.1929.
	
\bibitem[HLZ8]{HLZ8}
Y.-Z. Huang, J. Lepowsky and L. Zhang, Logarithmic tensor category theory for 
generalized modules for a conformal vertex algebra, VIII: Braided tensor 
category structure on categories of generalized modules for a conformal vertex 
algebra, arXiv:1110.1931.

\bibitem[HY]{HY}
Y.-Z. Huang and J. Yang, Associative algebras for (logarithmic) twisted modules for a vertex operator algebra, \textit{Trans. Amer. Math. Soc.} \textbf{371} (2019), 3747--3786.

\bibitem[Ja]{Ja}
N. Jacobson, \textit{Basic Algebra II}, 2nd ed., W. H. Freeman and Co., New York, 1989.

\bibitem[Ka]{Ka}
H. Kausch, Curiosities at $c=-2$, arXiv:hep-th/9510149.
 
\bibitem[Ki1]{KirillovOrbifoldI}
A. Kirillov, Jr., Modular categories and orbifold models, \textit{Comm. Math. Phys.} \textbf{229} (2002), 183--227.

\bibitem[Ki2]{KirillovOrbifoldII}
A. Kirillov, Jr., On modular categories and orbifold models II, arXiv:math/0110221.

\bibitem[Ki3]{KirillovGequivariant}
A. Kirillov, Jr., On $G$--equivariant modular categories, arXiv:math/0401119.

\bibitem[KO]{KO}
A. Kirillov, Jr. and V. Ostrik, On a $q$-analogue of the McKay correspondence and the $ADE$ classification of $\mathfrak{sl}_2$   conformal field theories, \textit{Adv. Math.}  \textbf{171}  (2002), 183--227. 

\bibitem[LL]{LL}
J. Lepowsky and H. Li, \textit{Introduction to Vertex
Operator Algebras and Their Representations}, Progress in Math.,
Vol. 227, Birkh\"auser, Boston, 2003.

\bibitem[Li]{Li3} W. Li, \textit{Representations of Vertex Operator Superalgebras and Abelian Intertwining Algebras}, Ph.D. thesis, Rutgers University, 1997.

\bibitem[McR]{McR} R. McRae, On the tensor structure of modules for compact orbifold vertex operator algebras, \textit{Math. Z.} (2019), https://doi.org/10.1007/s00209-019-02445-z.

\bibitem[Mi1]{Mi1} M. Miyamoto, $C_2$-cofiniteness of cyclic-orbifold models, \textit{Comm. Math. Phys.} \textbf{335} (2015), 1279--1286.

\bibitem[Mi2]{Mi} M. Miyamoto, $C_2$-cofiniteness of orbifold models for finite groups, arXiv:1812.00570.

\bibitem[M\"{u}1]{Mu1} M. M\"uger, Galois extensions of braided tensor categories and braided crossed $G$-categories, \textit{J. Algebra}
\textbf{277} (2004), 256--281.

\bibitem[M\"{u}2]{Mu2} M. M\"uger, Conformal orbifold theories and braided crossed $G$-categories. \textit{Comm. Math. Phys.}
\textbf{260} (2005), 727--762.

\bibitem[Pa]{Pa} B.\ Pareigis, On braiding and dyslexia, \emph{J.\ Algebra} \textbf{171} (1995), 413--425.

\bibitem[Ru]{Ru}
I. Runkel, A braided monoidal category for free super-bosons, \textit{J. Math. Phys.} \textbf{55}, 041702 (2014). 
 
\bibitem[Tu]{TuraevGcrossed}
V. Turaev, Homotopy field theory in dimension $3$ and crossed group-categories, arXiv:math/0005291.

\bibitem[Xu]{Xu}
X. Xu, Intertwining operators for twisted modules of a colored vertex operator superalgebra, \textit{J. Algebra}  \textbf{175}  (1995), 241--273.
\end{thebibliography}
\end{document}